\documentclass[11pt, oneside, reqno]{amsart}

\usepackage{amssymb,amsfonts}
\usepackage[letterpaper, margin=1in]{geometry}
\usepackage{parskip}
\usepackage[all,arc]{xy}
\usepackage{enumerate}
\usepackage{mathrsfs}
\usepackage{hyperref}
\usepackage{graphicx}
\usepackage{tikz}
\usepackage{tikz-cd}
\usetikzlibrary{calc}
\usepackage{amsrefs}
\graphicspath{ {./Images/} }
\makeatletter
\newcommand*{\rom}[1]{\expandafter\@slowromancap\romannumeral #1@}
\makeatother

\newtheorem{theorem}{Theorem}[section]
\newtheorem{corollary}[theorem]{Corollary}
\newtheorem{proposition}[theorem]{Proposition}
\newtheorem{lemma}[theorem]{Lemma}

\theoremstyle{definition}
\newtheorem{definition}[theorem]{Definition}

\theoremstyle{remark}
\newtheorem{remark}[theorem]{Remark}

\numberwithin{equation}{section}

\usepackage{tikz-cd}
\newcommand{\R}{\mathbb R}
\newcommand{\C}{\mathbb C}

\newcommand{\T}{\mathbb T}

\newcommand{\cF}{\mathcal F}
\newcommand{\cG}{\mathcal G}
\newcommand{\cK}{\mathcal K}

\newcommand{\cN}{\mathcal N}

\newcommand{\cS}{\mathcal S}

\newcommand{\re}{\mathrm {Re}}
\newcommand{\im}{\mathrm {Im} }
\newcommand{\tr}{\mathrm {Tr}}
\newcommand{\av}{\mathrm {Av}}
\newcommand{\pv}{\mathrm {p.v.\ }}

\newcommand{\lip}{\mathrm {Lip}}

\newcommand{\Div}{\mathrm {div\ }}

\newcommand{\bs}{\backslash }
\newcommand{\wstar}{\stackrel{\ast}{\rightharpoonup}}

\begin{document}
\author{Zhengjun Liang}
\address{Department of Mathematics, University of Michigan, 530 Church St, Ann Arbor, MI 48104}
\email{jspliang@umich.edu}
\title{Global Well-Posedness for the Hele-Shaw Problem with Point Injection}
\date{May 10, 2026}
\subjclass[2020]{35R35, 35Q35, 76D27, 76S05}

\begin{abstract}
We study the two-dimensional Hele-Shaw problem with point injection for star-shaped domains. We reduce the system to a nonlocal parabolic equation on $\T$, and for arbitrary Lipschitz initial interface away from the source, we prove global well-posedness of the interface equation in a \(L^\infty_{\mathrm{loc}, t}L_\alpha^2\) sense. We also introduce a viscosity solution framework for the interface equation and relate it to the classical viscosity theory for the Hele-Shaw problem.
\end{abstract}

\maketitle

\section{Introduction}
In this paper, we revisit the classical one-phase Hele-Shaw problem with a
point injection. Physically, Hele-Shaw flow describes the motion of a viscous fluid confined between two closely spaced parallel plates, or equivalently, fluid flow through a homogeneous porous medium. In the injection-driven setting considered here, fluid is continuously supplied through a small source, idealized as a point source at the origin, and the occupied fluid region expands in time.

Mathematically, if \(\Omega(t)\) denotes the fluid region and
\(\Gamma(t)=\partial\Omega(t)\) denotes its free boundary, the problem is
formulated as
\begin{equation}\label{domain_eqn_u_intro}
\left\{
\begin{array}{ll}
u=-\nabla p, & \text{in } \Omega(t),\\[2pt]
\operatorname{div} u=2\pi\delta, & \text{in } \Omega(t),\\[2pt]
p=0, & \text{on } \Gamma(t),\\[2pt]
V(\Gamma(t))=u\cdot n, & \text{on } \Gamma(t).
\end{array}
\right.
\end{equation}
Here \(u\) is the fluid velocity, \(p\) is the pressure, \(n\) is the outward unit normal to the free boundary, and \(V(\Gamma(t))\) denotes the normal velocity of the moving interface. The physical scenario is illustrated in Figure \ref{fig:hele_shaw_injection}.

\begin{figure}[!htbp]
\centering
\begin{tikzpicture}[>=stealth, line cap=round, line join=round, scale=0.75]

    \path[fill=blue!8, draw=black, thick]
      plot[domain=0:360, samples=240, smooth, variable=\t]
        ({(1 + 0.06*cos(8*\t))*4.0*cos(\t)},
         {(1 + 0.06*cos(8*\t))*3.0*sin(\t)})
      -- cycle;

    \fill (0,0) circle (3pt);

    \foreach \a in {0,45,...,315}{
        \draw[->, thin]
            ({0.55*cos(\a)},{0.55*sin(\a)}) --
            ({1.05*cos(\a)},{1.05*sin(\a)});
    }

    \node[font=\large] at (-2.1, 1.5) {$\Omega(t)$};

    \node[font=\large] at (5.1, 0.4) {$\Gamma(t)$};

    \draw[->, thick] (3.13, 2.12) -- (3.78, 2.89);
    \node[font=\large] at (3.95, 3.10) {$n$};

\end{tikzpicture}
\caption{Hele-Shaw flow with point injection}
\label{fig:hele_shaw_injection}
\end{figure}

\subsection{Main results}

We study the global-in-time behavior of the free boundary \(\Gamma(t)\) for the Hele-Shaw problem with injection. We work directly with an equation on the moving interface: when
\(\Gamma(t)\) is Lipschitz and star-shaped with respect to the origin, we find that the system \eqref{domain_eqn_u_intro} admits a natural nonlocal parabolic formulation on the boundary. More precisely, writing
\[
\Gamma(t):=\{h(\alpha,t)e^{i\alpha}:\alpha\in\mathbb T\},
\qquad
h(\alpha,t)=e^{\eta(\alpha,t)},
\]
with \(\eta(\cdot,t)\) Lipschitz, \eqref{domain_eqn_u_intro} can be equivalently formulated as
\begin{equation}\label{interface_eqn_intro}
\partial_t\eta+e^{-2\eta}\bigl(G(h)\eta-1\bigr)=0,
\end{equation}
where \(G(h)\eta\) is the Dirichlet-to-Neumann operator on star-shaped domains
as in Definition~\ref{def:DN_star_shaped}.

Our main result constructs global in-time solutions to \eqref{interface_eqn_intro}
for arbitrary Lipschitz initial data. To the best of the author's knowledge, this is the first large-data global result for the injection-driven Hele-Shaw problem that gives global quantitative control of the Lipschitz geometry of the free boundary without any smallness assumption on the initial interface.

\begin{theorem}[Main theorem]\label{thm:main_1}
For every \(\eta_0\in W^{1,\infty}(\mathbb T)\), there exists
\[
\eta\in C(\mathbb T\times [0,\infty))
\cap L_{\mathrm{loc}}^\infty([0,\infty);W^{1,\infty}(\mathbb T)),
\qquad
\partial_t\eta\in L_{\mathrm{loc}}^\infty([0,\infty);L^2(\mathbb T)),
\]
such that \(\eta(\cdot,0)=\eta_0\) and the interface
equation
\begin{equation}
\partial_t\eta+e^{-2\eta}\bigl(G(e^\eta)\eta-1\bigr)=0
\end{equation}
holds in the \(L_{\mathrm{loc}}^\infty([0,\infty);L^2(\mathbb T))\) sense. In addition, \(\eta\) satisfies the maximum principle
\begin{equation}\label{uniform_angle_intro}
\|\partial_\alpha\eta(\cdot,t)\|_{L^\infty}
\le
\|\partial_\alpha\eta_0\|_{L^\infty},
\end{equation}
which implies that the free-boundary normal remains in the same radial
cone determined by the initial data; see Figure~\ref{fig:cone_condition}. Finally, there exist constants
\(0<C_1<C_2\) such that the boundary height \(e^{\eta}\) satisfies
\begin{equation}
\sqrt{C_1+2t}\le e^\eta(\cdot,t)\le \sqrt{C_2+2t}.
\end{equation}
\end{theorem}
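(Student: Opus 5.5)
I would prove Theorem~\ref{thm:main_1} by regularization, a priori estimates, and compactness. First, mollify the data to \(\eta_0^\epsilon\in C^\infty(\mathbb T)\) with \(\|\partial_\alpha\eta_0^\epsilon\|_{L^\infty}\le\|\partial_\alpha\eta_0\|_{L^\infty}\) and \(\eta_0^\epsilon\to\eta_0\) uniformly. For smooth star-shaped data the interface equation is a quasilinear nonlocal parabolic equation: its linearization has principal part \(e^{-2\eta}G(h)\), which behaves like \(|D|\) with coefficients depending on the Lipschitz geometry. A standard local existence argument (for instance in \(H^s\), \(s\) large, via energy estimates and the paraproduct/shape-derivative structure of \(G(h)\)) gives a smooth local solution \(\eta^\epsilon\). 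The goal is a priori bounds that depend only on \(\|\eta_0\|_{W^{1,\infty}}\) and \(t\). These bounds give global continuation for each \(\epsilon\) and uniform control as \(\epsilon\to0\).

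\emph{Step 1 (height bounds).} Use the comparison principle with radial solutions. If \(\eta=\eta(t)\) is constant in \(\alpha\), then \(G(h)\eta=0\), since the harmonic extension is constant. The equation then reduces to \(\partial_t\eta=e^{-2\eta}\), i.e.\ \(e^{2\eta}=C+2t\). The operator \(\eta\mapsto e^{-2\eta}(G(e^\eta)\eta-1)\) satisfies a maximum principle: at a point where \(\eta-\psi\) attains its maximum with \(\psi\) constant in \(\alpha\), the domain is contained in the disk, and Hopf-type comparison of harmonic functions gives \(G(h)\eta\ge0\) there. So the comparison principle holds, and sandwiching \(\eta_0\) between \(\frac12\log C_1\) and \(\frac12\log C_2\) gives \(\sqrt{C_1+2t}\le e^\eta\le\sqrt{C_2+2t}\).

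\emph{Step 2 (Lipschitz maximum principle).} This is the main obstacle. The cleanest route is to use the symmetries of \eqref{domain_eqn_u_intro} in the \(\eta\)-variables. Rotations \(\alpha\mapsto\alpha+\theta\) preserve the equation. The Hele-Shaw flow with point injection, written in log-polar coordinates, is also invariant under the dilation-type maps that preserve the cone condition. I would compare \(\eta(\alpha+\theta,t)\) with \(\eta(\alpha,t)+L|\theta|\), where \(L=\|\partial_\alpha\eta_0\|_{L^\infty}\), using the comparison principle of Step 1. This requires checking that \(\eta+c\) is a supersolution, which may fail because adding a constant changes the source term. The fix is to rescale instead: \(h\mapsto\lambda h\) combined with the time shift \(t\mapsto\lambda^2 t\) maps solutions to solutions. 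Using this, the comparison argument should show that the cone of normals is preserved. If the comparison route breaks down, I would instead differentiate the equation in \(\alpha\) for the smooth solutions and evaluate at the maximum of \(\partial_\alpha\eta\). There one would show that the commutator \([\partial_\alpha,G(h)]\) has a favorable sign, via the shape-derivative formula \(\partial_\alpha(G(h)\eta)=G(h)\partial_\alpha\eta-G(h)(B\,\partial_\alpha\eta)-\partial_\alpha(V\partial_\alpha\eta)\)-type identity. The nonlocal sign structure here is the delicate part.

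\emph{Step 3 (time derivative and compactness).} With Lipschitz bounds, \(G(h)\) is bounded from \(H^1\) to \(L^2\) on Lipschitz star-shaped domains, uniformly in the Lipschitz constant. This uses Rellich identities / Dahlberg–Kenig theory for Lipschitz domains. Hence \(\partial_t\eta^\epsilon\) is bounded in \(L^\infty_{\mathrm{loc}}L^2\), uniformly in \(\epsilon\). By Arzelà–Ascoli, a subsequence converges locally uniformly to some \(\eta\) in the stated class, with \(\partial_t\eta^\epsilon\rightharpoonup^*\partial_t\eta\). To pass to the limit in \(G(e^{\eta^\epsilon})\eta^\epsilon\), I would use continuity of the Dirichlet-to-Neumann map with respect to uniform convergence of Lipschitz domains with a uniform Lipschitz constant. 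Weak convergence in \(L^2\) of the normal derivatives follows from harmonic-measure and Rellich estimates. The bounds of Steps 1 and 2 pass to the limit by lower semicontinuity.
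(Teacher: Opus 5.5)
You have a genuine gap in Step 2, because the key ingredient of the paper's argument is missing: the Taylor sign condition \(G(h)\eta-1\le 0\). It is elementary. The function \(p=\phi-\log|x|\) is harmonic in \(\Omega_h\setminus\{0\}\), tends to \(+\infty\) at the origin and vanishes on \(\Sigma_h\). Hence \(p\ge 0\), and \(N\cdot\nabla p=G(h)\eta-1\le 0\) at regular boundary points.

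With it, adding a constant does \emph{not} fail in the direction you need. For \(c\ge 0\), using \(G(e^{c}h)(\eta+c)=G(h)\eta\), plugging \(\eta+c\) into the equation gives \(\partial_t\eta+e^{-2c}e^{-2\eta}(G(h)\eta-1)\ge\partial_t\eta+e^{-2\eta}(G(h)\eta-1)=0\). So \(\eta+c\) is a supersolution. In the same way \(\eta(\alpha+\beta,t)-L|\beta|\) is a subsolution, and comparison gives the Lipschitz bound immediately. This is the paper's argument.

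Your rescaling fix does not bypass this ingredient. Comparing with the exact solution \(\eta(\alpha+\theta,te^{-2L|\theta|})+L|\theta|\) only yields \(\eta(\alpha,t)\le\eta(\alpha+\theta,te^{-2L|\theta|})+L|\theta|\). Removing the time lag then requires \(\partial_t\eta=e^{-2\eta}(1-G(h)\eta)\ge 0\), which is again the Taylor sign.

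Moreover, Step 1 only proves comparison against functions constant in \(\alpha\), while Step 2 needs comparison between two non-radial functions. That requires three things:
\begin{enumerate}
\item the pointwise inequality \(G(h_1)\eta_1\ge G(h_2)\eta_2\) at a contact point of \(\eta_1\le\eta_2\);
\item the invariance \(G(e^{M}h)(\eta+M)=G(h)\eta\), to turn the shifted contact \(\eta_--M_*-ct_*\le\eta_+\) into a genuine one;
\item once more the Taylor sign: at the contact point \(e^{-2\eta_-}<e^{-2\eta_+}\), so \(e^{-2\eta_-}(G(h_-)\eta_--1)\ge e^{-2\eta_+}(G(h_+)\eta_+-1)\) holds only because \(G(h_-)\eta_--1\le 0\).
\end{enumerate}
The commutator alternative you sketch is not developed far enough to fill this in.

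The second gap is global existence of your approximations. You solve the \emph{unregularized} equation with mollified data and assert that height and Lipschitz bounds give global continuation in \(H^s\). They do not. \(W^{1,\infty}\) is scaling-critical, and a blow-up criterion in terms of the Lipschitz norm alone does not follow from the local theory; it would be a genuine regularity theorem.

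The paper instead adds artificial viscosity, \(\partial_t\eta+e^{-2\eta}(G(h)\eta-1)-\varepsilon\partial_\alpha^2\eta=0\). The equation becomes semilinear parabolic, and the \(H^1\), \(H^2\) and \(H^s\) energy estimates close with constants depending on \(\varepsilon^{-1}\) and \(\|\eta\|_{W^{1,\infty}}\). All comparison arguments survive, because \(-\varepsilon\partial_\alpha^2\) commutes with translations and constant shifts and has the right sign at a maximum of \(\eta_--\eta_+\). Your rescaling symmetry is incompatible with this regularization, since it sends \(\varepsilon\) to \(\varepsilon/\lambda^2\); this is another reason the constant-shift argument is the right route.

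Your Step 3 is broadly in line with the paper. The paper passes to the limit in \(G(h_n)\eta_n\) through weak-\(*\) convergence of the layer-potential densities \(\theta_n\) together with uniform convergence of \(h_n\).
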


We next address uniqueness and finer interface dynamics. We introduce in Definition~\ref{def:visc_sol_bdry} a notion of viscosity solution for the interface equation \eqref{interface_eqn_intro}, following ideas from Dong--Gancedo--Nguyen~\cite{Dong-Gancedo-Nguyen-23} and
Chang-Lara--Guillen--Schwab~\cite{ChangLaraGuillenSchwab2019}. We show that the solution constructed in Theorem~\ref{thm:main_1} is the unique viscosity solution in this sense. We then relate this boundary formulation to the classical viscosity theory for the Hele-Shaw problem \cites{Kim2003, Kim2006Reg, Kim2006Long, JerisonKim2005, ChoiKim2006,ChoiJerisonKim2007, ChoiJerisonKim2009} and obtain the following proposition.

\begin{proposition}\label{prop:visc_sols_relations}
Suppose $\eta$ is a solution as in
Theorem~\ref{thm:main_1}. Let $\phi$ be the
unique harmonic extension of $\eta$ in $\Omega_\eta(t):=\{re^{i\alpha}: 0\leq r < e^{\eta(\alpha,t)},\
\alpha\in\T\}$. Define $p(x,t):=\max\{\phi(x,t)-\log|x|,0\}$, then $p$ is a viscosity solution as in Definition~\ref{def:visc_sol_domain}.
\end{proposition}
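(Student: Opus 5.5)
We will check the two halves of the classical viscosity definition (Definition~\ref{def:visc_sol_domain}) for $p$ separately: the interior condition in the positive phase, and the free-boundary conditions on $\Gamma(t)$.

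\textbf{Interior.} Since $\phi(\cdot,t)$ is harmonic in $\Omega_\eta(t)$ and $\log|x|$ is the fundamental solution, we have $-\Delta(\phi-\log|x|)=-2\pi\delta$. Thus $p$ is harmonic in $\Omega_\eta(t)\setminus\{0\}$, and near the origin $p\sim-\log|x|$, which is the required source behaviour. On $\Gamma(t)$ we have $|x|=e^{\eta}$ and $\phi=\eta$, so $\phi-\log|x|=0$ there.

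\textbf{Positivity set.} We must show $\{p>0\}=\Omega_\eta(t)$. The function $w=\phi-\log|x|$ is superharmonic in $\Omega_\eta(t)$ (harmonic away from $0$ and $+\infty$ at $0$) and vanishes on the boundary. By the minimum principle $w>0$ inside. Outside $\Omega_\eta(t)$ we set $p=0$. So $p$ is continuous, $\Gamma(t)=\partial\{p>0\}$, and the bounds $\sqrt{C_1+2t}\le e^\eta\le\sqrt{C_2+2t}$ from Theorem~\ref{thm:main_1} keep the source in the interior for all time.

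\textbf{Boundary conditions via touching.} The main step is the free-boundary velocity law $V=|\nabla p|$, tested by smooth functions. Let $\varphi$ touch $p$ from above (resp.\ below) at $(x_0,t_0)\in\Gamma(t_0)$ in a local parabolic neighbourhood, with $|\nabla\varphi|\neq0$ where required. We transfer the touching to the interface function $\eta$ as follows.
\begin{enumerate}[(i)]
\item Because the level set $\{\varphi=0\}$ is smooth near $x_0$ and transversal, by the implicit function theorem it is a star-shaped graph $r=e^{\psi(\alpha,t)}$ near $(\alpha_0,t_0)$. Here we use the Lipschitz cone condition \eqref{uniform_angle_intro} to keep the normal in the radial cone.
\item Ordering $p\le\varphi$ gives $\eta\le\psi$ locally, with equality at $(\alpha_0,t_0)$.
\item We extend $\psi$ to a global test function on $\T$ that stays above $\eta$, and invoke the viscosity-solution property of $\eta$ (Definition~\ref{def:visc_sol_bdry}, satisfied by the solution from Theorem~\ref{thm:main_1}) to get
\begin{equation*}
\partial_t\psi+e^{-2\psi}\bigl(G(e^\psi)\psi-1\bigr)\le 0
\end{equation*}
at $(\alpha_0,t_0)$, with the appropriate sign convention.
\end{enumerate}

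\textbf{Translating back to $\varphi$.} We rewrite this inequality in terms of $\varphi$. The radial speed $e^{\psi}\partial_t\psi$ times $n\cdot e_r$ is the normal velocity $-\partial_t\varphi/|\nabla\varphi|$. On the other hand, $e^{-\psi}(G(e^\psi)\psi-1)$ is, up to the factor $n\cdot e_r$, the normal derivative of $\Phi-\log|x|$, where $\Phi$ is the harmonic extension of $\psi$. Since $\eta\le\psi$, the comparison principle for the Dirichlet problem in the nested domains, together with $\varphi\ge p$, should give $|\nabla(\Phi-\log|x|)|\lesssim|\nabla\varphi|$ at the touching point in the right direction. This is a Hopf-type comparison of the two positive harmonic functions vanishing at the common boundary point. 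Combining gives $\partial_t\varphi\le|\nabla\varphi|^2$ (resp.\ $\ge$), which is the classical viscosity inequality.

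\textbf{Main obstacle.} The hardest part is matching the two notions of test function. Definition~\ref{def:visc_sol_domain} requires strict local separation in space-time domains, whereas the boundary definition uses the global nonlocal operator $G$. The localization of $\psi$ must be controlled using the monotonicity of $G(h)$ with respect to inclusion of domains, i.e.\ a comparison principle for the Dirichlet-to-Neumann map at a touching point. I would first prove this monotonicity lemma: if $\Omega_1\subset\Omega_2$ touch at $x_0$ and $\phi_j$ are the extensions, then the normal derivatives are ordered at $x_0$. I would then handle the strict-inequality perturbations by replacing $\psi$ with $\psi\pm\epsilon(|\alpha-\alpha_0|^2+|t-t_0|^2)$ and letting $\epsilon\to0$.
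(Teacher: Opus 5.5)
Your route in the nondegenerate free-boundary case is the paper's route. You parametrize $\{\varphi(\cdot,t)=0\}$ as $r=e^{\psi(\alpha,t)}$; the paper does this in Lemma~\ref{lem:starshaped_reduction_nondeg}, which shows $\partial_r\varphi(x_0,t_0)<0$ and globalizes the zero set. You observe $\eta\le\psi$ with equality at $(\alpha_0,t_0)$, apply Definition~\ref{def:visc_sol_bdry}, and convert $\partial_t\psi$ through $\partial_t\varphi+(\nabla\varphi\cdot x_0)\,\partial_t\psi=0$.

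\textbf{First gap: comparing $\tilde q$ with $\varphi$.} Write $\tilde q:=\Phi-\log|x|$, so that $G(e^\psi)\psi-1=N\cdot\nabla\tilde q$. To finish you need $|\nabla\tilde q(x_0,t_0)|\le|\nabla\varphi(x_0,t_0)|$, and this does not follow from $\eta\le\psi$ and $\varphi\ge p$.
\begin{itemize}
\item The nested-domain comparison (Proposition~\ref{pointwise_cp}, i.e.\ the monotonicity lemma you propose to prove) gives $p\le\tilde q$, hence $|\nabla p(x_0)|\le|\nabla\tilde q(x_0)|$.
\item The touching gives $|\nabla p(x_0)|\le|\nabla\varphi(x_0)|$.
\item Both inequalities bound $|\nabla p|$ from above, so they cannot be chained. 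Also, $\varphi$ is an arbitrary $C^{2,1}$ function, not harmonic, so a ``Hopf comparison of two harmonic functions'' is not available.
\end{itemize}
The claimed inequality is in fact false in general. Take the radial solution $p=\log(R(t)/|x|)$ with $R(t)=\sqrt{R_0^2+2t}$, and $\varphi=\log(R(t)/|x|)+M|x-x_0|^2$. Then $\varphi$ touches $p$ from above at $(x_0,t_0)$ and $|\nabla\varphi(x_0)|=|\nabla p(x_0)|$. Its zero set $|x|=Re^{M|x-x_0|^2}$ lies strictly outside $\Gamma(t_0)$ away from $x_0$, so Hopf applied to $\tilde q-p$ gives $|\nabla\tilde q(x_0)|>|\nabla\varphi(x_0)|$.

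The missing idea is the $\min$ in Definition~\ref{def:visc_sol_domain}(5b), which you never use.
\begin{itemize}
\item If $-\Delta\varphi(x_0,t_0)\le0$, there is nothing to prove.
\item Otherwise $\varphi$ is superharmonic near $x_0$. Then $\varphi-\tilde q$ is superharmonic there and vanishes on the common zero set, so $\varphi\ge\tilde q$.
\item Comparing outward normal derivatives at $x_0$ gives $G(e^\psi)\psi-1\ge N\cdot\nabla\varphi$. Combined with the kinematic identity, this yields $\partial_t\varphi\le|\nabla\varphi|^2$.
\end{itemize}
This is exactly how the paper argues. Since $-\Delta\varphi>0$ only near $x_0$, and $\tilde q$ is singular at the origin, this comparison has to be carried out locally.

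\textbf{Second gap: the degenerate test function.} For subsolutions, condition (5b) imposes no nondegeneracy, so the case $\nabla\varphi(x_0,t_0)=0$ must be handled. There the implicit function theorem fails and the interface inequality cannot be invoked; your phrase ``with $|\nabla\varphi|\ne0$ where required'' skips this case. The paper treats it with the hypothesis $\partial_t\eta\in L^\infty_{\mathrm{loc}}L^2$, which your proposal never uses.
\begin{itemize}
\item Together with the $W^{1,\infty}$ bound and Lemma~\ref{lem:1d_sobolev_interpolation}, this hypothesis gives $\|h(\cdot,t)-h(\cdot,s)\|_{L^\infty}\lesssim|t-s|^{2/3}$.
\item If $\partial_t\varphi(x_0,t_0)>0$ and $\nabla\varphi(x_0,t_0)=0$, Taylor expansion gives $\varphi(\cdot,t_0-\delta)<0$ on $B(x_0,\varepsilon\sqrt\delta)$.
\item The free-boundary point $h(\alpha_0,t_0-\delta)e^{i\alpha_0}$ lies within $C\delta^{2/3}\ll\varepsilon\sqrt\delta$ of $x_0$, so this contradicts $p\le\varphi$.
\end{itemize}

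Finally, a sign slip: since $\varphi>0$ inside, the normal velocity of $\{\varphi=0\}$ is $+\partial_t\varphi/|\nabla\varphi|$, not $-\partial_t\varphi/|\nabla\varphi|$.
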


As applications of Proposition \ref{prop:visc_sols_relations}, we revisit classical results on free boundary regularity \cites{ChoiKim2006, JerisonKim2005, ChoiJerisonKim2007}. Further discussions can be found in section \ref{sec:visc_sol_domain}.

\subsection{Relationship with previous work}

Regarding injection-driven Hele-Shaw problems, Choi--Jerison--Kim~\cite{ChoiJerisonKim2007} was the first to construct global viscosity solutions starting from star-shaped Lipschitz initial domains. They proved that the positive phase remains star-shaped and Lipschitz in space for all time, and they also obtained local regularity estimates for the free boundary. Up to minor differences in the setup, we can show that their solution coincides with ours, although our construction is very different and yields the global uniform angle estimate \eqref{uniform_angle_intro}. We defer a more detailed discussion to Section~\ref{sec:visc_sol_domain}.

Our approach stems from the belief that the injection-driven Hele-Shaw problem is closely related to the one-phase Muskat problem. For the one-phase Muskat problem in graph domains, Alazard--Meunier--Smets~\cite{AlazardMeunierSmets2020} and Nguyen--Pausader~\cite{nguyen-pausader} independently introduced the interface formulation
\begin{equation}\label{eqn:one-phase_Muskat}
\partial_t f + G(f)f = 0.
\end{equation}
A major advance, which is also the main inspiration of our present work, was made by Dong--Gancedo--Nguyen~\cite{Dong-Gancedo-Nguyen-23}. They established global well-posedness for \eqref{eqn:one-phase_Muskat} with arbitrary Lipschitz initial data. One of the main ideas in \cite{Dong-Gancedo-Nguyen-23} is to use the translation invariance of \eqref{eqn:one-phase_Muskat} and pointwise properties of $G(f)f$ to prove a comparison principle and hence obtain uniform control of the Lipschitz norm \(\|f(\cdot,t)\|_{W^{1,\infty}}\).

\subsection{New challenges and main ideas} For the injection-driven problem, our key structural finding is to use the logarithmic variable $\eta$ to obtain the interface equation \eqref{interface_eqn_intro}. However, the nonlinear expression
\[
    e^{-2\eta}\bigl(G(h)\eta-1\bigr)
\]
is no longer translation invariant. To close the comparison argument, we use the Taylor sign condition
\[
    G(h)\eta-1\leq 0
\]
to recover a weaker symmetry sufficient for our purposes. More precisely, we show that if \(\eta\) is a solution, then for every \(C\geq 0\), \(\eta-C\) and \(\eta+C\) are respectively sub- and supersolutions. Subsequently, we establish a comparison principle and then the maximum principle for \(\|\partial_\alpha\eta(\cdot,t)\|_{L^\infty}\). Throughout the paper, we keep careful track of signs, especially in the analysis of the interface viscosity solution introduced in Section~\ref{sec:visc_sol}.

We also establish in Appendix~A an equivalence between the Dirichlet-to-Neumann operator on star-shaped domains and the Dirichlet-to-Neumann operator on graph domains under an exponential change of variables. This allows us to transfer graph-domain estimates for Dirichlet-to-Neumann operators to the star-shaped setting used here. The result is of independent interest and may offer a useful perspective on the study of Dirichlet-to-Neumann operators. 

Beyond the specific results proved here, the paper provides a bridge between the classical viscosity-solution approach to Hele-Shaw problems and the modern interface-equation approach developed for Muskat-type problems. This gives a common language for comparing the two frameworks and suggests that interface-equation methods can be useful beyond the graph setting.

\subsection{Literature review}\label{subsec:literature_review}
The mathematical literature on Hele-Shaw and Muskat problems is vast, and we do not attempt a comprehensive survey here. Instead, we review only the directions most related to the present work. We also fix some terminology used throughout the paper. By the \emph{Hele-Shaw problem}, or \emph{horizontal Hele-Shaw problem}, we mean a free-boundary problem posed in a bounded domain, driven by injection, suction, surface tension, or a combination thereof. In contrast, the \emph{Muskat problem}, either one-phase or two-phase, will refer to the gravity-driven porous-medium problem, with ot without surface tension.

The Hele-Shaw model originates from Hele-Shaw's thin-gap experiment
\cite{HeleShaw1898}; in the depth-averaged limit, the velocity is governed by a Darcy law, whose porous-media origin goes back to Darcy \cite{Darcy1856}.  The closely related Muskat problem was introduced by Muskat in the study of fluid motion in porous media \cite{Muskat1937}.  In Muskat's originally proposed two-phase form, the problem describes the evolution of the interface separating two immiscible fluids in a porous medium or Hele-Shaw cell, with the motion driven by pressure and by jumps in density or viscosity across the interface. The unstable displacement of a more viscous fluid by a less viscous one leads to the Saffman--Taylor fingering instability \cite{SaffmanTaylor1958}, and the planar zero-surface-tension Hele-Shaw problem is also connected to conformal mapping, quadrature domains, and Laplacian growth; see, for example, the monograph of
Gustafsson and Vasiliev \cite{GustafssonVasiliev2006}.

There is a long tradition of studying source-driven Hele-Shaw flow by
complex-analytic, variational, and free-boundary methods.  Richardson's work
\cite{Richardson1972} is a classical reference for injection-driven planar
flows and moment identities.  Explicit and complex-variable methods have also
played an important role in understanding singular free-boundary geometries.
For example, Howison~\cite{Howison1986} studied cusp development in
zero-surface-tension Hele-Shaw flow, and Howison--King~\cite{HowisonKing1989}
constructed explicit solutions for several free-boundary problems in fluid
flow and diffusion.  Particularly relevant to the angle dynamics discussed
below is the work of King--Lacey--V\'azquez~\cite{KingLaceyVazquez1995},
who used self-similar special solutions to describe the evolution of corners:
in the injection case, obtuse corners move and smooth out instantaneously,
whereas acute corners persist for a finite waiting time. Local and global
well-posedness near special geometries has also been studied in several
settings.  Constantin and Pugh \cite{ConstantinPugh1993} proved global
existence for small perturbations in a conformal-mapping formulation, while
Duchon and Robert \cite{DuchonRobert1984} and Escher and Simonett
\cites{EscherSimonett1997SurfaceTension,EscherSimonett1997Multidimensional}
developed classical solution theories for Hele-Shaw models, especially in the
presence of surface tension.  Chen \cite{Chen1993} related the Hele-Shaw
problem to area-preserving curve-shortening motions.  More recently, Cheng,
Coutand, and Shkoller \cite{ChengCoutandShkoller2014} studied global existence
and decay for Hele-Shaw flows with injection and surface tension near
expanding spheres, distinguishing between different injection regimes. For the
capillarity-driven Hele-Shaw problem, Matioc and Walker
\cite{MatiocWalker2025Capillary} recently used potential theory to obtain
local well-posedness and parabolic smoothing in nearly optimal function
spaces, as well as stability of stationary solutions.

Another important branch of the Hele-Shaw theory uses weak formulations,
variational inequalities, and obstacle-problem methods.  The key device is the Baiocchi--Duvaut transform, which replaces the moving pressure by its time integral. After extending the pressure by zero outside the fluid region, one defines
\begin{equation}\label{eq:Baiocchi_transform_intro}
U(t,x):=\int_0^t p(s,x)\,ds.
\end{equation}
Then the time variable becomes a parameter in an elliptic problem.  For example, in the free-falling coordinates used for the graph Hele-Shaw problem in \cite{AlazardKoch2025HeleShaw}, the positivity sets are increasing and \(\Omega(t)=\{U(t)>0\}\).  Formally, if
\(\rho(t)=\chi_{\Omega(t)}\), then the kinematic condition gives
\begin{equation}\label{eq:Baiocchi_conservation_intro}
\partial_t\rho-\Delta p=0,
\end{equation}
and hence
\begin{equation}\label{eq:Baiocchi_laplacian_intro}
\Delta U(t)=\rho(t)-\rho(0)
=\chi_{\Omega(t)}-\chi_{\Omega(0)}.
\end{equation}
Equivalently, in that normalization,
\begin{equation}\label{eq:Baiocchi_obstacle_intro}
\Delta U(t)=0 \quad\text{in }\Omega(0)\cup(\mathbb R^N\setminus\Omega(t)),
\qquad
\Delta U(t)=1 \quad\text{in }\Omega(t)\setminus\Omega(0),
\end{equation}
together with the free-boundary condition
\begin{equation}\label{eq:Baiocchi_gradient_intro}
\nabla U(t)=0\quad\text{on }\partial\Omega(t).
\end{equation}
This is the mechanism by which the Hele-Shaw evolution is converted into a
family of elliptic obstacle-type problems. This perspective goes back to
Baiocchi \cite{Baiocchi1972} and underlies the variational approach of Elliott and Janovsk\'y \cite{ElliottJanovsky1981}; see also Friedman's monograph \cite{Friedman1982}. It connects Hele-Shaw flow to the extensive regularity theory of obstacle-type free-boundary problems, beginning with Caffarelli's work on free-boundary regularity \cites{Caffarelli1976,Caffarelli1977} and the later geometric theory summarized in \cite{CaffarelliSalsa2005}.  

Viscosity solutions provide another robust framework for Hele-Shaw flows
beyond classical regularity. An extensive review of
classic literature can be found in section \ref{sec:visc_sol_domain}. More
recent work \cite{KimZhang2024SourceDrift} of Kim and Zhang treats Hele-Shaw
flows with source and drift terms, and Zhang's recent paper
\cite{Zhang2026C1} proves a local \(C^1\) regularity theorem for Hele-Shaw
flow with source and drift under a small-Lipschitz free-boundary assumption.
Moreover, Chang-Lara, Guillen, and Schwab
\cite{ChangLaraGuillenSchwab2019} considered a class of free-boundary problems
by recasting them as nonlocal parabolic equations on the moving boundary.
Their discussion of viscosity solutions directly with these non-local
parabolic equations is an important inspiration to our section
\ref{sec:visc_sol_domain}.

There is also a plethora of literature on the Muskat problem. The work of Siegel, Caflisch, and Howison \cite{SiegelCaflischHowison2004} is a landmark in the study of global existence, singular solutions, and ill-posedness of the Muskat problem. Modern interface formulations of the one-phase and two-phase Muskat problem were developed in a particularly influential form by C\'ordoba, C\'ordoba, and Gancedo \cite{CordobaCordobaGancedo2011}, who studied contour dynamics equations for interfaces governed by Darcy's law. In the two-phase problem, the interface separates two fluids with different physical parameters, and the sign of the Rayleigh--Taylor condition determines whether the evolution is parabolic or unstable. Castro, C\'ordoba, Fefferman, Gancedo, and
L\'opez-Fern\'andez \cite{CastroCordobaFeffermanGancedo2012} proved
Rayleigh--Taylor breakdown for the Muskat problem, and Castro, C\'ordoba,
Fefferman, and Gancedo \cite{CastroCordobaFeffermanGancedo2013} constructed
finite-time breakdown of smoothness. In the stable regime, local well-posedness was studied by, for example, Alazard and Lazar \cite{AlazardLazar2020}, and global results were obtained by Constantin, C\'ordoba, Gancedo, and Strain
\cite{ConstantinCordobaGancedoStrain2013} and by Constantin, C\'ordoba,
Gancedo, Rodr\'iguez-Piazza, and Strain
\cite{ConstantinCordobaGancedoRodriguezPiazzaStrain2016}. Further global
regularity results under finite-slope assumptions were proved by Constantin,
Gancedo, Shvydkoy, and Vicol \cite{ConstantinGancedoShvydkoyVicol2017} and
Cameron \cite{Cameron2019}. There is also a substantial low-regularity and critical-space theory: see, for
instance, the critical Sobolev global results of C\'ordoba--Lazar
\cite{CordobaLazar2021MuskatH32} and Gancedo--Lazar
\cite{GancedoLazar2022Muskat3D}, the critical Cauchy theories of
Alazard--Nguyen
\cites{AlazardNguyen2021Critical,AlazardNguyen2022Muskat3D,AlazardNguyen2023Endpoint}, the scaling-critical Besov result of Nguyen \cite{Nguyen2022BesovMuskat}, the \(C^1\) theory of Chen--Nguyen--Xu \cite{ChenNguyenXu2022C1Muskat}, and
the global large-Lipschitz theory with surface tension of Lazar
\cite{Lazar2024SurfaceTensionMuskat}. For the viscosity-jump problem, Gancedo, Garc\'ia-Ju\'arez, Patel, and Strain
\cite{GancedoGarciaJuarezPatelStrain2019} obtained global-in-time results in
critical spaces for stable data. For formulation issues and equivalence between different versions of the two-dimensional Muskat problem, see Matioc
\cite{Matioc2019}.

In the one-phase graph setting, the Muskat problem is mathematically equivalent to a vertical Hele-Shaw problem driven by gravity.  The free boundary is often encoded by a graph function, and the evolution takes the form of a nonlocal parabolic equation involving a graph Dirichlet-to-Neumann operator.  Local well-posedness in rough Sobolev spaces has been studied by Nguyen and Pausader \cite{nguyen-pausader} and others; these works develop many paradifferential and Dirichlet-to-Neumann methods that are now standard in the analysis of Muskat-type equations. For a recent survey of paradifferential methods for water waves, Hele-Shaw, and Muskat problems, see Alazard \cite{Alazard2024Paralinearization} and the references therein. On global well-posedness, Dong, Gancedo, and
Nguyen \cite{Dong-Gancedo-Nguyen-23} proved a large-data global well-posedness result for the one-phase Muskat problem with arbitrary periodic Lipschitz graph initial data, using comparison principles, layer-potential estimates, and a viscosity-solution uniqueness argument. This work is also a major inspiration of our present work. Their three-dimensional analogue was later developed in \cite{Dong-Gancedo-Nguyen-23-2}. In addition, Schwab, Tu, and Turanova \cite{SchwabTuTuranova2024} recently obtained well-posedness for viscosity solutions of the one-phase Muskat problem in all dimensions with bounded uniformly continuous graph data. In addition, Alazard and Koch \cite{AlazardKoch2025HeleShaw} developed a modern semigroup theory for the one-phase Muskat problem, emphasizing comparison, Lyapunov structure, and an obstacle-problem formulation. Surface-tension variants of the one-phase Muskat problem have also been studied extensively; see, for example, Nguyen \cite{nguyen20}, Flinn--Nguyen \cite{flynn-nguyen}, and the more recent result by Dong--Kwon \cite{DongKwon2026SurfaceTensionMuskat}.

\subsection{Outline of the paper}\label{subsec:outline_paper}

The paper is organized as follows.
\begin{enumerate}
    \item In section \ref{sec:notations}, we introduce basic notation and conventions; additional notations are introduced throughout the paper as needed.

    \item In section \ref{sec:eqn_eulerian}, we present the injection-driven Hele-Shaw equations in their original Eulerian free-boundary form. We also discuss the physical background of the system in more detail.

    \item In sections \ref{sec:formulation_DN} and \ref{sec:interface_eqn}, we reformulate the injection-driven Hele-Shaw equations as a nonlocal parabolic equation on the interface. In the process, we introduce the Dirichlet-to-Neumann operator and its layer-potential formulation. We also prove several important properties of the Dirichlet-to-Neumann operator on star-shaped domains, as well as basic symmetries of the interface equation.

    \item In section \ref{sec:visc_reg}, we perform a viscosity regularization of the interface equation and prove global well-posedness for the regularized equations. The main ingredients include comparison principles established directly at the level of the interface equation and Calder\'on--Zygmund estimates for the Dirichlet-to-Neumann operator in its layer-potential formulation.

    \item In section \ref{sec:visc_sol}, we discuss the notion of viscosity solution for the interface equation. We establish a comparison principle for such solutions at low regularity and prove that viscosity solutions become classical solutions of the interface equation whenever sufficient regularity is available.

    \item In section \ref{sec:global_wp}, we use a vanishing-viscosity approach to establish global well-posedness of the interface equation. In particular, we prove uniqueness by showing that the solution obtained through the limiting procedure is a viscosity solution in the sense of Section \ref{sec:visc_sol}, which is unique by the comparison principle.

    \item In section \ref{sec:visc_sol_domain}, we connect the viscosity solutions introduced in Section \ref{sec:visc_sol} with the viscosity solutions used in the classical Hele-Shaw literature. As an application, we obtain results on the angle dynamics of the initial interface.
\end{enumerate}

\subsection*{Acknowledgements}
The author is deeply grateful to their doctoral advisor Sijue Wu for many valuable discussions, guidance, and generous support. The author also thanks Hongjie Dong, Francisco Gancedo, and Inwon Kim for their warm enouragements, helpful suggestions, and corrections.

\section{Notations and Conventions}\label{sec:notations}
Throughout the paper, we identify $\mathbb T:=\mathbb R/2\pi\mathbb Z$ with the interval \([0,2\pi)\), with endpoints identified. Unless otherwise specified, all functions on \(\mathbb T\) are understood to be \(2\pi\)-periodic. We also define the periodic distance
\[|\alpha - \beta|_\T:=\min_{k\in\mathbb Z}|\alpha-\beta+2\pi k|\in[0,\pi].\]
We often identify $\R^2$ with $\C$, but $z\cdot w$ always denotes dot product of $z$ and $w$ in $\R^2$.

For a domain \(\Omega\subset\mathbb R^d\), we use the standard Sobolev spaces
\(W^{k,p}(\Omega)\) using norm
\[
\|f\|_{W^{k,p}(\Omega)}
:=
\sum_{|\beta|\le k}\|D^\beta f\|_{L^p(\Omega)}
\]
with the usual modification when \(p=\infty\). We write $H^k(\Omega):=W^{k,2}(\Omega)$. The Sobolev space \(H^s(\mathbb T)\) can also be defined by Fourier series. We write
\[
\widehat f(k):=\frac{1}{2\pi}\int_{\mathbb T} f(\alpha)e^{-2\pi i k}\,d\alpha
\]
and define
\[
\|f\|_{H^s(\mathbb T)}^2
:=
\sum_{k\in\mathbb Z}\langle k\rangle^{2s}|\widehat f(k)|^2,
\qquad
\langle k\rangle:=(1+|k|^2)^{1/2}.
\]
We also use the homogeneous seminorm
\[
\|f\|_{\dot H^s(\mathbb T)}^2
:=
\sum_{k\in\mathbb Z\setminus\{0\}} |k|^{2s}|\widehat f(k)|^2.
\]
For \(0<s<1\), the \(H^s(\mathbb T)\) norm is equivalently characterized by finite differences:
\[
\|f\|_{H^s(\mathbb T)}^2
\simeq
\|f\|_{L^2(\mathbb T)}^2
+
\int_{\mathbb T}\int_{\mathbb T}
\frac{|f(\alpha)-f(\beta)|^2}
{|2\sin\frac{\alpha-\beta}{2}|^{1+2s}}
\,d\alpha\,d\beta .
\]
More generally, if \(s=m+\sigma\), where $m$ is a non-negative integer and \(0<\sigma<1\), then
\[
\|f\|_{H^s(\mathbb T)}^2
\simeq
\sum_{j=0}^m\|\partial_\alpha^j f\|_{L^2(\mathbb T)}^2
+
\int_{\mathbb T}\int_{\mathbb T}
\frac{|\partial_\alpha^m f(\alpha)-\partial_\alpha^m f(\beta)|^2}
{|2\sin\frac{\alpha-\beta}{2}|^{1+2\sigma}}
\,d\alpha\,d\beta .
\]
Here and below, \(A\simeq B\) means that \(A\lesssim B\) and \(B\lesssim A\),
with constants depending only on fixed parameters such as \(s\).

We write \(\operatorname{Lip}(\Omega)\) for the space of bounded Lipschitz
functions on \(\Omega\). Its seminorm and norm are
\[
[f]_{\operatorname{Lip}(\Omega)}
:=
\sup_{\substack{x,y\in\Omega\\ x\neq y}}
\frac{|f(x)-f(y)|}{|x-y|},
\qquad
\|f\|_{\operatorname{Lip}(\Omega)}
:=
\|f\|_{L^\infty(\Omega)}+[f]_{\operatorname{Lip}(\Omega)}.
\]
When \(\Omega=\mathbb T\), the distance in the definition of the Lipschitz
seminorm is understood as the periodic distance \(|\cdot|_{\mathbb T}\). We also commonly have the identification $\operatorname{Lip}(\mathbb T)=W^{1,\infty}(\mathbb T)$. For \(k\) a non-negative integer and \(0<\alpha\le1\), we write
\(C^{k,\alpha}(\Omega)\) for the usual Hölder space. Its norm is
\[
\|f\|_{C^{k,\alpha}(\Omega)}
:=
\sum_{|\beta|\le k}\|D^\beta f\|_{L^\infty(\Omega)}
+
\sum_{|\beta|=k}
[D^\beta f]_{C^\alpha(\Omega)},
\]
where the H\"older semi-norm is
\[
[g]_{C^\alpha(\Omega)}
:=
\sup_{\substack{x,y\in\Omega\\x\neq y}}
\frac{|g(x)-g(y)|}{|x-y|^\alpha}.
\]
On \(\mathbb T\), the Hölder seminorm is again computed using the periodic
distance. The notation \(C^{k,1}\) refers to functions whose \(k\)-th
derivatives are Lipschitz. We will also use pointwise \(C^{1,1}\) regularity. A function \(f\) is said to be \(C^{1,1}\) at \(x_0\) if there exist \(v\in\mathbb R^d\), \(M>0\), and
\(r>0\) such that
\[
|f(x)-f(x_0)-v\cdot(x-x_0)|
\le
M|x-x_0|^2
\qquad
\text{for } |x-x_0|<r.
\]
In one space dimension, this means that \(f\) admits a tangent line at \(x_0\)
with quadratic error.

For a Banach space \(B\) and \(1\leq p\leq \infty\), we use
\(L^p([0,T];B)\) to denote the usual Bochner space of strongly measurable
functions \(f:[0,T]\to B\) such that
\[
    \|f\|_{L^p([0,T];B)}
    :=
    \left(\int_0^T \|f(t)\|_B^p\,dt\right)^{1/p}
    <\infty,
    \qquad 1\leq p<\infty,
\]
with the standard modification for $p = \infty$.

\section{Equations in Eulerian Coordinates}\label{sec:eqn_eulerian}
Throughout the paper, we work on fluids in $\R^2$ unless explicitly mentioned otherwise. The fundamental equation underlying the Hele-Shaw problem is the Darcy's law:
\begin{equation}\label{eqn:Darcy}
\frac{\mu}{\kappa}u = -\nabla p.
\end{equation}
Here $\mu$ is viscosity, $\kappa$ is permeability, $u$ is fluid velocity, and $p$ represents scalar pressure. Darcy's law is used to describe either fluid flow through porous media or fluid confined between two parallel plates with a very thin gap in between. 

The Hele-Shaw equations consider the Darcy's law \eqref{eqn:Darcy} on a moving fluid domain $\Omega(t)$, where $t\geq 0$ is the time parameter. Denoting our free interface by $\Gamma(t)$, we have the dynamic boundary condition or Young-Laplace equation
\begin{equation}\label{eqn:dynamic}
p(x,t) = \sigma H(\Gamma(t)) \quad \text{on }\Gamma(t)
\end{equation}
which describes the jump of pressure across the interface. Here $\sigma$ is the surface tension coefficient $\geq 0$, and $H(\Gamma(t))$ is the mean curvature of the interface. In this article we consider the \emph{zero surface tension} case $\sigma = 0$. We also have the kinematic boundary condition
\begin{equation}\label{eqn:kinematic}
V(\Gamma(t)) = u\cdot n\quad \text{on } \Gamma(t)
\end{equation}
where $V(\Gamma(t))$ is the pointwise speed of the boundary $\Gamma(t)$ and $n$ is the unit outward normal on $\Gamma(t)$. Physically, the kinematic boundary condition \eqref{eqn:kinematic} says that particles on the boundary always stay on the boundary, so there is no invasion of air. Since we consider the zero-surface-tension case, if we further assume that the fluid is incompressible, then one can show that $\Omega(t)$ is actually \emph{static}. Therefore, some external source is needed. In this paper, we consider the case where the incompressible fluid has a constant point injection at the origin, namely
\begin{equation}
\Div u = 2\pi I\delta\quad \text{on } \Omega(t).
\end{equation}
Here the parameter $I$ is a constant rate of injection. Since we are not studying any limiting behavior of the parameters, we normalize $\mu =\kappa = I\equiv 1$. In summary, we obtain the system
\begin{equation}\label{domain_eqn_u}
\left\{
\begin{array}{ll}
u(\cdot,t) = -\nabla p(\cdot,t), & \text{in }\Omega(t)\\
\Div u(\cdot,t) = 2\pi \delta, & \text{in }\Omega(t)\\
p(\cdot, t) = 0, & \text{on }\Gamma(t)\\
V(\Gamma(t)) = u\cdot n, & \text{on }\Gamma(t)
\end{array}
\right.
\end{equation}
Equivalently, the equations can be formulated in terms of pressure. We have
\begin{equation}\label{domain_eqn_p}
\left\{
\begin{array}{ll}
      -\Delta p = 2\pi \delta & \mathrm{in}\ \Omega(t)\\
      p = 0  & \mathrm{on}\ \Gamma(t) \\
      V(\Gamma(t)) = \frac{\partial p}{\partial n} & \text{on }\Gamma(t)
\end{array}
\right.
\end{equation}

\section{The Dirichlet-to-Neumann Operator and Layer Potentials}\label{sec:formulation_DN}
In order to formulate the domain equations \eqref{domain_eqn_u} or \eqref{domain_eqn_p} as an equation on the interface, we introduce some preliminaries on the Dirichlet-to-Neumann operator and layer potentials. An extensive discussion in the context of periodic graph domain can be found in \cite{Dong-Gancedo-Nguyen-23}. 

\subsection{Dirichlet-to-Neumann Operator} We begin by recalling the classical definition of the Dirichlet-to-Neumann operator in the smooth setting.
\begin{definition}\label{def:DN_general_formal}
Suppose that \(\Omega\) and the boundary data \(f\) are sufficiently
regular. Let $\phi$ solve the Dirichlet boundary value problem
\begin{equation}\label{dirichlet_bv}
\left\{
\begin{array}{ll}
 \Delta\phi = 0 & \mathrm{in}\ \Omega  \\
 \phi = f & \mathrm{on}\ \Sigma
\end{array}
\right.
\end{equation}
and let $n$ be the outward unit normal vector field on $\Sigma$. We define the \emph{(normalized) Dirichlet-to-Neumann operator}
\begin{equation}
\cG(\Sigma)(f)(x,y) := n(x,y)\cdot\nabla\phi(x,y)\quad \mathrm{for}\ (x,y)\ \mathrm{on}\ \Sigma.
\end{equation}
\end{definition}

We have the following standard result which gives the weak harmonic
extension on bounded Lipschitz domains and hence the weak formulation of
the Dirichlet-to-Neumann operator. See, for instance, \cite[Chapters 3--4]{McLean2000} or
\cite{Grisvard1985}. Analogous results have also been proven in unbounded graph domains of the form $\{(x', x_d): x_d < f(x)\}$. See, for example, Proposition 3.6 of \cite{nguyen-pausader} and Proposition 2.6 of \cite{Dong-Gancedo-Nguyen-23}.

\begin{proposition}\label{prop:DN_well_defined}
Let $d\geq 2$, and $\Omega\subset\mathbb R^d$ be a bounded Lipschitz domain. Define
\[
\dot H^1(\Omega)
:=
\{u\in L^1_{\mathrm{loc}}(\Omega):\nabla u\in L^2(\Omega)\}/\mathbb R,
\qquad
\|u\|_{\dot H^1(\Omega)}:=\|\nabla u\|_{L^2(\Omega)}.
\]
Let $\dot H^{\frac{1}{2}}(\partial\Omega)$ denote the trace space of
$\dot H^1(\Omega)$, equipped with the quotient norm
\[
\|g\|_{\dot H^{\frac{1}{2}}(\partial\Omega)}
:=
\inf\bigl\{\|\nabla U\|_{L^2(\Omega)}:
U\in \dot H^1(\Omega),\ 
\operatorname{Tr}U=g\bigr\}.
\]
Then for every $g\in \dot H^{\frac{1}{2}}(\partial\Omega)$ there exists a unique $\phi\in \dot H^1(\Omega)$ such that $\operatorname{Tr}\phi=g$ and
\begin{equation}\label{harmonic_extension_weak}
\int_\Omega \nabla \phi\cdot\nabla \varphi\,dx=0
\quad
\text{for all}\ \varphi\in \dot H^1_0(\Omega).
\end{equation}
Moreover, we have the trace inequality
\begin{equation}
\|\phi\|_{\dot H^1(\Omega)}
\le
\|g\|_{\dot H^{\frac{1}{2}}(\partial\Omega)}.
\end{equation}
\end{proposition}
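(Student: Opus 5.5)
The plan is the standard Lax--Milgram (Riesz representation) argument in the Hilbert space $\dot H^1(\Omega)$, preceded by a reduction of the inhomogeneous boundary condition to a homogeneous one.

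\textbf{Step 1: setup.} Since $\Omega$ is a bounded Lipschitz domain, it is connected. Hence $\dot H^1(\Omega)$, modulo constants, is a Hilbert space with inner product $\langle u,v\rangle:=\int_\Omega\nabla u\cdot\nabla v\,dx$. Completeness follows from the Poincar\'e inequality on bounded Lipschitz domains, which identifies $\dot H^1(\Omega)$ with $\{u\in H^1(\Omega):\int_\Omega u=0\}$. The trace is then well defined and continuous from $H^1(\Omega)$ into $H^{1/2}(\partial\Omega)$. I define $\dot H^1_0(\Omega)$ as the closed subspace of elements with zero trace, i.e.\ the kernel of the bounded trace map. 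Since every element of $\dot H^1_0(\Omega)$ has a representative with zero trace, which is unique, this subspace is canonically $H^1_0(\Omega)$.

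\textbf{Step 2: reduction.} Given $g$, the definition of the quotient norm provides some $U\in\dot H^1(\Omega)$ with $\operatorname{Tr}U=g$. I look for $\phi=U+w$ with $w\in\dot H^1_0(\Omega)$. The weak equation \eqref{harmonic_extension_weak} then becomes
\[
\langle w,\varphi\rangle=-\langle U,\varphi\rangle\quad\text{for all }\varphi\in\dot H^1_0(\Omega).
\]
The right-hand side is a bounded linear functional on $\dot H^1_0(\Omega)$, so the Riesz representation theorem gives a unique $w$. Equivalently, $\phi$ is $U$ minus its orthogonal projection onto $\dot H^1_0(\Omega)$.

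\textbf{Step 3: uniqueness and independence of $U$.} If $\phi_1,\phi_2$ both satisfy the statement, then $\phi_1-\phi_2\in\dot H^1_0(\Omega)$. Testing \eqref{harmonic_extension_weak} with $\varphi=\phi_1-\phi_2$ gives $\|\nabla(\phi_1-\phi_2)\|_{L^2}=0$, so $\phi_1=\phi_2$ in $\dot H^1(\Omega)$. In particular, $\phi$ does not depend on the choice of $U$.

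\textbf{Step 4: the bound.} By construction $\phi$ is orthogonal to $\dot H^1_0(\Omega)$. For any competitor $V$ with $\operatorname{Tr}V=g$, we have $V-\phi\in\dot H^1_0(\Omega)$, so Pythagoras gives
\[
\|\nabla V\|_{L^2}^2=\|\nabla\phi\|_{L^2}^2+\|\nabla(V-\phi)\|_{L^2}^2\ge\|\nabla\phi\|_{L^2}^2.
\]
Taking the infimum over all such $V$ yields $\|\phi\|_{\dot H^1(\Omega)}\le\|g\|_{\dot H^{1/2}(\partial\Omega)}$. In fact equality holds, since $\phi$ is itself an admissible competitor.

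The only genuinely delicate point is Step 1: the functional-analytic facts on Lipschitz domains, namely the Poincar\'e inequality, continuity of the trace, and the fact that the kernel of the trace is exactly the closure of $C_c^\infty(\Omega)$. For these I would cite McLean and Grisvard, as the paper does. The remaining steps are pure Hilbert-space geometry.
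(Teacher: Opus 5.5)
Your proof is correct and is the standard Riesz-representation (orthogonal projection onto $\dot H^1_0(\Omega)$, i.e.\ Dirichlet principle) argument found in the references the paper cites for this result, McLean and Grisvard; the paper itself gives no proof beyond those citations. One step deserves to be made explicit: identifying $\dot H^1(\Omega)$ with mean-zero $H^1(\Omega)$ uses that an $L^1_{\mathrm{loc}}$ function with $L^2$ gradient on a bounded Lipschitz domain already lies in $L^2(\Omega)$, which follows from Poincar\'e applied to truncations or from the Deny--Lions/Ne\v{c}as lemma.
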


\begin{definition}[Dirichlet-to-Neumann operator]\label{def:DN_general}
Let \(d\geq 2\), and let \(\Omega\subset \mathbb R^d\) be a bounded Lipschitz domain with boundary \(\Sigma:=\partial\Omega\). For \(f\in \dot H^{\frac{1}{2}}(\Sigma)\), let \(\phi\in \dot H^1(\Omega)\) be the unique weak harmonic extension of \(f\), i.e.
\begin{equation}
\operatorname{Tr}\phi=f,
\qquad
\int_\Omega \nabla \phi\cdot \nabla \varphi\,dx=0
\quad\text{for all }\varphi\in \dot H^1_0(\Omega).
\end{equation}
The Dirichlet-to-Neumann operator $\mathcal G(\Sigma):H^{\frac{1}{2}}(\Sigma)\to H^{-\frac{1}{2}}(\Sigma)$ is defined by
\begin{equation}
\big\langle \mathcal G(\Sigma)f,g\big\rangle_{H^{-\frac{1}{2}},H^{\frac{1}{2}}}
:=
\int_\Omega \nabla \phi_f\cdot \nabla \psi \,dx,
\end{equation}
where \(\psi\in \dot H^1(\Omega)\) is any function satisfying
\(\operatorname{Tr}\psi =g\).
\end{definition}

\begin{remark}\label{remark:low_regularity_DN}
The variational definition realizes \(\mathcal G(\Sigma)f\) as an element of
\(\dot H^{-\frac{1}{2}}(\Sigma)\). Under additional regularity assumptions, this
distribution may be represented by an actual function. For instance, in the
Lipschitz setting considered below, layer potential theory gives
\(\mathcal G(\Sigma)f\in L^2(\Sigma)\) for suitable boundary data, such as
\(f\in H^1(\Sigma)\). In this case the weak pairing above agrees with the
\(L^2\)-pairing
\[
\big\langle \mathcal G(\Sigma)f,g\big\rangle
=
\int_\Sigma \mathcal G(\Sigma)f\, g\,d\sigma .
\]
If, in addition, the harmonic extension admits a classical normal derivative
on \(\Sigma\), then this \(L^2\)-representative agrees with the classical
expression
\[
\mathcal G(\Sigma)f=n\cdot\nabla\phi .
\]
Eventually, by interior elliptic regularity, \(\phi\) is smooth and
classically harmonic in the interior of \(\Omega\).
\end{remark}

\subsection{Layer potential theory} We begin by recalling the definitions of the single- and double-layer potentials. We mostly restrict attention to \(\mathbb R^2\), the setting of the present paper, since the explicit kernel formulas take different forms in higher dimensions. Many of the corresponding layer-potential estimates,
however, have higher-dimensional analogues. For a treatment of the layer potential theory on doubly periodic graph domains in \(\mathbb R^3\), see
Dong--Gancedo--Nguyen \cite{Dong-Gancedo-Nguyen-23-2}.

\begin{definition}[Single- and double-layer potententials]
Let 
\[\cN(z,z') := -\frac{1}{2\pi}\log|z - z'|,\quad z, z' \in \R^2\]
and by abusing notation we also write
\[\cN(z - z') = \cN(z, z')\]
be the Newtonian kernel. Let $\Omega\subset\R^2$ be a Lipschitz  $f\in L^1(\Sigma)$. We define the \emph{single-layer potential} operator
\begin{equation}
\cS[\Sigma] f(z) := \int_\Sigma \cN(z - z')f(z')\ d\sigma(z'),\quad z\in \R^2 \bs \Sigma 
\end{equation}
the \emph{boundary single-layer potential} operator
\begin{equation}
S[\Sigma] f(z) := \pv \int_\Sigma \cN(z - z')f(z')\ d\sigma(z'),\quad z\in \Sigma 
\end{equation}
the \emph{double-layer potential} operator
\begin{equation}
\cK[\Sigma] f(z) := \int_\Sigma n(z')\cdot \nabla \cN(z - z') f(z')\ d\sigma(z'),\quad z\in\R^2\bs\Sigma
\end{equation}
and the \emph{boundary double-layer potential} operator
\begin{equation}
K[\Sigma] f(z) := \pv \int_\Sigma n(z')\cdot \nabla \cN(z - z') f(z')\ d\sigma(z'),\quad z\in\Sigma
\end{equation}
\end{definition}

We often drop the dependence on $\Sigma$ when the context is clear. We have the following classical results on the layer potential operators. See, for example, \cite[Chapter~6]{McLean2000} and
\cites{FabesJodeitRiviere1978,Verchota1984,Costabel1988,Kenig1986}.

\begin{theorem}\label{thm:jump_relations}
Let $\Omega_-$ and $\Omega_+$ be the interior and exterior of the boundary $\Sigma$. For convenience of notation, we use $\lim_{w^\pm\to z}$ to denote a non-tangential limit of elements $w^\pm\in \Omega_\pm$ to $z\in \Sigma$. Then, for $f\in L^p(\Sigma)$ with $1\le p\le \infty$, $\cK f$ and and $\cS f$ are both harmonic in $\R^2\bs \Sigma$ with the following standard jump relations;

\begin{enumerate}[(1)]
\item The double-layer potential has a jump across the boundary, given by
\[
\lim_{w^\pm\to z} \cK f(w^\pm)
=
\left(\mp \frac{1}{2}I + K\right)f(z).
\]
\item The single-layer potential is continuous across the boundary, namely
\[
\lim_{w^\pm\to z} \cS f(w^\pm)
=
Sf(z).
\]
\item The normal derivative of the single-layer potential has a jump across the boundary, namely
\[
\lim_{w^\pm\to z} n(z)\cdot \nabla \cS f(w^\pm)
=
\left(\pm \frac{1}{2}I + K^*\right)f(z).
\]
\item The tangential derivative of the single-layer potential is continuous across the boundary, namely
\[
\lim_{w^\pm\to z} \tau(z)\cdot \nabla \cS f(w^\pm)
=
\tau(z)\cdot \nabla \cS f(z)
\]
for $z\in \Sigma$.
\end{enumerate}
\end{theorem}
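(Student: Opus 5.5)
The jump relations in Theorem~\ref{thm:jump_relations} are classical. I would prove them first for smooth (say $C^{1,\alpha}$) boundaries and smooth densities by direct kernel computations. Then I would pass to Lipschitz $\Sigma$ and $L^p$ densities using the Coifman--McIntosh--Meyer boundedness of the Cauchy integral together with the associated nontangential maximal function estimates. Harmonicity of $\cS f$ and $\cK f$ off $\Sigma$ is immediate: $\cN(z-z')$ and $n(z')\cdot\nabla_z\cN(z-z')$ are harmonic in $z$ for $z\neq z'$, and we may differentiate under the integral since $f\in L^1(\Sigma)$ and the kernels are smooth and locally uniformly bounded away from $\Sigma$.

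\textbf{Continuity of $\cS f$.} For item (2), write $\cS f(w)-Sf(z)=\int_\Sigma(\cN(w-z')-\cN(z-z'))f\,d\sigma$. Split $\Sigma$ into $|z'-z|<\delta$ and its complement. The far part tends to zero by dominated convergence. For the near part, the logarithm is locally integrable on the one-dimensional Lipschitz curve, uniformly for $w$ in a nontangential cone; this is because $|w-z'|\gtrsim |z-z'|$ there, so $|\log|w-z'||\lesssim 1+|\log|z-z'||$. That makes the near part small in $\delta$ for $f\in L^\infty$, and a maximal-function version handles $L^p$.

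\textbf{Double layer jump.} For item (1), I would use Gauss's identity: $\int_\Sigma n(z')\cdot\nabla\cN(w-z')\,d\sigma(z')$ equals $-1$, $-\tfrac12$ (at points of differentiability of $\Sigma$, in the p.v.\ sense), or $0$ according to whether $w$ is in $\Omega_-$, on $\Sigma$, or in $\Omega_+$. The sign convention is that of $\nabla\cN$ with $n$ outward. I would then write
\[
\cK f(w)=\int_\Sigma n(z')\cdot\nabla\cN(w-z')\,(f(z')-f(z))\,d\sigma(z')+f(z)\,\cK 1(w).
\]
For continuous $f$, the first term is continuous as $w\to z$ nontangentially, and the constants produce exactly the $\mp\tfrac12$ jump. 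Extending to $f\in L^p$ is the main obstacle on Lipschitz curves. I would use the complex form: in $\R^2$ the kernel $n(z')\cdot\nabla\cN(w-z')\,d\sigma(z')$ is, up to sign, $\re\bigl(\frac{1}{2\pi i}\frac{dz'}{z'-w}\bigr)$, so $\cK f$ is the real part of the Cauchy integral of $f$. The Coifman--McIntosh--Meyer theorem gives $L^p$ boundedness ($1<p<\infty$) of the nontangential maximal function of the Cauchy integral. With density of continuous functions, this yields a.e.\ nontangential convergence. The endpoints $p=1,\infty$ would be read as weak-type or a.e.\ statements, and I would cite \cite{McLean2000,Verchota1984} for them.

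\textbf{Derivatives of $\cS f$.} For items (3) and (4), note that $\nabla\cS f(w)=\int_\Sigma\nabla\cN(w-z')f(z')\,d\sigma(z')$, which in complex notation is again a Cauchy-type integral, namely $\overline{\partial_w}$ of it is $-\frac{1}{4\pi}\int\frac{f\,|dz'|}{w-z'}$ conjugated. Decomposing into the normal and tangential components at $z$, the tangential part is a principal value of a kernel that is odd in the leading order, so it has no jump, giving (4). The normal part has kernel $n(z)\cdot\nabla\cN(w-z')$, which differs from the transposed double-layer kernel. Reusing the argument of item (1) with the roles of $z$ and $z'$ exchanged gives the $\pm\tfrac12 I+K^*$ jump, and the $L^p$ extension goes through exactly as before via Coifman--McIntosh--Meyer. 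I expect the Lipschitz/$L^p$ limiting argument, rather than the algebraic identities, to be the only genuinely hard step, and I would rely on the cited literature for it.
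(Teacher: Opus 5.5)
Your strategy is the classical one; the paper gives no proof and only cites McLean, Fabes--Jodeit--Rivi\`ere, Verchota, Costabel and Kenig. However, your sketch gets the signs wrong, and the signs are most of the content here.

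In the paper's notation $\nabla\cN(w-z')=-\frac{1}{2\pi}\frac{w-z'}{|w-z'|^2}$. Therefore
\[
\int_\Sigma n(z')\cdot\nabla\cN(w-z')\,d\sigma(z')=\frac{1}{2\pi}\int_\Sigma\frac{n(z')\cdot(z'-w)}{|z'-w|^2}\,d\sigma(z')
\]
equals $1$, $\frac12$, $0$ for $w\in\Omega_-$, $w\in\Sigma$ (p.v., where $\Sigma$ has a tangent), $w\in\Omega_+$. Your values $-1,-\frac12,0$ correspond to differentiating in $z'$. Inserted into your own decomposition they give $\lim_{w^-\to z}\cK f(w^-)=Kf(z)+\frac12 f(z)-f(z)=(-\frac12 I+K)f(z)$, which is the opposite of (1). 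With the correct values, (1) does follow.

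In that same convention, though, (3) cannot come out as printed. The kernel $n(z)\cdot\nabla\cN(z-z')$ is minus the kernel of $K^*$. Since $-\Delta\cN=\delta$, the normal derivative of $\cS f$ drops by $f$ from $\Omega_-$ to $\Omega_+$, so
\[
\lim_{w^\pm\to z}n(z)\cdot\nabla\cS f(w^\pm)=\Bigl(\mp\frac12 I-K^*\Bigr)f(z).
\]
Test this on the unit circle with $f\equiv1$. There $\cS1=0$ in $|w|<1$ and $\cS1=-\log|w|$ in $|w|>1$, so the exterior limit is $-1$. The printed formula gives $(\frac12 I+K^*)1=\frac12+K^*1\in\{0,1\}$ under either sign convention.

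So (1) and (3) as printed are mutually inconsistent when $\Omega_-$ is the interior. They do become consistent if one differentiates in $z'$ and swaps the roles of $\Omega_\pm$. Your claim that reusing the argument of (1) yields the $\pm\frac12 I+K^*$ jump therefore cannot be right. This is harmless downstream, since the paper only uses (3) through $L^2$ norms, but a proof must fix one convention and state the corrected formula.

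Two steps specific to Lipschitz curves, which is the case the paper needs, also need repair.

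First, in the subtraction argument for (1) the dense class must consist of Lipschitz (or H\"older) densities, not merely continuous ones. On a Lipschitz curve the double-layer kernel is only $O(|z-z'|^{-1})$, not weakly singular as on a $C^{1,\alpha}$ curve. For Lipschitz $f$ the integrand is bounded in a nontangential cone, so dominated convergence applies. No approximation of $\Sigma$ by smooth curves is needed.

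Second, for (3) and (4), exchanging $z$ and $z'$ relies on $n(z')\to n(z)$, which fails on Lipschitz curves, where the normal need not be continuous anywhere. ``Odd in leading order'' is only a heuristic. The clean route is the complex one you mention:
\begin{enumerate}
\item Put $\mathcal C g(w):=\frac{1}{2\pi i}\int_\Sigma\frac{g(z')\,dz'}{z'-w}$, and let $\tau$ be the counterclockwise complex unit tangent.
\item For real $f$, in the convention where (1) holds, one has $\cK f=\re\,\mathcal C f$ and $\partial_x\cS f+i\partial_y\cS f=\overline{i\,\mathcal C(f\bar\tau)}$.
\item The Plemelj formula for $\mathcal C$ on a Lipschitz curve is elementary for Lipschitz densities, via $\mathcal C1=\mathbf{1}_{\Omega_-}$. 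It extends a.e.\ to all $g\in L^p$ by the Coifman--McIntosh--Meyer maximal bound and density.
\item Apply it with $g=f\bar\tau\in L^p$. This shows that $\nabla\cS f$ jumps by exactly $f\,n$ from $\Omega_+$ to $\Omega_-$.
\end{enumerate}
That single jump gives (4) and the normal jump in (3) at once, together with (1).
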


We also have the following theorem on the invertibility of layer potential operators. For more detailed discussions, see, for example, \cites{Kenig1986, Verchota1984}.
\begin{theorem}[Invertibility of layer potential operators]\label{invertibility}
Suppose $\Omega$ is a Lipschitz domain with boundary $\Sigma:= \partial\Omega$. Then the mappings
\begin{equation}
\begin{aligned}
\frac{1}{2}I + K: L^p(\Sigma) \to L^p(\Sigma),\quad p\in (2, \infty],\\
\frac{1}{2}I + K: W^{1,p}(\Sigma) \to W^{1,p}(\Sigma),\quad p\in (1, 2],\\
\frac{1}{2}I - K^*: L_0^p(\Sigma) \to L_0^p(\Sigma),\quad p\in (1, 2]\\
\end{aligned}
\end{equation}
are invertible. Here $L_0^p$ denotes mean-zero functions in $L^p$. 
\end{theorem}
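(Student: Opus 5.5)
We follow the classical route of Verchota and Dahlberg--Kenig: first establish the $L^2$ theory through Rellich identities, then extend to the full range of exponents by Hardy-space atomic estimates, interpolation and duality.

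\textbf{Step 1: the $L^2$ theory.} Write $u=\cS f$ for $f\in L^2(\Sigma)$ (mean zero where needed). Fix a smooth vector field $h$ that is transversal to $\Sigma$, that is, $h\cdot n\ge c>0$ on $\Sigma$; this exists because $\Sigma$ is Lipschitz. Apply the Rellich identity
\[
\Div\bigl(h|\nabla u|^2-2(h\cdot\nabla u)\nabla u\bigr)=O(|\nabla u|^2)
\]
in $\Omega_-$ and in $\Omega_+$, splitting $\nabla u$ into normal and tangential parts. This gives the two-sided comparison
\[
\|\partial_n u^\pm\|_{L^2(\Sigma)}\simeq\|\partial_\tau u\|_{L^2(\Sigma)}
\]
modulo compact error terms. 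By Theorem~\ref{thm:jump_relations}, the tangential derivative is continuous across $\Sigma$, while the normal traces are $(\pm\tfrac12 I+K^*)f$. Combining the interior and exterior comparisons yields the a priori estimate
\[
\|f\|_{L^2}\lesssim\|(\pm\tfrac12 I+K^*)f\|_{L^2}+\text{(compact terms)}.
\]
So these operators are semi-Fredholm. To show they are invertible, we deform $\Sigma$ to a smooth curve through Lipschitz curves with uniformly bounded Lipschitz character. The estimate is uniform along the deformation, so the index is constant; on the smooth curve $K^*$ is compact, so the index is $0$ there. Injectivity comes from uniqueness for the Neumann problem, which is why mean zero is needed for $\tfrac12 I-K^*$, together with decay of $\cS f$ at infinity. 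Duality then gives invertibility of $\tfrac12 I+K$ on $L^2$.

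\textbf{Step 2: the $L^p$ range.} For the Neumann operator $\tfrac12 I-K^*$ we prove boundedness of the inverse on the atomic Hardy space $H^1_{at}(\Sigma)$. Given an atom $a$ supported in a surface ball, the $L^2$ theory controls the solution near the ball. Away from the ball, we use the mean-zero property of $a$ together with interior and boundary Hölder decay of harmonic functions in Lipschitz domains to get integrable decay of $\nabla u$. Interpolating between $H^1$ and $L^2$ gives invertibility on $L^p_0$ for $1<p\le2$. Taking adjoints, $\tfrac12 I+K$ is invertible on $L^{p'}$ for $2\le p'<\infty$. The endpoint $p=\infty$ needs a separate argument: $\tfrac12 I+K$ is bounded on $L^\infty$, and surjectivity follows from the maximum principle for the Dirichlet problem with bounded data combined with a weak-$*$ limiting argument from the $L^p$ solutions.

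\textbf{Step 3: the $W^{1,p}$ statement.} In two dimensions we differentiate along $\Sigma$. Using complex notation, the double-layer kernel is $\re$ (respectively $\im$) of the Cauchy kernel, and integration by parts gives the intertwining identity
\[
\partial_\tau(Kf)=-K^*(\partial_\tau f)+\text{(a bounded Cauchy-type operator applied to }\partial_\tau f).
\]
Equivalently, one passes through the harmonic conjugate of the double-layer potential. Consequently, invertibility of $\tfrac12 I+K$ on $W^{1,p}$ reduces to the $L^p$ invertibility of the adjoint-type operator acting on $\partial_\tau f$, for $1<p\le 2$, which is exactly Step 2. The main obstacle is the Hardy-space atomic estimate in Step 2, which is where the exponent ranges are genuinely determined. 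Since these results are classical and stated here as background, in the paper itself we would cite \cites{Verchota1984,Kenig1986} rather than reproduce the argument.
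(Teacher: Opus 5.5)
The paper gives no proof of this theorem; it only cites Verchota and Kenig. Your outline is the classical argument behind those citations: Rellich identities and a deformation argument in $L^2$, Dahlberg--Kenig atomic estimates for $1<p\le 2$, duality, and the two-dimensional intertwining for $W^{1,p}$. Two steps fail as written, and the first shows the statement itself is too strong at $p=\infty$.

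\textbf{The endpoint $p=\infty$.} Your claim that $\tfrac12 I+K$ is bounded on $L^\infty(\Sigma)$ is false for general Lipschitz curves. Up to sign, $Kf(x)=\frac{1}{2\pi}\pv\int_\Sigma f(y)\,d_y\arg(y-x)$. Such an operator is bounded on $L^\infty$ essentially only when the angle functions $y\mapsto\arg(y-x)$ have uniformly bounded variation (Radon's curves of bounded rotation), and a Lipschitz curve need not have this.

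For a concrete example, let $\Sigma$ coincide near the origin with $y(t)=(t,\varepsilon t\sin\log|t|)$, $|t|<\delta$. Take $f(y(t))=\operatorname{sgn}(\cos\log t)$ for $0<t<\delta$ and $f=0$ elsewhere. Seen from $x=y(s)$, the angle of $y(t)$ is $\arctan(\varepsilon\sin\log t)+O(\varepsilon s/t)$ for $t\ge 2s$, with an $O(\varepsilon s/t^2)$ error in its $t$-derivative. So it oscillates about $\frac{1}{2\pi}\log(\delta/s)$ times on $2s<t<\delta$, and $f$ is chosen to match the sign of its derivative there. The range $0<t<2s$ contributes $O(1)$ uniformly in $s$. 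Hence $|Kf(y(s))|\gtrsim\varepsilon\log(1/s)$ as $s\to0^+$, and $Kf\notin L^\infty$.

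So $\tfrac12 I+K$ does not even map $L^\infty$ into itself, and the $p=\infty$ case is false without extra hypotheses on $\Sigma$. The layer-potential results in the cited references are for $2-\varepsilon<p<\infty$. Your surjectivity argument would not help in any case: the maximum principle bounds the harmonic extension, not the density $(\tfrac12 I+K)^{-1}g$. That density is the jump of the double layer across $\Sigma$ and involves its exterior trace.

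\textbf{Duality, and the intertwining identity.} In Step 2, the adjoint of $\tfrac12 I-K^*$ is $\tfrac12 I-K$, not $\tfrac12 I+K$. That operator is the exterior Dirichlet operator, and it annihilates constants. To reach $\tfrac12 I+K$ on $L^{p'}$ you need its adjoint $\tfrac12 I+K^*$ to be invertible on $L^p$, $1<p\le 2$. So you must also run the atomic argument for the exterior Neumann problem, as Dahlberg--Kenig do. Since $\int_\Sigma(\tfrac12 I+K^*)f\,d\sigma=\int_\Sigma f\,d\sigma$, this reduces to mean-zero densities, for which the single layer decays at infinity even in the plane.

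In Step 3, by contrast, there is no correction term. The integration by parts in the proof of Lemma~\ref{theta_lemma} gives exactly $\partial_\tau K=-K^*\partial_\tau$, that is, $\partial_\tau(\tfrac12 I+K)=(\tfrac12 I-K^*)\partial_\tau$. This matters: with an extra merely bounded term $B$, the reduction would need invertibility of $\tfrac12 I-K^*+B$, which Step 2 does not give.

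With the exact identity and $(\tfrac12 I+K)1=1$, the $W^{1,p}$ statement follows directly from the $L^p_0$ one:
\begin{itemize}
\item For surjectivity, given $g\in W^{1,p}$, let $f_1$ be a primitive of $(\tfrac12 I-K^*)^{-1}\partial_\tau g\in L^p_0$, then subtract the constant $(\tfrac12 I+K)f_1-g$.
\item For injectivity, $(\tfrac12 I+K)f=0$ gives $(\tfrac12 I-K^*)\partial_\tau f=0$. Hence $f$ is a constant $c$, and $c=(\tfrac12 I+K)c=0$.
\end{itemize}
Since this uses only the interior Neumann operator, Step 3 is unaffected by the duality error.
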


Beyond invertibility, we want to obtain quantitative bounds of the operators $(\frac{1}{2}I\pm K)^{-1}$. A key ingredient we need is the following classical identity, which allows us to see that $L^2(\Sigma)$-norms of tangential and normal derivatives of a harmonic function mutually control each other. 
\begin{lemma}[Rellich identity] 
Let $\Omega$ be a Lipschitz domain. Suppose $H$ is harmonic and $\nabla H\in L^2(\Omega)$. For any $V\in C^1(\overline\Omega)$, we have
\begin{equation}
\int_{\partial\Omega} V\cdot n|\nabla H|^2\ d\sigma = \int_{\partial\Omega}2\partial_n H(V\cdot\nabla H)\ d\sigma + \int_\Omega (\nabla\cdot V)|\nabla H|^2\ dx - \int_\Omega 2(\nabla V\nabla H)\cdot\nabla H\ dx
\end{equation}
\end{lemma}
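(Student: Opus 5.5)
The identity follows from the divergence theorem applied to the vector field
\[
W:=2(V\cdot\nabla H)\nabla H-|\nabla H|^2V .
\]
The plan is to establish it first for smooth harmonic $H$ on smooth domains, and then pass to the Lipschitz setting by approximation.

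\textbf{Step 1: the pointwise computation.} Use the summation convention and the harmonicity $\Delta H=0$. We have
\[
\nabla\cdot\big(2(V\cdot\nabla H)\nabla H\big)=2\,\partial_i V_j\,\partial_j H\,\partial_i H+2V_j\,\partial_{ij}H\,\partial_i H+2(V\cdot\nabla H)\Delta H .
\]
Its first term is $2(\nabla V\nabla H)\cdot\nabla H$, and the last term vanishes. On the other hand,
\[
\nabla\cdot\big(|\nabla H|^2V\big)=(\nabla\cdot V)|\nabla H|^2+2V_j\,\partial_{ij}H\,\partial_iH .
\]
The terms with second derivatives cancel, so
\[
\nabla\cdot W=2(\nabla V\nabla H)\cdot\nabla H-(\nabla\cdot V)|\nabla H|^2 .
\]
On the boundary, $W\cdot n=2\,\partial_nH\,(V\cdot\nabla H)-V\cdot n\,|\nabla H|^2$. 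Integrating $\nabla\cdot W$ over $\Omega$ and rearranging gives exactly the stated identity. This settles the case of smooth $\Omega$ and $H\in C^2(\overline\Omega)$.

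\textbf{Step 2: passing to Lipschitz domains.} Here $H$ is only known to be harmonic with $\nabla H\in L^2(\Omega)$. The boundary terms must then be read through non-tangential limits of $\nabla H$, which exist and lie in $L^2(\partial\Omega)$ whenever the non-tangential maximal function $(\nabla H)^*$ belongs to $L^2(\partial\Omega)$. This is the standing hypothesis in the Dahlberg--Jerison--Kenig--Verchota setting, and I would make it explicit, since otherwise the boundary integrals need not even be defined.

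Next, approximate $\Omega$ from inside by smooth domains $\Omega_k\uparrow\Omega$ in the sense of Ne\v{c}as/Verchota. These have uniformly bounded Lipschitz character, and there are bi-Lipschitz homeomorphisms $\Lambda_k:\partial\Omega\to\partial\Omega_k$ with $\Lambda_k\to\mathrm{id}$ and $n_k\circ\Lambda_k\to n$ pointwise a.e. and boundedly. Since $H$ is smooth on $\overline{\Omega_k}$, Step 1 applies on each $\Omega_k$.

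Finally, take $k\to\infty$.
\begin{itemize}
\item The interior integrals converge by dominated convergence, because $V\in C^1(\overline\Omega)$ and $\nabla H\in L^2(\Omega)$.
\item For the boundary integrals, pull back to $\partial\Omega$ via $\Lambda_k$. The points $\Lambda_k(z)$ lie in a fixed non-tangential cone at $z$, so $\nabla H(\Lambda_k(z))\to\nabla H(z)$ non-tangentially a.e. Moreover, $|\nabla H\circ\Lambda_k|\le(\nabla H)^*\in L^2$. Dominated convergence therefore applies to the quadratic boundary integrands, using also the convergence of the normals and of the Jacobians.
\end{itemize}

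\textbf{Main obstacle.} The only real difficulty is this limiting step: the approximating domains must stay inside non-tangential cones, and $(\nabla H)^*\in L^2$ is what supplies the dominating function. The algebra in Step 1 is routine.
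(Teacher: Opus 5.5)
Your proof is correct and is the standard argument the paper has in mind. The paper omits the proof, calling it a straightforward application of the divergence theorem and referring to Verchota and Kenig; your divergence computation for $W$, combined with Verchota's interior smooth approximation using $(\nabla H)^*$ as the dominating function, is exactly that argument. Your added hypothesis $(\nabla H)^*\in L^2(\partial\Omega)$ is genuinely needed for the boundary integrals to make sense, and it holds in the paper's application $H=\mathcal S f$ with $f\in L^2(\Sigma)$.
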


The proof is a straightforward application of the divergence theorem, so we omit it here. See, for example, \cites{Verchota1984, Kenig1986, AgrawalAlazard2023Rellich}, for more details.

We are now ready to prove our desired quantitative bounds. Such bounds are classical; see, for example, \cites{Kenig1986, Verchota1984}. We primarily run through the proof to make geometric constants more explicit. For convenience of notation, we use $|\cdot|$ to denote the surface measure on $\Sigma$ induced by Lebesgue measure on $\R^2$.

\begin{theorem}[Boundedness of layer potentials]\label{thm:quantitative_bound_layer_potential}
Suppose $\Omega$ is a bounded Lipschitz domain with $\Sigma := \partial\Omega$ and $|\Sigma| < \infty$, then 
\begin{equation}
\bigg\|\bigg(\frac{1}{2}I \pm K^*\bigg)^{-1} f\bigg\|_{L^2(\Sigma)}\leq M\|f\|_{L^2(\Sigma)}
\end{equation}
for all $f\in L^2(\Sigma)$ with mean zero, where $M$ depends on the Lipschitz geometric constants of $\Omega$. Moreover, suppose one can find a smooth vector field $V$ compactly supported on a neighborhood of $\Sigma$ such that $V\cdot n\geq c_0$ on $\Sigma$ for some $c_0 > 0$, $\|V\|_{L^\infty}\leq C_0$ for some $C_0 > 0$ and $\|\nabla V\|_{L^\infty}\leq r_0^{-1}$ for some $r_0 > 0$, then 
\begin{equation}
M\leq C\bigg(\frac{C_0}{c_0}\bigg)\bigg[1 + \bigg(\frac{|\Sigma|}{r_0}\bigg)^2\bigg]^{\frac{1}{2}}
\end{equation}
where $C$ is an absolute constant. 
\end{theorem}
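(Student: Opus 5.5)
The plan is the classical Verchota argument: use the Rellich identity with the given vector field $V$ to show that the interior and exterior boundary traces of $\nabla\cS f$ control each other in $L^2(\Sigma)$. Fix mean-zero $f\in L^2(\Sigma)$ and set $u:=\cS f$. It is harmonic in $\Omega_\pm$. Because $f$ has mean zero, $u$ decays like $O(|z|^{-1})$ at infinity in $\R^2$, and therefore $\nabla u\in L^2(\Omega_+)$. By Theorem~\ref{thm:jump_relations}, the tangential derivative $\partial_\tau u$ is the same from both sides. The normal traces are
\[
\partial_n u^\pm=\bigl(\pm\tfrac12 I+K^*\bigr)f,
\qquad
\partial_n u^+-\partial_n u^-=f .
\]
It therefore suffices to prove
\[
\|f\|_{L^2}\le M\,\|\partial_n u^\pm\|_{L^2}.
\]
Since $f=\partial_n u^+-\partial_n u^-$, this reduces to bounding $\|\partial_n u^\mp\|_{L^2}$ by $\|\partial_n u^\pm\|_{L^2}$ up to a constant of the stated size. (Invertibility itself is already given by Theorem~\ref{invertibility}; only the quantitative constant is new.)

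I will apply the Rellich identity to $H=u$ in $\Omega_-$ and in $\Omega_+$. Since $V$ has compact support, the exterior computation involves only a bounded region, and the outward normal of $\Omega_+$ is $-n$. Write $|\nabla u|^2=(\partial_n u)^2+(\partial_\tau u)^2$ and $V\cdot\nabla u=(V\cdot n)\partial_n u+(V\cdot\tau)\partial_\tau u$. The boundary terms on each side then become
\[
\int_\Sigma (V\cdot n)\bigl[(\partial_\tau u)^2-(\partial_n u^\pm)^2\bigr]
+2(V\cdot\tau)\,\partial_n u^\pm\,\partial_\tau u\,d\sigma ,
\]
and the remaining solid integrals are bounded by $C r_0^{-1}\|\nabla u\|^2_{L^2(\Omega_\pm\cap\operatorname{supp}V)}$. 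Using $V\cdot n\ge c_0$, $|V|\le C_0$, and Cauchy--Schwarz, each side yields the two-sided bound
\[
\|\partial_\tau u\|_{L^2}\lesssim \frac{C_0}{c_0}\|\partial_n u^\pm\|_{L^2}+\Bigl(\frac{1}{c_0r_0}\Bigr)^{1/2}\|\nabla u\|_{L^2(\Omega_\pm)},
\]
together with the analogous bound with the roles of $\partial_\tau u$ and $\partial_n u^\pm$ exchanged. Chaining through the common quantity $\partial_\tau u$ gives $\|\partial_n u^\mp\|\lesssim\|\partial_n u^\pm\|$ plus the solid error terms.

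The main obstacle is absorbing the solid terms $\|\nabla u\|^2_{L^2(\Omega_\pm)}$, and this is where the factor $|\Sigma|/r_0$ enters. For the interior, Green's identity gives
\[
\|\nabla u\|^2_{L^2(\Omega_-)}=\int_\Sigma u\,\partial_n u^-\,d\sigma .
\]
Since $\int_\Sigma\partial_n u^-\,d\sigma=0$, I can subtract the mean of $u$ on $\Sigma$ and apply the one-dimensional Poincaré inequality on the curve $\Sigma$ of length $|\Sigma|$:
\[
\|u-\bar u\|_{L^2(\Sigma)}\le\frac{|\Sigma|}{2\pi}\|\partial_\tau u\|_{L^2(\Sigma)} .
\]
This yields
\[
\|\nabla u\|^2_{L^2(\Omega_-)}\lesssim |\Sigma|\,\|\partial_\tau u\|\,\|\partial_n u^-\| .
\]
The exterior case is analogous. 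Here $\int_\Sigma\partial_n u^+\,d\sigma=0$ also holds, because $u$ has mean-zero flux and vanishes at infinity, so the boundary term at infinity drops out. Substituting these into the Rellich bounds and applying Young's inequality with weight of order $|\Sigma|/r_0$ produces the factor $[1+(|\Sigma|/r_0)^2]^{1/2}$. I expect the careful bookkeeping of this exponent, and the requirement that $u$ have no constant mode at infinity, to be the delicate points. Finally, I combine the two directions to get $\|f\|\le M\|(\tfrac12 I\pm K^*)f\|$ and then invert.
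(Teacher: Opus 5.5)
Your plan is essentially the paper's proof: Rellich's identity with the given $V$ on each side of $\Sigma$, the solid terms absorbed via Green's identity and a Poincar\'e inequality for $u-\bar u$ on $\Sigma$, chaining through the common tangential derivative $\partial_\tau u$, and finally $f=\partial_n u^+-\partial_n u^-$; you are more careful than the paper on the exterior side, where you use the mean-zero condition on $f$ to get $\nabla u\in L^2(\Omega_+)$ and to drop both the flux and the term at infinity. On the exponent you flagged: chaining two one-sided bounds multiplies their constants, so this argument (yours and the paper's alike) actually gives the square of the one-sided constant $\frac{C_0}{c_0}\bigl[1+(|\Sigma|/r_0)^2\bigr]^{1/2}$ (in the normalization $C_0=1$ used later), not the stated power. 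This is harmless for the later applications, which only use that $M$ is controlled by the Lipschitz character.
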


Before we dive into the proof, we show that the vector field $V$ in Theorem~\ref{thm:quantitative_bound_layer_potential} can always be found, and the quantitaties $c_0, C_0$, and $r_0$ depends on \emph{Lipschitz character} of $\Omega$, which we define below.

\begin{definition}[Lipschitz character]
Given a Lipschitz domain $\Omega$, there exist $\rho,L>0$, an integer $N$, and boundary patches $U_1,\dots,U_N$ covering $\partial\Omega$ with overlap bounded by some $M\in\mathbb N$, such that for each $j$ a rigid motion $R_j$ satisfies
\[R_j(\Omega\cap U_j)=\{(x, y)\in B'_\rho\times(-\rho,\rho): y< \varphi_j(x)\},\quad \mathrm{with}\ \|\partial_x\varphi_j\|_{L^\infty}\le L.\]
Any equivalent choice of such parameters is a Lipschitz character of $\Omega$.    
\end{definition}

Now we construct the desired vector field $V$. Suppose we have boundary patches $U_1, ..., U_N$ as given in the above definition, we choose smaller patches $U_j'\Subset U_j$ still covering $\partial\Omega$, and let $\chi_j\in C_c^\infty(U_j)$ satisfy
\[0\le\sum_j\chi_j\le 1\ \mathrm{on}\ \mathbb R^2,\quad \sum_j\chi_j=1\ \mathrm{near}\ \partial\Omega,\quad \mathrm{and}\ \|\partial_x\chi_j\|_{L^\infty}\le \frac{C}{\rho}.\]
We set
\[\nu_j:=R_j^{-1}(e_2),\quad V(x):=\sum_{j=1}^N\chi_j(x)\nu_j,\]
and then $V\in C_c^\infty$ as a finite sum of smooth compactly supported fields. On the graph $y=\varphi_j(x)$ the outward unit normal is 
\[n=\frac{(-\partial_x\varphi_j,1)}{\sqrt{1+|\partial_x\varphi_j|^2}}\]
so for a.e.\ $x\in\partial\Omega\cap U_j$, $\nu_j\cdot n(x)\ge 1/\sqrt{1+L^2}$. Hence for a.e.\ $x\in\partial\Omega$,
\begin{equation}
V(x)\cdot n(x)=\sum_{j=1}^N\chi_j(x)\,\nu_j\cdot n(x)\ge\frac{1}{\sqrt{1+L^2}}\sum_{j=1}^N\chi_j(x)=\frac{1}{\sqrt{1+L^2}},
\end{equation}
and we may take $c_0=(1+L^2)^{-1/2}$. Also, direct computation gives
\begin{equation}
\begin{aligned}
|V(x)|&\le\sum_j\chi_j(x)\le 1,\\
|\nabla V(x)|&\le\sum_j|\nabla\chi_j(x)|\,|\nu_j|\le \frac{CM}{\rho}
\end{aligned}
\end{equation}
so one may choose $C_0=1$ and $r_0=\rho/(CM)$, so $c_0$, $C_0$, $r_0$ depend on the Lipschitz character of $\Omega$.

\begin{proof}[Proof of Theorem \ref{thm:quantitative_bound_layer_potential}]
First of all, we note that if there is some $M > 0$ such that
\begin{equation}\label{double_layer_pm}
\frac{1}{M}\leq \frac{\|(\frac{1}{2}I - K^*)f\|_{L^2(\Sigma)}}{\|(\frac{1}{2}I + K^*)f\|_{L^2(\Sigma)}}\leq M,
\end{equation}
then 
\begin{equation}
\begin{aligned}
\|f\|_{L^2(\Sigma)} &\leq \bigg\|\bigg(\frac{1}{2}I - K^*\bigg)f\bigg\|_{L^2(\Sigma)} + \bigg\|\bigg(\frac{1}{2}I + K^*\bigg)f\bigg\|_{L^2(\Sigma)}\\
&\leq (M + 1)\bigg\|\bigg(\frac{1}{2}I - K^*\bigg)f\bigg\|_{L^2(\Sigma)}
\end{aligned}
\end{equation}
and thus $(\frac{1}{2}I - K^*)^{-1}$ is bounded on $L^2(\Sigma)$ with operator norm $M + 1$. Therefore, our goal is to prove (\ref{double_layer_pm}). Now, we consider $H = \cS f$ and recall from \eqref{thm:jump_relations} that
\begin{equation}\label{single_layer_normal_jump_2}
\begin{aligned}
\partial_n H|_{\partial\Omega} = \bigg(-\frac{1}{2}I + K^*\bigg)f,\qquad \partial_n^c H|_{\partial\Omega} = \bigg(\frac{1}{2}I + K^*\bigg)f.
\end{aligned}
\end{equation}

If we do the decompositions $V = (V\cdot n)n + (V\cdot \tau)\tau$, $\nabla H = (\partial_n H)n + (\partial_\tau H)\tau$, and use Rellich identity, we get
\begin{equation}
\begin{aligned}
\int_{\partial\Omega} (V\cdot n)|\partial_\tau H|^2\ d\sigma  &= \int_{\partial\Omega} (V\cdot n)|\partial_n H|^2\ d\sigma + \int_{\partial\Omega} 2(V\cdot \tau)(\partial_n H)(\partial_\tau H)\ d\sigma\\
&+ \int_\Omega (\Div V)|\nabla H|^2\ dx - 2\int_\Omega (\nabla V\nabla H)\cdot \nabla H\ dx.
\end{aligned}
\end{equation}
Suppose $V$ is a vector field as in the assumption of the problem. Then
\begin{equation}
\begin{aligned}
\int_{\partial\Omega} |\partial_n H|^2 d\sigma &\leq \frac{C_0}{c_0}\int_{\partial\Omega} |\partial_\tau H|^2\ d\sigma + \frac{2C}{c_0 r_0}\int_{\partial\Omega}|\partial_n H|\, |\partial_\tau H|\ d\sigma +  \frac{C}{c_0 r_0}\int_\Omega |\nabla H|^2\ dx,
\end{aligned}
\end{equation}
and an AM-GM inequality argument gives
\begin{equation}
\begin{split}
\int_{\partial\Omega} |\partial_n H|^2 d\sigma &\leq \frac{CC_0^2}{c_0^2}\int_{\partial\Omega}|\partial_\tau H|^2\ d\sigma +  \frac{C}{c_0 r_0}\int_\Omega |\nabla H|^2\ dx.
\end{split}
\end{equation}
To bound the last term on the right-hand side, we note that $H$ is harmonic in $\Omega$, so integration by parts gives
\begin{equation}\label{H_int_by_parts}
\int_\Omega |\nabla H|^2 \ dx = \int_{\Omega} (H-\av(H))\partial_n H\ d\sigma,
\end{equation}
where 
\begin{equation}
\av (f) := \frac{1}{|\partial\Omega|}\int_{\partial\Omega} f\ d\sigma.  
\end{equation} 
Now, since $H - \av(H)$ has zero mean on $\partial\Omega$, we note that
\begin{equation}
\begin{aligned}
(H - \av(H))(z) &= (H - \av(H))(z) - \av(H - \av(H)) \\
                &= \frac{1}{|\partial\Omega|}\int_{\partial\Omega}[(H-\av(H))(z) - (H - \av(H))(w)] d\sigma(w),
\end{aligned}
\end{equation}
A straightforward calculation using any parametrization of $\partial\Omega$ yields
\begin{equation}
|(H-\av(H))(z) - (H - \av(H))(w)|\leq \int_{\partial\Omega} |\partial_\tau H|\ d\sigma
\end{equation}
and thus
\begin{equation}
\begin{aligned}
|(H - \av(H))(z)|&\leq \int_{\partial\Omega} |\partial_\tau H|\ d\sigma \leq |\partial\Omega|^{\frac{1}{2}}\bigg(\int_{\partial\Omega} |\partial_\tau H|^2\ d\sigma\bigg)^{\frac{1}{2}}.
\end{aligned}
\end{equation}
It follows that
\begin{equation}
\| H - \av(H)\|_{L^2(\partial\Omega)}\leq |\partial\Omega|\, \|\partial_\tau H\|_{L^2(\partial\Omega)}.
\end{equation}
Using (\ref{H_int_by_parts}) and AM-GM, we get
\begin{equation}
\|\partial_n H\|_{L^2(\partial\Omega)}^2 \leq C\bigg(\frac{C_0}{c_0}\bigg)\bigg[1 + \bigg(\frac{|\partial\Omega|}{r_0}\bigg)^2\bigg] \|\partial_\tau H\|_{L^2(\partial\Omega)}^2.
\end{equation} 
An analogous computation using $\Omega^c$ in place of $\Omega$ yields
\begin{equation}\label{tangential_normal}
\|\partial_\tau H\|_{L^2(\partial\Omega)}^2 \leq C\bigg(\frac{C_0}{c_0}\bigg)\bigg[1 + \bigg(\frac{|\partial\Omega|}{r_0}\bigg)^2\bigg] \|\partial_n^c H\|_{L^2(\partial\Omega)}^2
\end{equation}
and using (\ref{single_layer_normal_jump_2}) we get
\begin{equation}
\frac{\|(\frac{1}{2}I - K^*)f\|_{L^2(\Sigma)}}{\|(\frac{1}{2}I + K^*)f\|_{L^2(\Sigma)}}\leq M.
\end{equation}
The other side of (\ref{double_layer_pm}) follows in an analogous fashion. 
\end{proof}

\subsection{Layer potential representation of Dirichlet-to-Neumann operator} We now derive an integral formulation of the Dirichlet-to-Neumann operator on a bounded Lipschitz domain \(\Omega\), with \(\Sigma:=\partial\Omega\), for Lipschitz boundary data. The formulation is intrinsic to the boundary and hence independent of parametrization. Nevertheless, for computations we often fix a non-degenerate counterclockwise parametrization \(z:\T\to\Sigma\).

By Theorem \ref{thm:jump_relations} and \ref{invertibility}, the solution to \eqref{dirichlet_bv} can be written as
\begin{equation}
\begin{split}
\phi &= \cK\bigg(\frac{1}{2}I + K\bigg)^{-1}f .
\end{split}
\end{equation}
With the parametrization $z$, we write
\begin{equation}
\begin{split}
\cK f(z) &= -\frac{1}{2\pi}\int_\T \frac{z_1'(\beta)(z_2 - z_2(\beta)) - z_2'(\beta)(z_1 - z_1(\beta))}{(z_1 - z_1(\beta))^2 + (z_2 - z_2(\beta))^2} f(z(\beta))\ d\beta.
\end{split}
\end{equation}
We denote 
\begin{equation}\label{theta_form_1}
\theta(\beta) = \frac{\partial}{\partial_\beta}\bigg[\bigg(\frac{1}{2}I + K\bigg)^{-1}f(z(\beta))\bigg] ,\qquad \Theta = \partial_\tau\bigg(\frac{1}{2}I + K\bigg)^{-1}f.
\end{equation}
Direct computation using integration by parts gives
\begin{equation}
\begin{split}
\partial_x\phi(x,y) &= \frac{1}{2\pi}\int_\T \frac{-(y - z_2(\beta))}{(x-z_1(\beta))^2 + (y - z_2(\beta))^2}\theta(\beta)\ d\beta\\
&=\frac{1}{2\pi}\int_\T \frac{-(y - y')}{(x-x')^2 + (y - y')^2}\Theta(x', y')\ d\sigma;
\end{split}
\end{equation}
\begin{equation}
\begin{split}
\partial_y\phi(x,y) &= \frac{1}{2\pi}\int_\T \frac{x - z_1(\beta)}{(x-z_1(\beta))^2 + (y - z_2(\beta))^2}\theta(\beta)\ d\beta\\
&= \frac{1}{2\pi}\int_\T \frac{x - x'}{(x-x')^2 + (y - y')^2}\Theta(x', y')\ d\sigma.
\end{split}
\end{equation}
Then it follows that
\begin{equation}
\begin{aligned}
\cG(\Sigma)f(x, y) &= \lim_{(\tilde x, \tilde y)\to z(\alpha)} n(z(\alpha))\cdot \nabla\phi(\tilde{x}, \tilde{y})\\
&= -\lim_{(\tilde x, \tilde y)\to (x, y)} \frac{1}{2\pi}\frac{1}{|z'(\alpha)|}\int_\T \frac{z_1'(\alpha)(\tilde{x} - z_1(\beta)) + z_2'(\alpha)(\tilde{y} - z_2(\beta))}{(\tilde{x} - z_1(\beta))^2 + (\tilde{y} - z_2(\beta))^2}\theta(\beta)\ d\beta\\
&= -\lim_{(\tilde x, \tilde y)\to (x, y)}\frac{1}{2\pi}\int \frac{\tau(x,y)\cdot (\tilde x - x', \tilde y - y')}{(\tilde x-x')^2 + (\tilde y - y')^2} \Theta(x', y')\ d\sigma(x', y')\\
&= -\lim_{(\tilde x, \tilde y)\to (x, y)}\tau(x, y)\cdot \nabla \cS\Theta(\tilde x, \tilde y).
\end{aligned}
\end{equation}
We remember from Theorem \ref{thm:jump_relations} that the tangential derivative of the single-layer potential is continuous up to the boundary, so 
\begin{equation}\label{integral_dn}
\begin{aligned}
\cG(\Sigma)f(x, y) &= -\tau(x, y)\cdot \nabla \cS\Theta (x, y)\\
&= -\pv\frac{1}{2\pi}\int \frac{\tau(x,y)\cdot (x - x',  y - y')}{(x-x')^2 + (y - y')^2} \Theta(x', y')\ d\sigma(x', y').
\end{aligned}
\end{equation}
The integral expression is almost explicit except for the inverse $(\frac{1}{2}I + K)^{-1}$ in $\Theta$. However, Theorem \ref{invertibility} ensures that the implicitness of $(\frac{1}{2}I \pm K)^{-1}$ does not cause us any trouble in doing estimates on $L^2(\Sigma)$. 

With abuse of notation, we often identify $f$ with its trace $f\circ z$. In the same spirit, we also regard $K$ and $S$ as operators defined on functions $f:\T\to \C$ when the parametrization is clear from the context. Under these conventions, we have the following commutativity lemma. 

\begin{lemma}\label{theta_lemma}
Let $\theta$ and $\Theta$ be defined as in \eqref{theta_form_1}. Then
\begin{equation}
\theta = \bigg(\frac{1}{2}I - K^*\bigg)^{-1}\partial_\alpha f,\qquad \Theta = \bigg(\frac{1}{2}I - K^*\bigg)^{-1}\partial_\tau f
\end{equation}
\end{lemma}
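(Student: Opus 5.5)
The identity will follow from a commutation relation between $\partial_\alpha$ and the double-layer operator $K$: differentiating along the boundary intertwines $K$ with $-K^*$, up to the arclength weight. Concretely, we want to show that for a (say Lipschitz, then by density $W^{1,2}$) function $g$ on $\Sigma$,
\begin{equation*}
\partial_\tau (K g) = -K^*(\partial_\tau g),
\qquad\text{equivalently}\qquad
\partial_\tau\Bigl(\tfrac12 I + K\Bigr) g = \Bigl(\tfrac12 I - K^*\Bigr)\partial_\tau g .
\end{equation*}
Granting this, set $g = (\tfrac12 I + K)^{-1} f$. Then $\Theta = \partial_\tau g$ by \eqref{theta_form_1}, and the intertwining gives $(\tfrac12 I - K^*)\Theta = \partial_\tau f$. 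The function $\partial_\tau f$ has mean zero on $\Sigma$ since $\Sigma$ is closed, and likewise $\Theta$ is the tangential derivative of a $W^{1,p}$ function. By Theorem~\ref{invertibility}, $\tfrac12 I - K^*$ is invertible on $L^p_0(\Sigma)$, so $\Theta = (\tfrac12 I - K^*)^{-1}\partial_\tau f$. The parametrized statement follows by multiplying by $|z'(\beta)|$, since $\theta = |z'|\Theta$ and $\partial_\alpha f = |z'|\partial_\tau f$. Here we read $K^*$ on the parametrized side as acting with the weight built in, consistent with the paper's identification of functions on $\T$ with functions on $\Sigma$.

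To prove the intertwining, we use the potentials rather than the kernels directly. Let $u = \cK g$ in $\Omega_\pm$. The key identity, obtained by integrating by parts in the double-layer integral exactly as in the computation preceding \eqref{integral_dn}, is
\begin{equation*}
\nabla \cK g = \nabla^{\perp}\cS(\partial_\tau g)
\end{equation*}
(up to a sign fixed by orientation). Thus the tangential derivative of $\cK g$ equals, up to sign, the normal derivative of $\cS(\partial_\tau g)$.

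We then take non-tangential limits from $\Omega_-$, using Theorem~\ref{thm:jump_relations}. On the left, $\partial_\tau\bigl((-\tfrac12 I + K)g\bigr)$ appears, by part (1); tangential differentiation commutes with the boundary trace here because the limit of $\cK g$ is taken along the boundary curve. On the right, part (3) gives $\pm(-\tfrac12 I + K^*)\partial_\tau g$. Comparing the two sides, or averaging the interior and exterior limits, yields $\partial_\tau K g = -K^*\partial_\tau g$. The relative signs have to be matched against the paper's conventions: $n$ is outward, $\tau$ is counterclockwise, and $K$ carries the kernel $n(z')\cdot\nabla\cN$.

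The main obstacle is the low-regularity justification. We need the interchange of $\partial_\tau$ with the trace, and the identification of the non-tangential limit of $\tau\cdot\nabla\cK g$ with $\partial_\tau$ of the limit of $\cK g$, when $\Sigma$ is only Lipschitz. The plan is to prove the identity first for smooth $g$ on a fixed Lipschitz curve, where the integration by parts is clean and the limits exist by Theorem~\ref{thm:jump_relations}. We then extend to $g\in W^{1,p}$ by density, using the $L^p$ boundedness of $K^*$ and the $W^{1,p}$ boundedness of $\tfrac12 I + K$ from Theorem~\ref{invertibility}.
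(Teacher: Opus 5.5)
Your argument is correct in outline, but it is organized differently from the paper's. Both rest on the intertwining $\partial K=-K^*\partial$.

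The paper never differentiates $g=(\tfrac12 I+K)^{-1}f$. It pairs $(\tfrac12 I-K^*)\theta$ with a smooth $\varphi$ and moves $\partial_\alpha$ and $\tfrac12 I-K$ onto $\varphi$. It then uses $\partial_\alpha(\tfrac12 I-K)\varphi=(\tfrac12 I+K^*)\partial_\alpha\varphi$, which is the transpose of your identity and is obtained by integrating by parts directly in the boundary kernel. So every derivative lands on a smooth test function, and no nontangential limits appear.

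You instead apply the identity to $g\in W^{1,p}$ itself. You derive it from $\nabla\cK g=\pm\nabla^\perp\cS(\partial_\tau g)$ (the computation preceding \eqref{integral_dn}) together with the jump relations, and then invert $\tfrac12 I-K^*$ on $L^p_0$. Your route is more explicit about two points the paper leaves implicit:
\begin{itemize}
\item $\Theta$ and $\partial_\tau f$ are both mean zero, so Theorem~\ref{invertibility} really applies;
\item the parametrized $K^*$ is the $L^2(\T,d\beta)$-adjoint of $K$, i.e.\ $|z'|\,K^*_\Sigma\,|z'|^{-1}$ in terms of the arclength adjoint $K^*_\Sigma$, so multiplying by $|z'|$ passes from $\Theta$ to $\theta$ exactly as you say.
\end{itemize}

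Two steps need repair.

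\emph{The sign in the jump relation.} Theorem~\ref{thm:jump_relations}(1) gives the limit of $\cK g$ from $\Omega_-$ as $(\tfrac12 I+K)g$, not $(-\tfrac12 I+K)g$. With your labeling, neither orientation sign yields $\partial_\tau Kg=-K^*\partial_\tau g$; for $\partial_\tau Kg$ you would get $K^*\Theta$ or $(I-K^*)\Theta$. With the correct labels no orientation bookkeeping is needed. Write $\tau(z)\cdot\nabla\cK g=c\,n(z)\cdot\nabla\cS\Theta$ with $c=\pm1$. The interior and exterior limits give $\partial_\tau(\tfrac12 I+K)g=c(-\tfrac12 I+K^*)\Theta$ and $\partial_\tau(-\tfrac12 I+K)g=c(\tfrac12 I+K^*)\Theta$. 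Subtracting yields $\Theta=-c\Theta$, hence $c=-1$. The interior relation is then exactly $\partial_\tau(\tfrac12 I+K)g=(\tfrac12 I-K^*)\Theta$.

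\emph{Interchanging $\partial_\tau$ with the trace.} Taking $g$ smooth does not resolve this, because the difficulty comes from $\Sigma$ being only Lipschitz. What you need is the standard fact: if $u$ is harmonic and its nontangential maximal functions $u^*$, $(\nabla u)^*$ lie in $L^p(\Sigma)$, then its trace is in $W^{1,p}(\Sigma)$ and $\partial_\tau(u|_\Sigma)=\tau\cdot(\nabla u)|_\Sigma$ a.e. One proves it by pushing $\Sigma$ into $\Omega_\mp$ along a transversal field, along which the approach is nontangential, and applying dominated convergence. The field $V$ constructed before Theorem~\ref{thm:quantitative_bound_layer_potential} works, or radial dilations in the star-shaped case. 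For $u=\cK g$ the hypothesis holds because $\nabla\cK g=\pm\nabla^\perp\cS\Theta$ and $\|(\nabla\cS\Theta)^*\|_{L^p}\lesssim\|\Theta\|_{L^p}$ by the Coifman--McIntosh--Meyer theory.

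With these two fixes your proof goes through.
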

\begin{proof}
We only show the statement for $\theta$; rewriting the proof without parametrization yields the result for $\Theta$. By duality and a limiting argument, it suffices to show
\begin{equation}\label{lem_duality_form}
\bigg\langle \bigg(\frac{1}{2}I - K^*\bigg)\theta, \varphi\bigg\rangle_{L^2(\T)} = -\langle f, \partial_\alpha\varphi\rangle_{L^2(\T)}
\end{equation}
for every $\varphi\in C^\infty(\T)$. For convenience of notation, we drop the subscript $L^2(\T)$ in the remainder of the proof. Note that
\begin{equation}\label{lem_duality_form_2}
\begin{aligned}
\bigg\langle \bigg(\frac{1}{2}I - K^*\bigg)\theta, \varphi\bigg\rangle &= -\bigg\langle\theta, \partial_\alpha\bigg(\frac{1}{2}I - K\bigg)\varphi\bigg\rangle\\
&= -\bigg\langle \bigg(\frac{1}{2}I + K\bigg)^{-1}f,\   \partial_\alpha\bigg(\frac{1}{2}I - K\bigg)\varphi\bigg\rangle.\\
\end{aligned}
\end{equation}
A straightforward integration by parts gives
\begin{equation}\label{K_K*_adjoint_prop}
\partial_\alpha K\varphi = -K^*\partial_\alpha\varphi
\end{equation}
and plugging \eqref{K_K*_adjoint_prop} into \eqref{lem_duality_form_2} we obtain
\begin{equation}
\begin{aligned}
-\bigg\langle \bigg(\frac{1}{2}I + K\bigg)^{-1}f,\   \partial_\alpha\bigg(\frac{1}{2}I - K\bigg)\varphi\bigg\rangle &= -\bigg\langle \bigg(\frac{1}{2}I + K\bigg)^{-1}f, \bigg(\frac{1}{2}I + K^*\bigg)\partial_\alpha\varphi\bigg\rangle\\
&= -\langle f, \partial_\alpha\varphi\rangle
\end{aligned}
\end{equation}
as desired. 
\end{proof}

\section{Star-Shaped Domain and the Interface Equation}\label{sec:interface_eqn}
In this section, we show that the domain formulation \eqref{domain_eqn_u}
reduces to a clean nonlocal parabolic equation on the interface when the domain is \emph{star-shaped}. The derivation is only formal at the moment, and the reader may assume as much regularity as necessary for the computations below. In subsequent sections, we will state relevant regularity hypotheses precisely and explain how the results obtained for the interface equation translate back to the original domain formulations \eqref{domain_eqn_u} and \eqref{domain_eqn_p}.

\subsection{Removal of injection singularity} We define
\begin{equation}
u_*(z) := \frac{z}{|z|^2} = \frac{1}{\overline{z}},
\end{equation}
and a direct calculation shows that 
\begin{equation}
\Div(u - u_*) = 0\quad \text{in } \Omega(t).
\end{equation} 
Therefore, if we define
\begin{equation}
\phi(x, y):= p(x,y) + \frac{1}{2}\log(x^2 + y^2),
\end{equation}
then $\phi$ solves the Dirichlet boundary value problem
\begin{equation}
\left\{
\begin{array}{ll}
 \Delta\phi = 0 & \mathrm{in}\ \Omega(t)  \\
 \phi = \frac{1}{2}\log(x^2+y^2) & \mathrm{on}\ \Sigma(t)
\end{array}
\right.
\end{equation}

\subsection{Kinematic boundary condition} Suppose \(z(\alpha,t)\) is a counterclockwise Lagrangian parametrization of
the boundary \(\Sigma(t)\). Darcy's law gives
\begin{equation}
z_t(\alpha,t)
=
u(z(\alpha,t),t)
=
-\nabla p(z(\alpha,t),t).
\end{equation}
Writing \(p=\phi-\log|x|\), we have
\[
-\nabla p=-\nabla\phi+\frac{x}{|x|^2}.
\]
Hence, for any outward normal vector \(N\) on the boundary,
\begin{equation}\label{kinematic_bc}
\begin{split}
z_t(\alpha,t)\cdot N(z(\alpha,t)) &=
-N(z(\alpha,t))\cdot\nabla\phi(z(\alpha,t),t)
+
N(z(\alpha,t))\cdot\frac{z(\alpha,t)}{|z(\alpha,t)|^2}.
\end{split}
\end{equation}

Now assume that \(\Omega(t)\) is star-shaped with respect to the origin. Then
we may write the boundary as
\begin{equation}
z(\alpha,t)=e^{ic(\alpha,t)}h(c(\alpha,t),t),
\end{equation}
where \(c(\alpha,t)\) accounts for a possible reparametrization of the angular
variable. We define the unnormalized outward normal
\begin{equation}
N(z(\alpha,t))
:=
-i\frac{\partial}{\partial c}\bigl(e^{ic}h(c,t)\bigr)
=
e^{ic}h(c,t)-ie^{ic}\partial_ch(c,t).
\end{equation}
Direct calculation gives
\begin{equation}
z_t(\alpha,t)\cdot N(z(\alpha,t))
=
h(c,t)\,\partial_t h(c,t);
\end{equation}
\begin{equation}
N(z(\alpha,t))\cdot \frac{z(\alpha,t)}{|z(\alpha,t)|^2}=1.
\end{equation}
Therefore, after relabeling the angular variable, \eqref{kinematic_bc} yields
\begin{equation}
\bigl(h\,\partial_t h\bigr)(\alpha,t)
=
-N(e^{i\alpha}h(\alpha,t))\cdot
\nabla\phi(e^{i\alpha}h(\alpha,t),t)
+1.
\end{equation}

\subsection{Interface equation with Dirichlet-to-Neumann operator} We now define a version of the Dirichlet-to-Neumann operator specific to star-shaped domain. Under low regularity, the definition should be interpreted as in Definition \ref{def:DN_general} and Remark \ref{remark:low_regularity_DN}. 

\begin{definition}[Dirichlet-to-Neumann operator on star-shaped domains]\label{def:DN_star_shaped}
Let \(h(\cdot,t)\) be positive everywhere and define
\[
\Omega_h(t):=\{re^{i\alpha}:0\le r<h(\alpha,t),\ \alpha\in\mathbb T\},
\qquad
\Sigma_h(t):=\partial\Omega_h(t).
\]
For each fixed \(t\), let \(\phi(\cdot,t)\) be the harmonic extension of
\(f(\cdot,t)\) to \(\Omega_h(t)\), namely
\begin{equation}\label{eqn:Dirichlet_problem_star_shaped}
\begin{cases}
\Delta \phi(\cdot,t)=0 & \text{in } \Omega_h(t),\\
\phi(h(\alpha,t)e^{i\alpha},t)=f(\alpha,t) & \text{on } \Sigma_h(t),\\
\nabla\phi(\cdot,t)\in L^2(\Omega_h(t)) & \\
\end{cases}
\end{equation}
We define the
\emph{Dirichlet-to-Neumann operator} by
\begin{equation}
G(h)f(\alpha,t)
:=
N_h(\alpha,t)\cdot
\nabla\phi(h(\alpha,t)e^{i\alpha},t),
\end{equation}
where
\[
N_h(\alpha,t):=h(\alpha,t)e_r(\alpha)-h_\alpha(\alpha,t)e_\theta(\alpha)
\]
is the unnormalized outward normal to \(\Sigma_h(t)\).
\end{definition}

A key observation is that the natural boundary unknown to work on is not the radius \(h\) itself, but its logarithm $\eta:=\log h$. The interface equation then becomes
\begin{equation}
\partial_t\eta+e^{-2\eta}\bigl(G(h)\eta-1\bigr)=0,
\qquad h=e^\eta.
\end{equation}

\subsection{Integral expression} We adapt the layer-potential
computations from section \ref{sec:formulation_DN} to obtain an integral formulation of \(G(h)\eta\). With \(z(\alpha)=h(\alpha)e^{i\alpha}\),
the boundary double-layer operator and its adjoint become
\begin{equation}\label{K_expression_1}
\begin{split}
K[h]g(\alpha)
&=-\pv\frac{1}{2\pi}\int_\T
\partial_\beta
\arctan\!\bigg(
\frac{h(\alpha)\sin\alpha-h(\beta)\sin\beta}
     {h(\alpha)\cos\alpha-h(\beta)\cos\beta}
\bigg)g(\beta)\,d\beta \\
&=-\pv\frac{1}{2\pi}\int_\T
\frac{
h(\alpha)h'(\beta)\sin(\alpha-\beta)
-h(\alpha)h(\beta)\cos(\alpha-\beta)
+h^2(\beta)}
{h^2(\alpha)+h^2(\beta)-2h(\alpha)h(\beta)\cos(\alpha-\beta)}
g(\beta)\,d\beta ,
\end{split}
\end{equation}
and
\begin{equation}\label{K^*_expression_1}
\begin{split}
K^*[h]g(\alpha)
&=-\pv\frac{1}{2\pi}\partial_\alpha\int_\T
\arctan\!\bigg(
\frac{h(\alpha)\sin\alpha-h(\beta)\sin\beta}
     {h(\alpha)\cos\alpha-h(\beta)\cos\beta}
\bigg)g(\beta)\,d\beta \\
&=-\pv\frac{1}{2\pi}\int_\T
\frac{
-h(\beta)h'(\alpha)\sin(\alpha-\beta)
-h(\alpha)h(\beta)\cos(\alpha-\beta)
+h^2(\alpha)}
{h^2(\alpha)+h^2(\beta)-2h(\alpha)h(\beta)\cos(\alpha-\beta)}
g(\beta)\,d\beta .
\end{split}
\end{equation}
Moreover, by \eqref{integral_dn},
\begin{equation}\label{G_integral_expression}
G(h)g(\alpha)
=
\pv\int_\T \Lambda[h](\alpha,\beta)\theta(\beta)\,d\beta,
\end{equation}
where the kernel admits the expression
\begin{equation}\label{kernel_expression_1}
\begin{split}
\Lambda[h](\alpha,\beta)
&=\frac{1}{4\pi}\partial_\alpha
\log\!\big[
(h(\alpha)\cos\alpha-h(\beta)\cos\beta)^2
+(h(\alpha)\sin\alpha-h(\beta)\sin\beta)^2
\big] \\
&=
\frac{1}{2\pi}
\frac{
h(\alpha)h'(\alpha)
-h'(\alpha)h(\beta)\cos(\alpha-\beta)
+h(\alpha)h(\beta)\sin(\alpha-\beta)}
{h^2(\alpha)+h^2(\beta)-2h(\alpha)h(\beta)\cos(\alpha-\beta)}.
\end{split}
\end{equation}
Here, inheriting the notation in section \ref{sec:formulation_DN} and using Lemma \ref{theta_lemma}, we write
\begin{equation}\label{theta_definition}
\theta
:=
\partial_\alpha\bigg(\frac12 I+K[h]\bigg)^{-1}g
=
\bigg(\frac12 I-K^*[h]\bigg)^{-1}\partial_\alpha g.
\end{equation}

It is often useful to express \(K[h]\), \(K^*[h]\), and \(\Lambda[h]\) in
complex notation. If \(F:\mathbb R\to\mathbb C\) is non-vanishing, then we have the elementary identities
\begin{equation}
\frac{d}{dx}\arg F(x)
=
\operatorname{Im}\frac{d}{dx}\log F(x),
\qquad
\frac{d}{dx}\log|F(x)|
=
\operatorname{Re}\frac{d}{dx}\log F(x).
\end{equation}
Therefore, we write
\begin{align}
K[h]g(\alpha)
&=
-\pv\frac{1}{2\pi}\operatorname{Im}
\int_\T
\partial_\beta
\log\!\big(h(\alpha)e^{i\alpha}-h(\beta)e^{i\beta}\big)
g(\beta)\,d\beta, \\
\label{K^*_expression_2}
K^*[h]g(\alpha)
&=
-\pv\frac{1}{2\pi}\operatorname{Im}\,
\partial_\alpha
\int_\T
\log\!\big(h(\alpha)e^{i\alpha}-h(\beta)e^{i\beta}\big)
g(\beta)\,d\beta, \\
\label{kernel_expression_2}
\Lambda[h](\alpha,\beta)
&=
\frac{1}{2\pi}\operatorname{Re}\,
\partial_\alpha
\log\!\big(h(\alpha)e^{i\alpha}-h(\beta)e^{i\beta}\big).
\end{align}
Again, we often drop the dependence of $K, K^*$, and $\Lambda$ on $h$ for convenience of notations.

\subsection{Symmetries of the interface equation} We summarize the preceding derivation of the interface equation as follows.

\begin{proposition}[Interface equation]\label{prop:interface_eqn_derivation}
Consider the injection-driven Hele-Shaw problem \eqref{domain_eqn_u}. Assume that the fluid domain is star-shaped with respect to the origin and is given by
\[
\Omega_h(t):=\{re^{i\alpha}:0\le r<h(\alpha,t),\ \alpha\in\mathbb T\},
\]
where \(h>0\) is \(2\pi\)-periodic in \(\alpha\). Let
\[
\eta(\alpha,t):=\log h(\alpha,t).
\]
Then \(\eta\) satisfies
\begin{equation}\label{interface_eqn}
\partial_t\eta+e^{-2\eta}\bigl(G(h)\eta-1\bigr)=0.
\end{equation}

Here \(G(f)g\) denotes the Dirichlet-to-Neumann operator from Definition
\ref{def:DN_star_shaped}. Moreover, for sufficiently regular \(f,g:\mathbb T\to
\mathbb R\), \(G(f)g\) admits the integral representation
\begin{equation}\label{G_integral_summary}
G(f)g(\alpha)
=
\pv\int_{\mathbb T}\Lambda[f](\alpha,\beta)\,\theta(\beta)\,d\beta,
\end{equation}
where \(\Lambda[f]\) is given by either \eqref{kernel_expression_1} or
\eqref{kernel_expression_2}, and \(\theta\) is determined by
\begin{equation}\label{theta_eqn}
\bigg(\frac12 I-K^*[f]\bigg)\theta
=
\partial_\alpha g.
\end{equation}
The adjoint double-layer operator \(K^*[f]\) is given by either
\eqref{K^*_expression_1} or \eqref{K^*_expression_2}.
\end{proposition}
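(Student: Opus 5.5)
The proof will be a direct computation assembling the derivations of this section, in three steps.

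\emph{Step 1: removal of the source.} Set $\phi:=p+\log|x|$ in $\Omega_h(t)$. Since $-\Delta p=2\pi\delta$ and $\Delta\log|x|=2\pi\delta$ in $\R^2$, the function $\phi$ is harmonic in $\Omega_h(t)$; the singularity at the origin cancels exactly. On $\Sigma_h(t)$ we have $p=0$, so at $x=h(\alpha,t)e^{i\alpha}$,
\[
\phi=\log h(\alpha,t)=\eta(\alpha,t).
\]
Hence $\phi(\cdot,t)$ is the harmonic extension of $\eta(\cdot,t)$ in the sense of Definition~\ref{def:DN_star_shaped}. The gradient is square integrable near the origin because the logarithmic singularity has been removed, so $\phi$ is the $\dot H^1$ extension of Proposition~\ref{prop:DN_well_defined}. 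By definition, $G(h)\eta=N_h\cdot\nabla\phi$ on the boundary.

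\emph{Step 2: kinematic condition.} The normal velocity condition is geometric, so we may test it against the unnormalized normal $N_h=he_r-h_\alpha e_\theta$ rather than use a Lagrangian parametrization. The boundary point $h(\alpha,t)e^{i\alpha}$ at fixed $\alpha$ moves with velocity $\partial_th\,e_r$. Its normal component is therefore
\[
V\,|N_h|=\partial_th\,e_r\cdot N_h=h\,\partial_th,
\]
which agrees with the computation $z_t\cdot N=h\partial_th$ above; the tangential reparametrization $c(\alpha,t)$ does not affect normal components. On the other hand,
\[
u\cdot N_h=-\nabla p\cdot N_h=-N_h\cdot\nabla\phi+N_h\cdot\frac{x}{|x|^2}.
\]
At $x=he^{i\alpha}$ we have $N_h\cdot\dfrac{x}{|x|^2}=\dfrac{h\cdot h}{h^2}=1$. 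Equating the two expressions gives
\[
h\,\partial_th=-G(h)\eta+1.
\]

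\emph{Step 3: change of variables.} Since $h\,\partial_th=e^{2\eta}\partial_t\eta$, dividing by $e^{2\eta}$ gives the interface equation \eqref{interface_eqn}.

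For the integral representation \eqref{G_integral_summary}, I would specialize \eqref{integral_dn} to the parametrization $z(\alpha)=f(\alpha)e^{i\alpha}$.

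First, write $\Theta\,d\sigma=\theta\,d\beta$ with $\theta=\partial_\beta(\tfrac12I+K)^{-1}g$. Lemma~\ref{theta_lemma} converts this to the equation $(\tfrac12I-K^*)\theta=\partial_\alpha g$ in \eqref{theta_eqn}.

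Next, the unnormalized normal is $N_f=-iz'(\alpha)$. The normalized expression $-\tau\cdot\nabla\mathcal S\Theta$ must be multiplied by $|z'|$, which turns $\tau$ into $z'(\alpha)$. The kernel then becomes
\[
-\frac{1}{2\pi}\,\frac{z'(\alpha)\cdot\bigl(z(\alpha)-z(\beta)\bigr)}{|z(\alpha)-z(\beta)|^2}.
\]
Up to the overall sign convention, this equals
\[
\frac{1}{4\pi}\,\partial_\alpha\log|z(\alpha)-z(\beta)|^2=\frac{1}{2\pi}\operatorname{Re}\,\partial_\alpha\log\bigl(z(\alpha)-z(\beta)\bigr),
\]
which is $\Lambda[f]$ in \eqref{kernel_expression_2}. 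Expanding $|z(\alpha)-z(\beta)|^2=f^2(\alpha)+f^2(\beta)-2f(\alpha)f(\beta)\cos(\alpha-\beta)$ gives \eqref{kernel_expression_1}. The formulas for $K^*$ in \eqref{K^*_expression_1} and \eqref{K^*_expression_2} follow the same way from $n(z')\cdot\nabla\mathcal N$, using $\partial\arg=\operatorname{Im}\,\partial\log$.

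\emph{Main obstacle.} The analysis is straightforward since the statement is formal. The real difficulty is the bookkeeping of signs and orientation. One has to check that the sign in \eqref{integral_dn} combined with $N_f=-iz'$ yields $+\Lambda$ rather than $-\Lambda$. One also has to check that $\tfrac12I+K$, and not $\tfrac12I-K$, is the operator for the interior trace under the counterclockwise orientation.

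I would verify both points on the disk: take $f\equiv R$ and $g=\cos k\alpha$. Then $G(R)g$ should equal $|k|\cos k\alpha$. This is because $\nabla\phi\cdot N=R\,\partial_r\phi$ and $\phi=(r/R)^{|k|}\cos k\alpha$. On the disk $K^*$ acts as $-\tfrac{1}{4\pi}\int g$, so it annihilates $\partial_\alpha g$, and hence $\theta=2\partial_\alpha g$. The kernel reduces to $\tfrac{1}{4\pi}\cot\tfrac{\alpha-\beta}{2}$, so $\int\Lambda\,\theta$ is the Hilbert transform of $\partial_\alpha g$, which equals $|k|\cos k\alpha$ with the right sign. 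This confirms the normalization and sign.
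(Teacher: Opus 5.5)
Your derivation of \eqref{interface_eqn} is correct and follows the paper's computation. Using the fixed-$\alpha$ velocity $\partial_t h\,e_r$ instead of a Lagrangian parametrization changes nothing, since only normal components enter. The gap is in the integral representation, at the step ``up to the overall sign convention.'' Once $N_f=-iz'$, the explicit $K^*[f]$ of \eqref{K^*_expression_1} and $\Lambda[f]$ of \eqref{kernel_expression_1} are fixed, no sign is left free. The kernel you obtain from \eqref{integral_dn} is $-\Lambda[f]$, exactly as you wrote. Your disk test cannot settle this. On a centered circle, $Kg\equiv-\frac{1}{4\pi}\int_\T g$ is constant and $K^*$ annihilates mean-zero functions. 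Hence $(\frac12 I+K)^{-1}$ and $(\frac12 I-K)^{-1}$ produce the same $\theta=2\partial_\alpha g$, and $(\frac12 I\mp K^*)\theta=\partial_\alpha g$ have the same solution. The test only fixes the sign of $\Lambda$ relative to $\theta=2\partial_\alpha g$. It is blind to the trace operator and to the sign of $K^*$ in \eqref{theta_eqn}, which are exactly what you wanted it to certify.

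Carrying the signs through changes the answer. The parametrized double layer whose gradient feeds \eqref{integral_dn} has kernel $-\frac{1}{2\pi}\partial_\beta\arg(z-z(\beta))$, and its potential with density $1$ equals $-1$ inside. So its interior trace is $-\frac12 I+K$, not $\frac12 I+K$; indeed $(\frac12 I+K)1=0$. With \eqref{K_K*_adjoint_prop} this gives $\partial_\alpha g=-(\frac12 I+K^*)\theta$. Combined with the kernel $-\Lambda$, you get $G(f)g=\pv\int\Lambda\hat\theta\,d\beta$ with $(\frac12 I+K^*[f])\hat\theta=\partial_\alpha g$, where $\hat\theta=-\theta$.

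There is an independent check. With $F'=\overline{\nabla\phi}$ one has $z'F'=\partial_\alpha g+i\,G(f)g$ on the boundary. Plemelj's formula for the interior Cauchy integral then gives $(\frac12 I-K^*)G(f)g=\mathcal H\partial_\alpha g$, where $\mathcal Hu:=\pv\int\Lambda u\,d\beta$. The same formula, through the identity $\mathcal C^2=\frac14 I$ for the boundary Cauchy operator, gives $K^*\mathcal H=-\mathcal HK^*$. Hence $G(f)g=\mathcal H(\frac12 I+K^*)^{-1}\partial_\alpha g$.

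A non-circular example confirms this. Take the ellipse $z(\beta)=a\cos\beta+ib\sin\beta$; the complex forms \eqref{K^*_expression_2}, \eqref{kernel_expression_2} are covariant under reparametrization. Take $\phi=x$, so $g=a\cos\alpha$ and $G(f)g=b\cos\alpha$. One finds $K^*[\sin\beta](\alpha)=\frac{a-b}{2(a+b)}\sin\alpha$ and $\mathcal H[\sin\beta](\alpha)=-\frac{b}{a+b}\cos\alpha$. Solving with $\frac12 I-K^*$ returns $a\cos\alpha$, while $\frac12 I+K^*$ returns the correct $b\cos\alpha$.

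The paper's own derivation contains the same two slips: $\phi=\mathcal K(\frac12 I+K)^{-1}f$, and the passage from \eqref{integral_dn} to \eqref{G_integral_expression}. They cancel only on circles, so following the paper does not close the gap, and \eqref{theta_eqn} should read $(\frac12 I+K^*[f])\theta=\partial_\alpha g$. This does not affect \eqref{interface_eqn} itself, nor estimates that treat $\frac12 I\pm K^*$ symmetrically.
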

Since fluid is injected into the domain at a non-zero constant rate, the equations do not admit steady-state solutions. In our search for special solutions, we begin with interfaces with no angular dependence. These correspond to disks centered at the origin that expand in time.

\begin{proposition}[Radial solutions]
The only solutions of \eqref{interface_eqn} that are constant in the spatial
variable are the expanding circles centered at the origin. More precisely, if
$h(\alpha,0)=R_0>0$ is constant, then
\[
h(\alpha,t)=\sqrt{R_0^2+2t}.
\]
Equivalently, in the variable $\eta=\log h$, these solutions are given by
\[
\eta(\alpha,t)=\log \sqrt{R_0^2+2t}.
\]
\end{proposition}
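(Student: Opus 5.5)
The plan has two parts: first verify that the expanding circles solve \eqref{interface_eqn}, then show that spatially constant solutions must have this form. Both rest on one computation: the Dirichlet-to-Neumann operator of a constant function vanishes, on any domain. If $\eta(\cdot,t)\equiv c(t)$ is constant in $\alpha$, its harmonic extension to $\Omega_h(t)$ is the constant function $\phi\equiv c(t)$. This is the unique finite-energy extension given by Proposition~\ref{prop:DN_well_defined}, so $\nabla\phi\equiv 0$ and $G(h)\eta\equiv 0$ for every admissible $h$. We can also see this from the integral representation \eqref{G_integral_summary}: since $\partial_\alpha\eta=0$, \eqref{theta_eqn} forces $\theta=0$ by the invertibility in Theorem~\ref{invertibility}, and hence $G(h)\eta=0$.

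Substituting into \eqref{interface_eqn}, the equation reduces to the ODE
\[
c'(t)=e^{-2c(t)},
\]
or equivalently $\frac{d}{dt}e^{2c}=2$, which integrates to $e^{2c(t)}=e^{2c(0)}+2t$. With $h(\cdot,0)=R_0$ this gives $h^2=R_0^2+2t$, that is, $h=\sqrt{R_0^2+2t}$ and $\eta=\log\sqrt{R_0^2+2t}$. Running this backwards proves existence: the explicit function is spatially constant, so $G(h)\eta=0$, and it satisfies the ODE, so it solves \eqref{interface_eqn}. Uniqueness among spatially constant solutions comes from the ODE itself, because its right-hand side $e^{-2c}$ is smooth and hence locally Lipschitz. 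Equivalently, $e^{2c}$ has derivative exactly $2$.

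The ``only'' part of the statement needs care about what it claims. Read literally, it says that a solution constant in $\alpha$ at every time must be one of these circles, and the argument above proves exactly that. The statement also contains a stronger reading: constant initial data $h(\cdot,0)=R_0$ produces the radial solution. Within the present formal, sufficiently regular setting, this follows from uniqueness for \eqref{interface_eqn}, which the paper establishes only later via comparison and viscosity solutions. I would therefore state the result for spatially constant solutions and note that the stronger reading depends on that later uniqueness theory.

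The main point to justify is that $G(h)$ annihilates constants under the low-regularity Definition~\ref{def:DN_general}, but this is immediate from the variational definition, since the pairing $\int\nabla\phi\cdot\nabla\psi$ vanishes when $\phi$ is constant. Everything else is the elementary ODE computation.
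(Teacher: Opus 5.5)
Your proof is correct and is the natural argument. The paper states this proposition without proof, and its entire content is that $G(h)$ annihilates spatial constants, so \eqref{interface_eqn} collapses to $\frac{d}{dt}e^{2\eta}=2$. Your caveat about the stronger reading (constant initial data forces the radial solution) is apt. It is settled by the comparison principle (Proposition~\ref{visc_cp} with $\varepsilon=0$, or equivalently Corollary~\ref{cor_Linfty_bound} with $r_0=R_0$), which traps any classical solution with $h(\cdot,0)\equiv R_0$ between two copies of the radial solution.
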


We next look at the scaling laws of the interface equation.

\begin{proposition}[Scaling law]
Suppose $h$ is a solution to equation \eqref{interface_eqn}, then $\lambda h(\alpha, t/\lambda^2)$ is also a solution if $\lambda\neq 0$. Equivalently, if $\eta$ is a solution, then $\eta_\lambda(\alpha, t):=\eta(\alpha, t/\lambda^2) + \ln\lambda$ is also a solution if $\lambda\neq 0$. 
\end{proposition}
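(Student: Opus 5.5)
We need to check the scaling law: if $h$ solves the interface equation, then $h_\lambda(\alpha,t)=\lambda h(\alpha,t/\lambda^2)$ also solves it. Equivalently, $\eta_\lambda(\alpha,t)=\eta(\alpha,t/\lambda^2)+\ln\lambda$ solves \eqref{interface_eqn}. Here we take $\lambda>0$, as the logarithmic form requires; for $\lambda<0$ the statement is read via $|\lambda|$, since $h>0$ is a radius.

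The plan is to compute each term of \eqref{interface_eqn} for $\eta_\lambda$ and compare with the original equation evaluated at time $s=t/\lambda^2$. The time derivative is immediate:
\[
\partial_t\eta_\lambda(\alpha,t)=\lambda^{-2}(\partial_t\eta)(\alpha,s).
\]
The prefactor transforms as
\[
e^{-2\eta_\lambda(\alpha,t)}=\lambda^{-2}e^{-2\eta(\alpha,s)}.
\]
It therefore suffices to show
\begin{equation*}
G(h_\lambda)\eta_\lambda(\alpha,t)=G(h)\eta(\alpha,s).
\end{equation*}
Granting this, the equation for $\eta_\lambda$ equals $\lambda^{-2}$ times the equation for $\eta$ at time $s$, which vanishes.

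The key step is the invariance of the Dirichlet-to-Neumann operator under dilation, together with its invariance under adding constants. I would argue directly from Definition~\ref{def:DN_star_shaped}.

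First, constants. The harmonic extension of $\eta+\ln\lambda$ is $\phi+\ln\lambda$, so the gradient is unchanged. Hence $G(h_\lambda)(\eta+\ln\lambda)=G(h_\lambda)\eta$.

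Second, dilation. The domain $\Omega_{h_\lambda}$ is $\lambda\Omega_h$. If $\phi$ is the harmonic extension of $\eta$ on $\Omega_h$, then $\phi_\lambda(x)=\phi(x/\lambda)$ is harmonic on $\lambda\Omega_h$. It has boundary value $\phi_\lambda(\lambda h(\alpha)e^{i\alpha})=\eta(\alpha)$ and finite Dirichlet energy, since the Dirichlet integral is dilation invariant in two dimensions. So by the uniqueness in Proposition~\ref{prop:DN_well_defined} it is the extension. We have $\nabla\phi_\lambda(\lambda z)=\lambda^{-1}\nabla\phi(z)$, while the unnormalized normal satisfies $N_{h_\lambda}=\lambda h e_r-\lambda h_\alpha e_\theta=\lambda N_h$. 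The factors cancel, giving $G(\lambda h)\eta=G(h)\eta$.

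As a cross-check, the same invariance is visible in the integral form \eqref{G_integral_summary}. The kernel \eqref{kernel_expression_2} is the $\alpha$-derivative of $\log(\lambda(h(\alpha)e^{i\alpha}-h(\beta)e^{i\beta}))$, and the additive $\log\lambda$ disappears under $\partial_\alpha$. Likewise $K^*[\lambda h]=K^*[h]$ by \eqref{K^*_expression_2}, so $\theta$ is unchanged.

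There is no serious obstacle here. The only point requiring care is the low-regularity interpretation: the identity must hold in whatever weak sense \eqref{interface_eqn} is understood. The variational definition (Definition~\ref{def:DN_general}) is compatible with this, since the change of variables $x\mapsto x/\lambda$ maps $\dot H^1$ to $\dot H^1$ isometrically in two dimensions. The weak pairing, after accounting for the parametrization by $\alpha$, thus transforms consistently.
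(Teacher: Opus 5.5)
Your proof is correct and follows the paper's route. The paper states the scaling law without a separate proof; its key input is the symmetry $G(e^{M_1}h)(\eta+M_2)=G(h)\eta$ of Proposition~\ref{scaling_symm}, used here with $M_1=M_2=\ln\lambda$. That proposition is proved by exactly your dilation-plus-constant argument ($\tilde N=e^{M_1}N$, $\nabla\tilde\phi=e^{-M_1}\nabla\phi$), after which the common factor $\lambda^{-2}$ coming from $\partial_t$ and from $e^{-2\eta}$ is immediate. Your restriction to $\lambda>0$ is the right reading, since $\ln\lambda$ appears in the statement; for $\lambda<0$ the curve $\lambda h e^{i\alpha}$ is the $|\lambda|$-dilate rotated by $\pi$, which is also covered by the rotation symmetry.
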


We note that 
\begin{equation}\label{eqn:critical_scaling}
\|\partial_\alpha\eta_\lambda\|_{L^\infty} = \|\partial_\alpha\eta\|_{L^\infty}\quad \text{for all}\ \lambda\neq 0,
\end{equation}
so $W^{1,\infty}(\T)$ is critical for the scaling of the interface equation \eqref{interface_eqn}.

\subsection{Properties of the Dirichlet-to-Neumann operator} We establish several properties of the Dirichlet-to-Neumann operator on star-shaped domains. Many of these could also be derived from the corresponding results for graph domains together with the equivalence between the graph and star-shaped Dirichlet-to-Neumann operators established in Appendix \ref{appendix:DN}. Nonetheless, proving them directly in the star-shaped setting provides a complementary perspective and better clarifies the underlying geometry.

First of all, we have the following basic symmetries of the Dirichlet-to-Neumann operator on star-shaped domains.
\begin{proposition}[Symmetries of Dirichlet-to-Neumann operator]\label{scaling_symm}
Let $\eta\in W^{1,\infty}(\T)$.
\begin{enumerate}
    \item (Scaling Symmetry) For $M_1, M_2\in\R$, we have
        \[G(e^{M_1}h)(\eta + M_2) = G(h)\eta \]
    \item (Rotation Symmetry) Let $\beta \in\T$. Suppose $\tilde h(\alpha) := h(\alpha - \beta)$ and $\tilde\eta := \log \tilde h$, then $G(\tilde h)\tilde\eta(\alpha) = G(h)\eta(\alpha - \beta)$.
\end{enumerate}
\end{proposition}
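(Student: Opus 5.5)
The plan is to argue directly from Definition~\ref{def:DN_star_shaped}, using that the Dirichlet problem and the unnormalized normal $N_h$ transform simply under dilations and rotations of the plane. Throughout, $\eta$ in $G(h)\eta$ is the boundary datum, viewed as a function of $\alpha$; the domain is determined by $h$ only.

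\emph{Scaling.} Let $\phi$ be the harmonic extension of $\eta$ to $\Omega_h$, given by Proposition~\ref{prop:DN_well_defined}. Set $\lambda:=e^{M_1}$ and
\[
\tilde\phi(x):=\phi(x/\lambda)+M_2 .
\]
This function is harmonic on $\Omega_{\lambda h}=\lambda\Omega_h$, and its gradient is in $L^2$, since the Dirichlet energy is dilation invariant in two dimensions. On the boundary, $\tilde\phi(\lambda h(\alpha)e^{i\alpha})=\eta(\alpha)+M_2$. By uniqueness in Proposition~\ref{prop:DN_well_defined} (uniqueness modulo constants is fixed by the trace), $\tilde\phi$ is the extension of $\eta+M_2$. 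Then $\nabla\tilde\phi(\lambda h e^{i\alpha})=\lambda^{-1}\nabla\phi(he^{i\alpha})$, while $N_{\lambda h}=\lambda h e_r-\lambda h_\alpha e_\theta=\lambda N_h$. The factors $\lambda$ and $\lambda^{-1}$ cancel, which gives (1).

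\emph{Rotation.} Let $R_\beta$ denote rotation by $\beta$. Then $\Omega_{\tilde h}=R_\beta\Omega_h$, because $\tilde h(\alpha)e^{i\alpha}=R_\beta\bigl(h(\alpha-\beta)e^{i(\alpha-\beta)}\bigr)$. Note that $\tilde\eta(\alpha)=\eta(\alpha-\beta)$. Define $\tilde\phi:=\phi\circ R_\beta^{-1}$. It is harmonic with finite energy, and its trace is $\tilde\phi(\tilde h(\alpha)e^{i\alpha})=\phi(h(\alpha-\beta)e^{i(\alpha-\beta)})=\eta(\alpha-\beta)=\tilde\eta(\alpha)$. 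So $\tilde\phi$ is the extension of $\tilde\eta$.

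For the gradient we have $\nabla\tilde\phi(R_\beta x)=R_\beta\nabla\phi(x)$. For the normal, $e_r(\alpha)=R_\beta e_r(\alpha-\beta)$ and $e_\theta(\alpha)=R_\beta e_\theta(\alpha-\beta)$, and $\tilde h_\alpha(\alpha)=h_\alpha(\alpha-\beta)$. Hence $N_{\tilde h}(\alpha)=R_\beta N_h(\alpha-\beta)$. Since rotations preserve dot products, $G(\tilde h)\tilde\eta(\alpha)=G(h)\eta(\alpha-\beta)$.

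\emph{Low regularity.} For Lipschitz $h$ and $\eta$, pointwise normal derivatives exist only a.e. or in the weak sense. The same identities then follow from Definition~\ref{def:DN_general} by changing variables in $\int\nabla\phi\cdot\nabla\psi$, where $\psi$ is an extension of a test function. Two points must be checked with care, and this is the only real obstacle: the surface-measure factor $d\sigma=|N_h|\,d\alpha$ must cancel correctly, and the $\alpha$-parametrized pairing must match the normalization of $G$ by $N_h$.

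Alternatively, one can verify both symmetries on the integral formula \eqref{G_integral_summary}. The kernels $\Lambda[h]$ and $K^*[h]$ in \eqref{kernel_expression_2} and \eqref{K^*_expression_2} are built from $\log\bigl(h(\alpha)e^{i\alpha}-h(\beta)e^{i\beta}\bigr)$, differentiated in $\alpha$ or $\beta$. Multiplying $h$ by $\lambda$ only adds the constant $\log\lambda$ inside this logarithm, and that constant is killed by the derivative. Rotation shifts both variables. Adding $M_2$ to $\eta$ does not change $\partial_\alpha\eta$, so $\theta$ is unchanged. This route is valid at Lipschitz regularity by Theorem~\ref{invertibility}.
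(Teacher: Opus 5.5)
Your proposal is correct and follows the paper's proof. For (1) the paper uses exactly your rescaled extension $\tilde\phi(x)=\phi(e^{-M_1}x)+M_2$ and the cancellation of $N_{e^{M_1}h}=e^{M_1}N_h$ against $\nabla\tilde\phi=e^{-M_1}\nabla\phi$. Part (2), which the paper dismisses as straightforward, is handled correctly by your conjugation with $R_\beta$. The low-regularity check you flag also goes through: since $|N_h|\,d\alpha=d\sigma$, the $d\alpha$-pairing of $G(h)f$ against a trace equals $\int_{\Omega_h}\nabla\phi\cdot\nabla\psi\,dx$ (as in Appendix~A), and this Dirichlet integral is invariant under dilations and rotations in two dimensions.
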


\begin{proof}
(2) is straightforward, so we only show (1). For simplicity of illustration we assume everything has enough regularity, but the proof can be readily adapted to a weak setting. Suppose $\phi(x, y)$ is the solution to the Dirichlet problem \eqref{eqn:Dirichlet_problem_star_shaped}. We note that $\tilde\phi(x, y) := \phi(e^{-M_1}x, e^{-M_1}y) + M_2$ solves the analogous Dirichlet boundary value problem with boundary parametrization $e^{M_1} h$ and boundary value $\eta + M_2$. At $e^{i\alpha}e^{M_1}h(\alpha)$, the unnormalized outward normal is
\begin{equation}
\begin{split}
\tilde N(e^{i\alpha}e^{M_1} h(\alpha)) &:= e^{i\alpha}e^{M_1} h(\alpha) - ie^{i\alpha} e^Mh'(\alpha) \\
                              &= e^{M_1} N(e^{i\alpha}h(\alpha)).
\end{split}
\end{equation}
We also have $\nabla\tilde\phi(e^{i\alpha}e^{M_1} h(\alpha)) = e^{-M_1}\nabla\phi(e^{i\alpha}h(\alpha))$, so it follows that
\begin{equation}
\begin{split}
G(e^{M_1} h)(\eta + M_2)(\alpha) &= \tilde N(e^{i\alpha}e^{M_1} h(\alpha))\cdot \nabla\tilde\phi(e^{i\alpha}e^{M_1} h(\alpha)) \\
                           &= N(e^{i\alpha}h(\alpha))\cdot \nabla\phi(e^{i\alpha}h(\alpha)) \\
                           &= G(h)\eta(\alpha)
\end{split}
\end{equation}
as desired.
\end{proof}

Next we present a few results that relies on maximum principle. Since we might work in lower boundary regularity than in classical regimes, we need a variational version of the maximum principle.

\begin{lemma}[Weak maximum principle]\label{lem:weak_max_principle}
Let $\Omega\subset\mathbb R^2$ be a bounded Lipschitz domain. Suppose
$u,v\in H^1(\Omega)$ satisfy
\begin{equation}\label{eq:weak-harmonic-u-v}
\int_\Omega \nabla u\cdot\nabla\varphi\,dx
=
\int_\Omega \nabla v\cdot\nabla\varphi\,dx=0
\quad
\forall\,\varphi\in H^1_0(\Omega)
\end{equation}
and $\tr\ u\le
\tr\ v$ a.e. on $\partial\Omega$, then $u\leq v$ a.e. in $\Omega$.
\end{lemma}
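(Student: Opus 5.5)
The plan is the standard Stampacchia truncation argument for weak solutions. Set $w:=u-v\in H^1(\Omega)$. Subtracting the two identities in \eqref{eq:weak-harmonic-u-v} shows that $w$ is weakly harmonic:
\[
\int_\Omega \nabla w\cdot\nabla\varphi\,dx=0\quad\text{for all }\varphi\in H^1_0(\Omega),
\]
and by linearity of the trace we have $\tr\, w\le 0$ a.e. on $\partial\Omega$. The goal is to show $w^+:=\max\{w,0\}=0$ a.e. in $\Omega$.

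First, I will use the chain rule for Lipschitz functions of Sobolev functions. It gives $w^+\in H^1(\Omega)$ with
\[
\nabla w^+=\chi_{\{w>0\}}\nabla w \quad\text{a.e.}
\]

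Second, I will check that $w^+\in H^1_0(\Omega)$. On a bounded Lipschitz domain the trace commutes with the positive part: $\tr(w^+)=(\tr\, w)^+$. One way to see this is to approximate $w$ in $H^1$ by $w_k\in C^\infty(\overline\Omega)$. Then $w_k^+\to w^+$ in $H^1$, because the map $f\mapsto f^+$ is continuous on $H^1$. Hence $\tr(w_k^+)=(w_k|_{\partial\Omega})^+\to(\tr\, w)^+$ in $L^2(\partial\Omega)$, by continuity of the trace and the $1$-Lipschitz property of $s\mapsto s^+$. Since $\tr\, w\le 0$, it follows that $\tr(w^+)=0$. On Lipschitz domains the kernel of the trace operator is exactly $H^1_0(\Omega)$, so $w^+\in H^1_0(\Omega)$.

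Third, I will test the weak harmonicity with $\varphi=w^+$. This gives
\[
0=\int_\Omega\nabla w\cdot\nabla w^+\,dx=\int_\Omega|\nabla w^+|^2\,dx,
\]
so $w^+$ is constant on the connected domain $\Omega$. By the Poincaré inequality on $H^1_0(\Omega)$ (valid because $\Omega$ is bounded), the only constant in $H^1_0(\Omega)$ is $0$. Hence $w^+=0$, that is, $u\le v$ a.e.

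I expect the main obstacle to be the second step: justifying $\tr(w^+)=(\tr\, w)^+$ and the characterization of $H^1_0(\Omega)$ as the kernel of the trace at Lipschitz boundary regularity. Both are standard facts (see \cite{Grisvard1985} or \cite{McLean2000}), and I will cite them rather than reprove them.
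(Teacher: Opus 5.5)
Your proposal is correct and is essentially the paper's own proof. Both take $w=u-v$, which is weakly harmonic with $\tr\, w\le 0$, observe that $w^+\in H^1_0(\Omega)$ by truncation, and test with $\varphi=w^+$ to get $\nabla w^+=0$ and hence $w^+=0$. The only differences are that you justify $\tr(w^+)=(\tr\, w)^+$ by smooth approximation, which the paper attributes to the truncation property without detail, and that you use Poincaré rather than the zero-trace argument to rule out a nonzero constant.
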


\begin{proof}
Set $w:=u-v$. Then
\begin{equation}\label{eq:weak-harmonic-w}
\int_\Omega \nabla w\cdot\nabla\varphi\,dx=0
\quad
\forall\,\varphi\in H^1_0(\Omega).
\end{equation}
and $\tr\ w \leq 0$. Hence $w^+:=\max\{w,0\}$ has zero trace and by truncation property of Sobolev functions $w^+\in H^1_0(\Omega)$. Taking
$\varphi=w^+$ in \eqref{eq:weak-harmonic-w} gives
\begin{equation}\label{eq:w-plus-energy-zero}
0=\int_\Omega \nabla w\cdot\nabla w^+\,dx
=\int_{\{w>0\}}|\nabla w|^2\,dx
=\int_\Omega |\nabla w^+|^2\,dx.
\end{equation}
and thus $w^+$ is constant on each connected component of $\Omega$. We then get $w^+\equiv0$ since it has zero trace, and the lemma immediately follows.
\end{proof}

We have the following pointwise comparison principle. The pointwise $C^{1,1}$ regularity is crucial in the analysis of viscosity solutions in section \ref{sec:visc_sol}. 
\begin{proposition}[Pointwise comparison principle] \label{pointwise_cp}
Let $h_1, h_2\in W^{1,\infty}(\T)$ be bounded away from the origin and we define $\eta_i = \log h_i$ for $i = 1, 2$. Suppose $\eta_1\leq \eta_2$, $\eta_1(\alpha_0) = \eta_2(\alpha_0)$, and $\eta_i$ is $C^{1,1}$ at $\alpha_0$ for $i = 1, 2$, then
\begin{equation}
G(h_1)\eta_1(\alpha_0)\geq G(h_2)\eta_2(\alpha_0).
\end{equation}
where both sides are classically well-defined.
\end{proposition}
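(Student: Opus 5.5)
Write $h_i=e^{\eta_i}$ and $\Omega_i:=\Omega_{h_i}$. Since $\eta_1\le\eta_2$ we have $h_1\le h_2$, hence $\Omega_1\subset\Omega_2$, and the two boundaries touch at $z_0:=h_1(\alpha_0)e^{i\alpha_0}=h_2(\alpha_0)e^{i\alpha_0}$. The boundary data $\eta_i=\log h_i$ is the trace of $\log|x|$ on $\Sigma_i$. So the harmonic extension $\phi_i$ of $\eta_i$ differs from $\log|x|$ by the pressure:
\[
p_i:=\phi_i-\log|x|
\]
solves $-\Delta p_i=2\pi\delta$ in $\Omega_i$, with $p_i=0$ on $\Sigma_i$. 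I will work with the $p_i$ rather than the $\phi_i$.

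\textbf{Step 1: sign and ordering of the pressures.} By the weak maximum principle (Lemma \ref{lem:weak_max_principle}), applied after removing the singularity, we get $p_1,p_2>0$ in their domains. On $\Sigma_1$ we have $p_1=0\le p_2$, and $p_2-p_1=\phi_2-\phi_1$ is harmonic in $\Omega_1$ (the singularity cancels). Lemma \ref{lem:weak_max_principle} then gives $p_1\le p_2$ in $\Omega_1$, so $w:=p_2-p_1\ge0$ is harmonic in $\Omega_1$ with $w(z_0)=0$.

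\textbf{Step 2: rewrite $G$ in terms of $p$.} Since $N_h\cdot\nabla\log|x|=N_h\cdot x/|x|^2=1$ on $\Sigma_h$, we have
\[
G(h_i)\eta_i(\alpha_0)=1+N_{h_i}(\alpha_0)\cdot\nabla p_i(z_0).
\]
The $C^{1,1}$ hypothesis at $\alpha_0$ together with $\eta_1\le\eta_2$ and equality at $\alpha_0$ forces $\partial_\alpha\eta_1(\alpha_0)=\partial_\alpha\eta_2(\alpha_0)$. Hence the two unnormalized normals coincide: $N_{h_1}(\alpha_0)=N_{h_2}(\alpha_0)=:N$. Since $p_i=0$ on $\Sigma_i$ and $p_i>0$ inside, $\nabla p_i(z_0)=-|\nabla p_i(z_0)|\,n$. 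The claim therefore reduces to
\[
|\nabla p_1(z_0)|\le|\nabla p_2(z_0)|,
\]
which is equivalent to $\partial_n w(z_0)\le 0$. This follows formally from $w\ge0$ and $w(z_0)=0$.

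\textbf{Step 3: the main obstacle is that both sides are classically defined and the comparison is legitimate on a merely Lipschitz domain.} Here the pointwise $C^{1,1}$ assumption does the work. The boundary $\Sigma_i$ near $z_0$ is squeezed between an interior and an exterior tangent disk at $z_0$. The disks come from the quadratic upper/lower bounds $|\eta_i(\alpha)-\eta_i(\alpha_0)-v(\alpha-\alpha_0)|\le M|\alpha-\alpha_0|^2$, transferred to Cartesian coordinates. I would then use barriers on these disks to show that $p_i$ is differentiable at $z_0$ in the sense $p_i(x)=c_i\,(x-z_0)\cdot(-n)+o(|x-z_0|)$ nontangentially:
\begin{enumerate}[(a)]
\item Hopf's lemma on the interior disk gives $p_i\ge c|x-z_0|$ along the normal.
\item A comparison with the Green's function of the complement of the exterior disk, via Lemma \ref{lem:weak_max_principle} on a small neighborhood, gives $p_i\le C\,\mathrm{dist}$.
\item A standard blow-up/Harnack argument (rescalings converge to a half-plane linear function) identifies the limit slope $c_i$.
\end{enumerate}
With this expansion, $G(h_i)\eta_i(\alpha_0)=1-|N|c_i$ is classically defined. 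Finally, $w\ge0$ gives $c_2-c_1\ge0$ directly: along the inward normal ray, $0\le w(z_0-sn)=(c_2-c_1)s+o(s)$. This yields $G(h_1)\eta_1(\alpha_0)\ge G(h_2)\eta_2(\alpha_0)$.
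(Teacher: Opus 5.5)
Your proposal is correct and follows the paper's proof essentially step for step. You pass to the pressures $p_i=\phi_i-\log|x|$, order them by the weak maximum principle to get $p_1\le p_2$ in $\Omega_{h_1}$ (the paper writes this as $p_1-p_2\le 0$), observe that the unnormalized normals coincide because $\partial_\alpha\eta_1(\alpha_0)=\partial_\alpha\eta_2(\alpha_0)$, and read off the sign of the normal derivative of $p_2-p_1$ at the touching point. The only difference is that the paper simply cites Lemma 11.17 of Caffarelli--Salsa (or Theorem 2.12 of Dong--Gancedo--Nguyen) for the classical existence of the normal derivative at a pointwise $C^{1,1}$ boundary point, whereas you sketch that fact yourself via tangent disks, barriers and a blow-up, which also makes the final one-sided comparison along the inward normal explicit.
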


\begin{proof}
Let $\phi_i$ be the harmonic extension of $\eta_i$ to $\Omega_{h_i}$ for $i = 1, 2$. We define
\[p_i(x, y) := \phi_i(x, y) - \frac{1}{2} \log(x^2 + y^2). \]
Then $p_i = 0$ on $\Sigma_{h_i}$ and $-\Delta p_i = 2\pi\delta$ on $\Omega_{h_i}$ for $i = 1, 2$, at least in a distributional sense. For convenience of notation, we write $N_i(\alpha) := N(e^{i\alpha}h_i(\alpha))$. 

Since $p_i$ is harmonic near $h_i(\alpha_0)e^{i\alpha_0}$ and vanishes on $\Sigma_{h_i}$, and $\Sigma_{h_i}$ is $C^{1,1}$ at $h_i(\alpha_0)e^{i\alpha_0}$, 
\begin{equation}
\begin{split}
G(h_i)\eta_i(\alpha_0) &= N_i(\alpha_0)\cdot \nabla\phi_i(e^{i\alpha_0}h_i(\alpha_0)) \\
                     &= N_i(\alpha_0)\cdot\nabla p_i(e^{i\alpha_0}h_i(\alpha_0)) + 1
\end{split}
\end{equation}
is classically well-defined (see, for example, Lemma 11.17 of \cite{CaffarelliSalsa2005} or Theorem 2.12 of \cite{Dong-Gancedo-Nguyen-23}). 

We first note that $p_i\geq 0$ in $\Omega_{h_i}$. Indeed, applying the maximum principle to $p_i$ in the punctured domain $\Omega_{h_i}\setminus B_\varepsilon(0)$ and then sending $\varepsilon\to 0$, using the fact that $p_i\to +\infty$ as $z\to 0$, gives the claim.

We have
\begin{equation}
G(h_1)\eta_1(\alpha_0) - G(h_2)\eta_2(\alpha_0) = N_1(\alpha_0)\cdot \nabla p_1(e^{i\alpha_0}h_1(\alpha_0)) - N_2(\alpha_0)\cdot \nabla p_2(e^{i\alpha_0} h_2(\alpha_0)).
\end{equation}
Note that $\alpha_0\in \T$ is a local maximum of $h_1 - h_2$, so $h_1'(\alpha_0) = h_2'(\alpha_0)$. Also by assumption $h_1(\alpha_0) = h_2(\alpha_0)$. Remember that $N_i(\alpha_0) = h_i(\alpha_0)e^{i\alpha_0} - ih_i'(\alpha_0)e^{i\alpha_0}$, so we have $N_1(\alpha_0) = N_2(\alpha_0)$. We denote both by $N(\alpha_0)$ for convenience. Then
\begin{equation}
G(h_1)\eta_1(\alpha_0) - G(h_2)\eta_2(\alpha_0) = N(\alpha_0)\cdot \nabla (p_1 - p_2)(e^{i\alpha_0}h_1(\alpha_0)).
\end{equation}
We note that $p_1 - p_2 = \phi_1 - \phi_2$ is harmonic within $\Omega_{h_1}$, and 
\begin{equation}
p_1 - p_2 = -p_2 \leq 0
\end{equation}
on $\Sigma_{h_1}$, so $p_1 - p_2\leq 0$ in $\Omega_{h_1}$ by maximum principle Lemma \ref{lem:weak_max_principle} applied to $p_1 - p_2$. Thus 
\begin{equation}
0\leq N(\alpha_0)\cdot \nabla (p_1 - p_2)(e^{i\alpha_0}h_1(\alpha_0)) = G(h_1)\eta_1(\alpha_0) - G(h_2)\eta_2(\alpha_0).
\end{equation}
as desired. Here the outward normal is well-defined since $\eta_i$ is $C^{1,1}$ at $\alpha_0$ for $i = 1, 2$.
\end{proof}

We next show the Taylor sign condition, which plays a crucial role for us to establish the comparison principle of the interface equation \eqref{interface_eqn}. Such Taylor sign condition was first observed in the study of water wave equations by Wu \cites{Wu1997WaterWaves2D, Wu1999WaterWaves3D}.

\begin{proposition}[Taylor sign condition]\label{taylor_sign}
Suppose $\eta\in \lip(\T)$ and $\eta$ is $C^{1,1}$ at $\alpha_0$. Let $h = e^\eta$. Then 
\begin{equation}
G(h)\eta - 1 \leq 0\quad \mathrm{at}\ \alpha_0
\end{equation}
\end{proposition}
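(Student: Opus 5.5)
We follow the proof of Proposition~\ref{pointwise_cp}, working with the pressure $p:=\phi-\log|x|$, where $\phi$ is the harmonic extension of $\eta$ to $\Omega_h$.

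On $\Sigma_h$ we have $\phi=\eta=\log h=\log|x|$, so $p=0$ there. Moreover $-\Delta p=2\pi\delta$ in $\Omega_h$. As in the proof of Proposition~\ref{pointwise_cp}, the weak maximum principle (Lemma~\ref{lem:weak_max_principle}) applied on $\Omega_h\setminus B_\varepsilon(0)$, followed by $\varepsilon\to0$, gives $p\ge 0$ in $\Omega_h$; this uses $p\to+\infty$ at the origin.

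Since $\eta$ is $C^{1,1}$ at $\alpha_0$, the point $z_0=h(\alpha_0)e^{i\alpha_0}$ is a point where $\Sigma_h$ is $C^{1,1}$. Hence $\nabla p(z_0)$ exists classically, by the references cited in the proof of Proposition~\ref{pointwise_cp} (Lemma 11.17 of \cite{CaffarelliSalsa2005}). Because $p$ vanishes on $\Sigma_h$, its tangential derivative vanishes at $z_0$. So $\nabla p(z_0)$ is parallel to $N_h(\alpha_0)$, and we can write
\[
N_h(\alpha_0)\cdot\nabla p(z_0)=|N_h(\alpha_0)|\,\partial_n p(z_0).
\]
Because $p\ge0$ inside and $p=0$ on the boundary, the outward normal derivative satisfies $\partial_n p(z_0)\le 0$.

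Using $N_h\cdot z/|z|^2=1$ on $\Sigma_h$, which was computed in the derivation of the kinematic condition, we get
\[
G(h)\eta(\alpha_0)-1=N_h(\alpha_0)\cdot\nabla\phi(z_0)-1=N_h(\alpha_0)\cdot\nabla p(z_0)\le 0 .
\]

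The main point requiring care is justifying that $\nabla p(z_0)$ exists and points along the normal when the boundary is only pointwise $C^{1,1}$ and globally Lipschitz. We do this by a barrier argument. Place an interior tangent disk $B\subset\Omega_h$ at $z_0$, which exists by the pointwise $C^{1,1}$ condition, and an exterior tangent disk. Then compare $p$ with radial harmonic barriers in the corresponding annuli. This shows $p(z)\le C\,\mathrm{dist}(z,\Sigma_h)$ near $z_0$, so the normal derivative is finite. The cited regularity lemma then gives the asymptotic expansion $p(z)=a\,(z-z_0)\cdot(-n)^{+}+o(|z-z_0|)$ with $a\ge0$, which yields the sign.

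If we only want the inequality and not the existence of the derivative, the sign alone follows more directly. Since $p\ge0$ in $\Omega_h$ and $p(z_0)=0$, every one-sided difference quotient of $p$ along $-n$ from $z_0$ is nonnegative, so $\partial_n p(z_0)\le0$ whenever the derivative exists classically.
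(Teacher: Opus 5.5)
Your proposal is correct and follows essentially the same route as the paper: set $p=\phi-\log|x|$, get $p\ge 0$ from the weak maximum principle on a punctured domain, take classical existence of the normal derivative at $z_0$ from Caffarelli--Salsa Lemma 11.17, and read off $G(h)\eta(\alpha_0)-1=N_h(\alpha_0)\cdot\nabla p(z_0)\le 0$ from the fact that $z_0$ is a minimum point of $p$. The only differences are cosmetic: the paper fixes one small $\varepsilon$ with $p\ge 1$ on $\partial B(0,\varepsilon)$ instead of letting $\varepsilon\to 0$, and your extra barrier sketch is not needed given the citation (there, the exterior-disk barrier actually gives $p(z)\le C|z-z_0|$ near $z_0$, not a bound by $\mathrm{dist}(z,\Sigma_h)$).
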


\begin{proof}
Let $\phi$ be a variational solution to the Dirichlet boundary value problem 
\begin{equation}
\left\{
\begin{array}{ll}
 \Delta\phi = 0 & \mathrm{in}\ \Omega_{h}  \\
 \phi = \eta & \mathrm{on}\ \Sigma_h
\end{array}
\right.
\end{equation}
which we know is well-defined since the boundary and $\eta$ are both Lipschitz. We consider
\[p(x, y) := \phi(x, y) - \frac{1}{2}\log(x^2 + y^2)\]
Then $p$ solves the Dirichlet boundary value problem
\begin{equation}
\left\{
\begin{array}{ll}
 -\Delta p = 2\pi\delta & \mathrm{in}\ \Omega_{h}  \\
 p = 0 & \mathrm{on}\ \Sigma_h
\end{array}
\right.
\end{equation}
at least in a distributional sense. First of all, since $\eta$ is $C^{1, 1}$ at $\alpha_0$, 
\begin{equation}
\begin{aligned}
\frac{\partial p}{\partial N}(e^{i\alpha_0}h(\alpha_0)) &= \frac{\partial\phi}{\partial N}(e^{i\alpha_0}h(\alpha_0)) - N(\alpha_0)\cdot \frac{e^{i\alpha_0}}{h(\alpha_0)}\\
&= G(h)\eta(\alpha_0) - 1
\end{aligned}
\end{equation}
is classically well-defined at $e^{i\alpha_0}h(\alpha_0)$. Based on the definition of $p$, we can find some small enough $\varepsilon > 0$ such that $\overline{B(0, \varepsilon)}\subset \Omega_h$, and $p \geq 1$ on $\partial B(0, \varepsilon)$. Now, $p$ is (distributionally) harmonic on $\Omega_h\bs \overline{B(0,\varepsilon)}$, $p \geq 1$ on $\partial B(0, \varepsilon)$, and $p = 0$ on $\Sigma_h$. It follows from maximum principle Lemma \ref{lem:weak_max_principle} that $\alpha_0$ must be a local minimum, and we have
\begin{equation}
\frac{\partial p}{\partial N}(e^{i\alpha_0}h(\alpha_0))\leq 0
\end{equation}
as desired.
\end{proof}

Layer-potential theory gives rise to $L^2$-boundedness of the Dirichlet-to-Neumann operator. We have the following bound of $(\frac{1}{2}I \pm K)^{-1}$ on $L^2(\T)$ as a corollary of Theorem \ref{thm:quantitative_bound_layer_potential}. Note that the constants here are by no means optimized.

\begin{proposition}\label{inverse_layer_potential_star_shape}
Suppose $\partial\Omega = \{e^{\eta(\alpha)}e^{i\alpha}:\alpha\in \T\}$, where $\eta\in \lip(\T)$, and $K$ be the corresponding boundary double-layer potential. Then
\begin{equation}
\bigg\|\bigg(\frac{1}{2}I \pm K^*\bigg)^{-1} f\bigg\|_{L^2}\leq Ce^{4(1+\|\eta\|_{L^\infty})}(1 + \|\partial_\alpha\eta\|_{L^\infty}^2)^{\frac{3}{2}}\|f\|_{L^2}.
\end{equation}
\end{proposition}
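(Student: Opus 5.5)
The plan is to apply Theorem~\ref{thm:quantitative_bound_layer_potential} directly. This requires exhibiting an explicit vector field $V$ adapted to the star-shaped geometry and tracking the constants $c_0$, $C_0$, $r_0$ and $|\Sigma|$ in terms of $\|\eta\|_{L^\infty}$ and $\|\partial_\alpha\eta\|_{L^\infty}$. We then convert the resulting $L^2(\Sigma)$ bound, taken with respect to arclength, into an $L^2(\T)$ bound in the parameter $\alpha$.

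The natural choice is the radial field. Set $V(x) := \chi(|x|)\,x/|x|$, where $\chi$ is a smooth cutoff with $\chi\equiv 1$ on the annulus $[e^{-\|\eta\|_{L^\infty}}, e^{\|\eta\|_{L^\infty}}]$, which contains $\Sigma$, and supported in a slightly larger annulus, say $[\tfrac12 e^{-\|\eta\|_{L^\infty}}, 2e^{\|\eta\|_{L^\infty}}]$.

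\begin{itemize}
\item \emph{Lower bound on $V\cdot n$.} Since $N = h e_r - h_\alpha e_\theta$ and $h_\alpha/h = \eta_\alpha$, on $\Sigma$ we get
\[
V\cdot n = \frac{h}{\sqrt{h^2+h_\alpha^2}} = (1+\eta_\alpha^2)^{-1/2}.
\]
So we may take $c_0 = (1+\|\partial_\alpha\eta\|_{L^\infty}^2)^{-1/2}$.
\item \emph{Size of $V$.} Clearly $C_0 = 1$.
\item \emph{Gradient of $V$.} We have $|\nabla(x/|x|)| \lesssim 1/|x|$, and $|\chi'| \lesssim e^{\|\eta\|_{L^\infty}}$ on the support. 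Hence $r_0^{-1} \lesssim e^{\|\eta\|_{L^\infty}}$.
\item \emph{Length of the boundary.} The arclength element is $d\sigma = \sqrt{h^2+h_\alpha^2}\,d\alpha = e^{\eta}\sqrt{1+\eta_\alpha^2}\,d\alpha$, so
\[
|\Sigma| \le 2\pi e^{\|\eta\|_{L^\infty}}(1+\|\partial_\alpha\eta\|_{L^\infty}^2)^{1/2}.
\]
\end{itemize}

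Inserting these into Theorem~\ref{thm:quantitative_bound_layer_potential} gives
\[
M \lesssim (1+\|\partial_\alpha\eta\|^2_{L^\infty})^{1/2}\,\Bigl[1 + e^{4\|\eta\|_{L^\infty}}(1+\|\partial_\alpha\eta\|_{L^\infty}^2)\Bigr]^{1/2} \lesssim e^{2\|\eta\|_{L^\infty}}(1+\|\partial_\alpha\eta\|_{L^\infty}^2).
\]
This is the bound on $L^2(\Sigma,d\sigma)$ for mean-zero data.

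It remains to pass to $L^2(\T)$. Here one has to be careful about which adjoint is meant. When $K^*$ is regarded as an operator on functions of $\alpha$ (formula \eqref{K^*_expression_1}), it is the arclength operator conjugated by the density $J(\alpha) := |z'(\alpha)| = e^\eta\sqrt{1+\eta_\alpha^2}$. Precisely, if $\tilde K^*$ denotes the intrinsic adjoint double layer acting on functions on $\Sigma$, then the parametric operator is $J\,\tilde K^*\,J^{-1}$. This conjugation is what makes $\theta$ correspond to $\Theta\, J$.

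Consequently $(\tfrac12 \pm K^*)^{-1} = J\,(\tfrac12\pm \tilde K^*)^{-1} J^{-1}$. To bound this on $L^2(\T)$, we change measure twice, using the weight $J$ in $\|g\|^2_{L^2(\Sigma)} = \int |g|^2 J\,d\alpha$. Each change of measure costs a factor of $\sup J$ or $\sup J^{-1}$ at the level of squared norms. Multiplying by $J$ or $J^{-1}$ costs a further factor of $\sup J$ or $\sup J^{-1}$. Altogether the operator norm picks up at most a factor of
\[
\bigl(\sup J/\inf J\bigr)^{3/2} \lesssim e^{3\|\eta\|_{L^\infty}}\bigl(1+\|\partial_\alpha\eta\|^2_{L^\infty}\bigr)^{3/4}.
\]
Combined with the bound on $M$, this is comfortably within the stated constant $Ce^{4(1+\|\eta\|_{L^\infty})}(1+\|\partial_\alpha\eta\|_{L^\infty}^2)^{3/2}$. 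Since the constants are deliberately not optimized, we can be generous.

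The main obstacle is the mean-zero condition. Theorem~\ref{thm:quantitative_bound_layer_potential} is stated for mean-zero $f$ on $\Sigma$, so the bookkeeping has to check that mean zero with respect to $d\sigma$ corresponds to the relevant condition after the conjugation by $J$. In the application, $f=\partial_\alpha g$ has $\int_\T f\,d\alpha = 0$, and $J^{-1}f$ has mean zero with respect to $d\sigma$, which is exactly what is needed. So I would state and use the bound for such $f$, handling the $+$ sign in the same way through the symmetric argument in that theorem. I would also double-check the sign and orientation conventions in identifying the parametric $K^*$ with $J\tilde K^* J^{-1}$, by comparing \eqref{K^*_expression_1} with the intrinsic kernel.
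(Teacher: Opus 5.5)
Your route is the same as the paper's: apply Theorem~\ref{thm:quantitative_bound_layer_potential} with a radial cutoff field, then convert from arclength to $d\alpha$. The structural steps are sound. The identity $K^*[h]=J\tilde K^*J^{-1}$ holds because the parametrized $K[h]$ is the intrinsic double layer (since $n\,d\sigma=-iz_\beta\,d\beta$), and the $L^2(d\alpha)$-adjoint of an operator is $J$ times its $L^2(d\sigma)$-adjoint times $J^{-1}$. Also, $J^{-1}f$ is $d\sigma$-mean-zero exactly when $f$ is $d\alpha$-mean-zero.

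However, your last step does not give the stated constant. Your $M\lesssim e^{2\|\eta\|_{L^\infty}}(1+\|\partial_\alpha\eta\|_{L^\infty}^2)$ times your conversion factor $(\sup J/\inf J)^{3/2}\lesssim e^{3\|\eta\|_{L^\infty}}(1+\|\partial_\alpha\eta\|_{L^\infty}^2)^{3/4}$ equals $e^{5\|\eta\|_{L^\infty}}(1+\|\partial_\alpha\eta\|_{L^\infty}^2)^{7/4}$. This is \emph{not} bounded by $Ce^{4(1+\|\eta\|_{L^\infty})}(1+\|\partial_\alpha\eta\|_{L^\infty}^2)^{3/2}$, since both exponents are too large, so ``comfortably within'' is false.

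The fix is to do the change of measure exactly rather than factor by factor. For $u=(\tfrac12\pm\tilde K^*)^{-1}(J^{-1}f)$ you have the identities $\|Ju\|_{L^2(d\alpha)}=\|J^{1/2}u\|_{L^2(d\sigma)}$ and $\|J^{-1}f\|_{L^2(d\sigma)}=\|J^{-1/2}f\|_{L^2(d\alpha)}$; equivalently, $u\mapsto J^{1/2}u$ is unitary from $L^2(\Sigma,d\sigma)$ onto $L^2(\T,d\alpha)$. So the conversion costs only $(\sup J/\inf J)^{1/2}\le e^{\|\eta\|_{L^\infty}}(1+\|\partial_\alpha\eta\|_{L^\infty}^2)^{1/4}$. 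The total $e^{3\|\eta\|_{L^\infty}}(1+\|\partial_\alpha\eta\|_{L^\infty}^2)^{5/4}$ then fits under the stated bound.

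With that repair, your argument is tighter than the paper's in several places:
\begin{itemize}
\item The paper takes $V=e^{i\alpha}\chi(r)$ with $\chi\equiv1$ near $r=0$. This field is discontinuous at the origin, so the Rellich identity on $\Omega$ is not justified as written. Your inner cutoff repairs this, at the cost of $r_0^{-1}\lesssim e^{\|\eta\|_{L^\infty}}$.
\item The paper writes $V\cdot n=e^{2\eta}(1+\eta_\alpha^2)^{-1/2}$; your value $(1+\eta_\alpha^2)^{-1/2}$ is the correct one.
\item The paper identifies $(\tfrac12 I\pm K^*)\tilde g=\tilde f$ with $\tilde g=g\circ z^{-1}$ without the Jacobian conjugation you make explicit.
\item The paper's displayed chain ends at $e^{6\|\eta\|_{L^\infty}+1}$, which itself does not match the stated $e^{4(1+\|\eta\|_{L^\infty})}$. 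The exact measure count above is what actually yields the stated form.
\end{itemize}
Your restriction to $d\alpha$-mean-zero data is exactly what the invoked theorem covers, and it is all that is used later, for $\theta=(\tfrac12 I-K^*)^{-1}\partial_\alpha g$. Without it, one of $\tfrac12 I\pm K^*$ is not invertible on $L^2(\T)$ at all, so the statement implicitly needs this hypothesis.
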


\begin{proof}
Write $g := (\frac{1}{2}I \pm K^*)^{-1}f$. We also define $\tilde g$ and $\tilde f$ such that $\tilde g(e^{\eta(\alpha)}e^{i\alpha}) = g(\alpha)$ and $\tilde f(e^{\eta(\alpha)}e^{i\alpha}) = f(\alpha)$, 
so we have the identification $(\frac{1}{2}I \pm K^*)\tilde{g} = \tilde{f}$. We choose $V(re^{i\alpha}) = e^{i\alpha}\chi(r)$, where $\chi$ is a smooth cutoff function satisfying $\chi = 1$ for $r\leq e^{\|\eta\|_{L^\infty}} + 1$ and $\chi = 0$ for $r > e^{\|\eta\|_{L^\infty}} + 2$. Then  $\|V\|_{C^1}\leq C$ for some absolute constant $C$, $V\cdot n = e^{2\eta}(1 + (\partial_\alpha\eta)^2)^{-\frac{1}{2}}$, and 
\begin{equation}
|\partial\Omega|\leq Ce^{\|\eta\|_{L^\infty}}(1 + \|\partial_\alpha\eta\|_{L^\infty}^2)^{\frac{1}{2}}.
\end{equation}
Therefore, by Theorem \ref{thm:quantitative_bound_layer_potential} we have
\begin{equation}
\|\tilde g\|_{L^2(\partial\Omega)}\leq Ce^{2\|\eta\|_{L^\infty}}(1 + \|\partial_\alpha\eta\|_{L^\infty}^2)^{\frac{1}{2}}\big(1 + e^{2\|\eta\|_{L^\infty}}(1 + \|\partial_\alpha\eta\|_{L^\infty}^2)\big)^{\frac{1}{2}}\|\tilde f\|_{L^2(\partial\Omega)},
\end{equation}
and a straightforward change of variable gives
\begin{equation}
\begin{aligned}
\|g\|_{L^2} &\leq Ce^{4\|\eta\|_{L^\infty}}(1 + \|\partial_\alpha\eta\|_{L^\infty}^2)\big(1 + e^{2\|\eta\|_{L^\infty}}(1 + \|\partial_\alpha\eta\|_{L^\infty}^2)\big)^{\frac{1}{2}}\|f\|_{L^2}\\
&\leq Ce^{6\|\eta\|_{L^\infty} + 1}(1 + \|\partial_\alpha\eta\|_{L^\infty}^2)^{\frac{3}{2}}\|f\|_{L^2},
\end{aligned}
\end{equation}
as desired.
\end{proof}

Following the proof of Proposition \ref{inverse_layer_potential_star_shape}, we also have the following result on the $H^1(\T)\to L^2(\T)$ boundedness of the Dirichlet-to-Neumann operator.

\begin{proposition}[Order one $L^2$-boundedness]\label{prop:DN_order_1_bdd}
Suppose $f\in \lip(\T)$ and $\partial_\alpha g\in L^2(\T)$, then
\begin{equation}
\|G(e^f)g\|_{L^2}\leq Ce^{10(1+\|f\|_{L^\infty})}(1 + \|\partial_\alpha f\|_{L^\infty}^2)^{\frac{7}{2}}\|\partial_\alpha g\|_{L^2}
\end{equation}
\end{proposition}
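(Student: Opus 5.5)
We would prove Proposition~\ref{prop:DN_order_1_bdd} from the representation \eqref{G_integral_summary}--\eqref{theta_eqn} and the intrinsic form \eqref{integral_dn}. The plan has two steps. First, bound the density $\theta$ in $L^2$ using Proposition~\ref{inverse_layer_potential_star_shape}. Second, bound the singular integral with kernel $\Lambda[h]$ on $L^2$, using the Cauchy-integral structure of the kernel on the Lipschitz curve $\Sigma_h$. Throughout, $h=e^f$ and $z(\alpha)=h(\alpha)e^{i\alpha}$.

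\textbf{Step 1 (density).} By Lemma~\ref{theta_lemma}, $\theta=(\tfrac12 I-K^*[h])^{-1}\partial_\alpha g$. Since $\partial_\alpha g$ has mean zero on $\T$, we may invoke the mean-zero invertibility from Theorem~\ref{invertibility} and the quantitative bound of Proposition~\ref{inverse_layer_potential_star_shape}. This gives
\[
\|\theta\|_{L^2}\le Ce^{6(1+\|f\|_{L^\infty})}(1+\|\partial_\alpha f\|_{L^\infty}^2)^{\frac32}\|\partial_\alpha g\|_{L^2}.
\]
A small point needs checking here. The proof of Proposition~\ref{inverse_layer_potential_star_shape} works with the arclength density $\Theta$ and the intrinsic mean-zero condition on $\Sigma$. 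In parametrized form, $\theta(\beta)=\Theta(z(\beta))|z'(\beta)|$, so the mean-zero conditions agree. The change of variables between the two costs a factor of at most $e^{\|f\|_{L^\infty}}(1+\|\partial_\alpha f\|_{L^\infty}^2)^{1/2}$ in each direction.

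\textbf{Step 2 (singular integral).} By \eqref{integral_dn}, $G(h)g(\alpha)=-|z'(\alpha)|\,\tau\cdot\nabla\mathcal S\Theta(z(\alpha))$, with $|z'(\alpha)|\le e^{\|f\|_{L^\infty}}(1+\|\partial_\alpha f\|^2_{L^\infty})^{1/2}$. Using \eqref{kernel_expression_2}, the operator is
\[
\theta\mapsto \frac{1}{2\pi}\operatorname{Re}\,\pv\int_\T\frac{z'(\alpha)}{z(\alpha)-z(\beta)}\theta(\beta)\,d\beta ,
\]
which is $\operatorname{Re}$ of $z'(\alpha)$ times a Cauchy integral on the Lipschitz curve $\Sigma_h$ applied to the density $\theta\,d\beta$. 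Its $L^2(\Sigma)$ boundedness is the Coifman--McIntosh--Meyer theorem, with norm polynomial in the Lipschitz character. To keep the constant explicit, we would avoid quoting CMM. Instead we reuse the Rellich argument from Theorem~\ref{thm:quantitative_bound_layer_potential}, taking $H=\mathcal S\Theta$ and the vector field $V=e^{i\alpha}\chi(r)$ built in Proposition~\ref{inverse_layer_potential_star_shape}. The estimate \eqref{tangential_normal}, together with its interior counterpart and the jump relations \eqref{single_layer_normal_jump_2}, gives
\[
\|\partial_\tau H\|_{L^2(\Sigma)}^2\le C e^{4\|f\|_{L^\infty}}(1+\|\partial_\alpha f\|_{L^\infty}^2)\bigl(1+e^{2\|f\|_{L^\infty}}(1+\|\partial_\alpha f\|^2_{L^\infty})\bigr)\|\partial_n^c H\|^2_{L^2(\Sigma)}.
\]
We also have $\partial_n^cH=(\tfrac12 I+K^*)\Theta$. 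The $L^2$ bound for $\tfrac12 I+K^*$ follows from the same Rellich inequality applied in the reverse direction: the normal derivative is controlled by $\|\Theta\|_{L^2}$ through $\|\partial_n H\|+\|\partial^c_n H\|$, using the jump identity $\partial_n^cH-\partial_nH=\Theta$ and the fact that each is controlled by $\partial_\tau H$. Alternatively, we can note that $K^*$ is bounded on $L^2$ as the real part of a Cauchy integral. This yields $\|\tau\cdot\nabla\mathcal S\Theta\|_{L^2(\Sigma)}\lesssim(\cdots)\|\Theta\|_{L^2(\Sigma)}$.

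\textbf{Collecting.} Convert between $L^2(\Sigma)$ and $L^2(\T)$, picking up powers of $e^{\|f\|_\infty}$ and $(1+\|\partial_\alpha f\|_\infty^2)^{1/2}$, and multiply all the factors together. The total exponential factor stays below $e^{10(1+\|f\|_\infty)}$ and the total power of $(1+\|\partial_\alpha f\|^2_\infty)$ below $7/2$; the constants are not optimized.

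\textbf{Main obstacle.} The main obstacle is Step 2: getting a quantitative $L^2$ bound for the principal-value operator $\tau\cdot\nabla\mathcal S$ on a Lipschitz curve without invoking CMM as a black box. The Rellich approach bounds $\partial_\tau H$ by $\partial_n^cH$, but we still need $\|(\tfrac12 I+K^*)\Theta\|_{L^2}\lesssim\|\Theta\|_{L^2}$, which is itself the boundedness of a Cauchy-type operator. If that cannot be closed circularly-free, we will simply cite CMM/David for the Cauchy integral on Lipschitz graphs, with polynomial dependence on the Lipschitz constant. To place the star-shaped curve in graph form, we use the exponential change of variables from Appendix~\ref{appendix:DN}: locally $\log z=f+i\alpha$ is a Lipschitz graph, and the Cauchy kernels differ by a smooth (bounded) kernel.
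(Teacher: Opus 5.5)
Your Step 2 has a genuine gap, and it comes from which normal derivative you feed into the Rellich estimate. You bound $\partial_\tau H$, with $H=\mathcal S\Theta$, by the \emph{exterior} normal derivative $\partial_n^cH=(\frac12 I+K^*)\Theta$. That forces you to prove $\|(\frac12 I+K^*)\Theta\|_{L^2(\Sigma)}\lesssim\|\Theta\|_{L^2(\Sigma)}$, which is the $L^2$-boundedness of a Cauchy-type singular integral.

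As you suspect, Rellich cannot supply this. Combined with $\partial_n^cH-\partial_nH=\Theta$, it only gives $\|\Theta\|\le\|\partial_nH\|+\|\partial_n^cH\|\lesssim\|\partial_\tau H\|$. That is the invertibility direction, so your ``reverse Rellich'' argument is circular. Falling back on Coifman--McIntosh--Meyer or David gives $\|G(e^f)g\|_{L^2}\le\mathcal F(\|f\|_{W^{1,\infty}})\|\partial_\alpha g\|_{L^2}$, but with an untracked constant. Your claim that the exponents stay below $10$ and $\frac72$ is asserted rather than derived, so the explicit inequality in the statement does not follow from your plan.

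The missing idea is that the \emph{interior} normal derivative of $H$ is known exactly. By the jump relation and the defining equation of the density (Lemma~\ref{theta_lemma}),
\[
\partial_n H=\Bigl(-\tfrac12 I+K^*\Bigr)\Theta=-\Bigl(\tfrac12 I-K^*\Bigr)\Bigl(\tfrac12 I-K^*\Bigr)^{-1}\partial_\tau g=-\partial_\tau g .
\]
On the bounded domain $\Omega$, apply the Rellich identity with a transversal field $V$ supported in an annulus around $\Sigma$, so that $V$ is $C^1$ on $\overline\Omega$; the radial field $e^{i\alpha}$ is singular at the origin. This controls $\partial_\tau H$ by this interior $\partial_nH$. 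The solid term is absorbed exactly as in the proof of Theorem~\ref{thm:quantitative_bound_layer_potential}, using AM--GM and
\[
\int_\Omega|\nabla H|^2\,dx=\int_\Sigma(H-\av(H))\,\partial_nH\,d\sigma\le|\Sigma|\,\|\partial_\tau H\|_{L^2(\Sigma)}\|\partial_nH\|_{L^2(\Sigma)}.
\]
Hence $\|\partial_\tau H\|_{L^2(\Sigma)}\lesssim\|\partial_\tau g\|_{L^2(\Sigma)}$, with a constant explicit in $e^{\|f\|_{L^\infty}}$ and $1+\|\partial_\alpha f\|_{L^\infty}^2$. The identity $|G(e^f)g|=|z'|\,|\partial_\tau H|$ and the conversions between $L^2(\Sigma)$ and $L^2(\T)$ then finish the proof.

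This is the paper's route: it bounds $\|\partial_\tau H\|$ by $\|\partial_nH\|=\|(-\frac12 I+K^*)\theta\|_{L^2}$, and then by $\|\partial_\alpha g\|_{L^2}$. No $L^2$ bound on $K^*$ enters the constant, and strictly speaking no bound on $\theta$ is needed either. Layer-potential theory on Lipschitz curves is used only qualitatively, to justify the boundary traces in the Rellich identity.
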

\begin{proof}
For convenience of notation, we denote $H := \cS\theta$, where $\theta = (\frac{1}{2}I - K^*)^{-1}\partial_\alpha g$. We recall from \eqref{integral_dn} that 
\begin{equation}
G(e^f)g(\alpha) = (i + f'(\alpha))e^{i\alpha + f(\alpha)}\cdot \nabla H(e^{i\alpha+f(\alpha)}),
\end{equation}
so it follows that
\begin{equation}
\begin{aligned}
\|G(e^f)g\|_{L^2}^2 &\lesssim e^{2\|f\|_{L^\infty}}(1 + \|\partial_\alpha f\|_{L^\infty}^2)\int_\T |\partial_\tau H(e^{f(\alpha) + i\alpha})|^2\ d\alpha \\
&\leq e^{3\|f\|_{L^\infty}}(1 + \|\partial_\alpha f\|_{L^\infty}^2)\int_\T|\partial_\tau H(e^{f(\alpha) + i\alpha})|^2e^{f(\alpha)}\sqrt{1 + \partial_\alpha f(\alpha)^2}\ d\alpha \\
&\leq e^{3\|f\|_{L^\infty}}(1 + \|\partial_\alpha f\|_{L^\infty}^2)\|\partial_\tau H\|_{L^2(\partial\Omega)}^2.
\end{aligned}
\end{equation}
Choosing the vector field $V$ as in the proof of Proposition \ref{inverse_layer_potential_star_shape}, we know from \eqref{tangential_normal} that 
\begin{equation}
\begin{aligned}
\|\partial_\tau H\|_{L^2(\partial\Omega)} &\leq Ce^{4(1+\|f\|_{L^\infty})}(1 + \|\partial_\alpha f\|_{L^\infty}^2)^{\frac{3}{2}} \|\partial_nH\|_{L^2(\partial\Omega)}\\
&\leq Ce^{4(1+\|f\|_{L^\infty})}(1 + \|\partial_\alpha f\|_{L^\infty}^2)^{\frac{3}{2}} \bigg\|\bigg(-\frac{1}{2}I + K^*\bigg)\theta\bigg\|_{L^2}\\
&\leq Ce^{8(1+\|f\|_{L^\infty})}(1 + \|\partial_\alpha f\|_{L^\infty}^2)^{3}\|\partial_\alpha g\|_{L^2},
\end{aligned}
\end{equation}
where the second inequality follows from Theorem \ref{thm:jump_relations}, and the last inequality follows from Proposition \ref{inverse_layer_potential_star_shape}. In summary, we have
\begin{equation}
\|G(e^f)g\|_{L^2}\leq Ce^{10(1+\|f\|_{L^\infty})}(1 + \|\partial_\alpha f\|_{L^\infty}^2)^{\frac{7}{2}}\|\partial_\alpha g\|_{L^2}
\end{equation}
as desired.
\end{proof}

\section{Viscosity Regularization of the Interface Equation}\label{sec:visc_reg}
The overall framework of this section is inspired by \cite{Dong-Gancedo-Nguyen-23}. In this section, we study the following viscosity regularization of the interface equation, where $\varepsilon\geq 0$:
\begin{equation}\label{visc_interface_eqn}
\partial_t\eta^\varepsilon + {e^{-2\eta^\varepsilon}}(G(h^\varepsilon)\eta^\varepsilon - 1) - \varepsilon\partial_\alpha^2\eta^\varepsilon = 0,\qquad h^\varepsilon = e^{\eta^\varepsilon}
\end{equation}
Here we will primarily use the expressions
\begin{equation}\label{DN_mollified}
\begin{split}
G(h^\varepsilon)\eta^\varepsilon(\alpha, t) = \pv\frac{1}{2\pi} \re\ \partial_\alpha\int_\T \log(h^\varepsilon(\alpha)e^{i\alpha} - h^\varepsilon(\beta)e^{i\beta})\, \theta^\varepsilon(\beta)\ d\beta,
\end{split}
\end{equation}
\begin{equation}\label{theta_eqn_visc}
\begin{split}
\frac{1}{2}\theta^\varepsilon(\alpha) + \pv\frac{1}{2\pi}\im\ \partial_\alpha \int_\T \log(h^\varepsilon(\alpha)e^{i\alpha} - h^\varepsilon(\beta)e^{i\beta})\,\eta^\varepsilon(\beta)\ d\beta = \partial_\alpha \eta^\varepsilon(\alpha).
\end{split}
\end{equation}

\begin{definition}\label{def:classical_sol}
Let $h^\varepsilon:= e^{\eta^\varepsilon}$. Then $\eta^\varepsilon$ is a classical subsolution (supersolution) of (\ref{visc_interface_eqn}) on $[0, T]\times \T$, where $0<T\leq \infty$, if $\eta$ is $C^1$ in time and $C^2$ in space, and
\begin{equation}
\partial_t\eta^\varepsilon + {e^{-2\eta^\varepsilon}}(G(h^\varepsilon)\eta^\varepsilon - 1) - \varepsilon\partial_\alpha^2\eta^\varepsilon \leq  0\quad (\geq 0) 
\end{equation}
on $[0, T]\times \T$. $\eta^\varepsilon$ is a classical solution if it is both a classical subsolution and a classical supersolution.
\end{definition}

\subsection{Comparison principles} The advantage of the viscosity regularization \eqref{DN_mollified} is that it preserves the comparison principle and maximum principles of the unregularized equation. 

\begin{proposition}[Comparison principle]\label{visc_cp}
Let $\eta_-^\varepsilon$ be a subsolution of \eqref{visc_interface_eqn} and $\eta_+^\varepsilon$ be a supersolution of \eqref{visc_interface_eqn} on $[0,T]$. If $\eta_-^\varepsilon(\cdot, 0)\leq \eta_+^\varepsilon(\cdot, 0)$, then $\eta_-^\varepsilon(\cdot, t)\leq \eta_+^\varepsilon(\cdot, t)$ for all $t\in [0, T]$.
\end{proposition}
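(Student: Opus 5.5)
The approach is the standard first-touching argument for parabolic comparison. We perturb the supersolution so that all inequalities become strict, and the nonlocal term is then handled by the pointwise comparison principle (Proposition~\ref{pointwise_cp}). The lack of translation invariance is compensated by the Taylor sign condition (Proposition~\ref{taylor_sign}).

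\textbf{Strict perturbation.} Fix $\delta>0$ and set $\eta_\delta:=\eta_+^\varepsilon+\delta(1+t)$ with $h_\delta:=e^{\eta_\delta}$. By the scaling symmetry (Proposition~\ref{scaling_symm}), $G(h_\delta)\eta_\delta=G(h_+^\varepsilon)\eta_+^\varepsilon$, and spatial derivatives are unchanged. Hence
\[
\partial_t\eta_\delta+e^{-2\eta_\delta}\bigl(G(h_\delta)\eta_\delta-1\bigr)-\varepsilon\partial_\alpha^2\eta_\delta
=\delta+\bigl(e^{-2\eta_\delta}-e^{-2\eta_+^\varepsilon}\bigr)\bigl(G(h_+^\varepsilon)\eta_+^\varepsilon-1\bigr)+(\text{supersolution residual}).
\]
Since $\eta_+^\varepsilon$ is $C^2$ in space, Proposition~\ref{taylor_sign} applies at every point and gives $G(h_+^\varepsilon)\eta_+^\varepsilon-1\le0$. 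Also $e^{-2\eta_\delta}\le e^{-2\eta_+^\varepsilon}$. So the middle term is a product of two nonpositive factors and is therefore $\ge0$, and the supersolution residual is $\ge0$. Thus $\eta_\delta$ is a \emph{strict} supersolution: the left-hand side is $\ge\delta>0$. This is precisely the ``$\eta+C$ is a supersolution'' symmetry mentioned in the introduction.

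\textbf{First touching.} We have $\eta_-^\varepsilon(\cdot,0)<\eta_\delta(\cdot,0)$. Suppose the conclusion $\eta_-^\varepsilon<\eta_\delta$ fails somewhere on $[0,T]$. By continuity and compactness of $\T\times[0,T]$, there is a first time $t_0>0$ and a point $\alpha_0$ such that $\eta_-^\varepsilon(\cdot,t_0)\le\eta_\delta(\cdot,t_0)$ with equality at $\alpha_0$. At $(\alpha_0,t_0)$ we have:
\begin{itemize}
\item $\partial_t\eta_-^\varepsilon\ge\partial_t\eta_\delta$, since the difference $\eta_-^\varepsilon-\eta_\delta$ is negative before $t_0$ and zero at $t_0$;
\item $\partial_\alpha^2\eta_-^\varepsilon\le\partial_\alpha^2\eta_\delta$, by spatial maximality of the difference;
\item $e^{-2\eta_-^\varepsilon}=e^{-2\eta_\delta}$.
\end{itemize}
Both functions are $C^2$, hence $C^{1,1}$ at $\alpha_0$. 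Proposition~\ref{pointwise_cp}, applied with $\eta_1=\eta_-^\varepsilon(\cdot,t_0)$ and $\eta_2=\eta_\delta(\cdot,t_0)$, gives $G(h_-^\varepsilon)\eta_-^\varepsilon\ge G(h_\delta)\eta_\delta$ at $\alpha_0$. Combining these,
\[
0\ge \Bigl[\partial_t+e^{-2\cdot}(G\cdot-1)-\varepsilon\partial_\alpha^2\Bigr]\eta_-^\varepsilon
\ge \Bigl[\partial_t+e^{-2\cdot}(G\cdot-1)-\varepsilon\partial_\alpha^2\Bigr]\eta_\delta\ge\delta,
\]
which is a contradiction. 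Hence $\eta_-^\varepsilon<\eta_+^\varepsilon+\delta(1+t)$ for every $\delta>0$, and letting $\delta\to0$ gives the claim.

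\textbf{Main obstacle.} The delicate step is the sign bookkeeping in the strict perturbation. Adding a constant changes the prefactor $e^{-2\eta}$, and only the Taylor sign condition makes that change favorable. One must add the constant to the \emph{super}solution; the alternative of lowering the subsolution gives the same sign via $e^{-2(\eta-\delta)}\ge e^{-2\eta}$ multiplying a nonpositive quantity. Some care is also needed that the classical regularity ($C^1$ in $t$, $C^2$ in $\alpha$, $\eta$ bounded) makes $G(h)\eta$ continuous. This continuity is what justifies the first-touching-time argument and the pointwise application of Propositions~\ref{pointwise_cp} and~\ref{taylor_sign}.
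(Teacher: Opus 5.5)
Your proof is correct and uses the same ingredients as the paper's: a linear-in-time penalization, the pointwise comparison principle combined with scaling symmetry, and the Taylor sign condition to handle the prefactor $e^{-2\eta}$. The only difference is bookkeeping. The paper works at a global maximum of $\eta_- - ct - \eta_+$ and applies the Taylor sign condition at that contact point, where the values differ and $e^{-2\eta_-}\le e^{-2\eta_+}$; you instead use it up front to make $\eta_+ + \delta(1+t)$ a strict supersolution, and then touch at equal values, where the prefactors coincide.
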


\begin{proof}
For convenience of notation, we suppress the superscript $\varepsilon$. Suppose on the contrary that
\begin{equation}
\max_{\T\times [0,T]} (\eta_- - \eta_+) = M_0 > 0
\end{equation}
is achieved at $(\alpha_0, t_0)$. Let $c > 0$ be sufficiently small such that
\begin{equation}
\eta_-(\alpha_0, t_0) - \eta_+(\alpha_0, t_0) - ct_0 > 0
\end{equation}
and we consider $\eta_- - ct - \eta_+$. Suppose $\eta_- - ct - \eta_+$ achieves its global maximum $M_*$ at $(\alpha_*, t_*)$. Note that $M_* > 0$, $t_* > 0$, and
\begin{equation}\label{comparison_global_max}
\eta_-(\alpha_*, t_*) - ct_* - M_* = \eta_+(\alpha_*, t_*).
\end{equation}
First of all, note that
\begin{equation}
\begin{split}
0 &\leq \partial_t (\eta_- - ct - M_* - \eta_+)(\alpha_*, t_*) \\
  & = \partial_t\eta_-(\alpha_*, t_*) - \partial_t\eta_+(\alpha_*, t_*) - c,
\end{split}
\end{equation}
which gives rise to 
\begin{equation}\label{fifth_eqn}
\partial_t \eta_- (\alpha_*, t_*) - \partial_t\eta_+(\alpha_*, t_*) \geq c.
\end{equation}
Since $\eta_-(\cdot, t_*) - ct_* - M_*\leq \eta_+(\cdot, t_*)$ and the equality holds at $\alpha_*$, so by pointwise comparison principle (Proposition \ref{pointwise_cp}) and scaling symmetry (Proposition \ref{scaling_symm}) of the Dirichlet-to-Neumann operator, we have
\begin{equation}\label{second_eqn}
\begin{split}
G(h_-)\eta_-(\alpha_*, t_*) &= G(e^{-ct_* - M_*}h_-)(\eta_- - ct_* - M_*)(\alpha_*, t_*)  \\
                            &\geq G(h_+)\eta_+(\alpha_*, t_*).
\end{split}
\end{equation}
At the mean time, by \eqref{comparison_global_max} we have
\begin{equation}
e^{2ct_* + M_*}e^{-2\eta_-} = e^{-2\eta_+}\quad\mathrm{at}\ (\alpha_*, t_*),
\end{equation}
and in particular
\begin{equation}\label{third_eqn}
0<e^{-2\eta_-}\leq e^{-2\eta_+}\quad\mathrm{at}\ (\alpha_*, t_*).
\end{equation}
In addition, we have the Taylor sign condition (Proposition \ref{taylor_sign})
\begin{equation}\label{fourth_eqn}
G(h_\pm)\eta_\pm - 1\leq 0.
\end{equation}
Using (\ref{second_eqn}), (\ref{third_eqn}), and (\ref{fourth_eqn}), we conclude that
\begin{equation}\label{sixth_eqn}
\begin{split}
e^{-2\eta_-}(G(h_-)\eta_- - 1)(\alpha_*, t_*) &\geq e^{-2\eta_+}(G(h_-)\eta_- - 1)(\alpha_*, t_*) \\
                                 &\geq e^{-2\eta_+}(G(h_+)\eta_+ - 1)(\alpha_*, t_*)
\end{split}
\end{equation}
Eventually, since $\alpha_*$ is a local max of $\eta_- - \eta_+ (\cdot, t_*)$, we have
\begin{equation}\label{seventh_eqn}
-\varepsilon \partial_\alpha^2 \eta_-(\alpha_*, t_*) \geq - \varepsilon\partial_\alpha^2\eta_+(\alpha_*, t_*)
\end{equation}
Now, combining inequalities (\ref{fifth_eqn}), (\ref{sixth_eqn}), and (\ref{seventh_eqn}), we have $c\leq 0$, leading to a contradiction. Our proof is thus complete. 
\end{proof}

\begin{proposition}[Preservation of modulus of continuity]\label{visc_reg_mod_cty}
Let $\omega:[0,\infty)\to[0,\infty)$ be a modulus of continuity, i.e.
$\omega$ is continuous, nondecreasing, and $\omega(0)=0$. Let
$\eta^\varepsilon$ be a classical solution of \eqref{visc_interface_eqn} on
$[0,T]$. If $\eta^\varepsilon(\cdot,0)$ has modulus of continuity $\omega$,
namely
\begin{equation}\label{modulus_of_cty_ineq}
-\omega(|\beta|)\leq \eta^\varepsilon(\alpha+\beta,0)-\eta^\varepsilon(\alpha,0)
\leq \omega(|\beta|)
\end{equation}
for all $\alpha,\beta\in\mathbb R$, then for every $t\in[0,T]$ we have
\begin{equation}
-\omega(|\beta|)\leq \eta^\varepsilon(\alpha+\beta,t)-\eta^\varepsilon(\alpha,t)
\leq \omega(|\beta|)
\end{equation}
for all $\alpha,\beta\in\mathbb R$. In particular, for any $0<\gamma\leq 1$, if
$\eta^\varepsilon(\cdot,0)\in C^\gamma(\mathbb T)$, then
\begin{equation}\label{eqn:mod_cty}
[\eta^\varepsilon(\cdot,t)]_{C^\gamma(\mathbb T)}
\leq
[\eta^\varepsilon(\cdot,0)]_{C^\gamma(\mathbb T)}
\qquad\text{for all } t\in[0,T].
\end{equation}
\end{proposition}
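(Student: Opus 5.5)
The approach is to use the rotation symmetry of the equation together with the ``weaker translation symmetry'' from the Taylor sign condition, and then apply the comparison principle, Proposition~\ref{visc_cp}. Fix $\beta\in\mathbb R$ and set
\[
\eta_\beta(\alpha,t):=\eta^\varepsilon(\alpha+\beta,t).
\]
By the rotation symmetry in Proposition~\ref{scaling_symm}(2), $G(e^{\eta_\beta})\eta_\beta(\alpha)=G(h^\varepsilon)\eta^\varepsilon(\alpha+\beta)$. The remaining terms $\partial_t$, $e^{-2\eta}$ and $\partial_\alpha^2$ clearly commute with translation, so $\eta_\beta$ is again a classical solution of \eqref{visc_interface_eqn}.

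Next, for a constant $C\ge0$, I show that $\eta^\varepsilon+C$ is a classical supersolution and $\eta^\varepsilon-C$ is a classical subsolution. By the scaling symmetry in Proposition~\ref{scaling_symm}(1),
\[
G(e^{C}h^\varepsilon)(\eta^\varepsilon+C)=G(h^\varepsilon)\eta^\varepsilon.
\]
Hence, writing $A:=G(h^\varepsilon)\eta^\varepsilon-1$, the operator applied to $\eta^\varepsilon+C$ equals
\[
\partial_t\eta^\varepsilon+e^{-2C}e^{-2\eta^\varepsilon}A-\varepsilon\partial_\alpha^2\eta^\varepsilon=(e^{-2C}-1)\,e^{-2\eta^\varepsilon}A .
\]
Here I used the equation for $\eta^\varepsilon$ itself. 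The Taylor sign condition (Proposition~\ref{taylor_sign}, valid pointwise since $\eta^\varepsilon$ is $C^2$) gives $A\le0$, and $e^{-2C}-1\le0$, so the product is $\ge0$; thus $\eta^\varepsilon+C$ is a supersolution. The analogous computation for $\eta^\varepsilon-C$ gives $(e^{2C}-1)e^{-2\eta^\varepsilon}A\le0$, so it is a subsolution. The same statements hold with $\eta^\varepsilon$ replaced by $\eta_\beta$.

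Now I apply the comparison principle. Take $C=\omega(|\beta|)\ge0$. Hypothesis \eqref{modulus_of_cty_ineq} says exactly that
\[
\eta_\beta(\cdot,0)\le \eta^\varepsilon(\cdot,0)+\omega(|\beta|)
\quad\text{and}\quad
\eta_\beta(\cdot,0)\ge \eta^\varepsilon(\cdot,0)-\omega(|\beta|).
\]
For the upper bound, the subsolution is $\eta_\beta$ and the supersolution is $\eta^\varepsilon+\omega(|\beta|)$. For the lower bound, the subsolution is $\eta^\varepsilon-\omega(|\beta|)$ and the supersolution is $\eta_\beta$. Proposition~\ref{visc_cp} then propagates both inequalities to all $t\in[0,T]$, which is the claim. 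For the H\"older statement I take $\omega(s)=[\eta^\varepsilon(\cdot,0)]_{C^\gamma}\,s^\gamma$, which is a modulus of continuity. By periodicity it suffices to consider $|\beta|\le\pi$, where $|\beta|=|\beta|_\T$, and this gives \eqref{eqn:mod_cty}.

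The main point to check is the sub/supersolution step. There the sign of $e^{\mp2C}-1$ must be matched correctly with the Taylor sign, and I must confirm that the pointwise $C^{1,1}$ hypotheses of Proposition~\ref{taylor_sign} and of Proposition~\ref{pointwise_cp} (used inside Proposition~\ref{visc_cp}) are met. They are, since the solutions are classical: $C^2$ in space.
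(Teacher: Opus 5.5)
Your proof is correct and follows essentially the same route as the paper: rotation and scaling symmetry of $G$, the Taylor sign condition to turn constant shifts into sub/supersolutions, and then the comparison principle (Proposition~\ref{visc_cp}). The only cosmetic difference is where the constant goes: the paper shows that $\eta^\varepsilon(\cdot+\beta,t)-\omega(\beta)$ is a subsolution and compares it with $\eta^\varepsilon$, whereas you keep $\eta^\varepsilon(\cdot+\beta,t)$ as an exact solution and shift $\eta^\varepsilon$ itself by $\pm\omega(|\beta|)$.
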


\begin{proof}
For convenience of notation, we suppress the superscript $\varepsilon$. Fix $\beta>0$. We claim that
\[
\tilde\eta(\alpha,t):=\eta(\alpha+\beta,t)-\omega(\beta)
\]
is a classical subsolution of \eqref{visc_interface_eqn}. Denote
\[
\tilde h(\alpha,t):=e^{\tilde\eta(\alpha,t)}
=e^{-\omega(\beta)}h(\alpha+\beta,t).
\]
We have
\begin{equation}\label{modulus_time_shift}
\partial_t\tilde\eta(\alpha,t)=\partial_t\eta(\alpha+\beta,t),
\qquad
\partial_\alpha^2\tilde\eta(\alpha,t)=\partial_\alpha^2\eta(\alpha+\beta,t).
\end{equation}
Moreover, by the translation invariance and scaling symmetry of the
unnormalized Dirichlet-to-Neumann operator, we obtain
\begin{equation}\label{modulus_dn_shift}
G(\tilde h)\tilde\eta(\alpha,t)=G(h)\eta(\alpha+\beta,t).
\end{equation}
Therefore,
\begin{equation}
\begin{split}
&\partial_t\tilde\eta(\alpha,t)
+e^{-2\tilde\eta(\alpha,t)}\bigl[G(\tilde h)\tilde\eta(\alpha,t)-1\bigr]
-\varepsilon\partial_\alpha^2\tilde\eta(\alpha,t) \\
&=
\partial_t\eta(\alpha+\beta,t)
+e^{2\omega(\beta)}e^{-2\eta(\alpha+\beta,t)}
\bigl[G(h)\eta(\alpha+\beta,t)-1\bigr]
-\varepsilon\partial_\alpha^2\eta(\alpha+\beta,t).
\end{split}
\end{equation}
Since $G(h)\eta-1\le 0$ pointwise by Proposition \ref{taylor_sign} and
$e^{2\omega(\beta)}\ge 1$, the right-hand side is bounded above by
\[
\partial_t\eta(\alpha+\beta,t)
+e^{-2\eta(\alpha+\beta,t)}\bigl[G(h)\eta(\alpha+\beta,t)-1\bigr]
-\varepsilon\partial_\alpha^2\eta(\alpha+\beta,t)
=0.
\]
Hence $\tilde\eta$ is a classical subsolution of
\eqref{visc_interface_eqn}. By \eqref{modulus_of_cty_ineq}, we have
\[
\tilde\eta(\alpha,0)
=
\eta(\alpha+\beta,0)-\omega(\beta)
\leq
\eta(\alpha,0).
\]
Applying the comparison principle (Proposition \ref{visc_cp}) to
$\tilde\eta$ and $\eta$, we obtain
\[
\eta(\alpha+\beta,t)-\eta(\alpha,t)\leq \omega(\beta)
\qquad\text{for all } \alpha\in\mathbb R,\ t\in[0,T].
\]

To prove the reverse inequality, we argue exactly as above, showing that $\alpha\mapsto \eta(\alpha + \beta, t) + \omega(\beta)$ is a classical supersolution. Then another application of the comparison principle yields
\[
-\omega(\beta)\leq \eta(\alpha+\beta,t)-\eta(\alpha,t).
\]

Combining the two inequalities gives the desired estimate for all $\beta>0$.
The case $\beta<0$ follows by replacing $\beta$ with $-\beta$. This completes
the proof.
\end{proof}

We have the following important corollaries that control the Lipschitz norm of $\eta(\cdot, t)$ in terms of $\eta_0$. The first follows from comparison with expanding circular solutions, and the second is a special case of \eqref{eqn:mod_cty} when $\gamma = 1$.
\begin{corollary}[Control of $L^\infty$]\label{cor_Linfty_bound}
Suppose $\eta^\varepsilon$ is a classical solution to \eqref{visc_interface_eqn} and $h^\varepsilon := e^{\eta^\varepsilon}$. If $0< r_0\leq h^\varepsilon(\cdot, 0)\leq R_0 <\infty$, then
\begin{equation}\label{h_l_infty}
\sqrt{r_0^2 + 2t} \leq h^\varepsilon(\cdot, t)\leq \sqrt{R_0^2 + 2t} 
\end{equation}
In particular, if $\eta^\varepsilon(\cdot, 0) = \eta_0^\varepsilon$, then
\begin{equation}\label{L_infty_bound}
|\eta^\varepsilon(\cdot, t)|\leq \log\sqrt{\exp(2\|\eta_0^\varepsilon\|_{L^\infty}) + 2t}
\end{equation}
\end{corollary}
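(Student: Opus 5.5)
The plan is to compare $\eta^\varepsilon$ with the radial solutions $\eta_r(\alpha,t):=\log\sqrt{r_0^2+2t}$ and $\eta_R(\alpha,t):=\log\sqrt{R_0^2+2t}$, using the comparison principle (Proposition~\ref{visc_cp}).

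First we check that these are classical solutions of the regularized equation \eqref{visc_interface_eqn} for every $\varepsilon\ge0$. They are constant in $\alpha$, so $\partial_\alpha^2\eta_r=0$. The harmonic extension of a constant is that constant, so $G(e^{\eta_r})\eta_r=0$. Scaling symmetry (Proposition~\ref{scaling_symm}) gives the same conclusion, since $G(c)\,\mathrm{const}=G(1)0=0$. The equation therefore reduces to the ODE $\partial_t\eta-e^{-2\eta}=0$. Writing $\eta=\tfrac12\log(r_0^2+2t)$, we get $\partial_t\eta=(r_0^2+2t)^{-1}=e^{-2\eta}$. This is exactly the radial-solution computation stated earlier, and it holds verbatim with the $\varepsilon$-term present. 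Both functions are smooth in $t$ and $\alpha$, so each is simultaneously a classical sub- and supersolution in the sense of Definition~\ref{def:classical_sol}.

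Next, the hypothesis $r_0\le h^\varepsilon(\cdot,0)\le R_0$ translates into $\eta_r(\cdot,0)\le\eta^\varepsilon(\cdot,0)\le\eta_R(\cdot,0)$. We apply Proposition~\ref{visc_cp} twice: first with $\eta_-=\eta_r$ and $\eta_+=\eta^\varepsilon$, then with $\eta_-=\eta^\varepsilon$ and $\eta_+=\eta_R$. Exponentiating the two resulting inequalities gives \eqref{h_l_infty} on any $[0,T]$ on which $\eta^\varepsilon$ is a classical solution.

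For \eqref{L_infty_bound}, take $R_0=e^{\|\eta_0^\varepsilon\|_{L^\infty}}$ and $r_0=e^{-\|\eta_0^\varepsilon\|_{L^\infty}}$. The upper bound gives $\eta^\varepsilon\le\log\sqrt{e^{2\|\eta_0^\varepsilon\|_{L^\infty}}+2t}$. The lower bound gives $\eta^\varepsilon\ge\log\sqrt{e^{-2\|\eta_0^\varepsilon\|_{L^\infty}}+2t}$. The right-hand side of this last inequality is at least $-\|\eta_0^\varepsilon\|_{L^\infty}$. Since $\log\sqrt{e^{2\|\eta_0^\varepsilon\|_{L^\infty}}+2t}\ge\|\eta_0^\varepsilon\|_{L^\infty}\ge0$, we get $-\eta^\varepsilon\le\|\eta_0^\varepsilon\|_{L^\infty}\le\log\sqrt{e^{2\|\eta_0^\varepsilon\|_{L^\infty}}+2t}$. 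Combining the two sides yields \eqref{L_infty_bound}.

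The only delicate point is whether Proposition~\ref{visc_cp} applies to these particular functions. Its proof uses the pointwise comparison and the Taylor sign condition at the contact point, so it needs enough regularity there. Both the smooth radial functions and the classical solution $\eta^\varepsilon$ (which is $C^2$ in space) are $C^{1,1}$ at every point, so the hypotheses are met. Apart from this check, the argument is routine.
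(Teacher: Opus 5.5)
Your proposal is correct and follows the paper's own argument: the corollary is obtained by comparing $\eta^\varepsilon$ with the expanding radial solutions $\log\sqrt{r_0^2+2t}$ and $\log\sqrt{R_0^2+2t}$ through Proposition~\ref{visc_cp}. Your choice $r_0=e^{-\|\eta_0^\varepsilon\|_{L^\infty}}$, $R_0=e^{\|\eta_0^\varepsilon\|_{L^\infty}}$ correctly supplies the step to \eqref{L_infty_bound}, which the paper leaves implicit.
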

for $t\in [0, T]$. 

\begin{corollary}[Control of Lipschitz constant]\label{cor_Lipschitz_bound}
Suppose $\eta^\varepsilon$ is a classical solution to \eqref{visc_interface_eqn}. Then
\begin{equation}\label{Lipschitz_bound}
\|\partial_\alpha\eta^\varepsilon(\cdot, t)\|_{L^\infty(\T)}\leq \|\partial_\alpha\eta^\varepsilon(\cdot, 0)\|_{L^\infty(\T)}
\end{equation}
for $t\in [0, T]$.
\end{corollary}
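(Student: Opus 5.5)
The Lipschitz bound follows from Proposition~\ref{visc_reg_mod_cty} applied with the linear modulus of continuity
\[
\omega(s):=L s,\qquad L:=\|\partial_\alpha\eta^\varepsilon(\cdot,0)\|_{L^\infty(\T)}.
\]
This $\omega$ is continuous, nondecreasing, and satisfies $\omega(0)=0$. I take $\eta^\varepsilon(\cdot,0)$ to be Lipschitz, which holds since a classical solution is $C^2$ in space; otherwise $L=\infty$ and there is nothing to prove.

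First I check the hypothesis \eqref{modulus_of_cty_ineq}. For all $\alpha,\beta\in\R$, the mean value theorem applied to the $2\pi$-periodic extension gives
\[
|\eta^\varepsilon(\alpha+\beta,0)-\eta^\varepsilon(\alpha,0)|\le L|\beta|=\omega(|\beta|).
\]
This is exactly the form required, with $\beta$ ranging over all of $\R$ and no periodic reduction. Proposition~\ref{visc_reg_mod_cty} then gives, for every $t\in[0,T]$ and all $\alpha,\beta\in\R$,
\[
|\eta^\varepsilon(\alpha+\beta,t)-\eta^\varepsilon(\alpha,t)|\le L|\beta|.
\]

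Next I divide by $|\beta|$ and let $\beta\to 0$. Because $\eta^\varepsilon(\cdot,t)$ is $C^1$ (indeed $C^2$), this yields $|\partial_\alpha\eta^\varepsilon(\alpha,t)|\le L$ pointwise. Taking the supremum over $\alpha$ gives \eqref{Lipschitz_bound}.

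Equivalently, this is the case $\gamma=1$ of \eqref{eqn:mod_cty}, since $[\cdot]_{C^{0,1}(\T)}$ equals $\|\partial_\alpha\cdot\|_{L^\infty}$ for $C^1$ periodic functions. The only point needing care is that the periodic distance $|\beta|_\T$ in the seminorm does not conflict with the $\R$-based inequality used in Proposition~\ref{visc_reg_mod_cty}. This causes no difficulty, because the derivative is recovered from arbitrarily small $\beta$. I therefore do not expect any genuine obstacle: all the substantive work, namely the subsolution construction via the Taylor sign condition and the comparison principle, is already contained in Proposition~\ref{visc_reg_mod_cty}.
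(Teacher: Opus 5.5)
Your proposal is correct and follows the paper's route: the paper obtains this corollary as the case $\gamma=1$ of \eqref{eqn:mod_cty}, which is Proposition~\ref{visc_reg_mod_cty} with a linear modulus, exactly as you do. Your handling of $\beta\in\mathbb R$ versus the periodic distance, by passing to the limit $\beta\to0$, is a fine way to make the identification $[\cdot]_{C^{0,1}(\mathbb T)}=\|\partial_\alpha\cdot\|_{L^\infty}$ explicit.
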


The maximum principle of $\|\partial_\alpha\eta(\cdot, t)\|_{L^\infty}$ has a natural geometric
interpretation as preservation of a uniform cone condition. Indeed, if
$z(\alpha,t)=e^{\eta(\alpha,t)}e^{i\alpha}$, $e_r:= e^{i\alpha}$, then the outward normal $n(\alpha, t)$ on $z(\alpha, t)$ satisfies
\[
n(\alpha,t)\cdot e_r(\alpha)
=\frac{1}{\sqrt{1+\eta_\alpha(\alpha,t)^2}}.
\]
Therefore, if $\|\partial_\alpha\eta(\cdot, t)\|_{L^\infty}\le L_0$, then
\[
n(\alpha,t)\cdot e_r(\alpha)
\ge \frac{1}{\sqrt{1+L_0^2}}
=\cos(\arctan L_0).
\]

\begin{figure}[ht]
\centering
\begin{tikzpicture}[
  scale=3, >=Latex, line cap=round, line join=round, thick,
]
  \def\th{90}    
  \def\ph{25}    
  \def\R{1.5}
  \def\rA{0.55}

  \coordinate (P)  at (0,0);
  \coordinate (eR) at ({\R*cos(\th)},{\R*sin(\th)});
  \coordinate (BL) at ({\R*cos(\th+\ph)},{\R*sin(\th+\ph)});
  \coordinate (BR) at ({\R*cos(\th-\ph)},{\R*sin(\th-\ph)});
  \pgfmathsetmacro{\thn}{\th + 0.55*\ph}             
  \coordinate (N) at ({0.95*cos(\thn)},{0.95*sin(\thn)});

  \fill[blue!8] (P) -- (BL)
      arc[start angle=\th+\ph, end angle=\th-\ph, radius=\R] -- cycle;
  \draw[densely dashed, gray!70] (P) -- (BL);
  \draw[densely dashed, gray!70] (P) -- (BR);
  \draw[gray!50] (BR) arc[start angle=\th-\ph, end angle=\th+\ph, radius=\R];

  \draw[gray!70!black]
      ({\rA*cos(\th)},{\rA*sin(\th)})
      arc[start angle=\th, end angle=\th-\ph, radius=\rA];
  \node[gray!30!black]
      at ({1.60*\rA*cos(\th-\ph)},{1.60*\rA*sin(\th-\ph)})
      {\small $\arctan L_0$};

  \draw[->, very thick, blue!65!black] (P) -- (eR)
      node[above, blue!60!black, inner sep=3pt] {$e_r(\alpha)$};
  \draw[->, very thick, red!75!black]  (P) -- (N)
      node[above left, red!70!black, inner sep=1pt] {$n(\alpha)$};

  \fill (P) circle (0.7pt);

  \pgfmathsetmacro{\thn}{\th + 0.55*\ph}
  \pgfmathsetmacro{\ttan}{\thn - 90}        
  \coordinate (N) at ({0.95*cos(\thn)},{0.95*sin(\thn)});

  ...   

  \begin{scope}[shift={(P)}, rotate=\ttan]
    \draw[thick, black!75, smooth, samples=50, domain=-0.62:0.62]
        plot ({\x},{-0.40*\x*\x});
  \end{scope}

    \pgfmathsetmacro{\ct}{cos(\ttan)}
    \pgfmathsetmacro{\st}{sin(\ttan)}
    \pgfmathsetmacro{\PRx}{ 0.62*\ct + 0.40*0.62*0.62*\st}
    \pgfmathsetmacro{\PRy}{ 0.62*\st - 0.40*0.62*0.62*\ct}
    \pgfmathsetmacro{\PLx}{-0.62*\ct + 0.40*0.62*0.62*\st}
    \pgfmathsetmacro{\PLy}{-0.62*\st - 0.40*0.62*0.62*\ct}
    
    \fill[gray!18]
        plot[domain=-0.62:0.62, samples=50, smooth]
            ({\x*\ct + 0.40*\x*\x*\st},{\x*\st - 0.40*\x*\x*\ct})
        -- (\PRx,-0.85)
        .. controls (\PRx,-1.05) and (\PLx,-1.05) .. (\PLx,-0.85)
        -- cycle;
    \begin{scope}[shift={(P)}, rotate=\ttan]
      \draw[thick, black!75, smooth, samples=50, domain=-0.62:0.62]
          plot ({\x},{-0.40*\x*\x});
    \end{scope}
    \node[black!55] at (0.04,-0.45) {$\Omega_{h^\varepsilon}$};
\end{tikzpicture}
\caption{Uniform Cone Condition}
\label{fig:cone_condition}
\end{figure}

Equivalently, at every boundary point, the outward normal remains inside the
fixed cone of half-angle $\arctan L_0$ around the radial direction. Hence the
interface stays uniformly star-shaped: it remains uniformly transverse to every
ray from the origin, with the same cone aperture as initially.

\subsection{Global well-posedness of the regularized equations} We next prove that, once the regularization is introduced, equation \eqref{visc_interface_eqn} is globally well-posed for sufficiently smooth initial data in Sobolev spaces. 
\begin{proposition}[Global well-posedness of regularized equation]\label{mollified_gwp}
Let $\varepsilon>0$ and $s\geq 2$. For each initial data $\eta_0^\varepsilon\in H^s(\T)$, equation \eqref{visc_interface_eqn} admits a unique solution $\eta^\varepsilon\in C([0,\infty); H^s(\T))\cap L^2_{\mathrm{loc}}([0,\infty); H^{s+1}(\T))$. Moreover, $\eta^\varepsilon$ satisfies the bounds \eqref{L_infty_bound} and \eqref{Lipschitz_bound}.
\end{proposition}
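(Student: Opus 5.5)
The proof will go in three stages: local well-posedness in $H^s$ by a parabolic fixed point, a blow-up alternative phrased in terms of a norm that the a priori bounds control, and global continuation using those bounds.

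\emph{Local theory.} For $\varepsilon>0$ the term $-\varepsilon\partial_\alpha^2\eta$ is the leading part of \eqref{visc_interface_eqn}, and the nonlocal term $e^{-2\eta}(G(e^\eta)\eta-1)$ is treated as a first-order perturbation. Write the equation in Duhamel form with the heat semigroup $e^{\varepsilon t\partial_\alpha^2}$ on $\T$:
\[
\eta(t)=e^{\varepsilon t\partial_\alpha^2}\eta_0-\int_0^t e^{\varepsilon (t-s)\partial_\alpha^2}\Big[e^{-2\eta}\big(G(e^\eta)\eta-1\big)\Big](s)\,ds .
\]
Run a contraction in $X_T:=C([0,T];H^s)\cap L^2([0,T];H^{s+1})$. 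This uses the parabolic gain $\|\int_0^t e^{\varepsilon(t-s)\partial_\alpha^2}F\,ds\|_{X_T}\lesssim_\varepsilon T^{1/2}\|F\|_{L^\infty_T H^{s-1}}$, or the energy form with $F\in L^2_TH^{s-1}$. The needed nonlinear input is a tame bound and a Lipschitz bound for $F(\eta)=e^{-2\eta}(G(e^\eta)\eta-1)$ as a map $H^s\to H^{s-1}$ on bounded sets, $s\ge2$:
\[
\|F(\eta_1)-F(\eta_2)\|_{H^{s-1}}\le C(\|\eta_1\|_{H^s},\|\eta_2\|_{H^s})\|\eta_1-\eta_2\|_{H^s}.
\]
To obtain it I would differentiate the integral representation \eqref{G_integral_summary}--\eqref{theta_eqn} $s-1$ times. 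The kernels $\Lambda[h]$ and the kernel of $K^*[h]$ are smooth perturbations of the Hilbert kernel $\frac{1}{2\pi}\cot\frac{\alpha-\beta}{2}$ when $h\in H^s\subset C^{1,1/2}$. Proposition \ref{inverse_layer_potential_star_shape} gives invertibility of $\frac12 I-K^*$ on $L^2$, and commutator estimates lift this to $H^{s-1}$. For the difference in $h$, use the shape-derivative of the kernels; alternatively, transfer the known graph-domain estimates through the equivalence in Appendix \ref{appendix:DN}. This gives a unique local solution with $T$ depending on $\|\eta_0\|_{H^s}$ and $\varepsilon$. Parabolic smoothing then makes the solution classical for $t>0$ in the sense of Definition \ref{def:classical_sol}.

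\emph{A priori bounds and continuation.} Since the solution is classical, Corollaries \ref{cor_Linfty_bound} and \ref{cor_Lipschitz_bound} apply on $[0,T]$. They give \eqref{L_infty_bound} and \eqref{Lipschitz_bound}, hence control of $\|\eta\|_{W^{1,\infty}}$ for all times, and that control is independent of $T$. To continue globally, I need the blow-up criterion that the $H^s$ norm cannot blow up while $\|\eta\|_{W^{1,\infty}}$ stays bounded. For this I would do an $H^s$ energy estimate directly: pair $\partial_\alpha^{s}$ of the equation with $\partial_\alpha^s\eta$ to get
\[
\tfrac12\tfrac{d}{dt}\|\partial_\alpha^s\eta\|_{L^2}^2+\varepsilon\|\partial_\alpha^{s+1}\eta\|_{L^2}^2=-\langle \partial_\alpha^{s-1}F(\eta),\partial_\alpha^{s+1}\eta\rangle ,
\]
and bound $\|F(\eta)\|_{H^{s-1}}\le C(\|\eta\|_{W^{1,\infty}})(1+\|\eta\|_{H^{s}})$ or, at worst, by $C(\|\eta\|_{W^{1,\infty}})\|\eta\|_{H^s}^{1+\delta}$ with interpolation against the dissipation. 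Young's inequality absorbs $\partial_\alpha^{s+1}\eta$ into $\varepsilon\|\partial^{s+1}_\alpha\eta\|^2$, and Gronwall gives finite $H^s$ growth on every bounded interval.

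\emph{Main obstacle.} The hard step is this tame estimate. I need $G(e^\eta)\eta$ in $H^{s-1}$ to be linear in the top norm $\|\eta\|_{H^s}$, with constants depending only on the Lipschitz norm, or at least on a lower norm that the dissipation can bootstrap. At $s=2$ this would start from Proposition \ref{prop:DN_order_1_bdd}, which handles order zero using only Lipschitz geometry. I would then differentiate once, with the commutator $[\partial_\alpha,G(h)]$ controlled by $\|h\|_{H^2}$-type terms times Lipschitz constants. If that route is too delicate, the fallback is a bootstrap: first bound $\|\eta\|_{H^{3/2+}}$ via the $\varepsilon$-heat smoothing, using only the $W^{1,\infty}$ bound (the nonlinearity is then in $L^2$ by Proposition \ref{prop:DN_order_1_bdd}), and then climb to $H^s$ linearly. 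Uniqueness follows from the contraction, or from Proposition \ref{visc_cp}.
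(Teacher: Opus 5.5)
Your proposal is correct if you commit to your fallback, and in that form it is a genuinely different and shorter route than the paper's. Drop the primary route, though. The bound $\|F(\eta)\|_{H^{s-1}}\le C(\|\eta\|_{W^{1,\infty}})(1+\|\eta\|_{H^s})$ is not established anywhere in the paper, and it should not be expected in general. Near a smoothed reentrant corner of fixed slope the pressure behaves like $r^{\pi/\omega}$ with interior angle $\omega>\pi$, so $\|\partial_\alpha G(h)\eta\|_{L^2}$ grows like a power of $\|\partial_\alpha^2\eta\|_{L^2}$ strictly larger than one. What the paper actually proves at $s=2$ is Lemma~\ref{grad_estimate}: $\|\partial_\alpha G(h)\eta\|_{L^2}\le \nu\|\partial_\alpha^3\eta\|_{L^2}+\nu^{-1}\cF(\|\eta\|_{W^{1,\infty}})(1+\|\partial_\alpha^2\eta\|_{L^2}+\|\partial_\alpha^2\eta\|_{L^2}^2)$. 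That is your ``$1+\delta$'' scenario with $\delta=1$, and there interpolation against the dissipation is exactly critical. The term $\|\partial_\alpha^3\eta\|_{L^2}\|\partial_\alpha^2\eta\|_{L^2}^2\le \|\partial_\alpha\eta\|_{L^2}\|\partial_\alpha^3\eta\|_{L^2}^2$ cannot be absorbed for large data. The paper instead closes the $\dot H^2$ Gronwall argument by using that $\int_0^t\|\partial_\alpha^2\eta\|_{L^2}^2\,ds$ is already bounded by the $\dot H^1$ energy estimate.

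Your fallback avoids all of this, and it works as follows.
\begin{itemize}
\item Apply Corollaries~\ref{cor_Linfty_bound}--\ref{cor_Lipschitz_bound} on $[\tau,T]$ and let $\tau\to0^+$, since the solution is classical only for $t>0$. Together with Proposition~\ref{prop:DN_order_1_bdd}, this shows $F(\eta)$ is bounded in $L^\infty_tL^2$ on $[0,T^*)$ when $T^*<\infty$.
\item The heat semigroup satisfies $\|e^{\varepsilon\tau\partial_\alpha^2}g\|_{H^\sigma}\lesssim(1+(\varepsilon\tau)^{-\sigma/2})\|g\|_{L^2}$, which is integrable in $\tau$ for $\sigma<2$. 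Duhamel therefore gives $\eta\in L^\infty([0,T^*);H^{s_0})$ for some $s_0\in(3/2,2)$.
\item From there, Proposition~\ref{DN_boundedness} together with $\|G(h)\eta\|_{L^\infty}\lesssim\|G(h)\eta\|_{H^{s_0-1}}$ makes the $H^s$ energy inequality linear. This is exactly the paper's final higher-regularity step.
\end{itemize}
So the two proofs share the local theory, the maximum-principle bounds, and the tame top-order step. They differ only in how one passes from $W^{1,\infty}$ to $H^{3/2+}$. The paper runs explicit $\dot H^1$ and $\dot H^2$ energy estimates and pays for the delicate layer-potential Lemma~\ref{grad_estimate}. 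You exploit semilinear parabolic smoothing, which is legitimate because these bounds are allowed to depend on $\varepsilon$, and you need only the order-one $L^2$ bound. The paper's route does yield an explicit estimate on $\partial_\alpha G(h)\eta$ with Lipschitz-dependent constants, but this proposition does not need it.
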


The proof proceeds by a continuation argument. We first establish local well-posedness in \(H^s\), along with the usual blowup alternative: if the maximal existence time is finite, then \(\|\eta(t)\|_{H^s}\) must diverge as \(t\) approaches it. We then derive a priori estimates showing that \(\|\eta(t)\|_{H^s}\) stays bounded on every finite time interval. This excludes finite-time blowup and hence implies that the solution exists globally.

\subsubsection{Local well-posedness} Fix $s\geq 2$. We first outline the argument of local well-posedness in $H^s(\T)$ with maximal time of existence $T_*>0$.
\begin{proposition}[Local well-posedness]
Let \(s\ge 2\) and \(\eta_0^\varepsilon\in H^s(\T)\). Then there exists a maximal existence time \(T^*\in(0,\infty]\) such that, for every \(T<T^*\), equation \eqref{visc_interface_eqn} admits a unique solution
\[
\eta^\varepsilon\in C([0,T];H^s(\T))\cap L^2([0,T];H^{s+1}(\T))\cap C^\infty((0,T)\times\T).
\]
Moreover, if \(T^*<\infty\), then the blowup alternative holds:
\begin{equation}\label{eqn:blow_up}
\lim_{t\to T^*_-}\|\eta^\varepsilon(t)\|_{H^s}=\infty.
\end{equation}
\end{proposition}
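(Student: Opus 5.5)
I would treat \eqref{visc_interface_eqn} as a semilinear-type perturbation of the heat equation and run a Duhamel fixed-point argument. Write
\[
\eta^\varepsilon(t)=e^{\varepsilon t\partial_\alpha^2}\eta_0^\varepsilon-\int_0^t e^{\varepsilon(t-s)\partial_\alpha^2}F(\eta^\varepsilon(s))\,ds,\qquad F(\eta):=e^{-2\eta}\bigl(G(e^\eta)\eta-1\bigr).
\]
I work in $X_T:=C([0,T];H^s)\cap L^2([0,T];H^{s+1})$, restricted to a ball of radius $2\|\eta_0^\varepsilon\|_{H^s}$. The heat semigroup gains one derivative in $L^2_t$: the energy estimate gives $\|e^{\varepsilon t\partial_\alpha^2}f\|_{X_T}\lesssim\|f\|_{H^s}$. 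The Duhamel term satisfies
\[
\Bigl\|\int_0^t e^{\varepsilon(t-s)\partial_\alpha^2}g\,ds\Bigr\|_{X_T}\lesssim_\varepsilon \|g\|_{L^2_TH^{s-1}}.
\]
In fact one can get a small factor $T^{1/2}$ by estimating $\|g\|_{L^1_TH^{s}}$ against the smoothing $\|e^{\varepsilon t\partial^2}\|_{H^{s-1}\to H^s}\lesssim (\varepsilon t)^{-1/2}$. So everything reduces to two estimates on the nonlinearity, both valid on bounded sets of $H^s$, $s\ge2$ (which embeds in $C^{1,1/2}$, so the domain is a $C^{1,\gamma}$ star-shaped domain with $e^{\eta}$ bounded above and below):
\[
\|F(\eta)\|_{H^{s}}\le C(\|\eta\|_{H^s})\bigl(1+\|\eta\|_{H^{s+1}}\bigr),\qquad
\|F(\eta_1)-F(\eta_2)\|_{H^{s-1}}\le C\|\eta_1-\eta_2\|_{H^{s}}.
\]
An order-one bound of this shape suffices. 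Using the $L^2$-in-time $H^{s+1}$ control, the Duhamel term is bounded by $C_\varepsilon T^{1/2}(\cdots)$, and for $T$ small the map is a contraction in the weaker norm $C_TH^{s-1}\cap L^2_TH^{s}$ on the $X_T$-ball. This gives existence and uniqueness, with $T$ depending only on $\|\eta_0^\varepsilon\|_{H^s}$ and $\varepsilon$.

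The main obstacle is these nonlinear estimates for $G(e^\eta)\eta$ in higher Sobolev norms. The paper so far only gives the $L^2$ bound of Proposition~\ref{prop:DN_order_1_bdd}. My plan is the following.

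\begin{enumerate}
\item Treat $G(e^\eta)\eta$ through the layer-potential representation \eqref{G_integral_summary}--\eqref{theta_eqn}. The kernels in \eqref{K^*_expression_1} and \eqref{kernel_expression_1} are smooth perturbations of the Hilbert-transform kernel $\frac12\cot\frac{\alpha-\beta}2$, with coefficients depending on difference quotients of $h$.
\item Differentiate $s-1$ times in $\alpha$. By the standard commutator estimates for Calder\'on-type operators, the top-order terms are (invertible) $\times\,\partial_\alpha^{s}\eta$. All remaining terms are controlled by $\|\eta\|_{H^{s}}$ through the $L^2$-invertibility of $\frac12 I-K^*$ (Theorem~\ref{invertibility} and Proposition~\ref{inverse_layer_potential_star_shape}).
\item Obtain the Lipschitz difference estimate in the same way, writing $K^*[h_1]-K^*[h_2]$ and $\Lambda[h_1]-\Lambda[h_2]$ as integrals of kernels linear in $h_1-h_2$.
\end{enumerate}

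Alternatively, one may transport the graph-domain estimates of \cite{nguyen-pausader} via the equivalence in Appendix~\ref{appendix:DN}.

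For smoothness, I bootstrap parabolic regularity. Since $e^{\varepsilon t\partial_\alpha^2}$ gains arbitrarily many derivatives for $t>0$ and $F$ loses at most one, iterating the Duhamel formula from any positive time $t_0$ gives $\eta^\varepsilon\in C((0,T];H^{k})$ for all $k$. Time derivatives then follow from the equation, giving $C^\infty((0,T)\times\T)$.

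For the blowup alternative, the local time depends only on a bound for $\|\eta(t_0)\|_{H^s}$. If $\|\eta(t)\|_{H^s}$ stayed bounded along some sequence $t_n\to T^*$, restarting from $t_n$ would extend the solution past $T^*$. Hence $\liminf_{t\to T^*}\|\eta(t)\|_{H^s}=\infty$, which is the stated limit. Uniqueness on every $[0,T]$, $T<T^*$, follows from the contraction estimate combined with a continuity argument.
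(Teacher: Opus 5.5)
Your proposal is correct and is essentially the paper's argument. The paper simply invokes standard semilinear parabolic theory (as in Taylor) for a nonlinearity $F$ that is locally Lipschitz from $H^s$ to $H^{s-1}$, with the needed Dirichlet-to-Neumann bounds taken from Propositions~\ref{DN_boundedness} and~\ref{DN_contraction} via the graph/star-shaped equivalence of Appendix~\ref{appendix:DN}; this is your fallback option, not your proposed layer-potential commutator analysis. One cosmetic fix: since $F(0)=-1\neq 0$ and your linear bound carries an $\varepsilon$-dependent constant, take the ball radius to be $2C_\varepsilon\|\eta_0^\varepsilon\|_{H^s}+1$ rather than $2\|\eta_0^\varepsilon\|_{H^s}$.
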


Since $\varepsilon>0$, equation \eqref{visc_interface_eqn} is semilinear parabolic, and local well-posedness in $H^s$ follows from standard theory for semilinear parabolic equations as in \cite{TaylorPDE3}, together with the Dirichlet-to-Neumann estimates collected in Appendix \ref{appendix:DN}. 

\subsubsection{Standing notation and estimates} For convenience of notation, we often drop the dependence of $\varepsilon$. We write $\mathcal F$ for a generic continuous nonnegative function that is nondecreasing in all of its arguments. Its precise form may vary from line to line.

We also record several estimates that will be used repeatedly below. For Sobolev-embedding-type inequalities, see Appendix \ref{subsec:useful_ineq}. From the layer-potential estimates established earlier (see Proposition \ref{inverse_layer_potential_star_shape} and Proposition \ref{prop:DN_order_1_bdd}), we have
\begin{align}
\|\theta(t)\|_{L^2} &\leq \mathcal F(\|\eta(t)\|_{W^{1,\infty}})\|\partial_\alpha\eta(t)\|_{L^2}\leq \mathcal F(\|\eta(t)\|_{W^{1,\infty}}),\\
\|G(h)\eta(t)\|_{L^2} &\leq \mathcal F(\|\eta(t)\|_{W^{1,\infty}})\|\partial_\alpha\eta(t)\|_{L^2}\leq \mathcal F(\|\eta(t)\|_{W^{1,\infty}}).
\end{align}
We also have the maximum-principle-type bounds from Corollary \ref{cor_Linfty_bound} and Corollary \ref{cor_Lipschitz_bound}:
\begin{align}
\|\eta(t)\|_{L^\infty}&\leq \log\sqrt{\exp(2\|\eta_0\|_{L^\infty}) + 2t},\\
\|\partial_\alpha\eta(t)\|_{L^\infty} &\leq \|\partial_\alpha\eta_0\|_{L^\infty}.
\end{align}
Strictly speaking, Corollary \ref{cor_Linfty_bound} and Corollary \ref{cor_Lipschitz_bound} were established for solutions that are \(C^1\) in time and \(C^2\) in space. This is harmless here: since \(\varepsilon>0\), solutions to \eqref{visc_interface_eqn} are instantly smooth for positive times, so the bounds apply on \([\tau,T]\) for every \(\tau>0\). Passing to the limit \(\tau\to0^+\) using \(H^s(\T)\hookrightarrow C^1(\T)\) gives the desired estimates on \([0,T]\).

\subsubsection{$H^1$ estimates} We note that $L^2$ estimates follow from $L^\infty$ control of $\eta$, so we proceed to $\dot H^1$ estimates. Multiplying \eqref{visc_interface_eqn} by $-\partial_\alpha^2\eta$ and integrating by parts, we get
\begin{equation}
\begin{split}
\frac{1}{2}\frac{d}{dt}\|\partial_\alpha\eta\|_{L^2}^2 + \varepsilon\|\partial_\alpha^2\eta\|_{L^2}^2 &\leq \|\partial_\alpha^2\eta\|_{L^2}\|e^{-2\eta}(G(h)\eta - 1)\|_{L^2} \\
&\leq \|\partial_\alpha^2\eta\|_{L^2}\cF(\|\eta\|_{W^{1,\infty}})(1 + \|\partial_\alpha\eta\|_{L^2}),\\
\end{split}
\end{equation}
and a standard AM-GM inequality argument gives
\begin{equation}
\frac{d}{dt}\|\partial_\alpha\eta\|_{L^2}^2 + \varepsilon\|\partial_\alpha^2\eta\|_{L^2}^2 \leq \varepsilon^{-1}\cF(\|\eta\|_{W^{1,\infty}})(1 + \|\partial_\alpha\eta\|_{L^2}^2).
\end{equation}
By Gronwall's inequality \ref{Gronwall}, we then have
\begin{equation}
\begin{split}
&\|\partial_\alpha\eta\|_{L^2}^2(t) + \varepsilon\int_0^t\|\partial_\alpha^2\eta\|_{L^2}^2(s)\ ds\\
&\leq  \exp\bigg(\varepsilon^{-1}\int_0^t\cF(\|\eta\|_{W^{1,\infty}}(s))\ ds\bigg)\bigg(\|\partial_\alpha\eta_0\|_{L^2}^2 + \varepsilon^{-1}\int_0^t\cF(\|\eta\|_{W^{1,\infty}}(s))\ ds\bigg),
\end{split}
\end{equation}
and thus
\begin{equation}\label{H^1_Gronwall}
\|\partial_\alpha\eta\|_{L^2}^2(t) + \varepsilon\int_0^t\|\partial_\alpha^2\eta\|_{L^2}^2(s)\ ds \leq \cF(\|\eta_0\|_{W^{1,\infty}}, \varepsilon^{-1}, t),
\end{equation}
where the right-hand side is in good control. 

\subsubsection{$\dot H^2$ estimates} We differentiate equation \eqref{visc_interface_eqn} with respect to $\alpha$ and get 
\begin{equation}\label{H^2_eqn}
\partial_t\partial_\alpha\eta -2\partial_\alpha\eta e^{-2\eta}(G(h)\eta - 1) + e^{-2\eta}\partial_\alpha G(h)\eta - \varepsilon\partial_\alpha^3\eta = 0.
\end{equation}
Multiplying both sides of equation \eqref{H^2_eqn} by $\partial_\alpha^3\eta$ and integrating by parts, we get
\begin{equation}\label{H^2_ineq_1}
\frac{1}{2}\frac{d}{dt}\|\partial_\alpha^2\eta\|_{L^2}^2 + \varepsilon\|\partial_\alpha^3\eta\|_{L^2}^2 \leq \cF(\|\eta\|_{W^{1,\infty}})\|\partial_\alpha^2\eta\|_{L^2}(1 + \|\partial_\alpha G(h)\eta\|_{L^2}).
\end{equation}
To estimate $\|\partial_\alpha G(h)\eta\|_{L^2}$, we need the following lemma. We note that the same kind of estimates appear in Lemma 5.4 of \cite{Dong-Gancedo-Nguyen-23}, but we prove it using our layer-potential formulation \eqref{DN_mollified} and \eqref{theta_eqn_visc} on star-shaped domains.

\begin{lemma}\label{grad_estimate}
For any $\nu > 0$, we have
\begin{equation}\label{grad_estimate_ineq}
\|\partial_\alpha G(h)\eta\|_{L^2}\leq \nu\|\partial_\alpha^3\eta\|_{L^2} + \frac{1}{\nu}\cF(\|\eta\|_{W^{1,\infty}})(1 + \|\partial_\alpha^2\eta\|_{L^2} + \|\partial_\alpha^2\eta\|_{L^2}^2).
\end{equation}
\end{lemma}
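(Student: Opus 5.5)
The plan is to differentiate the layer-potential representation \eqref{DN_mollified} directly in $\alpha$ and to split the result into a principal part, which is essentially a Hilbert transform of $\partial_\alpha\theta$, plus commutator-type remainders with at most one derivative on $\theta$ in total. Write $z(\alpha)=h(\alpha)e^{i\alpha}$. Then
\[
G(h)\eta(\alpha)=\frac{1}{2\pi}\,\re\ \pv\int_\T \frac{z'(\alpha)}{z(\alpha)-z(\beta)}\,\theta(\beta)\,d\beta .
\]
Differentiating in $\alpha$ produces two kinds of terms. The first carries $z''(\alpha)$ against a Cauchy-type integral of $\theta$. The second carries $z'(\alpha)^2/(z(\alpha)-z(\beta))^2$. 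I would rewrite the second using $\partial_\beta\bigl[(z(\alpha)-z(\beta))^{-1}\bigr]=z'(\beta)(z(\alpha)-z(\beta))^{-2}$ and integrate by parts in $\beta$, which moves the derivative onto $\theta$. This gives
\[
\partial_\alpha G(h)\eta=\frac{1}{2\pi}\re\,\pv\!\int_\T \frac{z'(\alpha)}{z(\alpha)-z(\beta)}\frac{z'(\alpha)}{z'(\beta)}\,\partial_\beta\theta\,d\beta+\frac{1}{2\pi}\re\,\pv\!\int_\T\frac{z'(\alpha)\bigl(z''(\alpha)/z'(\alpha)-z''(\beta)z'(\alpha)/z'(\beta)^2\bigr)}{z(\alpha)-z(\beta)}\,\theta\,d\beta .
\]
Because $h$ is smooth for $\varepsilon>0$, $z'$ is bounded away from zero with constants depending on $\|\eta\|_{W^{1,\infty}}$, so these manipulations are legitimate.

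For the first term, the kernel $z'(\alpha)/(z(\alpha)-z(\beta))$ is a Cauchy integral on a Lipschitz curve (Coifman--McIntosh--Meyer). Its $L^2$ bound therefore depends only on $\mathcal F(\|\eta\|_{W^{1,\infty}})$, and it gives $\|\cdot\|_{L^2}\le\mathcal F\,\|\partial_\alpha\theta\|_{L^2}$.

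The second term contains $\eta''$ undifferentiated in the numerator. Writing $z''(\alpha)-z''(\beta)$ as a difference quotient against $z(\alpha)-z(\beta)$, I expect a bound of the form $\mathcal F\,\|\partial_\alpha^2\eta\|_{L^\infty}\|\theta\|_{L^2}$, plus a term like $\|\partial^2_\alpha\eta\|_{L^2}\|\theta\|_{L^\infty}$ from the Cauchy integral of $z''\theta$. I would then interpolate using $\|\partial_\alpha^2\eta\|_{L^\infty}\lesssim\|\partial_\alpha^2\eta\|_{L^2}^{1/2}\|\partial_\alpha^3\eta\|_{L^2}^{1/2}+\|\partial_\alpha^2\eta\|_{L^2}$ together with $\|\theta\|_{L^2}\le\mathcal F$, and apply Young's inequality to produce $\nu\|\partial_\alpha^3\eta\|_{L^2}+\nu^{-1}\mathcal F(1+\|\partial_\alpha^2\eta\|_{L^2})$.

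The remaining step is to control $\partial_\alpha\theta$. I would differentiate \eqref{theta_eqn_visc}, which gives
\[
\Bigl(\tfrac12 I-K^*\Bigr)\partial_\alpha\theta=\partial_\alpha^2\eta+[\partial_\alpha,K^*]\theta .
\]
The commutator $[\partial_\alpha,K^*]$ has the same structure as the remainder above: derivatives falling on $h$ yield kernels involving $h''$ times Cauchy-type kernels. The argument of Lemma~\ref{theta_lemma} shows that $\partial_\alpha\theta$ has mean zero compatibility, so Proposition~\ref{inverse_layer_potential_star_shape} applies. It gives
\[
\|\partial_\alpha\theta\|_{L^2}\le\mathcal F\bigl(\|\partial_\alpha^2\eta\|_{L^2}+\|\partial_\alpha^2\eta\|_{L^\infty}\|\theta\|_{L^2}\bigr)\le\mathcal F\bigl(\|\partial_\alpha^2\eta\|_{L^2}+\|\partial_\alpha^2\eta\|_{L^2}^{1/2}\|\partial_\alpha^3\eta\|_{L^2}^{1/2}+\|\partial^2_\alpha\eta\|_{L^2}\bigr).
\]
Young's inequality again puts this in the required form. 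The quadratic term $\|\partial_\alpha^2\eta\|_{L^2}^2$ arises when $\|\theta\|_{L^\infty}$ is bounded by $\|\theta\|_{H^1}$ and multiplied by another $\|\partial^2_\alpha\eta\|_{L^2}$.

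I expect the main obstacle to be the commutator and remainder kernels. One has to verify that each kernel in which $h''$ appears splits into (a bounded function of $h''$) $\times$ (a Calderón-commutator kernel), so that only $L^\infty$ of $\partial^2_\alpha\eta$, and never $\partial^3_\alpha\eta$, appears multiplied by $\theta$. I would organize this with the complex form \eqref{K^*_expression_2}, using $\partial_\alpha\log(z(\alpha)-z(\beta))=z'(\alpha)/(z(\alpha)-z(\beta))$, so that every term becomes a Cauchy integral on the Lipschitz curve, bounded via Coifman--McIntosh--Meyer or the estimates of Appendix~\ref{appendix:DN}.
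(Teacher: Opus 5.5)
Your handling of the Dirichlet-to-Neumann part is correct, and cleaner than the paper's. Write $C\theta(\alpha):=\pv\int_\T\frac{\theta(\beta)}{z(\alpha)-z(\beta)}\,d\beta$. The identity $\partial_\alpha C\theta=z'\,C[\partial_\beta(\theta/z')]$ leaves only Cauchy integrals sandwiched between multiplications by $z'$, $1/z'$ and $z''$, so you never meet the commutator term $G_2$ that the paper has to split by hand.

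Two small corrections for the remainder. Do not form the difference quotient $\frac{z''(\alpha)-z''(\beta)}{z(\alpha)-z(\beta)}$, which costs $z'''$; bound $z''C\theta$ and $z'^2C[z''\theta/z'^2]$ separately using $\|z''\theta\|_{L^2}\le\|z''\|_{L^\infty}\|\theta\|_{L^2}$. Also drop the $\|\theta\|_{L^\infty}$ detour, which is not where the quadratic term comes from.

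The gap is in the bound for $\partial_\alpha\theta$, at exactly the step you flagged: $[\partial_\alpha,K^*]$ does \emph{not} have the same structure as the remainder. Since $K^*\theta=-\frac{1}{2\pi}\im(z'C\theta)$, the same identity gives
\[
[\partial_\alpha,K^*]\theta=-\frac{1}{2\pi}\im\Bigl(z''C\theta-z'^2C\bigl[z''\theta/z'^2\bigr]\Bigr)-\frac{1}{2\pi}\im\Bigl(z'\bigl(z'C[\partial_\alpha\theta/z']-C[\partial_\alpha\theta]\bigr)\Bigr).
\]
The last term is the commutator of multiplication by $z'$ with the Cauchy integral, acting on $\partial_\alpha\theta$. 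Its kernel $\frac{z'(\alpha)(z'(\alpha)-z'(\beta))}{z'(\beta)(z(\alpha)-z(\beta))}$ has size $\cF\|z''\|_{L^\infty}$, which is not small, so it cannot be absorbed into $(\frac12 I-K^*)\partial_\alpha\theta$. Integrating by parts in $\beta$ instead produces, besides harmless terms, the first Calder\'on commutator
\[
\pv\int_\T\frac{z'(\alpha)-z'(\beta)}{(z(\alpha)-z(\beta))^2}\,\theta(\beta)\,d\beta .
\]

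This operator is not a Cauchy integral composed with multiplication operators. Its $L^2$ bound by $\cF(\|\eta\|_{W^{1,\infty}})\|z''\|_{L^\infty}\|\theta\|_{L^2}$ does not follow from the Cauchy-integral bound of Lemma~\ref{lem:cauchy_star_shaped_l2}. Your phrase ``(a bounded function of $h''$) $\times$ (a Calder\'on-commutator kernel)'' hides exactly the estimate that has to be proved, since $\|z''\|_{L^\infty}$ enters only through the operator norm. To close the argument you need one of two inputs.

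\begin{enumerate}
\item Cite Calder\'on's first-commutator theorem on Lipschitz (chord-arc) curves,
\[
\Bigl\|\pv\int_\T\frac{A(\alpha)-A(\beta)}{(z(\alpha)-z(\beta))^2}f(\beta)\,d\beta\Bigr\|_{L^2}\le\cF(\|\eta\|_{W^{1,\infty}})\|A'\|_{L^\infty}\|f\|_{L^2},
\]
with $A=z'$. With this heavier input your scheme goes through, and it even gives a bound linear in $\|\partial_\alpha^2\eta\|_{L^2}$.
\item Do what the paper does for $G_2$ and for the imaginary part in \eqref{theta_derivative_eqn_1}. Split at scale $\delta$. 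For $|\beta|>\delta$, use $[z']_{C^{1/2}}\lesssim\|z''\|_{L^2}$. For $|\beta|<\delta$, subtract $\beta z''(\alpha)$ and bound the Taylor remainder by $|\beta|^{3/2}\|z'''\|_{L^2}$. Treat the linear part as $z''(\alpha)$ times a truncated Hilbert transform plus the mean-value correction $A[h]$. Finally choose $\delta\sim\nu^2/\bigl(\cF(\|\eta\|_{W^{1,\infty}})(1+\|\partial_\alpha^2\eta\|_{L^2})\bigr)$.
\end{enumerate}

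The second, elementary route is precisely where the $\nu\|\partial_\alpha^3\eta\|_{L^2}$ term and the quadratic $\|\partial_\alpha^2\eta\|_{L^2}^2$ in the lemma come from.
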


Before proving Lemma \ref{grad_estimate}, we explain how it yields the desired \(\dot H^2\) control of \(\eta\). Combining \eqref{H^2_ineq_1} and \eqref{grad_estimate_ineq}, we obtain
\begin{equation}
\begin{aligned}
\frac12\frac{d}{dt}\|\partial_\alpha^2\eta\|_{L^2}^2+\varepsilon\|\partial_\alpha^3\eta\|_{L^2}^2
&\leq \mathcal F(\|\eta\|_{W^{1,\infty}})\|\partial_\alpha^2\eta\|_{L^2}
\bigl(1+\nu\|\partial_\alpha^3\eta\|_{L^2}\bigr) \\
&\quad + \frac{1}{\nu}\mathcal F(\|\eta\|_{W^{1,\infty}})\|\partial_\alpha^2\eta\|_{L^2}
\bigl(1+\|\partial_\alpha^2\eta\|_{L^2}+\|\partial_\alpha^2\eta\|_{L^2}^2\bigr).
\end{aligned}
\end{equation}
Choosing \(\nu=\varepsilon/4\) and using the AM--GM inequality, we get
\begin{equation}
\frac{d}{dt}\|\partial_\alpha^2\eta\|_{L^2}^2+\varepsilon\|\partial_\alpha^3\eta\|_{L^2}^2
\leq \varepsilon^{-3}\mathcal F(\|\eta\|_{W^{1,\infty}})\bigl(1+\|\partial_\alpha^2\eta\|_{L^2}^2\bigr)\|\partial_\alpha^2\eta\|_{L^2}^2.
\end{equation}
Applying Gronwall's inequality (Lemma \ref{Gronwall}), we obtain
\begin{equation}
\begin{aligned}
&\|\partial_\alpha^2\eta(t)\|_{L^2}^2+\varepsilon\int_0^t\|\partial_\alpha^3\eta(s)\|_{L^2}^2\,ds\\
&\leq \|\partial_\alpha^2\eta_0\|_{L^2}^2 \exp\bigg(\varepsilon^{-3}\int_0^t \mathcal F(\|\eta(s)\|_{W^{1,\infty}})
\bigl(1+\|\partial_\alpha^2\eta(s)\|_{L^2}^2\bigr)\,ds\bigg).
\end{aligned}
\end{equation}
Bounding \(\|\eta(s)\|_{W^{1,\infty}}\) in terms of \(\|\eta_0\|_{W^{1,\infty}}\) and using \eqref{H^1_Gronwall}, we conclude that
\begin{equation}
\|\partial_\alpha^2\eta(t)\|_{L^2}^2\leq \mathcal F(\|\eta_0\|_{W^{1,\infty}},\varepsilon^{-1},t)\|\partial_\alpha^2\eta_0\|_{L^2}^2,
\end{equation}
as desired.

\begin{proof}[Proof of Lemma \ref{grad_estimate}]
For convenience of notation, we suppress the dependence on $t$. Differentiating (\ref{DN_mollified}) with $h = e^\eta$ plugged in, we obtain
\begin{equation}
\begin{split}
\partial_\alpha G(h)\eta(\alpha) = G_1 + G_2 + G_3,
\end{split}
\end{equation}
where
\[\begin{split}
G_1(\alpha) &= \re\ \frac{\partial_\alpha^2 e^{\eta(\alpha) - i\alpha}}{2\pi}\int_\T \frac{e^{\eta(\alpha) - i\alpha} - e^{\eta(\alpha - \beta) - i(\alpha - \beta)}}{|e^{\eta(\alpha) + i\alpha} - e^{\eta(\alpha - \beta) + i(\alpha - \beta)}|^2}\ \theta(\alpha -\beta)  d\beta,\\
G_2(\alpha) &= -\re\ \frac{\partial_\alpha e^{\eta(\alpha) + i\alpha}}{2\pi}\int_\T \frac{\partial_\alpha e^{\eta(\alpha) - i\alpha} - \partial_\alpha e^{\eta(\alpha - \beta) - i(\alpha - \beta)}}{|e^{\eta(\alpha) + i\alpha} - e^{\eta(\alpha - \beta) + i(\alpha - \beta)}|^2} \theta(\alpha -\beta)\ d\beta,\\
G_3(\alpha) &= \int_\T\Lambda[e^\eta](\alpha,\alpha -\beta) \partial_\alpha\theta(\alpha -\beta)\ d\beta.
\end{split}\]
Before we proceed with estimates of $G_1$, $G_2$, and $G_3$, we note that straightforward calculations give us the bounds
\begin{equation}\label{eta_derivative_bound}
\begin{split}
|\partial_\alpha e^{\eta(\alpha) \pm i\alpha}| &= |e^{\eta(\alpha)}|\sqrt{1 + (\partial_\alpha\eta(\alpha))^2}, \\
|\partial_\alpha^2e^{\eta(\alpha) \pm i\alpha}| &\leq |e^{\eta(\alpha)}|\big(|\partial_\alpha^2\eta(\alpha)| + |\partial_\alpha\eta(\alpha)|^2 + 1\big), \\
|\partial_\alpha^3e^{\eta(\alpha) \pm i\alpha}|&\leq |e^{\eta(\alpha)}|\big(|\partial_\alpha^3\eta(\alpha)| + 3(1+|\partial_\alpha\eta(\alpha)|)|\partial_\alpha^2\eta(\alpha)| + (|\partial_\alpha\eta(\alpha)|+1)^3\big).
\end{split}
\end{equation}
We shall also frequently use the fact that
\begin{equation}\label{denom_lower_bound}
\begin{split}
|e^{\eta(\alpha) + i\alpha} - e^{\eta(\beta) + i\beta}|^2 &= [e^{\eta(\alpha) - \eta(\beta)}]^2 + e^{\eta(\alpha) + \eta(\beta)}\sin^2(\frac{\alpha - \beta}{2})\\
&\gtrsim \exp(-2\|\eta\|_{L^\infty})(\alpha-\beta)^2.
\end{split}
\end{equation}
\paragraph{\textbf{Estimates of $G_1$}}
We define
\begin{equation}
\begin{split}
T\theta(\alpha) &:=  \int_\T \frac{e^{\eta(\alpha) - i\alpha} - e^{\eta(\alpha - \beta) - i(\alpha - \beta)}}{|e^{\eta(\alpha) + i\alpha} - e^{\eta(\alpha - \beta) + i(\alpha - \beta)}|^2}  \theta(\alpha -\beta)\ d\beta\\
\end{split}
\end{equation}
\(T\) is a Cauchy integral operator on a Lipschitz curve. Therefore, by classical results in harmonic analysis, \(T\) is bounded on \(L^2(\T)\), with
operator norm depending only on $\|\eta\|_{W^{1,\infty}}$. More detailed discussions can be found in Appendix \ref{subsec:bdd_cauchy}. We then have
\begin{equation}
\begin{split}
\|G_1\|_{L^2} &\leq \|\partial_\alpha^2e^{\eta + i\cdot}\|_{L^\infty}\|T\theta\|_{L^2}
\\
&\leq \cF(\|\eta\|_{W^{1,\infty}})(1 + \|\partial_\alpha^2\eta\|_{L^\infty}).
\end{split}
\end{equation}
By Sobolev embedding,
\begin{equation}\label{second_derivative_sobolev}
\|\partial_\alpha^2\eta\|_{L^\infty}\leq C\|\partial_\alpha^3\eta\|_{L^2}^\frac{1}{2}\|\partial_\alpha^2\eta\|_{L^2}^{\frac{1}{2}},
\end{equation}
and an application of the AM-GM inequality gives
\begin{equation}\label{G_1_bound}
\|G_1\|_{L^2}\leq \nu\|\partial_\alpha^3\eta\|_{L^2} + \frac{1}{\nu}\cF(\|\eta\|_{W^{1,\infty}})(1 + \|\partial_\alpha^2\eta\|_{L^2} + \|\partial_\alpha^2\eta\|_{L^2}^2)
\end{equation}
as desired.

\vspace{2ex}

\paragraph{\textbf{Estimates of $G_2$}}
We write
\[G_2 = G_{2, 1} + G_{2, 2} + G_{2, 3} + G_{2, 4},\]
where, for $\delta > 0$ to be chosen,
\[\begin{split}
G_{2, 1}(\alpha) &= -\re\ \frac{\partial_\alpha e^{\eta(\alpha) + i\alpha}}{2\pi}\int_{|\beta| > \delta} \frac{\partial_\alpha e^{\eta(\alpha) - i\alpha} - \partial_\alpha e^{\eta(\alpha - \beta) - i(\alpha - \beta)}}{|e^{\eta(\alpha) + i\alpha} - e^{\eta(\alpha - \beta) + i(\alpha - \beta)}|^2}\theta(\alpha -\beta)\ d\beta,\\
G_{2, 2}(\alpha) &= -\re\ \frac{\partial_\alpha e^{\eta(\alpha) + i\alpha}}{2\pi}\int_{|\beta| < \delta} \frac{(\partial_\alpha e^{\eta(\alpha) - i\alpha} - \partial_\alpha e^{\eta(\alpha - \beta) - i(\alpha - \beta)}) - \beta\partial_\alpha^2 e^{\eta(\alpha) + i\alpha}}{|e^{\eta(\alpha) + i\alpha} - e^{\eta(\alpha - \beta) + i(\alpha - \beta)}|^2}\theta(\alpha - \beta)\ d\beta,
\end{split}\]
\[G_{2, 3}(\alpha) = -\re\ \frac{\partial_\alpha e^{\eta(\alpha) + i\alpha}\partial_\alpha^2e^{\eta(\alpha) + i\alpha}}{2\pi}\int_{|\beta| < \delta}  \beta\ A[h](\alpha, \beta)\  \theta(\alpha - \beta)\ d\beta \]
\[\mathrm{with}\quad A[h](\alpha, \beta) = \frac{1}{|e^{\eta(\alpha) + i\alpha} - e^{\eta(\alpha - \beta) + i(\alpha - \beta)}|^2}-\frac{1}{e^{2\eta(\alpha)}[1+(\partial_\alpha\eta(\alpha))^2]\beta^2},\]
and
\[G_{2, 4}(\alpha) = \frac{\partial_\alpha e^{\eta(\alpha) + i\alpha}\partial_\alpha^2 e^{\eta(\alpha)+i\alpha}}{e^{\eta(\alpha)}[1 + (\partial_\alpha\eta(\alpha))^2]}\int_{|\beta| < \delta}\frac{\theta(\alpha - \beta)}{\beta}\ d\beta .
\]

We begin with $G_{2, 1}$. We note from (\ref{denom_lower_bound}) that the kernel of $G_{2, 1}$ is bounded by
\begin{equation}
\begin{split}
\lesssim \frac{|\partial_\alpha e^{\eta(\alpha) - i\alpha} - \partial_\alpha e^{\eta(\alpha - \beta) - i(\alpha - \beta)}|}{|\beta|^2},
\end{split}
\end{equation}
which can be further bounded by
\begin{equation}
\begin{split}
&\lesssim \|\partial_\alpha e^{\eta - i\cdot}\|_{C^{\frac{1}{2}}}|\beta|^{-\frac{3}{2}}\lesssim \|\partial_\alpha^2 e^{\eta - i\cdot}\|_{L^2}|\beta|^{-\frac{3}{2}} \lesssim \cF(\|\eta\|_{W^{1,\infty}})(1 + \|\partial_\alpha^2\eta\|_{L^2})|\beta|^{-\frac{3}{2}},
\end{split}
\end{equation}
where the second inequality follows from Morrey's inequality. Therefore,
\begin{equation}
\begin{split}
\|G_{2, 1}\|_{L^2} &\leq \cF(\|\eta\|_{W^{1,\infty}})(1 + \|\partial_\alpha^2\eta\|_{L^2})\bigg\|\int_{|\beta|>\delta} \frac{\theta(\cdot - \beta)}{|\beta|^{\frac{3}{2}}}\bigg\|_{L^2}\\
&\leq \cF(\|\eta\|_{W^{1,\infty}})(1 + \|\partial_\alpha^2\eta\|_{L^2})\delta^{-\frac{1}{2}},
\end{split}
\end{equation}
where we used Young's convolution inequality to bound
\begin{equation}
\bigg\|\int_{|\beta|>\delta} \frac{\theta(\cdot - \beta)}{|\beta|^{\frac{3}{2}}}\bigg\|_{L^2}\leq \frac{\|\theta\|_{L^2}}{\delta^{\frac{1}{2}}}.
\end{equation}

For $G_{2, 2}$, we note that
\begin{equation}
\begin{aligned}
&\big|(\partial_\alpha e^{\eta(\alpha)-i\alpha}-\partial_\alpha e^{\eta(\alpha-\beta)-i(\alpha-\beta)})
-\beta\,\partial_\alpha^2 e^{\eta(\alpha)-i\alpha}\big| \\
&\qquad \lesssim |\beta|^{3/2}\|\partial_\alpha^2 e^{\eta-i\cdot}\|_{C^{1/2}}\\
&\qquad\lesssim |\beta|^{3/2}\|\partial_\alpha^3 e^{\eta-i\cdot}\|_{L^2} \\
&\qquad \le |\beta|^{3/2}\mathcal F(\|\eta\|_{W^{1,\infty}})
\bigl(1+\|\partial_\alpha^2\eta\|_{L^\infty}+\|\partial_\alpha^3\eta\|_{L^2}\bigr).
\end{aligned}
\end{equation}
where the second inequality again follows from Morrey's inequality. We can utilize Sobolev embedding and AM-GM inequality to further bound
\begin{equation}\label{sobolev_embedding_am_gm}
\begin{split}
\|\partial_\alpha^2\eta\|_{L^\infty}&\lesssim \|\partial_\alpha^3\eta\|_{L^2}^\frac{1}{2}\|\partial_\alpha^2\eta\|_{L^2}^{\frac{1}{2}} \lesssim \|\partial_\alpha^3\eta\|_{L^2}+\|\partial_\alpha^2\eta\|_{L^2}
\end{split}
\end{equation}
and obtain 
\begin{equation}\label{numerator_bound}
\begin{aligned}
|(\partial_\alpha e^{\eta(\alpha) - i\alpha} - \partial_\alpha e^{\eta(\alpha - \beta) - i(\alpha - \beta)}) - \beta\partial_\alpha^2 e^{\eta(\alpha) - i\alpha}|\\
\leq |\beta|^{\frac{3}{2}}\cF(\|\eta\|_{W^{1,\infty}})(1 + \|\partial_\alpha^2\eta\|_{L^2} + \|\partial_\alpha^3\eta\|_{L^2}).
\end{aligned}
\end{equation}
Using (\ref{numerator_bound}), (\ref{denom_lower_bound}), and (\ref{eta_derivative_bound}), we can bound the kernel of $G_{2, 2}$ by
\begin{equation}
\begin{split}
\leq \cF(\|\eta\|_{W^{1,\infty}})(1 + \|\partial_\alpha^2\eta\|_{L^2} + \|\partial_\alpha^3\eta\|_{L^2})|\beta|^{-\frac{1}{2}}
\end{split}
\end{equation}
and thus
\begin{equation}
\begin{split}
\|G_{2, 2}\|_{L^2}\leq \cF(\|\eta\|_{W^{1,\infty}})(1 + \|\partial_\alpha^2\eta\|_{L^2} + \|\partial_\alpha^3\eta\|_{L^2})\bigg\|\int\frac{\theta(\cdot - \beta)}{|\beta|^{\frac{1}{2}}}\bigg\|_{L^2}.
\end{split}
\end{equation}
Using Young's convolution inequality, we obtain 
\begin{equation}
\bigg\|\int\frac{\theta(\cdot - \beta)}{|\beta|^{\frac{1}{2}}}\bigg\|_{L^2} \lesssim \delta^{\frac{1}{2}}\|\theta\|_{L^2},
\end{equation}
and it follows that
\begin{equation}
\|G_{2, 2}\|_{L^2}\leq \cF(\|\eta\|_{W^{1,\infty}})(1 + \|\partial_\alpha^2\eta\|_{L^2} + \|\partial_\alpha^3\eta\|_{L^2})\delta^{\frac{1}{2}}.
\end{equation}
For $G_{2, 3}$, mean-value theorem gives
\begin{equation}
\begin{split}
|A[h](\alpha,\beta)\beta|&\lesssim \sup_\alpha \bigg|\partial_\alpha \big|\partial_\alpha e^{\eta(\alpha) + i\alpha}\big|^2\bigg|\\
&\leq \sup_\alpha|\partial_\alpha^2 e^{\eta(\alpha) + i\alpha}|\cdot \sup_\alpha|\partial_\alpha e^{\eta(\alpha) + i\alpha}|\\
&\leq \cF(\|\eta\|_{W^{1,\infty}})(1 + \|\partial_\alpha^2\eta\|_{L^\infty}).
\end{split}
\end{equation}
Therefore, we obtain
\begin{equation}
\begin{split}
\|G_{2, 3}\|_{L^2} &\leq \cF(\|\eta\|_{W^{1,\infty}})\|\partial_\alpha^2\eta\|_{L^\infty}(1 + \|\partial_\alpha^2\eta\|_{L^\infty})\bigg\|\int_{|\beta|<\delta}\theta(\cdot - \beta)\ d\beta\bigg\|_{L^2}\\
&\leq \cF(\|\eta\|_{W^{1,\infty}})\|\partial_\alpha^2\eta\|_{L^\infty}(1 + \|\partial_\alpha^2\eta\|_{L^\infty})\delta^\frac{1}{2},
\end{split}
\end{equation}
where we used Young's convolution inequality to bound
\begin{equation}
\bigg\|\int_{|\beta|<\delta}\theta(\cdot - \beta)\ d\beta\bigg\|_{L^2}\leq \delta^\frac{1}{2}\|\theta\|_{L^2}.
\end{equation}
Another application of (\ref{sobolev_embedding_am_gm}) then gives
\begin{equation}
\|G_{2, 3}\|_{L^2} \leq \cF(\|\eta\|_{W^{1,\infty}})(1 + \|\partial_\alpha^2\eta\|_{L^2} +\|\partial_\alpha^3\eta\|_{L^2} + \|\partial_\alpha^2\eta\|_{L^2} \|\partial_\alpha^3\eta\|_{L^2})\delta^\frac{1}{2}.
\end{equation}

For $G_{2, 4}$, standard Calderon-Zygmund operator theory, bounds of $\theta$, and (\ref{sobolev_embedding_am_gm}) together give
\begin{equation}
\begin{split}
\|G_{2, 4}\|_{L^2} &\leq \cF(\|\eta\|_{W^{1,\infty}})\|\partial_\alpha^2 e^{\eta + i\cdot}\|_{L^\infty}\|\theta\|_{L^2}\\
&\leq \cF(\|\eta\|_{W^{1,\infty}})(1 + \|\partial_\alpha^2\eta\|_{L^2} + \|\partial_\alpha^3\eta\|_{L^2}).
\end{split}
\end{equation}
Now, we choose 
\[\delta = \frac{c\nu^2}{\cF(\|\eta\|_{W^{1,\infty}})(1 + \|\partial_\alpha^2\eta\|_{L^2})}\]
for $c > 0$ sufficiently small, and we obtain
\begin{equation}\label{G_2_bound}
\|G_2\|_{L^2}\leq \nu\|\partial_\alpha^3\eta\|_{L^2} + \frac{1}{\nu}\cF(\|\eta\|_{W^{1,\infty}})(1 + \|\partial_\alpha^2\eta\|_{L^2} + \|\partial_\alpha^2\eta\|_{L^2}^2)
\end{equation}
as desired.

\vspace{2ex}

\paragraph{\textbf{Estimates of $G_3$}} First of all, we recall that
\begin{equation}
G_3 = N(e^{i\alpha}h(\alpha))\cdot\nabla(S[h]\partial_\alpha\theta)(e^{i\alpha}h(\alpha))
\end{equation}
and thus
\begin{equation}
\|G_3\|_{L^2}\leq \cF(\|\eta\|_{W^{1,\infty}})\|(\frac{1}{2}I - K^*)\partial_\alpha\theta\|_{L^2}.
\end{equation}
Therefore, to obtain the desired bound for $G_3$, we just need to obtain 
\begin{equation}
\|(\frac{1}{2}I - K^*)\partial_\alpha\theta\|_{L^2}\leq \nu\|\partial_\alpha^3\eta\|_{L^2} + \frac{1}{\nu}\cF(\|\eta\|_{W^{1,\infty}})(1 + \|\partial_\alpha^2\eta\|_{L^2} + \|\partial_\alpha^2\eta\|_{L^2}^2)
\end{equation}
Differentiating equation (\ref{theta_eqn}) with respect to $\alpha$ and rearranging the terms, we get
\begin{equation}
(\frac{1}{2}I - K^*)\partial_\alpha\theta = \partial_\alpha^2\eta + (\partial_\alpha K^*\theta(\alpha) - K^*\partial_\alpha\theta(\alpha)),
\end{equation}
and a direct computation using \eqref{K^*_expression_2} yields
\begin{equation}\label{theta_derivative_eqn_1}
\begin{split}
& \partial_\alpha K^*\theta(\alpha) - K^*\partial_\alpha\theta(\alpha) \\
& = \im\ \frac{1}{2\pi}\int\partial_\alpha\bigg(\frac{\partial_\alpha e^{\eta(\alpha) + i\alpha}(e^{\eta(\alpha) - i\alpha} - e^{\eta(\alpha - \beta) - i(\alpha-\beta)})}{|e^{\eta(\alpha) + i\alpha} - e^{\eta(\alpha - \beta) + i(\alpha - \beta)})|^2}\bigg)\theta(\alpha -\beta)\ d\beta.
\end{split}
\end{equation}
Since our analysis of 
\begin{equation}
\re\ \frac{1}{2\pi}\int\partial_\alpha\bigg(\frac{\partial_\alpha e^{\eta(\alpha) + i\alpha}(e^{\eta(\alpha) - i\alpha} - e^{\eta(\alpha - \beta) - i(\alpha-\beta)})}{|e^{\eta(\alpha) + i\alpha} - e^{\eta(\alpha - \beta) + i(\alpha - \beta)})|^2}\bigg)\theta(\alpha -\beta)\ d\beta = G_1 + G_2
\end{equation}
also works for the imaginary part of the same complex number, we can obtain the same bounds as \eqref{G_1_bound} and \eqref{G_2_bound}, and thus
\begin{equation}
\begin{split}
\|(\frac{1}{2}I - K^*)\partial_\alpha\theta\|_{L^2} &\leq \nu\|\partial_\alpha^3\eta\|_{L^2} + \frac{1}{\nu}\cF(\|\eta\|_{W^{1,\infty}})(1 + \|\partial_\alpha^2\eta\|_{L^2} + \|\partial_\alpha^2\eta\|_{L^2}^2)
\end{split}
\end{equation}
as desired. This finishes the proof of the lemma. 
\end{proof}
\paragraph{\textbf{Higher regularity}} For $s\geq 2$, we have
\begin{equation}
\begin{split}
\frac{1}{2}\frac{d}{dt}\|\eta\|_{H^s}^2 + \varepsilon\|\partial_\alpha\eta\|_{H^s}^2 &\leq \|\eta\|_{H^{s+1}}\|e^{-2\eta}G(h)\eta\|_{H^{s-1}},
\end{split}
\end{equation}
and by product lemma in Sobolev spaces (see Lemma \ref{Sobolev_prod}), 
\begin{equation}
\|e^{-2\eta}G(h)\eta\|_{H^{s-1}} \lesssim \|e^{-2\eta}\|_{L^\infty}\|G(h)\eta\|_{H^{s-1}} + \|e^{-2\eta}\|_{H^{s-1}}\|G(h)\eta\|_{L^\infty}.
\end{equation}
Note that
\begin{equation}
\|e^{-2\eta}\|_{L^\infty} \leq \exp(2\|\eta\|_{L^\infty}),
\end{equation}
and that by fractional chain rule in Sobolev spaces (Lemma \ref{fractional_chain_rule}) we have
\begin{equation}
\|e^{-2\eta}\|_{H^{s-1}}\lesssim \exp(2\|\eta\|_{L^\infty})\|\eta\|_{H^{s-1}}.
\end{equation}
Moreover, for $s_0\in (\frac{3}{2}, 2]$, we have from Proposition \ref{DN_boundedness} that
\begin{equation}
\|G(h)\eta\|_{H^{s-1}} \leq \cF(\|\eta\|_{H^{s_0}})\|\eta\|_{H^s}.
\end{equation}
By Sobolev embedding (Lemma \ref{lem:1d_sobolev_interpolation}),
\begin{equation}
\|G(h)\eta\|_{L^\infty} \lesssim \|G(h)\eta\|_{H^1}\leq \cF(\|\eta\|_{H^{s_0}})\|\eta\|_{H^2}.
\end{equation}
so it follows that for $s_0\in (\frac{3}{2}, 2]$,
\begin{equation}
\|e^{-2\eta}G(h)\eta\|_{H^{s-1}} \leq \cF(\|\eta\|_{H^{s_0}})\|\eta\|_{H^s}.
\end{equation}
Eventually, 
\begin{equation}
\|\eta\|_{H^{s+1}}\lesssim \|\eta\|_{L^2} + \|\partial_\alpha\eta\|_{H^s},
\end{equation}
so an AM-GM inequality argument as before gives
\begin{equation}
\frac{d}{dt}\|\eta\|_{H^s}^2 + \varepsilon\|\partial_\alpha\eta\|_{H^s}^2 \leq \cF(\varepsilon^{-1}, \|\eta\|_{H^{s-1}})\|\eta\|_{H^s}^2.
\end{equation}
By Gronwall's inequality \eqref{Gronwall}, we have
\begin{equation}
\|\eta(t)\|_{H^s}^2 + \varepsilon\int_0^t \|\partial_\alpha\eta(\tau)\|_{H^s}^2\ d\tau \leq \exp\bigg(\int_0^t\cF(\varepsilon^{-1}, \|\eta(\tau)\|_{H^{s-1}})\ d\tau\bigg)\|\eta_0\|_{H^s}^2.
\end{equation}
Inductively, we can show that for all $s\geq 2$, 
\begin{equation}
\|\eta(t)\|_{H^s}^2 + \varepsilon\int_0^t\|\partial_\alpha\eta(\tau)\|_{H^s}^2\ d\tau \leq\cF\big(\|\eta_0\|_{W^{1,\infty}}, \varepsilon^{-1}, t)\|\eta_0\|_{H^s}^2.
\end{equation}
It follows that finite-time blow up of $\|\eta(t)\|_{H^s}$ cannot happen, and we established global well-posedness of \eqref{visc_interface_eqn} with $H^s(\T)$ initial data. 

\section{Viscosity Solutions of the Interface Equation}\label{sec:visc_sol}
In this section, we introduce the notion of viscosity solutions for the interface equation \eqref{interface_eqn}. This definition is motivated by the comparison principle \ref{visc_cp} established earlier for classical solutions of \eqref{visc_interface_eqn}. We emphasize that the extrema in the definition must be global because of the nonlocal nature of the Dirichlet-to-Neumann operator, and must also be non-negative (respectively non-positive) in order to preserve the sign information needed in the comparison argument.

\begin{definition}[Viscosity solution of the boundary equation]\label{def:visc_sol_bdry}
A function $\eta: [0, T]\times\T\to \R$ is called a viscosity subsolution (resp. supersolution) of \eqref{interface_eqn} on $(0, T)$ if the following conditions hold:
\begin{enumerate}
\item $\eta$ is upper semicontinuous (resp. lower semicontinuous) on $\T\times [0, T]$, and
\item for every $\psi: \T\times(0, T)\to \R$ with $\partial_t\psi\in C(\T\times (0, T))$ and $\psi\in C((0, T); C^{1,1}(\T))$, if $\eta - \psi$ attains a \emph{non-negative global maximum (resp. non-positive global minimum)} over $\T\times [t_0 - r, t_0]$ at $(\alpha_0, t_0)\in \T\times (0, T)$ for some $r > 0$, then
\begin{equation}
\partial_t\psi(\alpha_0, t_0) + e^{-2\psi}(G(e^\psi)\psi - 1)(\alpha_0, t_0) \leq 0\quad (\mathrm{resp.}\ \geq 0).
\end{equation}
\end{enumerate}
A viscosity solution is both a viscosity subsolution and a viscosity supersolution.
\end{definition}

For the one-phase Muskat problem, there is by now an extensive literature on viscosity solutions of the interface equation; see, for example, \cite{ChangLaraGuillenSchwab2019}, \cite{Dong-Gancedo-Nguyen-23}, and \cite{SchwabTuTuranova2024} for more detailed discussions. For the horizontal Hele-Shaw problem, the notion of viscosity solution has also existed for a long time, although it has most often been formulated for the domain equation \eqref{domain_eqn_p}. In Section \ref{sec:visc_sol_domain}, we revisit this classical literature and examine its connection with Definition \ref{def:visc_sol_bdry}.

We now begin by showing that once a viscosity solution enjoys sufficient regularity, it upgrades to a classical solution.
\begin{proposition}[Consistency]\label{consistency}
Let $\eta$ be a viscosity subsolution (resp. supersolution) of \eqref{interface_eqn} on $(0, T)$ and let $h := e^\eta$. Assume that $\eta\in W^{1,\infty}(\T\times (0, T))$ and is $C^{1, 1}$ at $(\alpha_0, t_0)\in \T\times (0, T)$. Then $G(h)\eta$ is classically well-defined at $(\alpha_0, t_0)$ and
\begin{equation}\label{classical_eqn}
\partial_t\eta(\alpha_0, t_0) + e^{-2\eta}(G(h)\eta - 1)(\alpha_0, t_0) \leq 0\quad (\mathrm{resp.}\ \geq 0).
\end{equation}
\end{proposition}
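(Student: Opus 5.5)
The plan is to prove the subsolution case; the supersolution case is symmetric, with all inequalities reversed and test functions touching from below. First, $G(h)\eta(\alpha_0,t_0)$ is classically well-defined by the same argument as in Propositions \ref{pointwise_cp} and \ref{taylor_sign}. That argument uses the boundary $C^{1,1}$ regularity of $\Sigma_h$ at $e^{\eta(\alpha_0,t_0)+i\alpha_0}$ together with Lemma 11.17 of \cite{CaffarelliSalsa2005}, and $p=\phi-\log|x|$ vanishes on the boundary.

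Write the $C^{1,1}$ expansion at $(\alpha_0,t_0)$ as
\[
\eta(\alpha,t)\le P(\alpha,t):=\eta(\alpha_0,t_0)+a(\alpha-\alpha_0)+b(t-t_0)+M\bigl(|\alpha-\alpha_0|^2+|t-t_0|^2\bigr)
\]
for $|\alpha-\alpha_0|,|t-t_0|<r_0$. Here $a=\partial_\alpha\eta(\alpha_0,t_0)$ and $b=\partial_t\eta(\alpha_0,t_0)$.

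The natural paraboloid test function is not enough. Suppose we touch $\eta$ from above by some admissible $\psi$ and apply Definition \ref{def:visc_sol_bdry}. Then $\partial_t\psi=b$ and $e^{-2\psi}=e^{-2\eta}$ at $(\alpha_0,t_0)$. However, Proposition \ref{pointwise_cp} only gives $G(e^\eta)\eta\ge G(e^\psi)\psi$ at $\alpha_0$, which is the wrong direction. We therefore need a family of test functions $\psi_{\delta}$ that touch from above and satisfy $G(e^{\psi_\delta})\psi_\delta(\alpha_0,t_0)\to G(h)\eta(\alpha_0,t_0)$.

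\textbf{Construction.} Let $\eta_\delta$ be a space--time mollification of $\eta$, and set $\omega_\delta:=C\delta\|\eta\|_{W^{1,\infty}}$, so that $\eta_\delta+\omega_\delta\ge\eta$. Choose $\rho\le r_0$ and a smooth cutoff $\chi_\rho(\alpha,t)$ that equals $1$ on the $\rho/2$-neighbourhood of $(\alpha_0,t_0)$ and is supported in the $\rho$-neighbourhood. Set
\[
\psi_{\delta,\rho}:=\chi_\rho\,\bigl(P+M'|t-t_0|^2\bigr)+(1-\chi_\rho)\,(\eta_\delta+\omega_\delta).
\]
This is a convex combination of two upper bounds for $\eta$, so $\psi_{\delta,\rho}\ge\eta$ on $\T\times[t_0-\rho,t_0]$, with equality at $(\alpha_0,t_0)$. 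Hence $\eta-\psi_{\delta,\rho}$ attains the non-negative global maximum $0$ there. The function is smooth enough in $(\alpha,t)$ to be admissible, and it agrees with $P$ near $(\alpha_0,t_0)$. The definition then yields
\[
b+e^{-2\eta(\alpha_0,t_0)}\bigl(G(e^{\psi_{\delta,\rho}})\psi_{\delta,\rho}(\alpha_0,t_0)-1\bigr)\le0.
\]

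\textbf{Passage to the limit.} We send $\delta\to0$ and then $\rho\to0$. The functions $\psi_{\delta,\rho}(\cdot,t_0)$ converge to $\eta(\cdot,t_0)$ uniformly, with uniformly bounded Lipschitz constants. Moreover, near $\alpha_0$ they agree with a fixed paraboloid that differs from $\eta$ by $O(M|\alpha-\alpha_0|^2)$ on a window of width $\rho$.

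\textbf{Main obstacle.} The key step is to show that this convergence forces the pointwise normal derivatives at the common touching point to converge. Let $p_{\delta,\rho}$ and $p$ be the pressures for $\psi_{\delta,\rho}$ and $\eta$. Since $\psi_{\delta,\rho}\ge\eta$, we have $\Omega_h\subset\Omega_{e^{\psi_{\delta,\rho}}}$, and the harmonic function $p_{\delta,\rho}-p$ is nonnegative on $\Omega_h$ by Lemma \ref{lem:weak_max_principle}. The task is to bound its normal derivative at the touching point by $o(1)$.

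I would attempt this with a barrier argument at the boundary point. Split $p_{\delta,\rho}-p$ into two parts. The far part has boundary values that are uniformly small in $L^\infty$, since $p_{\delta,\rho}$ is Lipschitz and the boundaries are uniformly close; its gradient is controlled by boundary Harnack / Lipschitz estimates in a fixed $C^{1,1}$ neighbourhood of the point. The near part has boundary values $O(M\rho^2)$ and is supported on a window of width $\rho$; for it, a $C^{1,1}$ barrier (an exterior/interior ball of fixed radius at $\alpha_0$, as in \cite{CaffarelliSalsa2005}) gives a normal derivative that is $O(\rho)$.

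If this local estimate proves delicate, I would instead use the layer-potential representation \eqref{G_integral_summary}. The idea there is to compare the kernels $\Lambda$ and the densities $\theta$ through Proposition \ref{inverse_layer_potential_star_shape}, treating the principal value at $\alpha_0$ via the common second-order expansion. Either route gives $G(e^{\psi_{\delta,\rho}})\psi_{\delta,\rho}(\alpha_0,t_0)\to G(h)\eta(\alpha_0,t_0)$, which yields \eqref{classical_eqn}.
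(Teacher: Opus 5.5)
Your plan follows the paper's own strategy. You build admissible $\psi\ge\eta$ that touch $\eta$ at $(\alpha_0,t_0)$, with $\partial_t\psi(\alpha_0,t_0)=\partial_t\eta(\alpha_0,t_0)$ and $G(e^{\psi})\psi(\alpha_0,t_0)\to G(h)\eta(\alpha_0,t_0)$; then you apply Definition~\ref{def:visc_sol_bdry} and pass to the limit. You are also right that a single touching function is useless, since Proposition~\ref{pointwise_cp} points the wrong way.

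The difference lies in how the hard step is justified. The paper takes the construction from Appendix C of \cite{Dong-Gancedo-Nguyen-23}. It takes the convergence (its item (4)) from (6.8) there, transported through Appendix~\ref{appendix:DN}, so it rests on Dong--Gancedo--Nguyen's quantitative $C^{1,\alpha}$ estimates. You write the gluing explicitly and prove (4) by barriers at the contact point. That route is more elementary and self-contained, and it can be made to work, but three points in your sketch need correcting.

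\emph{(i) No $C^{1,1}$ neighbourhood.} $\partial\Omega_h$ is $C^{1,1}$ only at $x_0$, so there is no $C^{1,1}$ neighbourhood in which to invoke boundary Harnack or Lipschitz estimates. What you do have are the interior and exterior tangent balls at $x_0$. Since $p_{\delta,\rho}-p\ge 0$, one-sided upper barriers built on the exterior ball are all you need. Alternatively, handle the $\delta\to0$ step by comparing $\psi_{\delta,\rho}$ with $\psi_{0,\rho}:=\chi_\rho P+(1-\chi_\rho)\eta$; these two boundaries genuinely share the smooth arc $|\alpha-\alpha_0|<\rho/2$.

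\emph{(ii) The far data need H\"older, not Lipschitz, control.} The Lipschitz constant of $p_{\delta,\rho}$ is not uniform in $\delta$: near mollified reentrant corners of $\eta$ it grows like a negative power of $\delta$. The smallness of the far data must instead come from H\"older continuity of the pressures. This is uniform because the $\psi_{\delta,\rho}$ have uniformly bounded slopes for fixed $\rho$. It gives a contribution $O(\delta^\gamma/\rho)$, which is harmless since $\delta\to0$ first.

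\emph{(iii) The near data need quadratic vanishing at $x_0$.} A bound $O(M\rho^2)$ on the near data over a window of width $\rho$ gives no control of the normal derivative at $x_0$. A barrier built from it has size $\sim\rho^2$ at $x_0$, where $p_{0,\rho}-p$ vanishes. You need the pointwise bound $p_{0,\rho}(x)\lesssim M|x-x_0|^2$ on $\partial\Omega_h$, that is, a Lipschitz bound for $p_{0,\rho}$ along the window that is uniform in $\rho$.

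You can get this from domain monotonicity. Let $\Omega'$ be a fixed smooth star-shaped domain whose boundary is the curve $e^{P(\alpha,t_0)+i\alpha}$ for $|\alpha-\alpha_0|<r_0/2$. Then $\Omega_{\psi_{0,\rho}}\subset\Omega'$, hence on $\partial\Omega_h$ in the window $p_{0,\rho}\le p_{\Omega'}\le C\,\mathrm{dist}(\cdot,\partial\Omega')\lesssim M|\alpha-\alpha_0|^2$.

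Now let $B(z_*,R)$ be the exterior tangent ball at $x_0$. Compare $p_{0,\rho}-p$ on $\Omega_h\cap B(z_*,2R)$ with the harmonic function in the annulus that has data $K\min\{|\xi-x_0|^2,\rho^2\}\mathbf 1_{\{|\xi-x_0|\lesssim\rho\}}$ on $\partial B(z_*,R)$ and $\sup(p_{0,\rho}-p)\lesssim\rho^2$ on $\partial B(z_*,2R)$. It dominates on $\partial\Omega_h$ because points of $\partial\Omega_h$ near $x_0$ lie within $O(|x-x_0|^2)$ of $\partial B(z_*,R)$. Its normal derivative at $x_0$ is $\lesssim\int_{|\xi-x_0|\lesssim\rho}d\sigma(\xi)+\rho^2/R=O(\rho)$. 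This is exactly the convergence you need.
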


\begin{proof}
The proof is very similar to the one for periodic one-phase Muskat problem (\cite{Dong-Gancedo-Nguyen-23}, Proposition 6.2), so we outline the main idea here. We only discuss viscosity subsolutions, since the arguments for supersolutions are similar. First of all we note from Proposition \ref{pointwise_cp} that $G(h)\eta(\alpha_0, t_0)$ is classically well-defined since $\eta(t_0)$ is $C^{1,1}$ . To show the equation \eqref{classical_eqn}, one constructs a family of functions $\{\psi_r\}_r\subset C^\infty(\T\times\R)$ such that 
\begin{enumerate}
    \item $\psi_r\geq \eta$ on $\T\times [0, T]$;
    \item $\psi_r(\alpha_0, t_0) = \eta(\alpha_0, t_0)$;
    \item there exists $\delta > 0$ sufficiently small such that $\psi_r \to \eta$ in $C(\T\times [\delta, T - \delta])$;
    \item $\lim_{r\to 0} G(e^{\psi_r})\psi_r(\alpha_0, t_0) = G(e^\eta)\eta(\alpha_0, t_0)$.
\end{enumerate}
The construction of $\psi_r$ that satisfies (1) - (3) can be taken exactly as in Appendix C of \cite{Dong-Gancedo-Nguyen-23}. Since $\eta-\psi_r$ attains a maximum at $(\alpha_0,t_0)$ and $\eta$ is differentiable at $(\alpha_0,t_0)$, we also have
\[
\partial_t\psi_r(\alpha_0,t_0)=\partial_t\eta(\alpha_0,t_0).
\]
Justification of (4) follows from equation (6.8) in \cite{Dong-Gancedo-Nguyen-23} and the equivalence we established in Appendix \ref{appendix:DN}. We remark that justification of (4) is highly non-trivial and crucially depends on quantitative $C^{1,\alpha}$ estimates established in Section 2.2 of \cite{Dong-Gancedo-Nguyen-23}. We refer to \cite{Dong-Gancedo-Nguyen-23} for the details.

Once we have the family $\{\psi_r\}_r$, we note that from (1) and (2) that every $\psi_r$, which is smooth, is a test function for viscosity subsolution at $(\alpha_0, t_0)$. Thus
\begin{equation}
\partial_t\psi_r(\alpha_0, t_0) + e^{-2\psi_r}(G(e^{\psi_r})\psi_r - 1)(\alpha_0, t_0) \leq 0
\end{equation}
for all sufficiently small $r > 0$. Sending $r\to 0$, equation \eqref{classical_eqn} then follows from (2), (3), (4), and the identity $\partial_t\psi_r(\alpha_0,t_0)=\partial_t\eta(\alpha_0,t_0)$.
\end{proof}

We proved a comparison principle as in Proposition \ref{visc_cp} for classical solutions of the regularized interface equation \eqref{visc_interface_eqn}, where $\varepsilon\geq 0$. Now we extend the comparison principle to viscosity solutions.
\begin{proposition}[Comparison principle]\label{visc_sol_cp}
Assume that $\eta_-,\ \eta+:\T\times [0, T]\to\R$ are bounded subsolution and supersolution of (\ref{interface_eqn}) on $(0, T)$. If $\eta_-(\alpha, 0)\leq \eta_+(\alpha, 0)$ for all $\alpha\in\T$, then $\eta_-(\alpha, t)\leq \eta_+(\alpha, t)$ for all $(\alpha, t)\in \T\times [0, T]$. 
\end{proposition}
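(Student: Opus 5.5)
I would follow the strategy of Dong--Gancedo--Nguyen for the one-phase Muskat problem. The classical argument of Proposition~\ref{visc_cp} is combined with a doubling-of-variables / sup-convolution regularization, because $\eta_\pm$ are only semicontinuous. The sign information is supplied by the Taylor sign condition (Proposition~\ref{taylor_sign}) and the scaling symmetry (Proposition~\ref{scaling_symm}).

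\textbf{Setup.} Suppose for contradiction that $\sup(\eta_--\eta_+)>0$. As in the classical proof, I replace $\eta_-$ by $\eta_- - ct$ for small $c>0$. The scaling symmetry and $G(h)\eta-1\le 0$ imply that $\eta_--ct$ is still a strict viscosity subsolution: the $e^{-2\psi}$ factor only increases when $\psi$ is lowered, so the operator inequality gains $-c$. The non-negativity requirement on the global maximum is harmless, since the maxima considered will be positive.

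\textbf{Regularization.} Next I regularize in space and time by sup-convolution of $\eta_-$ and inf-convolution of $\eta_+$, say
\[
\eta_-^\delta(\alpha,t)=\sup_{\beta,s}\Big\{\eta_-(\beta,s)-\tfrac{|\alpha-\beta|_\T^2+|t-s|^2}{2\delta}\Big\},
\]
and symmetrically for $\eta_+$. These are semiconvex and semiconcave respectively. Because $G$ commutes with rotations and the equation commutes with time translation, they remain (approximately) sub- and supersolutions on a slightly shrunk time interval. I must also check that shifting $\eta$ by a constant does not break the sub/supersolution property, which is exactly where the Taylor sign condition enters, as in Proposition~\ref{visc_reg_mod_cty}. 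The boundedness of $\eta_\pm$ keeps these convolutions finite and the penalization localized, and the time-zero ordering transfers up to $o(1)$ errors. The difference $\eta^\delta_--\eta^\delta_{+}-ct$ then attains a positive global maximum at some $(\alpha_*,t_*)$ with $t_*>0$.

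\textbf{Touching argument.} At $(\alpha_*,t_*)$ the semiconvex function lies below the semiconcave one up to the constant $M_*$. By Jensen's lemma (or Alexandrov), after a small linear perturbation I can arrange that both are pointwise $C^{1,1}$ at a nearby maximum point. Then I apply the viscosity inequalities with test functions obtained from these touching paraboloids. The global test-function requirement can be met by taking $\psi$ equal to the other (shifted) function, which is $C^{1,1}$ at the point, and using the approximation family $\psi_r$ from the proof of Proposition~\ref{consistency}. With both $G(e^{\eta_\pm})\eta_\pm$ classically defined there, I repeat the chain of inequalities \eqref{fifth_eqn}--\eqref{sixth_eqn}: the pointwise comparison (Proposition~\ref{pointwise_cp}) together with scaling symmetry, and the Taylor sign with $e^{-2\eta_-}\le e^{-2\eta_+}$. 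This gives $c\le o(1)$, and letting the perturbation and $\delta\to 0$ yields a contradiction.

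\textbf{Main obstacle.} The hard step is passing viscosity inequalities to the touching point when the functions are only semiconvex or semiconcave. The operator is nonlocal, so the test function must touch globally, and $G(e^\psi)\psi$ must converge under the approximating test functions; this is the delicate continuity property (4) in Proposition~\ref{consistency}. I would import the quantitative $C^{1,\alpha}$ continuity of the Dirichlet-to-Neumann operator at $C^{1,1}$ touching points from \cite{Dong-Gancedo-Nguyen-23} via Appendix~\ref{appendix:DN}. I would also track carefully that the maxima stay non-negative, so that Definition~\ref{def:visc_sol_bdry} applies.
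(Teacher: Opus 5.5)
Your proposal is correct and follows essentially the same route as the paper. Both arguments use sup/inf-convolutions, and these remain sub/supersolutions by rotation and time-translation invariance combined with the scaling symmetry and the Taylor sign condition. Both then add a $-ct$ penalization, get pointwise $C^{1,1}$ regularity at the maximum point, pass to classical inequalities via the $\psi_r$ family of Proposition~\ref{consistency}, and finish with the chain of inequalities from Proposition~\ref{visc_cp}.

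One simplification: you do not need Jensen's lemma or the linear perturbation. On $\T$, with a nonlocal operator, they would actually cost you periodicity and the global ordering required by Proposition~\ref{pointwise_cp}. At a maximum of a semiconvex function minus a semiconcave one, each function is already squeezed between two paraboloids with a common tangent plane, so both are automatically pointwise $C^{1,1}$ there. This is exactly how the paper argues.
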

\begin{proof}
The proof, where $\eta_-$ and $\eta_+$ are only bounded and semi-continuous, is a standard argument using sup- and inf-convolutions. For small $\delta >0$, we define the sup- and inf- convolutions 
\begin{equation}
\begin{aligned}
\eta^\delta(\alpha, t) &= \sup_{(\beta, s)\in\T\times[0, T]} \bigg[\eta(\beta, t) - \frac{1}{2\delta}(|\alpha - \beta|^2 + |t - s|^2)\bigg],\\
\eta_\delta(\alpha, t) &= \sup_{(\beta, s)\in\T\times[0, T]} \bigg[\eta(\beta, t) + \frac{1}{2\delta}(|\alpha - \beta|^2 + |t - s|^2)\bigg].
\end{aligned}
\end{equation}
for $(\alpha, t)\in \T\times[0, T]$. We recall the following standard properties of sup- and inf-convolutions
\cite[Appendix A]{CrandallIshiiLions1992}.
\begin{enumerate}[(i)]
    \item \(\eta^\delta, \eta_\delta \in \operatorname{Lip}(\mathbb T\times[0,T])\).

    \item \(\eta^\delta\) is semiconvex and \(\eta_\delta\) is semiconcave. More precisely,
    \(\eta^\delta\) is semiconvex with opening \(\delta^{-1}\) in the sense that, for every
    \(X_0=(\alpha_0,t_0)\in \mathbb T\times[0,T]\), there exists \(v_{X_0}\in\mathbb R^2\)
    such that, in local coordinates and for \(X=(\alpha,t)\) sufficiently close to \(X_0\),
    \[
        \eta^\delta(X)
        \geq
        \eta^\delta(X_0)+v_{X_0}\cdot (X-X_0)
        -\frac{1}{2\delta}|X-X_0|^2.
    \]
    Similarly, \(\eta_\delta\) semiconcave with opening \(\delta^{-1}\) means that for every \(X_0\), there exists \(w_{X_0}\in\mathbb R^2\) such that
    \[
        \eta_\delta(X)
        \leq
        \eta_\delta(X_0)+w_{X_0}\cdot (X-X_0)
        +\frac{1}{2\delta}|X-X_0|^2
    \]
    for \(X\) sufficiently close to \(X_0\).

    \item The half-relaxed limits
    \[
        \limsup_{\substack{\delta\to0\\ (\beta,s)\to(\alpha,t)}} \eta^\delta(\beta,s)
        =
        \eta(\alpha,t),
        \qquad
        \liminf_{\substack{\delta\to0\\ (\beta,s)\to(\alpha,t)}} \eta_\delta(\beta,s)
        =
        \eta(\alpha,t)
    \]
    hold for all \((\alpha,t)\in\mathbb T\times[0,T]\).
\end{enumerate}

We now claim that if $\eta$ is a viscosity subsolution of (\ref{interface_eqn}), each $\eta^\delta$ for $\delta > 0$ is also a viscosity subsolution of (\ref{interface_eqn}). Let $\psi$ be a test function as in Definition \ref{def:visc_sol_bdry} such that $\eta^\delta - \psi$ attains a global maximum $M\geq 0$ on $\T\times[t_0 - r, t_0]$ at $(\alpha_0, t_0)$, and suppose
\begin{equation}
\eta^\delta(\alpha_0, t_0) = \eta(\beta_0, s_0) - \frac{1}{2\delta}(|\alpha_0 - \beta_0|^2 + |t_0 - s_0|^2).
\end{equation}
We note that such $(\beta_0, s_0)$ can be found since $\eta$ is upper semicontinuous. Then 
\begin{equation}\label{conv_max}
\begin{aligned}
M &= \eta^\delta(\alpha_0, t_0) - \psi(\alpha_0, t_0)\\
  &= \eta(\beta_0, s_0) - \frac{1}{2\delta}(|\alpha_0 - \beta_0|^2 + |t_0 - s_0|^2) - \psi(\alpha_0, t_0),
\end{aligned}
\end{equation}
and for any $(\alpha, t)\in \T\times[t_0 - r, t_0]$, $(\beta, s)\in\T\times[0, T]$, one has
\begin{equation}\label{conv_ineq}
\begin{aligned}
M &\geq \eta^\delta(\alpha, t) - \psi(\alpha, t)\\
  &\geq \eta(\beta, s) - \frac{1}{2\delta}(|\alpha - \beta|^2 + |t - s|^2)-\psi(\alpha, t).
\end{aligned}
\end{equation}
Now, if we define the translated function
\[\tilde\psi(\alpha, t) := \psi(\alpha + \alpha_0 - \beta_0, t + t_0 - s_0) + \frac{1}{2\delta}(|\alpha_0 - \beta_0|^2 + |t_0 - s_0|^2)  \]
we have from \eqref{conv_max} that 
\begin{equation}
M = \eta(\beta_0, s_0) - \tilde\psi(\beta_0, s_0).
\end{equation}
If we use \eqref{conv_ineq} with $(\alpha, t) = (\beta + \alpha_0 - \beta_0, s + t_0 - s_0)$, where $(\beta, s)\in \T\times [s_0 - r, s_0]$, we have that
\begin{equation}
\begin{aligned}
M &\geq \eta(\beta, s) - \frac{1}{2\delta}(|\alpha_0 - \beta_0|^2 + |t_0 - s_0|^2) - \psi(\beta+\alpha_0 - \beta_0, s + t_0 - s_0)\\
  &= \eta(\beta, s) - \tilde\psi(\beta, s)
\end{aligned}
\end{equation}
for any $(\beta, s)\in \T\times [s_0 - r, s_0]$. Therefore, $\tilde\psi$ is a valid test function at $(\beta_0, s_0)$, and thus
\begin{equation}
\partial_t\tilde\psi + e^{-2\tilde\psi}(G(e^{\tilde\psi})\tilde\psi - 1) \leq 0\quad \mathrm{at}\ (\beta_0, s_0).
\end{equation}
We note that 
\begin{equation}
\partial_t\tilde\psi(\beta_0, s_0) = \partial_t\psi(\beta, s),
\end{equation}
and from Proposition \ref{scaling_symm}, Proposition \ref{taylor_sign} that
\begin{equation}
G(e^{\tilde\psi})\tilde\psi(\beta_0, s_0) - 1 = G(e^\psi)\psi(\alpha_0, t_0) - 1\leq 0,
\end{equation}
and that
\begin{equation}
\begin{aligned}
0 &\leq \exp(-2\tilde\psi(\beta_0, s_0))\\
&= \exp(-2\psi(\alpha_0, t_0))\exp\bigg(-\frac{1}{\delta}(|\alpha_0 - \beta_0|^2 + |t_0 - s_0|^2)\bigg)\\
&\leq \exp(-2\psi(\alpha_0, t_0)).
\end{aligned}
\end{equation}
Therefore, 
\begin{equation}
\partial_t\psi + e^{-2\psi}(G(e^\psi)\psi - 1) \leq 0\quad \mathrm{at}\ (\alpha_0, t_0), 
\end{equation}
and $\eta^\delta$ is a viscosity subsolution, showing the claim. Similarly, if $\eta$ is a viscosity subsolution of \eqref{interface_eqn}, $\eta_\delta$ is also a viscosity subsolution of \eqref{interface_eqn} for each $\delta > 0$. 

We next show that for every $\varepsilon > 0$, there exists a $\delta(\varepsilon) > 0$ such that for all $0< \delta \leq \delta(\varepsilon)$, $(\eta_-)^\delta\leq (\eta_+)_\delta + \varepsilon$ pointwise on $\T\times[0, T]$. The argument is much like in the proof of Proposition \ref{visc_cp}. Suppose on the contrary that there exists some $\varepsilon_0 > 0$ and a sequence $\delta_k\downarrow 0$ such that
\begin{equation}
M_n:= \max_{\T\times[0, T]}\bigg[(\eta_-)^{\delta_k} - (\eta_+)_{\delta_k}\bigg] = (\eta_-)^{\delta_k}(\alpha_k, t_k) -(\eta_+)_{\delta_k}(\alpha_k, t_k) \geq \varepsilon_0,
\end{equation}
where such $(\alpha_k, t_k)$ exist since $(\eta_-)^{\delta_k} - (\eta_+)_{\delta_k}$ is upper semicontinuous. Since $\eta_-(\cdot, 0) - \eta_+(\cdot, 0) \leq 0$, we have
\begin{equation}
\max_\T\bigg[(\eta_-)^{\delta_k}(\cdot, 0) - (\eta_+)_{\delta_k}(\cdot, 0)\bigg] < \frac{\varepsilon_0}{3}
\end{equation}
for all $k\geq k_0$ for some large enough $k_0$. Let $c > 0$ be sufficiently small such that
\begin{equation}
(\eta_-)^{\delta_{k_0}}(\alpha_{k_0}, t_{k_0}) - (\eta_+)_{\delta_{k_0}}(\alpha_{k_0}, t_{k_0}) - ct_{k_0} > \frac{2\varepsilon_0}{3},
\end{equation}
then
\begin{equation}
M_* :=\max_{\T\times[0, T]}\bigg[(\eta_-)^{\delta_{k_0}} - (\eta_+)_{\delta_{k_0}} - ct\bigg] = (\eta_-)^{\delta_{k_0}}(\alpha_*, t_*) - (\eta_+)_{\delta_{k_0}}(\alpha_*, t_*) - ct_* > \frac{2\varepsilon_0}{3}
\end{equation}
where again such $(\alpha_*, t_*)$ exists since the function we are maximizing is upper semicontinuous. In other words,
\begin{equation}\label{eqn:max_contact}
\max_{\T\times[0, T]}\bigg[(\eta_-)^{\delta_{k_0}} - (\eta_+)_{\delta_{k_0}} - ct - M_*\bigg] = 0
\end{equation}
is attained at $(\alpha_*, t_*)$. We note that both $(\eta_-)^{\delta_{k_0}}$ and $(\eta_+)_{\delta_{k_0}}$ are $C^{1, 1}$ at $(\alpha_*, t_*)$. 

To see this, write $\delta=\delta_{k_0}$, $X=(\alpha,t)$, $X_*=(\alpha_*,t_*)$. Recall that
\((\eta_-)^\delta\) and \(-(\eta_+)_\delta\) are semiconvex, so there exist
\(v_-,v_+\in\mathbb R^2\) and \(A\sim \delta^{-1}\) such that, for \(X\) sufficiently
close to \(X_*\),
\begin{align}
(\eta_-)^\delta(X)
&\geq
(\eta_-)^\delta(X_*)+v_-\cdot (X-X_*)-A|X-X_*|^2,
\label{eq:eta-minus-lower}\\
-(\eta_+)_\delta(X)
&\geq
-(\eta_+)_\delta(X_*)+v_+\cdot (X-X_*)-A|X-X_*|^2.
\label{eq:eta-plus-negative-lower}
\end{align}

By \eqref{eqn:max_contact} we have, for every \(X\),
\begin{equation}\label{eq:sum-upper}
(\eta_-)^\delta(X)-(\eta_+)_\delta(X)
\leq
(\eta_-)^\delta(X_*)-(\eta_+)_\delta(X_*)+(0,c)\cdot (X-X_*).
\end{equation}
We only prove that \((\eta_-)^\delta\) is \(C^{1,1}\) at \(X_*\), since the proof for $(\eta_+)_\delta$ is similar. Rearranging \eqref{eq:sum-upper}, we get
\[
(\eta_-)^\delta(X)
\leq
(\eta_-)^\delta(X_*)+(0,c)\cdot(X-X_*)
-\Big[-(\eta_+)_\delta(X)+(\eta_+)_\delta(X_*)\Big].
\]
Then we use \eqref{eq:eta-plus-negative-lower} to obtain 
\begin{equation}\label{eq:eta-minus-upper}
(\eta_-)^\delta(X)
\leq
(\eta_-)^\delta(X_*)+\big((0,c)-v_+\big)\cdot(X-X_*)
+A|X-X_*|^2.
\end{equation}
We combine \eqref{eq:eta-minus-lower} and \eqref{eq:eta-minus-upper} to obtain
\begin{equation}
|(v_- -((0,c)-v_+))\cdot (X-X_*)|
\leq 2A|X - X_*|^2.
\end{equation}
Then we divide both sides by $|X-X_*|$ and send $X\to X_*$ to get
\begin{equation}
v_-=(0,c)-v_+.
\end{equation}
Therefore, \eqref{eq:eta-minus-lower} and \eqref{eq:eta-minus-upper} imply
\[
\left|
(\eta_-)^\delta(X)-(\eta_-)^\delta(X_*)
-v_-\cdot(X-X_*)
\right|
\leq A|X-X_*|^2.
\]
which means that \((\eta_-)^\delta\) is \(C^{1,1}\) at \(X_*\). Therefore, by Proposition \ref{consistency}, $(\eta_-)^{\delta_{k_0}}$ and $(\eta_+)_{\delta_{k_0}}$ are classical subsolutions and supersolutions, repsectively, at $(\alpha_*, t_*)$. Then we finish the proof of the claim by deriving a contradiction exactly as in Proposition \ref{visc_cp}. 

Now, for any $(\alpha, t)\in \T\times[0, T]$, we have
\begin{equation}
\eta_- - \eta_+ = \limsup_{\delta\to 0} (\eta_-)^\delta - \liminf_{\delta\to 0} (\eta_+)_\delta =  \limsup_{\delta\to 0} \big[(\eta_-)^\delta - (\eta_+)_\delta\big] \leq \varepsilon
\end{equation}
and thus
\begin{equation}
(\eta_- - \eta_+)(\alpha, t)\leq 0
\end{equation}
since $\varepsilon > 0$ is arbitrary. This finishes the proof as desired.
\end{proof}

A straightforward corollary is that viscosity solutions as defined in Definition \ref{def:visc_sol_bdry} must be unique. 
\begin{corollary}[Uniqueness of viscosity solutions]
Suppose $\eta_1,\ \eta_2$ are viscosity solutions of \eqref{interface_eqn} on $(0, T)$ with $\eta_1(\cdot, 0) = \eta(\cdot, 0)$, then $\eta_1 \equiv \eta_2$ on $\T\times [0, T]$. 
\end{corollary}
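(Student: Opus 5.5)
The plan is to apply the comparison principle for viscosity solutions (Proposition~\ref{visc_sol_cp}) twice, with the roles of $\eta_1$ and $\eta_2$ interchanged. The statement presumably means $\eta_1(\cdot,0)=\eta_2(\cdot,0)$, and I read it that way.

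First I check that the hypotheses of Proposition~\ref{visc_sol_cp} hold. By Definition~\ref{def:visc_sol_bdry}, a viscosity solution is both a subsolution and a supersolution, hence both upper and lower semicontinuous. It is therefore continuous on the compact set $\T\times[0,T]$, and in particular bounded. This boundedness is the only extra hypothesis the comparison principle needs.

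Next I take $\eta_-=\eta_1$ (as a subsolution) and $\eta_+=\eta_2$ (as a supersolution). Since $\eta_1(\cdot,0)\le\eta_2(\cdot,0)$, Proposition~\ref{visc_sol_cp} gives $\eta_1\le\eta_2$ on $\T\times[0,T]$. Swapping the roles, with $\eta_2$ as the subsolution and $\eta_1$ as the supersolution, gives $\eta_2\le\eta_1$. Together these yield $\eta_1\equiv\eta_2$.

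I expect no real obstacle here: all the analytic difficulty (sup/inf-convolutions, pointwise $C^{1,1}$ contact, consistency, and the Taylor sign condition) is already contained in Proposition~\ref{visc_sol_cp}. The only point needing care is the boundedness and semicontinuity check above, together with the convention that the initial data are attained pointwise at $t=0$.
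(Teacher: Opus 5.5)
Your proposal is correct and follows the paper's route: the paper treats uniqueness as an immediate consequence of the comparison principle (Proposition~\ref{visc_sol_cp}) applied twice with the roles of $\eta_1$ and $\eta_2$ exchanged, and your reading of the hypothesis as $\eta_1(\cdot,0)=\eta_2(\cdot,0)$ is the intended one. The paper leaves the boundedness hypothesis of the comparison principle implicit, and you correctly supply it: a viscosity solution is both upper and lower semicontinuous, hence continuous on the compact set $\T\times[0,T]$ and therefore bounded.
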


Another corollary of Proposition \ref{visc_sol_cp}, just as how Proposition \ref{visc_reg_mod_cty} follows from Proposition \ref{visc_cp}, is that the modulus of continuity of a viscosity solution is preserved. 

\begin{corollary}[Modulus of continuity]
Proposition \ref{visc_reg_mod_cty} holds for all Lipschitz viscosity solutions of \eqref{interface_eqn}. 
\end{corollary}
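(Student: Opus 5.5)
The plan is to copy the proof of Proposition \ref{visc_reg_mod_cty}, with the classical comparison principle replaced by the viscosity comparison principle, Proposition \ref{visc_sol_cp}. Let $\eta$ be a Lipschitz viscosity solution of \eqref{interface_eqn} on $(0,T)$ whose initial datum has modulus of continuity $\omega$. Fix $\beta>0$ and set
\[
\tilde\eta(\alpha,t):=\eta(\alpha+\beta,t)-\omega(\beta).
\]
The claim to establish is that $\tilde\eta$ is a viscosity subsolution in the sense of Definition \ref{def:visc_sol_bdry}. Since $\eta$ is Lipschitz, $\tilde\eta$ is continuous, hence upper semicontinuous, and bounded.

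To verify the test-function condition, take an admissible $\psi$ such that $\tilde\eta-\psi$ attains a non-negative global maximum $M\ge 0$ over $\T\times[t_0-r,t_0]$ at $(\alpha_0,t_0)$. Define the shifted test function
\[
\hat\psi(\alpha,t):=\psi(\alpha-\beta,t)+\omega(\beta),
\]
which has the same regularity as $\psi$. Then $\eta-\hat\psi$ equals $\tilde\eta-\psi$ up to the translation $\alpha\mapsto\alpha-\beta$. So $\eta-\hat\psi$ attains the same non-negative global maximum $M$ over $\T\times[t_0-r,t_0]$, at the point $(\alpha_0+\beta,t_0)$. Because $\eta$ is a viscosity subsolution, this gives
\[
\partial_t\hat\psi+e^{-2\hat\psi}\bigl(G(e^{\hat\psi})\hat\psi-1\bigr)\le 0 \quad\text{at } (\alpha_0+\beta,t_0).
\]

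Next, transfer this inequality back to $\psi$ at $(\alpha_0,t_0)$ term by term.
\begin{enumerate}
\item The time derivatives agree: $\partial_t\hat\psi(\alpha_0+\beta,t_0)=\partial_t\psi(\alpha_0,t_0)$.
\item The Dirichlet-to-Neumann terms agree by the rotation and scaling symmetries of Proposition \ref{scaling_symm}: $G(e^{\hat\psi})\hat\psi(\alpha_0+\beta)=G(e^{\psi})\psi(\alpha_0)$.
\item The prefactors satisfy $e^{-2\hat\psi}=e^{-2\omega(\beta)}e^{-2\psi}\le e^{-2\psi}$ at the corresponding points.
\end{enumerate}
The Taylor sign condition, Proposition \ref{taylor_sign}, applies because $\psi(t_0)$ is $C^{1,1}$, and gives $G(e^\psi)\psi-1\le 0$. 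Therefore replacing the smaller prefactor $e^{-2\hat\psi}$ by the larger $e^{-2\psi}$ can only decrease the nonpositive term. This yields
\[
\partial_t\psi+e^{-2\psi}\bigl(G(e^\psi)\psi-1\bigr)\le 0 \quad\text{at } (\alpha_0,t_0),
\]
so $\tilde\eta$ is a viscosity subsolution. This step is the one place where the sign bookkeeping matters: the non-negativity of the maximum is preserved because we shift the test function rather than the maximum value, and the direction of the prefactor inequality is exactly the one the Taylor sign condition needs.

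By hypothesis $\tilde\eta(\cdot,0)\le\eta(\cdot,0)$, and both $\tilde\eta$ and $\eta$ are bounded. Proposition \ref{visc_sol_cp} therefore gives $\eta(\alpha+\beta,t)-\eta(\alpha,t)\le\omega(\beta)$ for all $(\alpha,t)$. For the lower bound, the symmetric argument shows that $\eta(\alpha+\beta,t)+\omega(\beta)$ is a viscosity supersolution. Now the maximum is non-positive, the prefactor becomes $e^{-2\omega(\beta)}$ times a larger quantity, and the inequality reverses as required, using $G-1\le0$ again. Comparison then gives $-\omega(\beta)\le\eta(\alpha+\beta,t)-\eta(\alpha,t)$. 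The case $\beta<0$ follows by symmetry, and taking $\omega(s)=[\eta_0]_{C^\gamma}s^\gamma$ gives \eqref{eqn:mod_cty}.
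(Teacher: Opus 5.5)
Your proposal is correct and follows the paper's route: the paper obtains this corollary by rerunning the proof of Proposition \ref{visc_reg_mod_cty} with the viscosity comparison principle, Proposition \ref{visc_sol_cp}, in place of Proposition \ref{visc_cp}. Your check that $\eta(\cdot+\beta,\cdot)\mp\omega(\beta)$ are viscosity sub/supersolutions is the detail the paper leaves implicit. It uses the translated test function, the symmetries of Proposition \ref{scaling_symm}, the prefactor comparison, and the Taylor sign condition, and it mirrors the paper's own argument that the sup-convolution $\eta^\delta$ is a subsolution.
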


\section{Global well-posedness of the Interface Equation}\label{sec:global_wp}
In this section, we use a vanishing viscosity argument to construct, for arbitrary Lipschitz initial data \(\eta_0\), a global strong solution \(\eta\) to the interface equation \eqref{interface_eqn}. As noted in section~\ref{sec:interface_eqn}, \(W^{1,\infty}(\T)\) is critical for the scaling of \eqref{interface_eqn}, and uniqueness at this regularity is usually delicate. The key point is that, as in \cite{Dong-Gancedo-Nguyen-23}, the resulting strong solution is also a viscosity solution in the sense of Definition~\ref{def:visc_sol_bdry}. Uniqueness then follows from the comparison principle in Proposition~\ref{visc_sol_cp}. We write our main theorem as follows.

\begin{theorem}[Global existence and uniqueness]\label{thm:gwp}
For all $\eta_0\in W^{1,\infty}(\T)$, there exist
\[\eta\in C(\T\times [0,\infty))\cap L_{\mathrm{loc}}^\infty([0, \infty); W^{1,\infty}(\T)),\qquad \partial_t\eta \in L_{\mathrm{loc}}^\infty([0,\infty); L^2(\T)) \]
such that $\eta(\cdot, 0) = \eta_0$ and $\eta$ satisfies the interface equation \eqref{interface_eqn} in $L_{\mathrm{loc}}^\infty([0,\infty); L^2(\T))$ sense. Moreover, $\eta$ is the unique viscosity solution in the sense of Definition \ref{def:visc_sol_bdry}. 
\end{theorem}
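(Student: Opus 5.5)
The plan is a vanishing-viscosity argument. Mollify the initial datum: set $\eta_0^\varepsilon := \rho_\varepsilon * \eta_0 \in H^s(\T)$ for some $s\ge 2$. Then $\|\eta_0^\varepsilon\|_{L^\infty}\le\|\eta_0\|_{L^\infty}$ and $\|\partial_\alpha\eta_0^\varepsilon\|_{L^\infty}\le\|\partial_\alpha\eta_0\|_{L^\infty}$. Let $\eta^\varepsilon$ be the global solution of \eqref{visc_interface_eqn} provided by Proposition \ref{mollified_gwp}. Corollaries \ref{cor_Linfty_bound} and \ref{cor_Lipschitz_bound} give bounds on $\|\eta^\varepsilon(t)\|_{W^{1,\infty}}$ that are uniform in $\varepsilon$ and locally uniform in $t$. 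To control $\partial_t\eta^\varepsilon$ uniformly, it is not enough to read it off the equation in $L^2$, because $\varepsilon\partial_\alpha^2\eta^\varepsilon$ is not bounded in $L^2$ uniformly. Instead I would:
\begin{itemize}
\item pair the equation with $\partial_t\eta^\varepsilon$ (energy method); or, more simply,
\item use the time-translation comparison trick: compare $\eta^\varepsilon(\cdot,t+\tau)$ with $\eta^\varepsilon(\cdot,t)\pm C\tau$.
\end{itemize}
For the second route, note that the equation is autonomous, so $\eta^\varepsilon(\cdot,\cdot+\tau)$ is again a solution. Proposition \ref{visc_cp}, combined with the sub/supersolution shifts used in Proposition \ref{visc_reg_mod_cty}, then gives $\|\eta^\varepsilon(t+\tau)-\eta^\varepsilon(t)\|_{L^\infty}\le\|\eta^\varepsilon(\tau)-\eta^\varepsilon_0\|_{L^\infty}$. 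The right-hand side is controlled by $\tau$ times a bound on the equation at time zero. A uniform bound of the form $\|\partial_t\eta^\varepsilon\|_{L^\infty_tL^2}\le \cF(\|\eta_0\|_{W^{1,\infty}},t)$ would then follow from Proposition \ref{prop:DN_order_1_bdd} together with an a priori estimate obtained by multiplying by $\partial_t\eta^\varepsilon$. The $\varepsilon$-term contributes $\frac{\varepsilon}{2}\frac{d}{dt}\|\partial_\alpha\eta^\varepsilon\|_{L^2}^2$, which has the good sign. The DN term is handled by writing $e^{-2\eta}G(h)\eta$ so that it is $\cF(\|\eta\|_{W^{1,\infty}})$-bounded in $L^2$. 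Even without this estimate, equicontinuity in time with a modulus (e.g.\ $\tau^{1/2}$) suffices for compactness.

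Next comes compactness. By Arzel\`a--Ascoli, a subsequence satisfies $\eta^\varepsilon\to\eta$ locally uniformly on $\T\times[0,\infty)$, with $\eta$ Lipschitz in space and satisfying all the uniform bounds. We also get $\partial_t\eta^\varepsilon\rightharpoonup^\ast\partial_t\eta$ in $L^\infty_{\mathrm{loc}}L^2$. To pass to the limit in the nonlinear term, I would show $G(e^{\eta^\varepsilon})\eta^\varepsilon\to G(e^\eta)\eta$ weakly in $L^2$ for a.e.\ $t$. Uniform convergence of Lipschitz boundaries with uniformly bounded Lipschitz character yields convergence of harmonic extensions, via the variational formulation in Definition \ref{def:DN_general} and weak $\dot H^1$ compactness. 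The uniform $L^2$ bound from Proposition \ref{prop:DN_order_1_bdd} identifies the weak limit, and $\varepsilon\partial_\alpha^2\eta^\varepsilon\to0$ in distributions. Hence the equation holds in $L^\infty_{\mathrm{loc}}L^2$, since the weak limit of $e^{-2\eta^\varepsilon}(G\eta^\varepsilon-1)$ is $e^{-2\eta}(G\eta-1)$ by strong convergence of the prefactor. The height bounds pass to the limit from \eqref{h_l_infty}, and the Lipschitz bound from lower semicontinuity.

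The uniqueness part, which I expect to be the main obstacle, is to show that $\eta$ is a viscosity solution in the sense of Definition \ref{def:visc_sol_bdry}; uniqueness then follows from Proposition \ref{visc_sol_cp}. I would use the standard stability argument. Take a test function $\psi$ such that $\eta-\psi$ has a nonnegative global max on $\T\times[t_0-r,t_0]$ at $(\alpha_0,t_0)$. First make the max strict by replacing $\psi$ with $\psi+|\alpha-\alpha_0|^4+|t-t_0|^2$, which requires checking that the DN term at $(\alpha_0,t_0)$ moves continuously under this perturbation. Uniform convergence then gives maxima $(\alpha_\varepsilon,t_\varepsilon)\to(\alpha_0,t_0)$ of $\eta^\varepsilon-\psi-c_\varepsilon$ with $c_\varepsilon\to 0$, and these maxima are nonnegative after the constant shift. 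At such a point $\eta^\varepsilon$ is smooth and touches $\psi+c_\varepsilon$ from below globally in space. Proposition \ref{pointwise_cp}, with scaling symmetry (Proposition \ref{scaling_symm}) and the Taylor sign (Proposition \ref{taylor_sign}), gives
\[
\partial_t\psi+e^{-2\psi}(G(e^\psi)\psi-1)-\varepsilon\partial_\alpha^2\psi\le0
\]
at $(\alpha_\varepsilon,t_\varepsilon)$, up to $e^{-2c_\varepsilon}$ factors; this mirrors the argument in Proposition \ref{visc_cp}. Here $\partial_\alpha^2\psi$ exists only a.e.\ for $C^{1,1}$ test functions, so I would first mollify $\psi$ in $\alpha$, keeping the touching by an additional quadratic correction. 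Letting $\varepsilon\to0$ requires continuity of $(\alpha,t)\mapsto G(e^\psi)\psi(\alpha,t)$ at $(\alpha_0,t_0)$ for $\psi\in C_tC^{1,1}_\alpha$, together with stability under the mollification; this is where the $C^{1,\alpha}$ DN estimates cited in Proposition \ref{consistency} enter. Finally, the initial condition holds because $\eta^\varepsilon_0\to\eta_0$ uniformly and the time-equicontinuity is uniform.
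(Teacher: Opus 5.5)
Your proof is correct. The uniqueness half follows the paper; the existence half takes a different route. The uniqueness half matches the paper's argument. You mollify the test function in $\alpha$ and take approximate maxima of $\eta^\varepsilon-\psi^\delta$. Proposition~\ref{pointwise_cp}, scaling symmetry and the Taylor sign absorb the $e^{-2M}$ factors, and the limit uses the pointwise continuity of the Dirichlet-to-Neumann term, Lemma~\ref{DN_ptwise_conv}. Your quartic penalization in $\alpha$ is a welcome addition. The paper adds only $|t-t_0|^2$, which makes the maximum strict in time but not in space, yet the maximizers must converge to $(\alpha_0,t_0)$ itself. The continuity this costs follows from the Sobolev estimates of Propositions~\ref{DN_boundedness} and~\ref{DN_contraction}, since $C^{1,1}\subset H^2$. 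The $C^{1,\alpha}$ theory is needed only inside Proposition~\ref{consistency}, not here.

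In the existence half, the paper needs no extra time estimate for compactness. It reads $\partial_t\eta^\varepsilon$ off \eqref{visc_interface_eqn} in $L^\infty_tH^{-1}$, where $\varepsilon\partial_\alpha^2\eta^\varepsilon$ is bounded by $\|\partial_\alpha\eta^\varepsilon\|_{L^2}$, and applies Aubin--Lions. This is simpler than your energy and time-translation detour. For the nonlinear term the paper stays in the layer-potential formulation. It identifies the weak-$*$ limit of $\theta^\varepsilon$ through the $\arctan$ kernel. It then passes to the limit in the logarithmic kernel after splitting off $\log\sin^2(\alpha/2)$, which is a Hilbert transform.

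Your variational route instead uses convergence of harmonic extensions on uniformly Lipschitz, uniformly converging domains, tested against a fixed radial extension of the test function. It is more geometric and does not depend on explicit kernels. It does need one step you left implicit: identifying the boundary trace of the limit by transplanting to a fixed domain. The radial stretch $re^{i\alpha}\mapsto re^{\eta^\varepsilon(\alpha)-\eta(\alpha)}e^{i\alpha}$ works, since it is uniformly bi-Lipschitz and tends to the identity.

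Two of your side claims are wrong as stated, though the argument survives without them.
\begin{itemize}
\item The bound $\|\eta^\varepsilon(\tau)-\eta^\varepsilon_0\|_{L^\infty}=O(\tau)$ is not uniform in $\varepsilon$. For Lipschitz data, $G(h_0)\eta_0-1$ is unbounded below near corners where the fluid region has interior angle exceeding $\pi$, so the residual at time zero blows up under mollification.
\item Pairing with $\partial_t\eta^\varepsilon$ gives only a uniform $L^2_tL^2$ bound, not $L^\infty_tL^2$.
\end{itemize}
The $L^2_tL^2$ bound already suffices for compactness. It gives an $L^2$-modulus $\tau^{1/2}$, hence an $L^\infty$-modulus $\tau^{1/3}$ by Lemma~\ref{lem:1d_sobolev_interpolation}, which is enough for Arzel\`a--Ascoli. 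The regularity $\partial_t\eta\in L^\infty_{\mathrm{loc}}L^2$ then comes from the limit equation and Proposition~\ref{prop:DN_order_1_bdd}, not from weak-$*$ compactness of $\partial_t\eta^\varepsilon$.
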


Comparison with explicit radial barriers also yields the following stability estimate.

\begin{proposition}[Large-time asymptotics]
Suppose $\eta$ is a solution as in Theorem \ref{thm:gwp}. Then there exist $0< C_1 < C_2$ such that
\begin{equation}
\sqrt{C_1 + 2t}\leq e^{\eta(\cdot, t)} \leq \sqrt{C_2 + 2t}.
\end{equation}
In other words, the fluid region $\Omega(t)$ in equations \eqref{domain_eqn_u} is asymptotically a disk with radius $\sim \sqrt{2t}$ for large $t$. 
\end{proposition}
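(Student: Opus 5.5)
The plan is to compare $\eta$ with the explicit radial solutions $\eta_R(\alpha,t):=\log\sqrt{R^2+2t}$ from the Proposition on radial solutions. We use the comparison principle for viscosity solutions, Proposition~\ref{visc_sol_cp}. Since $\eta_0\in W^{1,\infty}(\T)$ is bounded, we set $r_0:=e^{\min\eta_0}$ and $R_0:=e^{\max\eta_0}$, so that $\log r_0\le \eta_0\le \log R_0$ on $\T$. We then take $C_1:=r_0^2/2$ (any constant below $r_0^2$ will do, giving the strict inequality $C_1<C_2$) and $C_2:=R_0^2$. The claimed bound is then equivalent to
\[
\eta_{\sqrt{C_1}}(\cdot,t)\le \eta(\cdot,t)\le \eta_{R_0}(\cdot,t)\qquad\text{for all } t\ge 0.
\]

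First I would check that each $\eta_R$ is a viscosity solution in the sense of Definition~\ref{def:visc_sol_bdry}. It is smooth, so it suffices to know that it is a classical solution and that classical solutions are viscosity solutions. The first point is direct: for the disk of radius $h=\sqrt{R^2+2t}$, the harmonic extension of the constant $\eta_R$ is constant, so $G(h)\eta_R=0$. Hence $\partial_t\eta_R=(R^2+2t)^{-1}=e^{-2\eta_R}$, which is exactly \eqref{interface_eqn}.

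For the second point, suppose $\eta_R-\psi$ attains a nonnegative global maximum on $\T\times[t_0-r,t_0]$ at $(\alpha_0,t_0)$. Shifting $\psi$ by a constant, we may assume the maximum is zero, so $\psi\ge\eta_R$ with equality at $(\alpha_0,t_0)$. This shift costs nothing: Proposition~\ref{scaling_symm} leaves $G$ unchanged, and $e^{-2\psi}$ only decreases when the maximum is nonnegative, while $G-1\le 0$ by the Taylor sign condition, Proposition~\ref{taylor_sign}. This is the same mechanism used for the sup-convolution in the proof of Proposition~\ref{visc_sol_cp}. Proposition~\ref{pointwise_cp} then gives $G(e^\psi)\psi(\alpha_0,t_0)\le G(e^{\eta_R})\eta_R(\alpha_0,t_0)$, the one-sided time maximum gives $\partial_t\psi\le\partial_t\eta_R$, and combining these yields the subsolution inequality. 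The supersolution case is symmetric.

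Next I would use that $\eta$ is the unique viscosity solution from Theorem~\ref{thm:gwp}, and that it is bounded on each $[0,T]$ because $\eta\in C(\T\times[0,\infty))$. The initial ordering $\eta_{\sqrt{C_1}}(\cdot,0)\le\eta_0\le\eta_{R_0}(\cdot,0)$ holds by the choice of constants. Applying Proposition~\ref{visc_sol_cp} on every $[0,T]$ then gives the two inequalities, and exponentiating yields the result.

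An alternative route avoids viscosity theory at the limit level. One applies Corollary~\ref{cor_Linfty_bound} to the regularized solutions $\eta^\varepsilon$ with mollified data, which satisfy $r_0-o(1)\le h^\varepsilon(\cdot,0)\le R_0+o(1)$, and passes to the limit using the locally uniform convergence from the vanishing-viscosity construction. The slack in $C_1$ absorbs the $o(1)$ errors.

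The main technical point is the verification that $\eta_R$ is a viscosity sub- and supersolution, specifically handling the nonnegative rather than zero maximum. I expect the sign bookkeeping with Proposition~\ref{taylor_sign} to resolve it. If that step proves awkward, I would fall back on the regularized-equation route, where Corollary~\ref{cor_Linfty_bound} already contains the needed comparison.
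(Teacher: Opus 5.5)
Your proposal is correct and matches the paper's argument, which is only the remark that comparison with the explicit radial barriers $\log\sqrt{R^2+2t}$ yields the bound; the regularized version of this is Corollary~\ref{cor_Linfty_bound}, and your alternative route passes it to the limit under the uniform convergence $\eta_n\to\eta$. Your check that the barriers are viscosity solutions, via Proposition~\ref{scaling_symm}, Proposition~\ref{taylor_sign} and Proposition~\ref{pointwise_cp}, correctly supplies the step the paper leaves implicit when applying Proposition~\ref{visc_sol_cp} directly, and the $o(1)$ slack is unnecessary because mollifying with a nonnegative kernel preserves $\min\eta_0\le\eta_0^\varepsilon\le\max\eta_0$.
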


In the remainder of this section, we unwind the proof of Theorem \ref{thm:gwp}. 
\subsection{Compactness} Let $\eta_0\in W^{1, \infty}(\T)$. For $\varepsilon > 0$ sufficiently small, consider $\eta_0^\varepsilon := \Gamma_\varepsilon *\eta_0$ where $\Gamma_\varepsilon$ is an approximation to identity. Then $\eta_0^\varepsilon\in H^s(\T)$ for any $s > 0$, and by Proposition \ref{mollified_gwp} we have a unique global solution $\eta^\varepsilon$ to the regularized interface equation (\ref{visc_interface_eqn}). The solution $\eta^\varepsilon$ satisfies the estimates
\begin{equation}
\begin{split}
\|\eta^\varepsilon(t)\|_{L^\infty} &\leq \log\sqrt{\exp(2\|\eta_0^\varepsilon\|_{L^\infty}) + {2t}}\leq \log\sqrt{\exp(2\|\eta_0\|_{L^\infty}) + {2t}};\\
\|\eta^\varepsilon(t)\|_{\lip}&\leq \|\eta_0^\varepsilon\|_{\lip} \leq \|\eta_0\|_{\lip},\\
\end{split}
\end{equation}
and it follows from layer potential estimates that
\begin{equation}\label{theta_unif_l2_bound}
\|\theta^\varepsilon(t)\|_{L^2}\leq \cF(\|\eta_0\|_{\lip}).
\end{equation}
Now, let $T > 0$. By Banach-Alaoglu theorem, upon extracting a subsequence $\varepsilon_n\to 0$ and denoting $\eta_n := \eta^{\varepsilon_n}$, $h_n := e^{\eta_n}$, $\theta_n := \theta^{\varepsilon_n}$, we obtain weak* convergences
\begin{equation}
\begin{split}
\eta_n &\wstar \eta\quad \mathrm{in}\quad L^\infty([0, T]; W^{1,\infty}(\T));\\
\theta_n &\wstar \theta\quad \mathrm{in}\quad L^\infty([0, T]; L^2(\T)).
\end{split}
\end{equation}

\subsection{Strong convergence of $\eta_n$ in $C(\T\times [0,T])$} We note that
\begin{equation}
\begin{split}
\|\partial_t\eta_n\|_{L^\infty([0, T]; H^{-1})}&\leq \|e^{-2\eta_n}(G(h_n)\eta_n - 1)\|_{L^\infty([0, T]; H^{-1})} + \|\partial_\alpha^2\eta_n\|_{L^\infty([0, T]; H^{-1})} \\
&\leq \|e^{-2\eta_n}(G(h_n)\eta_n - 1)\|_{L^\infty([0, T]; L^2)} + \|\partial_\alpha\eta_n\|_{L^\infty([0, T]; L^2)} \\
&\leq \cF\big(\|\eta_n\|_{L^\infty([0, T]; W^{1,\infty})}\big)\\
&\leq \cF\big(\|\eta_0\|_{L^\infty([0, T]; W^{1,\infty})}, T\big),
\end{split}
\end{equation}
and that we have the continuous embedding $W^{1,\infty}(\T)\subset C(\T)\subset H^{-1}(\T)$ where the first one is compact. By Aubin-Lions Lemma \ref{aubin-lions},
\begin{equation}
\eta_n\to \eta \quad \mathrm{in}\ C(\T\times [0, T])
\end{equation}
upon extracting a further subsequence if necessary. In particular, we also get
\begin{equation}
h_n\to h \quad \mathrm{in}\ C(\T\times [0, T]),
\end{equation}
where $h := e^\eta$.
\subsection{Equation \eqref{theta_eqn}} We use an argument analogous to the one in \cite{Dong-Gancedo-Nguyen-23} to obtain equation \eqref{theta_eqn}. For completeness, we also present it here. If we multiply a test function $\varphi\in C^\infty(\T\times[0, T])$ to both sides of equation (\ref{theta_eqn_visc}) and integrate by parts, we obtain
\begin{equation}\label{theta_eqn_visc_integral}
\frac{1}{2}\int_0^T\int_\T\varphi(\alpha,t)\theta_n(\alpha, t)\ d\alpha dt + I_1^n = \int_0^T\int_\T \varphi(\alpha, t)\partial_\alpha \eta_n(\alpha, t)\ d\alpha dt,
\end{equation}
where, noting the equivalence of \eqref{K^*_expression_1} and \eqref{K^*_expression_2}, we have
\begin{equation}
I_1^n = \frac{1}{2\pi}\int_0^T\int_\T\int_\T \partial_\alpha\varphi(\alpha, t)\arctan\bigg(\frac{h_n(\alpha)\sin(\alpha) - h_n(\beta)\sin(\beta)}{h_n(\alpha)\cos(\alpha) - h_n(\beta)\cos(\beta)}\bigg)\theta_n(\beta)\ d\beta d\alpha dt.
\end{equation}
The first and last terms of equation (\ref{theta_eqn_visc_integral}) converge to their corresponding limits due to weak* convergence of $\theta_n$ and $\partial_\alpha\eta_n$, so it remains to show that $I_1^n\to I_1$, where
\begin{equation}
I_1 = \frac{1}{2\pi}\int_0^T\int_\T\int_\T \partial_\alpha\varphi(\alpha, t)\arctan\bigg(\frac{h(\alpha)\sin(\alpha) - h(\beta)\sin(\beta)}{h(\alpha)\cos(\alpha) - h(\beta)\cos(\beta)}\bigg)\theta(\beta)\ d\beta d\alpha dt.
\end{equation}
Let $\delta > 0$ be sufficiently small. We write $I_1^n-I_{1}=D_1^n+D_2^n$ where
\begin{align*}
\begin{split}
D_1^n=&\int_0^T\int_\T\theta_n(\beta,t)\int_{\T\cap\{|\alpha|>\delta\}}\partial_\alpha\varphi(\alpha\!+\!\beta,t)\frac{1}{2\pi}\arctan\bigg(\frac{h_n(\alpha+\beta)\sin(\alpha+\beta) - h_n(\beta)\sin(\beta)}{h_n(\alpha+\beta)\cos(\alpha+\beta) - h_n(\beta)\cos(\beta)}\bigg)\ d\alpha d\beta dt\\
&-\int_0^T\!\!\int_\T\theta(\beta,t)\int_{\T\cap\{|\alpha|>\delta\}}\partial_\alpha\varphi(\alpha\!+\!\beta,t)\frac{1}{2\pi}\arctan\bigg(\frac{h(\alpha+\beta)\sin(\alpha+\beta) - h(\beta)\sin(\beta)}{h(\alpha+\beta)\cos(\alpha+\beta) - h(\beta)\cos(\beta)}\bigg)\ d\alpha d\beta dt
\end{split}
\end{align*}
and
\begin{align*}
\begin{split}	
	D_2^n=&\int_0^T\!\!\int_\T\theta_n(\beta,t)\int_{\T\cap\{|\alpha|<\delta\}}\partial_\alpha\varphi(\alpha\!+\!\beta,t)\frac{1}{2\pi}\arctan\bigg(\frac{h_n(\alpha+\beta)\sin(\alpha+\beta) - h_n(\beta)\sin(\beta)}{h_n(\alpha+\beta)\cos(\alpha+\beta) - h_n(\beta)\cos(\beta)}\bigg)\ d\alpha d\beta dt\\
	&-\int_0^T\!\!\int_\T\theta(\beta,t)\int_{\T\cap\{|\alpha|<\delta\}}\partial_\alpha\varphi(\alpha\!+\!\beta,t)\frac{1}{2\pi}\arctan\bigg(\frac{h(\alpha+\beta)\sin(\alpha+\beta) - h(\beta)\sin(\beta)}{h(\alpha+\beta)\cos(\alpha+\beta) - h(\beta)\cos(\beta)}\bigg)\ d\alpha d\beta dt.
\end{split}
\end{align*}

Since $|\arctan(\cdot)|\leq \frac{\pi}{2}$ and $\theta_n$ enjoys the uniform bound (\ref{theta_unif_l2_bound}),
\begin{equation}
|D_2^n|\leq \delta C(\|\eta_0\|_{\lip})\|\partial_\alpha\varphi\|_{L^\infty}
\end{equation}
The strong convergence in $C(\T\times [0, T])$ of $h_n$ combined with the weak* convergence of $\theta_n$ implies that for any $\delta\in (0, 1)$,
\begin{equation}
\lim_{n\to \infty}|D_1^{n}|=0.
\end{equation}
It follows that 
\begin{equation}
\limsup_{n\to\infty} |I_1^n - I_1| \leq \delta C(\|\eta_0\|_{\lip})\|\partial_\alpha\varphi\|_{L^\infty}
\end{equation}
and we get the desired convergence $I_1^n\to I_1$ since $\delta > 0$ is arbitrary. Now, since $\eta\in L^\infty([0, T]; W^{1,\infty}(\T))$, we have $K^*[h]\theta$ well-defined on $L^\infty([0, T]; L^2(\T))$, so we can integrate by parts back to obtain 
\begin{equation}
\begin{aligned}
&\frac{1}{2}\int_0^T\int_\T\varphi(\alpha,t)\theta(\alpha, t)\ d\alpha dt + \int_0^T\int_\T \varphi(\alpha, t)K^*[h]\theta(\alpha, t)\ d\alpha dt \\
=& \int_0^T\int_\T \varphi(\alpha, t)\partial_\alpha \eta(\alpha, t)\ d\alpha dt.
\end{aligned}
\end{equation}
By density of $C^\infty(\T\times[0, T])$ in $L^1([0, T]; L^2(\T))$, we obtain equation \eqref{theta_eqn} as desired.

\subsection{Weak* convergence of $e^{-2\eta_n}G(h_n)\eta_n$ to $e^{-2\eta}G(h)\eta$ in $L^\infty([0, T]; L^2(\T))$} Let $\varphi\in C^\infty(\T\times [0, T])$. Using the integral expression of $G(h_n)\eta_n$ in equation \eqref{DN_mollified} and integrating by parts, we get
\begin{equation}
\begin{split}
&\int_0^T\int_\T G(h_n)\eta_n(\alpha)\varphi(\alpha)\ d\alpha dt\\
&= \frac{1}{4\pi}\int_0^T\int_\T\int_\T \partial_\alpha\varphi(\alpha)\log (h_n(\alpha)^2 + h_n(\beta)^2 - 2h_n(\alpha)h_n(\beta)\cos(\alpha-\beta))\theta_n(\beta)\ d\beta d\alpha dt\\
&=  \frac{1}{4\pi}\int_0^T\int_\T\int_\T \partial_\alpha\varphi(\alpha + \beta)\log (h_n(\alpha+\beta)^2 + h_n(\beta)^2 - 2h_n(\alpha + \beta)h_n(\beta)\cos(\alpha))\theta_n(\beta)\ d\beta d\alpha dt,
\end{split}
\end{equation}
where the second equality follows from a simple change of variable $\alpha \mapsto \alpha + \beta$. Using trigonometric identities, we can write
\begin{equation}
\begin{split}
&h_n(\alpha+\beta)^2 + h_n(\beta)^2 - 2h_n(\alpha + \beta)h_n(\beta)\cos(\alpha)\\
&= h_n(\alpha+\beta)^2 + h_n(\beta)^2 - 2h_n(\alpha + \beta)h_n(\beta)(1 - 2\sin^2(\frac{\alpha}{2}))\\
&= [h_n(\alpha+\beta) - h_n(\beta)]^2 + 4h_n(\alpha + \beta)h_n(\beta)\sin^2(\frac{\alpha}{2}).
\end{split}
\end{equation}
Now, we decompose 
\begin{equation}
\begin{split}
&\log (h_n(\alpha+\beta)^2 + h_n(\beta)^2 - 2h_n(\alpha + \beta)h_n(\beta)\cos(\alpha))\\
&=\log \bigg[(h_n(\alpha+\beta) - h_n(\beta))^2 + 4h_n(\alpha + \beta)h_n(\beta)\sin^2(\frac{\alpha}{2})\bigg]\\
&= \log\bigg[\bigg(\frac{h_n(\alpha+\beta) - h_n(\beta)}{\sin(\frac{\alpha}{2})}\bigg)^2 + 4h_n(\alpha + \beta)h_n(\beta)\bigg] + \log\bigg[\sin^2(\frac{\alpha}{2})\bigg]
\end{split}
\end{equation}
and we write
\begin{equation}
\int_0^T\int_\T G(h_n)\eta_n(\alpha)\varphi(\alpha)\ d\alpha dt = I_2^n + I_3^n,
\end{equation}
where
\begin{equation}
I_2^n =  \frac{1}{4\pi}\int_0^T\int_\T\int_\T \partial_\alpha\varphi(\alpha + \beta)\log\bigg[\bigg(\frac{h_n(\alpha+\beta) - h_n(\beta)}{\sin(\frac{\alpha}{2})}\bigg)^2 + 4h_n(\alpha + \beta)h_n(\beta)\bigg]\theta_n(\beta)\ d\beta d\alpha dt;
\end{equation}
\begin{equation}
I_3^n = \int_0^T\int_\T\int_\T \partial_\alpha\varphi(\alpha + \beta)\log\bigg[\sin^2(\frac{\alpha}{2})\bigg]\theta_n(\beta)\ d\beta d\alpha dt.
\end{equation}
For $I_2^n$, note that
\begin{equation}
\bigg|\frac{h_n(\alpha+\beta) - h_n(\beta)}{\sin(\frac{\alpha}{2})}\bigg|\lesssim \|h_n\|_{\lip} \leq \exp(\|\eta_n\|_{L^\infty})\|\eta_n\|_{\lip},
\end{equation}
and that in view of the bounds (\ref{h_l_infty}) we have
\begin{equation}
\exp\big(-\|\eta_0^{\varepsilon_n}\|_{L^\infty}^2 -{2T}\big)\leq h_n(\alpha + \beta) h_n(\beta) \leq \exp\big(\|\eta_0^{\varepsilon_n}\|_{L^\infty}^2 +{2T}\big).
\end{equation}
Therefore, we can obtain convergence of $I_2^n$ to its respective limit by decomposing into small and large scales as we did for $I_1^n$. We then note that
\begin{equation}
\begin{split}
I_3^n =  -\frac{1}{2}\int_0^T\int_\T \theta_n(\beta)H\varphi(\beta)\ d\beta dt,
\end{split} 
\end{equation}
where $H$ is the Hilbert transform on $\T$. Since $H$ is bounded on $L^2(\T)$, the convergence of $I_3^n$ follows from weak* convergence of $\theta_n$. In summary, we obtain
\begin{equation}\label{DN_limit_integral}
\begin{aligned}
&\lim_{n\to\infty} \int_0^T\int_\T G(h_n)\eta_n(\alpha)\varphi(\alpha)\ d\alpha dt\\
&= \frac{1}{4\pi}\int_0^T\int_\T\int_\T \partial_\alpha\varphi(\alpha)\log (h(\alpha)^2 + h(\beta)^2 - 2h(\alpha)h(\beta)\cos(\alpha-\beta))\theta(\beta)\ d\beta d\alpha dt.\\
\end{aligned}
\end{equation}
Since $\eta\in L^\infty([0, T];W^{1,\infty}(\T))$, $G(h)\eta$ is well-defined in $L^\infty([0, T]; L^2(\T))$. Therefore, integrating by parts on the right-hand side of \eqref{DN_limit_integral}, we get
\begin{equation}
\lim_{n\to\infty} \int_0^T\int_\T G(h_n)\eta_n(\alpha)\varphi(\alpha)\ d\alpha dt = \int_0^T\int_\T G(h)\eta(\alpha)\varphi(\alpha)\ d\alpha dt.
\end{equation}
Again, by density of $C^\infty(\T\times[0, T])$ in $L^1([0, T]; L^2(\T))$, we conclude that $G(h_n)\eta_n \wstar G(h)\eta$ in $L^\infty([0, T]; L^2(\T))$. 
Since $\eta_n\to \eta$ in $C(\T\times[0, T])$, we get that $e^{-2\eta_n}G(h_n)\eta_n \wstar e^{-2\eta}G(h)\eta$ in $L^\infty([0, T]; L^2(\T))$, as desired.

\subsection{Strong solution to the interface equation \eqref{interface_eqn}}
Now, suppose $\varphi\in C_c^\infty(\T\times[0, T))$. Then from \eqref{visc_interface_eqn} we get
\begin{equation}
\begin{aligned}
0 &= -\int_0^T\int_\T \partial_t\varphi(\alpha, t)\eta_n(\alpha, t)\ d\alpha dt + \int_\T\Gamma_{\varepsilon_n}*\eta_0(\alpha)\varphi(\alpha, 0)\ d\alpha - \int_0^T\int_\T e^{-2\eta_n}(\alpha, t)\varphi(\alpha, t)\ d\alpha dt\\
 &+ \int_0^T\int_\T [e^{-2\eta_n}G(h_n)\eta_n](\alpha, t)\varphi(\alpha, t)\ d\alpha dt + \varepsilon_n\int_0^T\int_\T\partial_\alpha\varphi(\alpha, t)\partial_\alpha\eta_n(\alpha, t)\ d\alpha dt
\end{aligned}
\end{equation}
Sending $n\to \infty$, we have
\begin{equation}\label{eqn:int_by_parts_eqn}
\begin{aligned}
0 &= -\int_0^T\int_\T \partial_t\varphi(\alpha, t)\eta(\alpha, t)\ d\alpha dt + \int_\T\eta_0(\alpha)\varphi(\alpha, 0)\ d\alpha\\
 &+ \int_0^T\int_\T [e^{-2\eta}G(h)\eta(\alpha, t)\varphi(\alpha, t)]\ d\alpha dt - \int_0^T\int_\T e^{-2\eta}(\alpha, t)\varphi(\alpha, t)\ d\alpha dt
\end{aligned}
\end{equation}
as desired. Since $\eta\in L^\infty([0, T];W^{1,\infty}(\T))$, $G(h)\eta\in L^\infty([0, T]; L^2(\T))$. By a density argument we can deduce from \eqref{eqn:int_by_parts_eqn} that the interface equation \eqref{interface_eqn} is satisfied in a $L^\infty([0, T); L^2(\T))$ sense.

\subsection{$\eta$ is the Unique Viscosity Solution} Given Lipschitz initial data $\eta_0$, we have shown that for every $T>0$, there exists a solution $\eta$ to the interface equation \eqref{interface_eqn} in a strong $L^\infty([0,T];L^2(\T))$ sense. This alone does not yet yield a full well-posedness theory: without a uniqueness statement, the solution constructed on a given interval $[0,T]$ need not be compatible with the solutions obtained on larger or smaller time intervals. To close this gap, we prove that $\eta$ is the unique viscosity solution with initial data $\eta_0$ in the sense of Definition~\ref{def:visc_sol_bdry}.

Let $\psi$ be a test function as given in Definition \ref{def:visc_sol_bdry} such that $\eta - \psi$ achieves a non-negative global maximum over $\T\times[t_0-r, t_0]$ at $(\alpha_0, t_0)$. Let $\Gamma_\delta$ be an approximation to identity supported on $\T$, and we define 
\[\psi^\delta(t) := \Gamma_\delta * \psi(t) - 2c_\delta, \qquad c_\delta:= \|\Gamma_\delta*\psi - \psi\|_{L^\infty(\T\times[t_0-r, t_0])}.\] Since $\psi\in C([t_0 - r, t_0]; C^{1, 1}(\T))$, we know that $c_\delta \downarrow 0$ as $\delta\downarrow 0$.  Moreover, we note that
\begin{equation}
\max_{\T\times[t_0-r, t_0]} \big(\eta - \psi^\delta\big)\geq \eta(\alpha_0, t_0) - \psi^\delta(\alpha_0, t_0) \geq c_\delta > 0.
\end{equation}
We consider 
\[\tilde{\psi^\delta}(\alpha, t):= \psi^\delta(\alpha, t) + |t -t_0|^2,\quad \tilde{\psi}(\alpha, t):= \psi(\alpha, t) + |t -t_0|^2 \]
then we note that $(\alpha_0, t_0)$ is a strict global maximum of $\eta - \tilde\psi$ over $\T\times [t_0 - r, t_0]$, and  $\eta - \tilde\psi^\delta$ also achieves a strict global maximum 
\begin{equation}
\max_{\T\times[t_0-r, t_0]} \big(\eta - \tilde\psi^\delta\big) \geq c_\delta > 0.
\end{equation}
at $(\alpha_{0,\delta}, t_{0,\delta})$. By uniform convergence, we have $(\alpha_{0,\delta}, t_{0,\delta})\to (a_0, t_0)$ as $\delta \downarrow 0$. We fix $\delta > 0$ for the moment. Since $\eta_n\to \eta$ uniformly, there exists a sequence $(\alpha_{n,\delta}, t_{n,\delta})\to (\alpha_{0,\delta}, t_{0,\delta})$ where $\eta_n - \tilde\psi^\delta$ attains a global maximum 
\[M_{n, \delta}:= \max_{\T\times[t_0 - r, t_0]}\eta_n - \tilde\psi^\delta\]
at $(a_{n, \delta}, t_{n,\delta})$. We assume $n$ is sufficiently large such that $M_{n, \delta} > 0$. We define $\tilde\psi_n^\delta := \tilde\psi^\delta + M_{n, \delta}$, then $\eta_n - \tilde\psi_n^\delta$ attains a maximum $0$ at $(a_{n, \delta}, t_{n,\delta})$. Since $\eta_n - \tilde\psi_n^\delta$ is smooth, at the maximum $(a_{n, \delta}, t_{n,\delta})$ we have the optimality conditions
\begin{equation}\label{second_optimality_1}
e^{-2\tilde\psi_n^\delta} \leq e^{-2\eta_n},\quad\partial_t\tilde\psi_n^\delta\leq \partial_t\eta_n,\quad -\partial_\alpha^2\tilde\psi_n^\delta\leq -\partial_\alpha^2\eta_n
\end{equation}
and by Proposition \ref{taylor_sign} and Proposition \ref{pointwise_cp} we have 
\begin{equation}\label{second_optimality_2}
G(e^{\tilde\psi_n^\delta})\tilde\psi_n^\delta -1 \leq G(\eta_n)\eta_n - 1 \leq 0
\end{equation}
Therefore, since 
\begin{equation}
\partial_t\eta_n + e^{-2\eta_n}(G(e^{\eta_n})\eta_n - 1) - \frac{1}{n}\partial_\alpha^2\eta_n = 0
\end{equation}
we have from \eqref{second_optimality_1} and \eqref{second_optimality_2} that 
\begin{equation}
\partial_t\tilde\psi_n^\delta + e^{-2\tilde\psi_n^\delta}(G(e^{\tilde\psi_n^\delta})\tilde\psi_n^\delta - 1) - \frac{1}{n}\partial_\alpha^2\tilde\psi_n^\delta \leq 0\quad \mathrm{at}\ (\alpha_{n,\delta}, t_{n,\delta})
\end{equation}
Furthermore, since $M_{n,\delta} > 0$, $\tilde\psi^\delta \leq \tilde\psi_n^\delta$, and thus $e^{-2\tilde\psi^\delta}\geq e^{-2\tilde\psi_n^\delta}$. By Proposition \ref{scaling_symm}, we have 
\begin{equation}
\partial_t\tilde\psi^\delta + e^{-2\tilde\psi^\delta}(G(e^{\tilde\psi^\delta})\tilde\psi^\delta - 1) - \frac{1}{n}\partial_\alpha^2\tilde\psi^\delta \leq 0\quad \mathrm{at}\ (\alpha_{n,\delta}, t_{n,\delta})
\end{equation}
for all sufficiently large $n$ depending on $\delta > 0$. In order to send $n\to\infty$ and then $\delta\downarrow 0$, we need the following lemma on pointwise convergence of the Dirichlet-to-Neumann operators. 
\begin{lemma}\label{DN_ptwise_conv}
Let $f:\T\times[t_1, t_2]\to \R$ be such that $\partial_tf\in C(\T\times[t_1, t_2])$ and $f\in C([t_1, t_2]; C^{1, 1}(\T))$. Suppose $\Gamma_n:\T\to\R$ is an approximation to identity and $f_n:=\Gamma_n * f$. If $(\alpha_n, t_n)\to (\alpha_0, t_0)$ in $\T\times [t_1, t_2]$, then 
\begin{equation}
\lim_{n\to\infty}G(e^{f_n(t_n)})f_n(t_n)(\alpha_n) = G(e^{f(t_0)})f(t_0)(\alpha_0)
\end{equation}
where both sides of the equation are classically defined. 
\end{lemma}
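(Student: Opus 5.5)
The approach is to reduce the convergence of $G(e^{f_n(t_n)})f_n(t_n)$ at the moving point $\alpha_n$ to two facts. The first is uniform $C^{1,1}$ control of the boundaries and data. The second is a stability estimate for the Dirichlet-to-Neumann operator in $C^{1,\gamma}$-type norms, for some $\gamma<1$.

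\textbf{Step 1: uniform bounds and compactness.} Set $g_n:=f_n(t_n)$ and $g:=f(t_0)$. Since $f\in C([t_1,t_2];C^{1,1}(\T))$ and mollification does not increase $C^{1,1}$ norms, we have $\sup_n\|g_n\|_{C^{1,1}}\le \sup_t\|f(t)\|_{C^{1,1}}<\infty$. Moreover, $g_n\to g$ in $C^{1}(\T)$. Indeed, $\|\Gamma_n*f(t_n)-f(t_0)\|_{C^1}$ is bounded by
\begin{equation*}
\|\Gamma_n*(f(t_n)-f(t_0))\|_{C^1}+\|\Gamma_n*f(t_0)-f(t_0)\|_{C^1},
\end{equation*}
and both terms go to zero: the first by the time continuity of $f$ in $C^{1,1}\subset C^1$, the second because $f(t_0)\in C^{1,1}$. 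Interpolating, we get $g_n\to g$ in $C^{1,\gamma}$ for every $\gamma<1$. The domains $\Omega_{e^{g_n}}$ are then uniformly $C^{1,1}$ star-shaped domains, with heights bounded above and below, converging in $C^{1,\gamma}$ to $\Omega_{e^g}$. Both sides of the lemma are classically defined by Proposition~\ref{pointwise_cp} (or by global $C^{1,1}$ regularity of the boundary).

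\textbf{Step 2: uniform boundary regularity of the extensions.} Let $\phi_n$ and $\phi$ be the harmonic extensions of $g_n$ and $g$. I would show $\|\nabla\phi_n\|_{C^{\gamma}(\overline{\Omega_{e^{g_n}}})}\le C$ uniformly in $n$. This is Schauder-type $C^{1,\gamma}$ regularity up to the boundary for $C^{1,\gamma}$ domains with $C^{1,\gamma}$ data. Concretely, one can flatten the domains by the uniform map $re^{i\alpha}\mapsto (r/e^{g_n(\alpha)})e^{i\alpha}$ away from the origin, turning the problem into divergence-form equations with uniformly $C^{\gamma}$ coefficients on a fixed annulus. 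Alternatively, one can invoke the quantitative $C^{1,\alpha}$ estimates of Section~2.2 of \cite{Dong-Gancedo-Nguyen-23}, transported through the equivalence in Appendix~\ref{appendix:DN}.

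\textbf{Step 3: passing to the limit.} Pull back to the fixed reference annulus $\{1/2<|w|<1\}$ via $\Phi_n(w)=e^{g_n(\arg w)}w$. The pulled-back functions $\phi_n\circ\Phi_n$ are bounded in $C^{1,\gamma}$, so by Arzel\`a--Ascoli they converge in $C^{1,\gamma'}$ for $\gamma'<\gamma$. The limit solves the pulled-back problem for $g$; by uniqueness of the Dirichlet problem (Lemma~\ref{lem:weak_max_principle}) it equals $\phi\circ\Phi$, so the whole sequence converges. Hence $\nabla\phi_n(e^{g_n(\alpha_n)+i\alpha_n})\to\nabla\phi(e^{g(\alpha_0)+i\alpha_0})$. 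In addition, $N_{e^{g_n}}(\alpha_n)\to N_{e^{g}}(\alpha_0)$ because $g_n\to g$ in $C^1$ and $\alpha_n\to\alpha_0$. Taking the product gives the claim.

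The main obstacle is the uniform-in-$n$ boundary $C^{1,\gamma}$ estimate of Step~2, with constants depending only on the $C^{1,1}$ norm and the height bounds. I would try the flattening approach first, since the coefficients are then explicit in $g_n$ and $g_n'$. If that proves awkward, I would fall back on the layer-potential representation: show that $\theta_n=(\frac12 I-K^*[e^{g_n}])^{-1}g_n'$ is uniformly $C^{\gamma}$, then use the $C^{\gamma}$ boundedness of the Cauchy-type kernels $\Lambda[e^{g_n}]$ on $C^{1,1}$ curves.
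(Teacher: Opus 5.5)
Your argument is correct, but it takes a genuinely different route from the paper's.

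\textbf{The paper's route.} The paper splits the difference into three terms:
\begin{itemize}
\item $A_n$: the change of time $t_n\to t_0$ at fixed mollification;
\item $B_n$: the mollification error at time $t_0$;
\item $C_n$: moving the evaluation point $\alpha_n\to\alpha_0$ for the fixed limit profile.
\end{itemize}
It bounds $A_n$ and $B_n$ in $L^\infty$ using $H^1(\T)\hookrightarrow L^\infty(\T)$ together with the Sobolev boundedness and contraction (shape-stability) estimates for the Dirichlet-to-Neumann operator in Propositions~\ref{DN_boundedness} and~\ref{DN_contraction}. These are imported from the water-wave/Muskat literature and transferred to star-shaped domains through Appendix~\ref{appendix:DN}. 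The $C^{1,1}$ hypothesis enters only through $C^{1,1}\subset H^2$ and the resulting $H^2$-convergence $f_n(t_n)\to f(t_0)$. Finally, $C_n\to0$ is just continuity of $G(e^{f(t_0)})f(t_0)$.

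\textbf{Your route.} You never estimate how $G$ depends on the domain. Uniform Schauder $C^{1,\gamma}$ bounds up to the boundary (after flattening), Arzel\`a--Ascoli, and uniqueness of the Dirichlet problem identify the limit. The moving evaluation point is absorbed automatically by uniform convergence of $\nabla\phi_n$ and $N_{e^{g_n}}$.

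\textbf{Trade-offs.} The paper's proof is short given the cited paradifferential machinery and gives an explicit bound in terms of $H^2$ distances. Yours is qualitative, but it is self-contained: it uses only classical elliptic theory, no paradifferential estimates and no graph/star equivalence. It also only uses $C^{1,\gamma}$ information on the profiles.

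\textbf{One step to make explicit.} Your reference annulus only controls a neighborhood of the boundary, so uniqueness cannot be invoked from the annulus limit alone. You also need the following:
\begin{itemize}
\item $\phi_n\to\tilde\phi$ locally uniformly on compact subsets of $\Omega_{e^{g}}$. This follows from interior harmonic estimates and the maximum-principle bound $|\phi_n|\le\|g_n\|_{L^\infty}$.
\item $\tilde\phi\circ\Phi$ agrees with your annulus limit. This uses $\Phi_n\to\Phi$ uniformly together with the uniform gradient bounds.
\end{itemize}
Then $\tilde\phi$ is harmonic in $\Omega_{e^g}$ and continuous up to the boundary with trace $g$, hence $\tilde\phi=\phi$. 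The subsequence argument then gives convergence of the full sequence.
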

\begin{proof}
To begin, we write
\begin{equation}
\begin{aligned}
G(e^{f_n(t_n)})f_n(t_n)(\alpha_n) - G(e^{f(t_0)})f(t_0)(\alpha_0) &= G(e^{f_n(t_n)})f_n(t_n)(\alpha_n) - G(e^{f_n(t_0)})f_n(t_0)(\alpha_n)\\
&+ G(e^{f_n(t_0)})f_n(t_0)(\alpha_n) - G(e^{f(t_0)})f(t_0)(\alpha_n)\\
&+ G(e^{f(t_0)})f(t_0)(\alpha_n)) - G(e^{f(t_0)})f(t_0)(\alpha_0)\\
&=: A_n + B_n + C_n
\end{aligned}
\end{equation}
We note that $C_n\to 0$ since $f\in C([t_1, t_2]; C^{1, 1}(\T))$. For $A_n$, we note that 
\begin{equation}
\begin{aligned}
&|G(e^{f_n(t_n)})f_n(t_n)(\alpha_n) - G(e^{f_n(t_0)})f_n(t_0)(\alpha_n)|\\
&\leq  \|G(e^{f_n(t_n)})(f_n(t_n) - f_n(t_0))\|_{L^\infty} +\|[G(e^{f_n(t_n)}) - G(e^{f_n(t_0)})]f_n(t_0)\|_{L^\infty}\\
&\leq \|G(e^{f_n(t_n)})(f_n(t_n) - f_n(t_0))\|_{H^2} +\|[G(e^{f_n(t_n)}) - G(e^{f_n(t_0)})]f_n(t_0)\|_{H^2}\\
&\leq \cF(\|f_n(t_n)\|_{H^2}, \|f_n(t_0)\|_{H^2})\|f_n(t_n) - f_n(t_0)\|_{H^2}\\
&\leq \cF(\|f(t_n)\|_{H^2}, \|f(t_0)\|_{H^2})\|f(t_n) - f(t_0)\|_{H^2}
\end{aligned}
\end{equation}
where we used Sobolev embedding for the second inequality, Proposition \ref{DN_boundedness} and Proposition \ref{DN_contraction} for the third inequality, and properties of approximation to identity in the last inequality. Then $A_n\to 0$ since $f\in C([t_1, t_2]; C^{1, 1}(\T))$. For $B_n$, we employ a similar strategy and obtain
\begin{equation}
\begin{aligned}
&\|G(e^{f_n(t_0)})f_n(t_0) - G(e^{f(t_0)})f(t_0)\|_{L^\infty}\\
&\leq \|G(e^{f_n(t_0)})(f_n(t_0) - f(t_0))\|_{H^2} +\|[G(e^{f_n(t_0)}) - G(e^{f(t_0)})]f(t_0)\|_{H^2}\\
&\leq \cF(\|f_n(t_0)\|_{H^2}, \|f(t_0)\|_{H^2})\|f_n(t_0) - f(t_0)\|_{H^2}\\
\end{aligned}
\end{equation}
Then $B_n\to 0$ as $n\to\infty$ using properties of approximation to identity. This finishes the proof of the lemma.
\end{proof}

Now, sending $n\to\infty$ and using Lemma \ref{DN_ptwise_conv}, we get
\begin{equation}
\partial_t\tilde\psi^\delta + e^{-2\tilde\psi^\delta}(G(e^{\tilde\psi^\delta})\tilde\psi^\delta - 1)\leq 0\quad \mathrm{at}\ (\alpha_{0,\delta}, t_{0,\delta})
\end{equation}
Moreover, since $\partial_t\tilde\psi^\delta (\alpha_{0,\delta}, t_{0,\delta}) = \partial_t\psi^\delta(\alpha_{0,\delta}, t_{0,\delta})$ $e^{-2\psi^\delta}\geq e^{-2\tilde\psi^\delta}$, by Proposition \ref{scaling_symm} and Proposition \ref{taylor_sign}, we have
\begin{equation}
\partial_t\psi^\delta + e^{-2\psi^\delta}(G(e^{\psi^\delta})\psi^\delta - 1)\leq 0\quad \mathrm{at}\ (\alpha_{0,\delta}, t_{0,\delta})
\end{equation}
Since $\psi\in C((0, T); C^{1, 1}(\T)$ and $\partial_t\psi\in C(\T\times(0, T))$, sending $\delta\to 0$ and applying Lemma \ref{DN_ptwise_conv}, we get
\begin{equation}
\partial_t\psi + e^{-2\psi}(G(e^\psi)(\psi) - 1) \leq 0 \quad\mathrm{at}\ (a_0, t_0)
\end{equation}
as desired. The proof that $\eta$ is a viscosity supersolution is similar. 

\section{Viscosity Solutions in Eulerian Coordinates}\label{sec:visc_sol_domain}
In this section, we bring our focus back to the Hele-Shaw equations in Eulerian coordinates. We observe from \eqref{domain_eqn_p} that the Hele-Shaw problem with an injection at the origin can be equivalently formulated as follows:
\begin{equation}\label{Hele-Shaw_Kim}
\left\{
\begin{array}{ll}
      -\Delta p = 2\pi \delta & \mathrm{in}\ \{p> 0\}\\
      V = \partial_tp/|\nabla p| = -(\nabla p)\cdot n = |\nabla p| & \mathrm{on}\ \partial\{p > 0\}
\end{array}
\right.
\end{equation}

Given a general initial data $p(\cdot, 0) = p_0$, classical solutions to \eqref{Hele-Shaw_Kim} might not exist globally, so the notion of viscosity solution corresponding to \eqref{Hele-Shaw_Kim} was introduced first by Kim in \cite{Kim2003}.
\begin{definition}[Viscosity solutions in the domain]\label{def:visc_sol_domain} Let $Q := \R^2\bs\{0\}\times (0, \infty)$. 
A nonnegative function
\[
p:\R^2\bs\{0\}\times[0,\infty)\to [0,\infty)
\]
is called a viscosity subsolution of \eqref{Hele-Shaw_Kim} with initial data $p_0$ if the following hold:
\begin{enumerate}
    \item $p(\cdot,0)=p_0$ on $\R^2\bs\{0\}$;
    \item for every $t_0 > 0$, 
    \[\limsup_{(x, t)\to (0, t_0)} \big(p(x, t) + \log|x|\big) \leq 0; \]
    \item $\overline{\{p>0\}}\cap\{t=0\}=\overline{\{p_0>0\}}$;
    \item for each $T\ge 0$, the set $\overline{\{p>0\}}\cap\{t\le T\}$ is bounded;
    \item let $q\in C^{2,1}(Q)$ be such that $p(x, t) - q(x,t)$ attains a local maximum in $\overline{\{p>0\}}\cap\{t\le t_0\}\cap Q$ at $(x_0,t_0)$, the following hold:
    \begin{enumerate}[(a)]
        \item $-\Delta q(x_0,t_0)\le 0$ if $p(x_0,t_0)>0$;        
        \item $\min\bigl(-\Delta q,\ q_t-|\nabla q|^2\bigr)(x_0,t_0)\le 0$ if $(x_0,t_0)\in \partial\{p>0\}$.
    \end{enumerate}
\end{enumerate}
A nonnegative function
\[
p:\R^2\bs\{0\}\times[0,\infty)\to [0,\infty)
\]
is called a viscosity supersolution of \eqref{Hele-Shaw_Kim} with initial data $p_0$ if the following hold:
\begin{enumerate}
    \item $p(\cdot,0)=p_0$ on $\R^2$;
    \item for every $t_0 > 0$, 
    \[\liminf_{(x, t)\to (0, t_0)} \big(p(x, t) + \log|x|\big) \geq 0 \]
    \item $\overline{\{p>0\}}\cap\{t=0\}=\overline{\{p_0>0\}}$;
    \item for each $T\ge 0$, the set $\overline{\{p>0\}}\cap\{t\le T\}$ is bounded;
    \item let $q\in C^{2,1}(Q)$ be such that $p(x, t)-q(x, t)$ attains a local minimum in $\overline{\{p>0\}}\cap\{t\le t_0\}\cap Q$ at $(x_0,t_0)$, the following hold:
    \begin{enumerate}[(a)]
        \item $-\Delta q(x_0,t_0)\geq 0$ if $p(x_0,t_0)>0$;        
        \item $\max\bigl(-\Delta q,\ q_t-|\nabla q|^2\bigr)(x_0,t_0)\ge 0$ if $(x_0,t_0)\in \partial\{p>0\}$, $|\nabla q(x_0, t_0)|\neq 0$, and 
        \[\{q > 0\}\cap \{p > 0\}\cap B\]
        is non-empty for every ball $B$ centered at $(x_0, t_0)$.
    \end{enumerate}
\end{enumerate}

A nonnegative function $p: \R^2\bs\{0\}\to [0,\infty)$ is called a viscosity solution of \eqref{Hele-Shaw_Kim} if it is both a viscosity subsolution and a viscosity supersolution.
\end{definition}

\subsection{Relationship between different notions of viscosity solutions} In section \ref{sec:visc_sol}, we also defined our notion of viscosity solutions for the interface equation
\[\partial_t\eta + e^{-2\eta}(G(h)\eta - 1) = 0,\qquad h = e^\eta.\] 
One natural question to ask is how viscosity solutions of the interface equation, as in Definition \ref{def:visc_sol_bdry}, relate to viscosity solutions of the domain equations, as in Definition \ref{def:visc_sol_domain}. We give a positive answer to this question by giving a direct proof that a viscosity solution in the sense of Definition \ref{def:visc_sol_bdry}, under some regularity assumptions, correspond to a viscosity solution in the sense of Definition \ref{def:visc_sol_domain}. We want to again mention that such observations are by no means unprecedented; Chang-Lara, Guillen, and Schwab \cite{ChangLaraGuillenSchwab2019} have an in-depth discussion of how viscosity solutions of the interface equations and viscosity solutions in the domain are related for a general class of free boundary problems with graph domains. 

We now state our main proposition. The additional regularity is to deal with possible degeneracy of the test function.

\begin{proposition}\label{prop:visc_sol_equiv}
Suppose $\eta$ is a viscosity solution as in
Definition~\ref{def:visc_sol_bdry}. Assume in addition that
\begin{equation}\label{regularity_eta}
\eta\in C(\T\times [0,\infty))
\cap L_{\mathrm{loc}}^\infty([0,\infty); W^{1,\infty}(\T)),
\qquad
\partial_t\eta \in L_{\mathrm{loc}}^\infty([0,\infty); L^2(\T)).
\end{equation}
Let $\phi$ be the
unique harmonic extension of $\eta$ in
\[
\Omega_\eta(t):=\{re^{i\alpha}: 0\leq r < e^{\eta(\alpha,t)},\
\alpha\in\T\}.
\]
Define
\[
p(x,t):=\max\{\phi(x,t)-\log|x|,0\}.
\]
Then $p$ is a viscosity solution as in
Definition~\ref{def:visc_sol_domain}.
\end{proposition}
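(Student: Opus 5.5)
Our plan is to verify the conditions of Definition~\ref{def:visc_sol_domain} one by one, taking the structural conditions first and then the test-function condition (5). In $\Omega_\eta(t)$ we have $p=\phi-\log|x|$, which equals $0$ on $\Sigma_\eta(t)$ and satisfies $-\Delta p=2\pi\delta$. By the argument in Proposition~\ref{pointwise_cp}, $p>0$ in $\Omega_\eta(t)\setminus\{0\}$. Hence $\{p>0\}=\{(x,t):0<|x|<e^{\eta(\arg x,t)}\}$.

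Conditions (1)--(4) then follow quickly. Condition (1) is the definition of $p_0$. Condition (2), in both its sub and super versions, holds because $\phi$ is bounded and continuous near the origin, so $p+\log|x|=\phi\to\phi(0,t_0)$. Strictly speaking, the two-sided limit here is $\phi(0,t_0)$ rather than $0$, so we read (2) as a statement about the logarithmic singularity and track the normalization. Condition (3) follows from the continuity of $\eta$ up to $t=0$. Condition (4) follows from the bound $e^\eta\le\sqrt{C_2+2t}$ from Corollary~\ref{cor_Linfty_bound}, passed to the limit. Interior case (a) is immediate, since $p$ is classically harmonic in $\{p>0\}$ away from $0$. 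At an interior local maximum of $p-q$ we get $\Delta q\ge\Delta p=0$, and at a minimum the reverse inequality.

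The real work is boundary case (b). Take $(x_0,t_0)$ with $x_0=e^{\eta(\alpha_0,t_0)}e^{i\alpha_0}$, and suppose $p-q$ has a local maximum there on $\overline{\{p>0\}}\cap\{t\le t_0\}$. If $-\Delta q(x_0,t_0)\le0$ we are done, so assume $\Delta q<0$ near $(x_0,t_0)$. We want to show $q_t\le|\nabla q|^2$. The plan is to convert $q$ into an interface test function $\psi$ and apply Definition~\ref{def:visc_sol_bdry}, arguing in four steps.

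\begin{enumerate}
\item \emph{Building $\psi$.} Since $q$ is superharmonic and touches the positive part of $p$ from above near $x_0$, we expect $\nabla q(x_0,t_0)\neq0$ with $\nabla q$ pointing inward; the degenerate case $\nabla q=0$ gives $q_t\le0=|\nabla q|^2$ directly from time monotonicity. Otherwise $\{q=0\}$ is locally a $C^{2}$ curve in polar form $r=e^{\psi(\alpha,t)}$. It lies outside $\Omega_\eta$ near $(\alpha_0,t_0)$ for $t\le t_0$ and touches at $\alpha_0$. So $\eta-\psi$ has a local maximum $0$ at $(\alpha_0,t_0)$.
\item \emph{Globalizing.} To meet the global, non-negative maximum requirement, we extend $\psi$ outside a small angular window to a $C^{1,1}$ function that is large compared with $\eta$, for instance $\psi+\lambda|\alpha-\alpha_0|^2$ glued to a big constant. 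This only increases $\psi$ away from $\alpha_0$.
\item \emph{Comparing normal derivatives.} Apply Proposition~\ref{pointwise_cp} to the harmonic extensions. Let $p_\psi:=\phi_\psi-\log|x|$ solve the domain problem in $\Omega_\psi$. Since $\Omega_\psi\supset\Omega_\eta$ near the contact point, the relevant inequality is between $|\nabla p_\psi|$ and $|\nabla q|$ at $x_0$. Here we use $\Delta q<0$ together with $q\ge p$ near $x_0$ (this domain of dependence is only local), aiming for $|\nabla q|\ge|\nabla p_\psi|$ after a Hopf-type comparison in a small neighborhood. Concretely, we would replace $q$ by $q$ plus a small radial correction, so that $q\ge p_\psi$ on the boundary of a small annular region.
\item \emph{Translating the sub-inequality.} Differentiating $q(e^{\psi}e^{i\alpha},t)=0$ expresses $q_t/|\nabla q|$ as the normal velocity $e^{\psi}\psi_t/|N|$. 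Meanwhile $e^{-2\psi}(1-G(e^\psi)\psi)|N|/e^{\psi}$ equals $|\nabla p_\psi|$. The viscosity subsolution inequality $\psi_t+e^{-2\psi}(G\psi-1)\le0$ then gives $q_t/|\nabla q|\le|\nabla p_\psi|\le|\nabla q|$, which is (b).
\end{enumerate}

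The supersolution case is symmetric. Test from below with the extra hypotheses $|\nabla q|\ne0$ and nonempty overlap; these guarantee that $\{q=0\}$ is a graph near $x_0$ and that the contact is genuine. We then use the non-positive global minimum version of Definition~\ref{def:visc_sol_bdry}, and the Taylor sign condition (Proposition~\ref{taylor_sign}) to keep signs consistent when adding constants.

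We expect the main obstacle to be step 3: passing from the local touching of $q$ and $p$ to the comparison of the nonlocal quantity $G(e^\psi)\psi$ with $|\nabla q|$. That quantity depends on the whole modified domain $\Omega_\psi$, whereas $q$ is only a local test function. The fallback is to choose the globalized $\psi$ so that $\Omega_\psi$ is very close to $\Omega_\eta$ away from $\alpha_0$. We would then use continuity of the Dirichlet-to-Neumann operator (Lemma~\ref{DN_ptwise_conv}-type estimates) plus a barrier built from $q$ in a small ball, absorbing an $o(1)$ error and letting the localization parameter tend to $0$.
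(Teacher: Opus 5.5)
The genuine gap is the degenerate case $\nabla q(x_0,t_0)=0$. This is exactly what the extra hypothesis $\partial_t\eta\in L^\infty_{\mathrm{loc}}([0,\infty);L^2(\T))$ is for, and your proposal never uses it.

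Time monotonicity of $\Omega_\eta(t)$ does not give $q_t(x_0,t_0)\le 0$; for a general viscosity solution that monotonicity would itself need proof. The touching condition constrains $q$ only on $\overline{\{p>0\}}\cap\{t\le t_0\}$. If $q_t(x_0,t_0)=c>0$, Taylor expansion gives $q(x,t_0-\delta)\le -c\delta/2+M|x-x_0|^2$, which is negative only on the parabolic ball $|x-x_0|<\varepsilon\sqrt\delta$. A contradiction therefore needs a point of $\overline{\Omega_\eta(t_0-\delta)}$ inside that ball. In other words, you need a quantitative bound saying the free boundary near $x_0$ moves by $o(\sqrt\delta)$ in time $\delta$. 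Knowing that earlier domains sit inside $\Omega_\eta(t_0)$ says nothing about how far inside they lie.

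The paper obtains this bound from the hypothesis. With $L$ bounding $\|\partial_t h\|_{L^2}$ on $[0,t_0]$, Cauchy--Schwarz gives $\|h(\cdot,t)-h(\cdot,s)\|_{L^2}\le L|t-s|$. Interpolating against the uniform Lipschitz bound via Lemma~\ref{lem:1d_sobolev_interpolation} gives $\|h(\cdot,t)-h(\cdot,s)\|_{L^\infty}\le C(|t-s|^{2/3}+|t-s|)$. So the boundary point $x'=h(\alpha_0,t_0-\delta)e^{i\alpha_0}$ satisfies $|x'-x_0|\le C\delta^{2/3}<\varepsilon\sqrt\delta$ and $p(x',t_0-\delta)=0$. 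Yet $q(x',t_0-\delta)<0$, contradicting $p\le q$.

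This case is also not exceptional, contrary to your expectation in step 1. At a corner of $\Omega_\eta(t_0)$ with interior angle at most $\pi/2$, the pressure vanishes at least quadratically. Tests with $\nabla q=0$ therefore arise naturally at waiting-time corners.

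In the nondegenerate case, your steps 1 and 4 follow the paper's Case 2, but step 3 is not closed. Moreover, the globalization in your step 2 points the wrong way. By the monotonicity behind Proposition~\ref{pointwise_cp}, enlarging $\Omega_\psi$ away from $\alpha_0$ can only increase $|\nabla p_\psi(x_0)|$. Gluing to a large constant thus makes $|\nabla p_\psi(x_0)|\le|\nabla q(x_0)|$ harder, not easier. A barrier confined to a small ball about $x_0$ cannot rescue it either: you would need $q\ge p_\psi$ on the boundary of the ball, whereas you only know $q\ge p$, and $p_\psi\ge p$ by an amount you do not control.

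The paper instead uses Lemma~\ref{lem:starshaped_reduction_nondeg} to take $\Omega_\psi$ to be $\{q(\cdot,t)>0\}$ itself. It sets $\tilde q=\Phi-\log|x|$, with $\Phi$ the harmonic extension of $\psi$ there, and applies the maximum principle to $q-\tilde q$ to obtain $N\cdot\nabla\tilde q(x_0,t_0)\ge N\cdot\nabla q(x_0,t_0)$. That one line tacitly needs $q-\tilde q$ to be superharmonic on the whole positivity set, not merely $-\Delta q(x_0,t_0)>0$. Your worry about localization is therefore legitimate, but your $o(1)$ fallback remains only a sketch.

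Two smaller points:
\begin{itemize}
\item In step 4 the normal velocity is $e^{2\psi}\psi_t/|N|$, and $|\nabla p_\psi|=(1-G(e^\psi)\psi)/|N|$. With these formulas the chain $q_t/|\nabla q|\le|\nabla p_\psi|\le|\nabla q|$ does follow from the interface inequality.
\item You are right that condition (2) of Definition~\ref{def:visc_sol_domain} cannot hold as written unless $\phi(0,t_0)=0$, since $p+\log|x|\to\phi(0,t_0)$. The paper's proof does not check conditions (1)--(4) at all.
\end{itemize}
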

\begin{proof}
We only verify that $p$ is a viscosity subsolution, since the case of supersolution is analogous. Let $q$ be a test function as in Definition \ref{def:visc_sol_domain}. Suppose $p - q$ has a local maximum at $(x_0, t_0)\in \R^2\bs\{0\}\times (0, \infty)$.

\noindent\emph{Case 1: interior point.} If $p(x_0, t_0) > 0$, $x_0\in \Omega_\eta(t_0)$, so
\[-\Delta (p(x_0, t_0) - q(x_0, t_0)) \geq 0 \]
by the standard second optimality condition. Note here that $p(\cdot, t_0)$ is $C^\infty$ at $x_0$ by elliptic regularity. Then, since $p$ is harmonic in $\Omega_\eta\bs\{0\}$, we have
\[-\Delta q(x_0, t_0)\leq 0\]
as desired. 

\noindent\emph{Case 2: boundary point with non-degenerate test function.} Now we suppose $p(x_0, t_0) = 0$, so that $x_0\in \partial\Omega_\eta(t_0)$. We want to show that
\begin{equation}\label{domain_visc_sub_sol_condition}
\min[-\Delta q(x_0, t_0), (q_t - |\nabla q|^2)(x_0, t_0)] \leq 0.
\end{equation}
Since our discussions concern only local properties of $p$ and $q$ around $(x_0, t_0)$, we may modify $q$ away from $(x_0, t_0)$ and assume that $(x_0, t_0)$ is actually a global maximum of $p - q$. Translating $q$ if necessary, we may also assume that $p-q = 0$ at $(x_0, t_0)$, and it follows that $p\leq q$ and $p = q$ at $(x_0, t_0)$. In this case, we impose the non-degeneracy condition $|\nabla q(x_0, t_0)| > 0$ so we can also assume $\{x:q(\cdot, t) = 0\}$ is star-shaped for every $t\in [t_0 - \delta, t_0]$. The reason is more involved so we phrase it as Lemma \ref{lem:starshaped_reduction_nondeg}. Let $\psi$ be such that
\[\{x: q(x, t) = 0\} = \{e^{\psi(\alpha, t)}e^{i\alpha}: \alpha\in \T\},\]
where $t\in [t_0 - \delta, t_0]$, and let $x_0 = e^{\psi(\alpha_0, t_0)}e^{i\alpha_0}$. We know that $\psi$ is in particular $C^{1, 1}$ at $(\alpha_0, t_0)$ since $q$ is $C^{2, 1}$ at $(x_0, t_0)$. Moreover, $\eta - \psi$ achieves a non-positive global maximum at $(\alpha_0, t_0)$ over $\T\times [t_0-\delta, t_0]$. Since $\eta$ is a viscosity solution, we have
\begin{equation}\label{visc_sol_condition}
[\partial_t\psi + e^{-2\psi}(G(e^\psi)\psi - 1)](\alpha_0, t_0) \leq 0.
\end{equation}

At the mean time, differentiating both sides of $q(e^{i\alpha_0 + \psi(\alpha_0, t)}, t) = 0$ with respect to $t$ at $t_0$, we get
\begin{equation}\label{q_time_derivative_eqn}
\partial_t q(e^{i\alpha_0 + \psi(\alpha_0, t_0)},t_0) + (\nabla q\cdot e^{i\alpha_0 + \psi(\alpha_0, t_0)})  \partial_t \psi(\alpha_0, t_0) = 0.
\end{equation}
We let $\Phi(\cdot, t)$ be the harmonic extension of $\psi(\cdot, t)$ inside the domain $\{q(\cdot, t) > 0\}$, where $t\in [t_0-\delta, t_0]$, and $\tilde{q}(x, t) := \Phi(x, t) -\log |x|$. Then straightforward calculation yields 
\begin{equation}\label{Taylor_sign_q_tilde}
G(e^\psi)\psi(\alpha_0, t_0) - 1 = N(e^{i\alpha_0 + \psi(\alpha_0, t_0)})\cdot \nabla \tilde{q}(x_0, t_0)
\end{equation}
where we recall that $N$ is the unnormalized vector field $N := -i\partial_\alpha(e^{i\alpha + \psi(\alpha, t)})$. If $-\Delta q(x_0, t_0)\leq 0$, \eqref{domain_visc_sub_sol_condition} is automatically satisfied, so we may assume $-\Delta q(x_0, t_0) > 0$. Then $q - \tilde q$ is superharmonic in $\{q(\cdot, t_0) > 0\}$. Since $q - \tilde{q} = 0$ on $\{q(\cdot, t_0) = 0\}$, we have $q - \tilde{q} \geq 0$ in $\{q(\cdot, t_0) > 0\}$ and the outward normal derivative
\begin{equation}
[n\cdot\nabla (q - \tilde{q})](x_0, t_0) \leq 0.
\end{equation}
Together with equation \eqref{Taylor_sign_q_tilde} we get
\begin{equation}
G(e^\psi)\psi(\alpha_0, t_0) - 1 \geq N(e^{i\alpha_0 + \psi(\alpha_0, t_0)})\cdot \nabla {q}(x_0,t_0).
\end{equation}
We note that
\begin{equation}\label{normal_dot_product_radial}
\begin{aligned}
N(e^{i\alpha + \psi(\alpha, t)})\cdot e^{i\alpha + \psi(\alpha, t)} = e^{2\psi(\alpha, t)}.
\end{aligned}
\end{equation}
Moreover, since $\nabla q$ only has a negative normal component, we have from \eqref{normal_dot_product_radial} that $\nabla q(x_0, t_0) \cdot e^{i\alpha_0 + \psi(\alpha_0, t_0)} \leq 0$. Using \eqref{visc_sol_condition}, we then have
\begin{equation}
\begin{aligned}
\partial_t\psi(\alpha_0, t_0)\nabla q(x_0, t_0) \cdot e^{i\alpha_0 + \psi(\alpha_0, t_0)} &\geq -\nabla q(x_0, t_0) \cdot e^{i\alpha_0 + \psi(\alpha_0, t_0)}[e^{-2\psi}(G(e^\psi)\psi -1)](\alpha_0, t_0). 
\end{aligned}
\end{equation}
Since $G(e^\psi)\psi - 1 \geq N\cdot\nabla q$, we may use the identity $\nabla q = -|\nabla q|n$ to reorganize the right-hand side as
\begin{equation}
\begin{aligned}
-|\nabla q(x_0, t_0)|^2e^{-2\psi(\alpha_0, t_0)} N(e^{i\alpha_0 + \psi(\alpha_0, t_0)})\cdot e^{i\alpha_0 + \psi(\alpha_0, t_0)} = -|\nabla q|^2,
\end{aligned}
\end{equation}
where the equality is given by \eqref{normal_dot_product_radial}. Now, recalling \eqref{q_time_derivative_eqn}, we have
\begin{equation}
\begin{aligned}
0 &= \partial_t q(x_0, t_0) + \nabla q(x_0, t_0)\cdot e^{i\alpha_0 + \psi(\alpha_0, t_0)}\partial_t\psi(\alpha_0, t_0)\\
  &\geq \partial_t q(x_0, t_0) - |\nabla q|^2(x_0, t_0)
\end{aligned}
\end{equation}

\noindent\emph{Case 3: boundary point with degenerate test function.} We are left with the degenerate case $\nabla q(x_0, t_0) = 0$. Again, we may assume that $p\leq q$ and $p = q$ at $(x_0, t_0)$. The only bad situation is when $-\Delta q(x_0, t_0) > 0$ and $\partial_t q(x_0, t_0) > 0$, and we show by contradiction that it cannot happen. By regularity assumptions \eqref{regularity_eta}, we have for $h: = e^\eta$ that $\partial_t h\in L^\infty([0, t_0]; L^2(\T))$ and $h\in L^\infty([0, t_0]; W^{1,\infty}(\T))$. For convenience of notation, we assume
\begin{equation}
\sup_{0\leq t\leq t_0}\Big(\|\partial_th(\cdot, t)\|_{L^2} + \|h(\cdot, t)\|_{W^{1,\infty}}\Big)\leq L.
\end{equation}
Then for $0< s < t\leq t_0$,
\begin{equation}
\begin{aligned}
\|h(\cdot, t) - h(\cdot, s)\|_{L^2}^2 &= \int_\T \bigg|\int_s^t \partial_\tau h(\alpha, \tau)\ d\tau\bigg|^2\ d\alpha \\
&\leq |t - s|\int_\T\int_s^t |\partial_\tau h(\alpha, \tau)|^2\ d\tau\\
&\leq M^2|t - s|^2
\end{aligned}
\end{equation}
where the first inequality follows from Cauchy--Schwarz. Then by interpolation inequalities in Lemma \ref{lem:1d_sobolev_interpolation} we have
\begin{equation}\label{eqn: h_time_diff}
\|h(\cdot, t) - h(\cdot, s)\|_{L^\infty} \leq C(|t - s|^{\frac{2}{3}} + |t - s|)
\end{equation}
for some $C$ depending only on $L$. Now, since $c := \partial_t q(x_0, t_0)> 0$ and $\nabla q(x_0, t_0) = 0$, we obtain from Taylor expansion that, for sufficiently small $\delta >0$ and $x$ near $x_0$,
\begin{equation}
\begin{aligned}
q(x, t_0 - \delta) &= q(x, t_0 - \delta) - q(x_0, t_0)\\
&\leq -\frac{c}{2}\delta + M|x-x_0|^2
\end{aligned}
\end{equation}
Note that $M$ here can be chosen to only depend on the local $C^{2,1}$ constant of $q$ and not relying specifically on $\delta$. Therefore, if $|x - x_0| < \varepsilon\sqrt{\delta}$ for sufficiently small $\varepsilon > 0$ depending on $M$ and $c$, we have $q(x, t_0 - \delta) < 0$. Define $x' := h(\alpha_0, t_0 - \delta)e^{i\alpha_0}$, then for all $\delta > 0$ sufficiently small depending on $\varepsilon$ and $C$, we have from \eqref{eqn: h_time_diff} that
\begin{equation}
|x' - x| \leq C\delta^{\frac{2}{3}} < \varepsilon\sqrt{\delta}
\end{equation}
and thus $q(x', t_0-\delta) < 0$. However, $p(x', t_0-\delta) = 0$, contradicting $p\leq q$. 
\end{proof}

\begin{lemma}[Star-shaped reduction for nondegenerate tests]
\label{lem:starshaped_reduction_nondeg}
Let $p$ be as in Proposition \ref{prop:visc_sol_equiv}. Let
$q\in C^{2,1}$ be such that
\[
p-q \le 0
\quad\text{in } \overline{\{p>0\}}\cap\{t\le t_0\}\cap Q,
\quad
(p-q)(x_0,t_0)=0, \quad \nabla_x q(x_0,t_0)\neq 0,
\]
for some $(x_0,t_0)\in \partial\{p>0\}\cap Q$, with $x_0\neq 0$. Then there exists some $\tilde q \in C^{2,1}$ such that 
\begin{enumerate}
    \item $\tilde{q} = q$ in a neighborhood of $(t_0, x_0)$;
    \item $\{x:\tilde q(x,t)>0\}$ is a star-shaped domain for every $t$ sufficiently close to $t_0$.
\end{enumerate} 
\end{lemma}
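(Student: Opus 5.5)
The plan is to use the implicit function theorem in polar coordinates near $(x_0,t_0)$, and then to cut and paste $q$ with a radial profile away from $x_0$. Write $x_0=r_0e^{i\alpha_0}$ with $r_0>0$.

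\emph{Step 1: radial transversality at $(x_0,t_0)$.} I would first show that $\partial_r q(x_0,t_0)<0$. Since $p\le q$ on $\overline{\{p>0\}}$ and $p=q=0$ at $(x_0,t_0)$, we have $q\ge p>0$ on $\Omega_\eta(t_0)$. Hence $x_0$ minimizes $q(\cdot,t_0)$ over $\overline{\Omega_\eta(t_0)}$. The domain $\Omega_\eta(t_0)$ is Lipschitz star-shaped, so it contains the truncated cone $\{re^{i\alpha}: r<r_0,\ |\alpha-\alpha_0|<c(r_0-r)\}$ with vertex $x_0$, where $c$ depends on $\|\partial_\alpha\eta(\cdot,t_0)\|_{L^\infty}$ (bounded by the maximum principle \eqref{uniform_angle_intro}). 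Consequently $\nabla q(x_0,t_0)\cdot v\le 0$ for every direction $v$ pointing into this cone. In particular $\nabla q\cdot e_r(\alpha_0)\le 0$.

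The radial derivative is in fact strictly negative. Since $\nabla q\neq 0$ and $\nabla q$ has a nonpositive inner product with every vector in an open cone around $-e_r$, the vector $\nabla q$ lies in the dual cone. That dual cone excludes directions orthogonal to $e_r$ because its aperture is strictly less than $\pi/2$. Thus $\partial_r q(x_0,t_0)<0$. This is the main point: it uses the uniform cone condition. I expect it to be the main obstacle to state cleanly at a boundary point where $\eta$ is merely Lipschitz, so I would phrase it through the cone inclusion and not through a normal.

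\emph{Step 2: local star-shaped graph.} By the implicit function theorem applied to $(r,\alpha,t)\mapsto q(re^{i\alpha},t)$, there are $\rho,\tau>0$ and a $C^{1}$ function $R(\alpha,t)$ such that the following holds on the box $U=\{|r-r_0|<\rho,\ |\alpha-\alpha_0|<\rho,\ |t-t_0|<\tau\}$. We have $\partial_rq<0$ on $U$, $\{q=0\}\cap U=\{r=R(\alpha,t)\}$, and $q>0$ exactly for $r<R$. The function $R$ inherits $C^{1,1}$ regularity in $\alpha$ from the $C^{2,1}$ regularity of $q$.

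\emph{Step 3: gluing.} Take a smooth angular cutoff $\chi(\alpha)$ equal to $1$ for $|\alpha-\alpha_0|<\rho/3$ and supported in $|\alpha-\alpha_0|<2\rho/3$. Take a radial cutoff $\zeta(r)$ equal to $1$ for $|r-r_0|<\rho/3$ and supported in $|r-r_0|<2\rho/3$. Let $q_*(re^{i\alpha}):=A(r_0-r)$, with $A>0$. Define
\[
\tilde q:=\chi\zeta\, q+(1-\chi\zeta)\,q_*.
\]
This function is $C^{2,1}$ away from the origin. It equals $q$ near $(x_0,t_0)$, which gives (1).

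For (2), I would check that $\partial_r\tilde q<0$ wherever $\tilde q$ can vanish, and that $\tilde q>0$ near the origin and $\tilde q<0$ for large $r$. Outside the support of $\chi\zeta$ we have $\tilde q=q_*$, whose zero set is the circle $r=r_0$.

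On the transition region I would first shrink $\tau$ and $\rho$ so that $|q(re^{i\alpha},t)-q_*|$ is small there, or rather so that both $q$ and $q_*$ have the same sign pattern in $r$. Then I would choose $A$ comparable to $-\partial_rq(x_0,t_0)$. Now consider the convex combination $\chi\zeta q+(1-\chi\zeta)q_*$. The terms $\chi\zeta\,\partial_r q$ and $(1-\chi\zeta)\partial_r q_*$ are both negative. The cross term $\partial_r\zeta\,(q-q_*)$ is small once $\rho$ and $\tau$ are small. To arrange this, choose the matching radius so that $q_*=0$ at $r_0$ and $|q-q_*|\lesssim \rho^2+\tau$ on $U$ by Taylor expansion, while $|\partial_r\zeta|\lesssim \rho^{-1}$. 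Then each ray meets $\{\tilde q=0\}$ exactly once, transversally.

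Hence $\{\tilde q(\cdot,t)>0\}$ is star-shaped for $|t-t_0|<\tau$, with a boundary graph $e^{\psi(\alpha,t)}$ that is $C^{1,1}$ near $\alpha_0$, as needed in Case 2.
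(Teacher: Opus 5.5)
Steps 1 and 2 are sound, apart from a sign slip in Step 1. Since $x_0$ minimizes $q(\cdot,t_0)$ over $\overline{\Omega_\eta(t_0)}$, every $v$ pointing into the interior cone satisfies $\nabla q(x_0,t_0)\cdot v\ge 0$, not $\le 0$. With that corrected, $\nabla q$ lies in a closed cone of half-aperture $<\pi/2$ around $-e_r$, and $\partial_rq(x_0,t_0)<0$ follows. This is more careful than the paper's argument, which differentiates $\alpha\mapsto q(e^{\eta(\alpha,t_0)+i\alpha},t_0)$ at its minimum and so tacitly uses differentiability of $\eta(\cdot,t_0)$ at $\alpha_0$.

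The gap is in Step 3: the bound $|q-q_*|\lesssim\rho^2+\tau$ on $U$ is false in general. Set $Q(r,\alpha,t):=q(re^{i\alpha},t)$ and $A=-\partial_rQ(r_0,\alpha_0,t_0)$. Then
\[
Q(r,\alpha,t)-A(r_0-r)=\partial_\alpha Q(r_0,\alpha_0,t_0)\,(\alpha-\alpha_0)+O(\rho^2+\tau).
\]
Here $\partial_\alpha Q(r_0,\alpha_0,t_0)=A\,\partial_\alpha R(\alpha_0,t_0)$. It vanishes only if $\{q(\cdot,t_0)=0\}$ is tangent to the circle $|x|=r_0$ at $x_0$, and nothing forces this: your Step 1 only puts $\nabla q$ in a cone around $-e_r$. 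So $|q-q_*|$ is of order $\rho$ on the transition region. Against $|\partial_r\zeta|\sim\rho^{-1}$, the cross term $\chi\,\partial_r\zeta\,(q-q_*)$ is then of order $A\,|\partial_\alpha R(\alpha_0,t_0)|$, independently of $\rho$ and $\tau$. It is not small compared with the good terms, so the claim that each ray meets $\{\tilde q=0\}$ exactly once does not follow from your argument.

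Your construction can be saved by a sign argument instead of smallness.
\begin{itemize}
\item At a zero of $\tilde q$ with $0<\chi\zeta<1$ one has $q-q_*=-q_*/(\chi\zeta)$, which has the sign of $r-r_0$.
\item If $\zeta$ is nondecreasing for $r<r_0$ and nonincreasing for $r>r_0$, then $\partial_r\zeta$ has the sign of $r_0-r$. So the cross term is $\le 0$ at every zero. It vanishes where $\chi\zeta\in\{0,1\}$, since there $\chi=0$ or $\partial_r\zeta=0$.
\item Hence $\partial_r\tilde q\le\chi\zeta\,\partial_rq-(1-\chi\zeta)A<0$ at every zero, and each ray crosses $\{\tilde q=0\}$ exactly once, transversally.
\end{itemize}

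The paper avoids this computation. It glues $q$ to $g(e^{y+i\alpha},t)=\tilde\rho(\alpha,t)-y$, where $\tilde\rho$ is a positive periodic extension of the implicit-function graph. Then $q$ and $g$ have the same zero set and the same sign on the support of the cutoff. So $\chi q+(1-\chi)g$ has the sign of $g$ everywhere, and $\{\tilde q(\cdot,t)>0\}=\{g(\cdot,t)>0\}$ with no derivative estimate at all.

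That choice also leaves room to take $\tilde\rho\ge\eta$ away from $\alpha_0$. This is what Case~2 of Proposition~\ref{prop:visc_sol_equiv} actually uses, namely $\eta\le\psi$ on $\T\times[t_0-\delta,t_0]$. Your outer profile pins $\{\tilde q=0\}$ to the circle $|x|=r_0$ away from $x_0$, which cuts through $\Omega_\eta(t)$ wherever $e^{\eta}>r_0$. So it satisfies the lemma as stated but would not work in that application.
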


\begin{proof}
Write $h=e^\eta$, $x_0=h(\alpha_0,t_0)e^{i\alpha_0}$, and
$y_0=\eta(\alpha_0,t_0)$. Denote $q^\sharp(\alpha,y,t):=q(e^{y+i\alpha},t)$. We first show that
$\partial_y q^\sharp(\alpha_0,y_0,t_0)\neq0$. Suppose on the contrary that $\partial_y q^\sharp(\alpha_0,y_0,t_0)=0$, then since
$p\le q$ and $p=0$ on the free boundary, the trace $q_\eta^\sharp(\alpha, t_0) := q^\sharp(\alpha,\eta(\alpha,t_0),t_0)$ has a local minimum at $\alpha_0$. Then
\begin{equation}
\begin{aligned}
0 &= \partial_\alpha q_\eta^\sharp(\alpha_0, t_0)\\
  &= \partial_\alpha q^\sharp(\alpha_0, \eta(\alpha_0, t_0),t_0) + \partial_\alpha\eta(\alpha_0, t_0)\partial_y q^\sharp(\alpha_0, \eta(\alpha_0, t_0),t_0)\\
  &= \partial_\alpha q^\sharp(\alpha_0, \eta(\alpha_0, t_0),t_0),
\end{aligned}
\end{equation}
which would then imply $\nabla_xq(x_0,t_0) = 0$ after a change of variable. In fact, $\partial_y q^\sharp(\alpha_0,y_0,t_0)<0$. To see this, we note that for $y<y_0$ close to $y_0$, $x := e^{y+i\alpha_0}$, we have $q(x, t_0) \geq p(x, t_0) > 0$ while
$q(x_0,t_0)=0$. Thus the implicit function theorem gives neighborhoods $I$ of $\alpha_0$
and $J$ of $t_0$, and a function $\rho\in C^{2,1}(I\times J)$, such that near $(x_0,t_0)$,
\begin{equation}
\{q=0\}=\{e^{\rho(\alpha,t)}e^{i\alpha}:(\alpha,t)\in I\times J\}.
\end{equation}
Choose $I'\Subset I$, $J'\Subset J$, and extend $\rho|_{I'\times J'}$ to
a positive function $\widetilde\rho\in C^{2,1}(\mathbb T\times J')$ with
$\widetilde\rho=\rho$ on $I'\times J'$. Define
$g(e^{y+i\alpha},t):=\widetilde\rho(\alpha,t)-y$. Then
\begin{equation}
\{g(\cdot,t)>0\} = \{re^{i\alpha}:0<r<e^{\widetilde\rho(\alpha,t)}\}
\end{equation}
is smooth and star-shaped. Since $q$ and $g$ have the same zero set and the same sign near $(x_0,t_0)$, we choose $\chi\in C_c^\infty$ supported in this neighborhood with $\chi\equiv1$ near $(x_0,t_0)$, and define $\widetilde q:=\chi q+(1-\chi)g$. Then $\widetilde q=q$ near
$(x_0,t_0)$. Also, $\widetilde q$ has the same sign as $g$, so for
$t$ close to $t_0$,
\begin{equation}
\{\widetilde q(\cdot,t)>0\}=\{g(\cdot,t)>0\},
\end{equation}
which is star-shaped.
\end{proof}

\subsection{Past literature on viscosity solutions} Viscosity solutions provide another robust framework for Hele-Shaw flows
beyond classical regularity. Kim~\cite{Kim2003} introduced a viscosity
formulation for the one-phase Hele-Shaw and Stefan problems and proved
existence and uniqueness of viscosity solutions. Choi and Kim~\cite{ChoiKim2006}
used this framework to study waiting-time phenomena, giving criteria in
terms of the growth of the initial harmonic pressure near a boundary point
which distinguish waiting from immediate motion. 

A second theme in the viscosity literature is regularization. Kim~\cite{Kim2006Reg}
proved that a Lipschitz free boundary becomes smooth near points where the
solution is Lipschitz and nondegenerate, while Kim~\cite{Kim2006Long} showed
that, after finite time, viscosity solutions become Lipschitz with
nondegenerate free-boundary speed and hence eventually smooth. Choi, Jerison,
and Kim further developed this theory: in \cite{ChoiJerisonKim2007} they
constructed global viscosity solutions starting from star-shaped Lipschitz
initial domains, proved that the positive phase remains star-shaped and
Lipschitz in space for all time, and obtained quantitative arrival-time and
speed estimates for the free boundary; in \cite{ChoiJerisonKim2009} they
proved scale-invariant local regularization results and derived further
existence, uniqueness, and regularity consequences for global solutions with
Lipschitz initial free boundary. 

We revisit several classical results on viscosity solutions (in the sense of Definition \ref{def:visc_sol_domain}) of the Hele-Shaw problem. The injection considered in these results are usually from a fixed boundary (see, for example, \cites{Kim2003}), but the proofs are local in nature and can be adapted to our point injection setting. Then, by Proposition \ref{prop:visc_sol_equiv}, they apply to our solution in Theorem \ref{thm:gwp}. To fix notations, let $p(x, 0) = p_0(x)$, and assume that $\Omega_0 := \{p_0 > 0\}$ is Lipschitz. We also define $\Gamma_0:=\partial\Omega_0$.

\subsubsection{Corners on the initial interface} 
The first results we present concerns the waiting time phenomenon of acute-angled corners and immediate movement of obtuse angled corners on the initial interface.

\begin{theorem}[Choi--Jerison--Kim \cites{ChoiKim2006, ChoiJerisonKim2007, ChoiJerisonKim2009}] Let $\eta_0\in \lip(\T)$ and $\eta$ be the unique global solution given by Theorem \ref{thm:gwp} such that $\eta(\cdot, 0) = \eta_0$. 
\begin{enumerate}
    \item \emph{(Waiting time.)} Suppose that for some $x_0 = e^{\eta_0(\alpha_0)}e^{i\alpha_0}$, there exists $\delta > 0$ such that $B(x_0, \delta)\cap \Omega_{\eta_0}$ is contained in a cone centered at $x_0$ with opening angle strictly less than $\pi/ 2$. Then there is some $T > 0$ such that $x_0\in \Gamma_t$ for all $0\leq t\leq T$.
    \item \emph{(Immediate movement.)} Suppose that for some $x_0 = e^{\eta_0(\alpha_0)}e^{i\alpha_0}$, there exists $\delta > 0$ such that $B(x_0, \delta)\cap \Omega_{\eta_0}$ contains a cone centered at $x_0$ with opening angle strictly larger than $\pi/ 2$. Then for all sufficiently small $t > 0$, $x_0\not\in \Gamma_t$.
\end{enumerate}
\end{theorem}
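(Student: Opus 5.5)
The plan is to move the problem to the Eulerian pressure setting with Proposition~\ref{prop:visc_sol_equiv}, and then use the growth criteria of Choi--Kim for waiting time versus immediate motion. Since those criteria are local, the point source enters only through the far-field behaviour of $p$.

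\textbf{Reduction to a growth condition on $p_0$.} Let $\eta$ be the solution of Theorem~\ref{thm:gwp}. By Proposition~\ref{prop:visc_sol_equiv}, $p=\max\{\phi-\log|x|,0\}$ is a viscosity solution in the sense of Definition~\ref{def:visc_sol_domain}, with initial pressure $p_0=\phi_0-\log|x|$ on $\Omega_{\eta_0}$. The Choi--Kim criterion (\cite{ChoiKim2006}) reads as follows.
\begin{enumerate}
\item If $\sup_{B(x_0,r)}p_0\le Cr^2$ for small $r$, then $x_0$ waits for a positive time.
\item If $p_0(x)/|x-x_0|^2\to\infty$ as $x\to x_0$ inside a cone in $\Omega_0$ with vertex $x_0$, then $x_0$ moves immediately.
\end{enumerate}
So the first step is to establish these growth bounds from the cone hypotheses. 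The near-$x_0$ estimates use only the facts that $p_0$ is positive and harmonic in $\Omega_{\eta_0}\cap B(x_0,\delta)$ and vanishes on $\Gamma_0$. This holds because $\delta$ can be taken smaller than $\operatorname{dist}(x_0,0)$, since $\eta_0$ is bounded.

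\textbf{Waiting time (acute case).} Suppose $\Omega_0\cap B(x_0,\delta)\subset\mathcal C$, where $\mathcal C$ is a cone of opening $\vartheta<\pi/2$. Let $w=r^{\pi/\vartheta}\cos(\pi\theta'/\vartheta)$ in polar coordinates about $x_0$ adapted to $\mathcal C$. Then $w$ is positive and harmonic in $\mathcal C$ and vanishes on $\partial\mathcal C$. By the weak maximum principle (Lemma~\ref{lem:weak_max_principle}) on $\Omega_0\cap B(x_0,\delta)$, we get $p_0\le (\|p_0\|_{L^\infty(B(x_0,\delta))}/\inf_{\partial B(x_0,\delta)\cap\mathcal C'}w)\,w$ after slightly enlarging to a cone $\mathcal C'$. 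This gives $p_0\lesssim r^{\pi/\vartheta}=o(r^2)$.

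Next I will follow the barrier argument of Choi--Kim. Consider the expanding supersolution built from the Baiocchi-type profile $q(x,t)=A(|x-x_0|^2/2+\ldots)$ truncated in a small ball, with boundary data on $\partial B(x_0,\delta)\times[0,T]$ dominating $p$. Here $p$ is bounded there for $t\le T$ by the height bound of Theorem~\ref{thm:gwp} and elliptic estimates. Apply the comparison principle for viscosity sub/supersolutions in a localized domain (Kim~\cite{Kim2003}). The positivity set of the barrier stays away from $x_0$ for $t\le T$, hence $x_0\in\Gamma_t$.

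\textbf{Immediate movement (obtuse case).} Here $\Omega_0\cap B(x_0,\delta)\supset\mathcal C$ with opening $\vartheta>\pi/2$. I will take the analogous harmonic function $w$ on a slightly smaller cone $\mathcal C''\subset\mathcal C$, cut off near $\partial B(x_0,\delta/2)$. Using $\inf p_0>0$ on compact subsets of $\mathcal C\cap B(x_0,\delta)$ (Harnack), the maximum principle gives $p_0\ge c\,w$ in $\mathcal C''$. Since $\pi/\vartheta<2$, this yields $p_0/r^2\to\infty$ along the axis. I then build a subsolution as in \cite{ChoiKim2006}: a radially expanding profile around a point inside the cone whose positive phase crosses $x_0$ at arbitrarily small times. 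Comparison gives $x_0\in\Omega_t$, hence $x_0\notin\Gamma_t$, for all small $t>0$. Monotonicity of the positive phase comes from $\partial_t\eta\ge0$, which follows from the Taylor sign condition.

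\textbf{Main obstacle.} I expect the main obstacle to be the localized comparison principle for Definition~\ref{def:visc_sol_domain} on $(B(x_0,\delta)\times[0,T])$ with lateral boundary data. Kim's theory treats injection from a fixed boundary, and one must check that the point source at the origin does not interfere. I would handle this by working in a space-time cylinder away from $0$ and importing Kim's local comparison argument verbatim, using the lateral data bounds from Theorem~\ref{thm:main_1}. As an alternative, I would first try to realize the barriers directly as star-shaped interface sub/supersolutions and invoke Proposition~\ref{visc_sol_cp}, though localizing those barriers to a star-shaped global profile may itself be delicate.
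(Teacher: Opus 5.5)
Your proposal follows essentially the paper's route. The paper gives no argument beyond noting that the Choi--Kim and Choi--Jerison--Kim proofs are local, so the source at the origin is harmless, and that Proposition~\ref{prop:visc_sol_equiv} makes $p=\max\{\phi-\log|x|,0\}$ a domain viscosity solution. Your reduction of the cone hypotheses to $p_0\lesssim r^{\pi/\vartheta}$, and to $p_0\gtrsim r^{\pi/\vartheta''}$ along the cone axis, via cone harmonics correctly fills in what the paper leaves implicit, as does your identification of the localized comparison principle as the real work.

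One caution on the waiting-time step. The profile $A(|x-x_0|^2/2+\dots)$, as written, is subharmonic and positive arbitrarily close to $x_0$, so it is not a supersolution whose positive phase avoids $x_0$. Either import Choi--Kim's barrier verbatim, or use a fixed-vertex cone $a\,r^{\lambda(t)}\cos(\lambda(t)\theta')$ with $\lambda(t)=\pi/\vartheta(t)>2$ and $\dot\vartheta\ge 2a\lambda\delta^{\lambda-2}$. Push its vertex a distance $\epsilon$ out of $\overline{\Omega_0}$ to get strict initial ordering, then let $\epsilon\to0$. This barrier satisfies $V\ge|\nabla q|$ on its edges in $B(x_0,\delta)$ exactly because $\lambda>2$.
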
 

The corner-dynamics results above should also be viewed in relation to the
classical explicit-solution literature for source-driven Hele-Shaw flow.
Using complex-variable and self-similar methods, Howison~\cite{Howison1986},
Howison--King~\cite{HowisonKing1989}, and King--Lacey--V\'azquez
\cite{KingLaceyVazquez1995} studied the evolution of singular free-boundary
geometries such as cusps and corners. In particular, King--Lacey--V\'azquez
\cite{KingLaceyVazquez1995} used special self-similar solutions to describe
corner dynamics, already revealing the acute/obtuse dichotomy.

For the gravity-driven Muskat problem, there are also extensive literature on the angle dynamics of Lipschitz interfaces. In the one-phase case, Agrawal--Patel--Wu \cite{agrawal-patel-wu} proved local-well-posedness of the system \eqref{domain_eqn_u} in a function space that allows the initial interface to have acute-angled corners and cusps. They also showed that for such cornered initial data the angle remains unchanged for at least a short time. Their methodology, which involves Riemann mapping parametrization of the boundary and energy estimates in weighted Sobolev spaces, first appeared in the work \cite{Wu2019WaterWaves} of Wu for water waves. On the other hand, in \cite{AlazardKoch2025HeleShaw}, Alazard--Koch studied the one-phase Muskat problem as a semi-flow. They showed that on initial interfaces of these semi-flow solutions, acute-angled corners exhibit a waiting-time phenomenon while obtuse-angled corners have immediate movement. Their argument utilizes monotonicity/comparison principle of semi-flows. Regarding the two-phase Muskat problem with equal viscosities, Garc\'ia-Ju\'arez--G\'omez-Serrano--Haziot--Pausader \cite{GarciaJuarezGomezSerranoHaziotPausader2024} showed desingularization of obtuse corners with small slope.

\subsubsection{Lipschitz star-shaped viscosity solution and regularity estimates} Lastly, we want to mention the following important result of Choi--Jerison--Kim \cite{ChoiJerisonKim2007}, which is analogous to Theorem \ref{thm:gwp}.

\begin{proposition}[Lemma 2.4, \cite{ChoiJerisonKim2007}]\label{lem:Lipschitz_visc_sol}
Let $\Omega_0$ be Lipschitz and star-shaped. Then there exists a viscosity solution (as in Definition \ref{def:visc_sol_domain}) $p(x,t)$ of the Hele-Shaw equations with initial positive phase $\Omega_0$. Moreover, $\Omega(t) := \{x: p(x, t) > 0\}$ is star-shaped Lipschitz for all $t>0$. 
\end{proposition}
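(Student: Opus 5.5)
The plan is to derive the statement from the interface theory already developed: Theorem~\ref{thm:gwp} for existence, and Proposition~\ref{prop:visc_sol_equiv} to pass to the domain formulation. No new analysis should be needed beyond translating hypotheses. First I would encode $\Omega_0$ as a radial graph. Following Choi--Jerison--Kim, I read ``star-shaped Lipschitz'' as star-shaped with respect to a small ball $B(0,r_0)\subset\Omega_0$. Then the boundary is uniformly transversal to rays from the origin, so $\partial\Omega_0=\{e^{\eta_0(\alpha)}e^{i\alpha}\}$ with $\eta_0\in W^{1,\infty}(\T)$. The bound on $\|\partial_\alpha\eta_0\|_{L^\infty}$ is controlled by the cone aperture, as in the discussion around Figure~\ref{fig:cone_condition}. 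The initial pressure is $p_0:=\max\{\phi_0-\log|x|,0\}$, where $\phi_0$ is the harmonic extension of $\eta_0$.

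Next I would apply Theorem~\ref{thm:gwp} to $\eta_0$. This gives a global $\eta$ with exactly the regularity \eqref{regularity_eta}, which is also the unique viscosity solution in the sense of Definition~\ref{def:visc_sol_bdry}. Proposition~\ref{prop:visc_sol_equiv} then shows that $p:=\max\{\phi-\log|x|,0\}$ satisfies the test-function conditions (5) of Definition~\ref{def:visc_sol_domain}. The remaining structural conditions (1)--(4) I would check by hand.
\begin{enumerate}
\item Condition (1) holds because $\eta(\cdot,0)=\eta_0$.
\item Condition (4), boundedness of $\overline{\{p>0\}}\cap\{t\le T\}$, follows from the barrier bound $e^{\eta}\le\sqrt{C_2+2t}$.
\item Condition (3) follows from $\eta\in C(\T\times[0,\infty))$: the sets $\Omega_\eta(t)$ vary continuously in Hausdorff distance, so the closure of the space-time positive set at $t=0$ is $\overline{\Omega_0}$.
\item Condition (2), the singular behaviour at the origin, reduces to the normalization $p+\log|x|=\phi$ near $0$, with $\phi$ harmonic and bounded. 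One must check that this is compatible with the sign convention in (2); I expect it to amount to the constant normalization of the injection rate.
\end{enumerate}

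Finally, the geometric conclusion is immediate from the construction. Since $p>0$ exactly where $\phi>\log|x|$, Proposition~\ref{taylor_sign} and the maximum-principle argument in its proof give $\{p(\cdot,t)>0\}=\Omega_\eta(t)\setminus\{0\}$, which is star-shaped. By the Lipschitz maximum principle (Corollary~\ref{cor_Lipschitz_bound}, preserved in the limit and recorded in \eqref{uniform_angle_intro}) we have $\|\partial_\alpha\eta(\cdot,t)\|_{L^\infty}\le\|\partial_\alpha\eta_0\|_{L^\infty}$. Together with the two-sided height bounds this makes $\Omega(t)$ uniformly Lipschitz and star-shaped, with the same cone aperture as $\Omega_0$.

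The main obstacle I expect is the first step, namely matching hypotheses. An arbitrary Lipschitz domain that is merely star-shaped about the origin can have boundary segments lying along rays, and then $\eta_0\notin W^{1,\infty}$. I would handle this either by adopting the Choi--Jerison--Kim convention of star-shapedness with respect to a ball, or by translating the origin (the injection point) to the centre of such a ball. A secondary technical point is verifying condition (3) near $t=0$, where only continuity of $\eta$, and no time derivative bound, is available.
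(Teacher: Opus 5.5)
\paragraph{Comparison with the paper.}
Your route differs from the paper's. The paper gives no proof of its own here. It cites Choi--Jerison--Kim, where existence comes from Perron's method for domain viscosity solutions, with injection through a fixed inner boundary. There, Lipschitz star-shapedness is propagated by the domain comparison principle with suitable sub- and supersolutions.

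You instead pass through the interface theory. Theorem~\ref{thm:gwp} combined with Proposition~\ref{prop:visc_sol_equiv} is precisely Proposition~\ref{prop:visc_sols_relations}, and the Lipschitz maximum principle supplies the geometry. Within the paper this is legitimate, and it gives more than the cited lemma: $\|\partial_\alpha\eta(\cdot,t)\|_{L^\infty}\le\|\partial_\alpha\eta_0\|_{L^\infty}$, so the radial cone aperture never exceeds its initial value, plus the bounds $\sqrt{C_1+2t}\le e^\eta\le\sqrt{C_2+2t}$. The price is that it rests on the full layer-potential and vanishing-viscosity machinery. It also applies only to data star-shaped about the source, and gives no existence for general data as Perron's method does.

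Your checks of (1), (3), (4), and of $\{p(\cdot,t)>0\}=\Omega_\eta(t)\setminus\{0\}$, are fine; use the strong maximum principle for strict positivity. Continuity of $\eta$ on $\T\times[0,\infty)$ is all that (3) needs, so that is not a real obstacle.

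\paragraph{The gap: condition (2).}
Condition (2) does not close, and your proposed resolution is wrong. Near the origin $p+\log|x|=\phi$, so already along $t=t_0$ you get $p(x,t_0)+\log|x|\to\phi(0,t_0)$. The sub- and supersolution versions of (2) together therefore demand $\phi(0,t_0)=0$ for every $t_0>0$.

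This is not a normalization of the injection rate. The rate fixes the coefficient of $-\log|x|$, while the additive constant $\phi(0,t)$ is determined by the moving boundary. For $\eta_0\equiv\log R_0$ your construction gives the radial solution, with $\phi\equiv\tfrac12\log(R_0^2+2t)$. Then (2) fails for every $t_0>0$ with $R_0^2+2t_0\neq 1$.

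So, as written, (2) prescribes both the strength of the singularity and the value of the regular part at the source. That over-determines the problem, and your $p$ provably violates it. The black box does not help: the proof of Proposition~\ref{prop:visc_sol_equiv} only checks condition (5). You must read (2) as requiring $p+\log|x|$ to stay bounded near the origin, locally uniformly in $t$ (i.e.\ $p=-\log|x|+O(1)$). Your $p$ satisfies this, since $|\phi(\cdot,t)|\le\|\eta(\cdot,t)\|_{L^\infty}$ by the maximum principle.

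\paragraph{Matching hypotheses.}
Of your two options, only the first is admissible. Moving the injection point to the centre of a ball changes the problem, because the source location is part of the data. You must assume star-shapedness with respect to a ball centred at the source, which is exactly what $\eta_0\in W^{1,\infty}(\T)$ encodes.
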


In their work, existence of viscosity solutions with more general initial data was first proven using Perron's method (see \cites{Kim2003, JerisonKim2005}), and preservation of Lipschitz star-shapedness follows from comparison principle with sub- and supersolutions. They also proved several regularity results regarding the free boundary. 

The first result they obtained has to do with speed of the free boundary. Define 
\[T(x) := \sup\{t > 0: p(x, t) = 0\}\]
which is the time it takes for the free boundary to reach $x$. In our setting, consider $x_\pm:=h(\alpha, 0)e^{i\alpha} \pm \delta n(\alpha,0)$ for some sufficiently small $\delta > 0$.  It was shown in \cite{ChoiJerisonKim2007} that
\begin{equation}
\frac{\delta}{T(x_+)}\sim |\nabla p(x_-, 0)|.
\end{equation}
In other words, the average normal speed of the free boundary across a $\delta$-neighborhood is comparable to the initial gradient of $p$ at that scale.  

The second result is given in the following proposition.
\begin{proposition}[Cone Monotoniticy, \cite{ChoiJerisonKim2007}]
\label{thm:CJK_cone_monotonicity}
Let \(p\) be a viscosity solution (in the sense of Definition \ref{def:visc_sol_domain}) of the Hele-Shaw equations, and let \(x_0\in\Gamma_0\). Suppose that $\Gamma_0$ has Lipschitz constant \(M<1\) in a coordinate neighborhood near $x_0$, and the initial pressure
\(p_0\) is monotone in a cone
\[
W(\theta, e):=\{q\in\mathbb R^2:(q-x_0)\cdot e\ge |q - x_0|\cos\theta\},
\quad
\mathrm{where\ }\frac{\pi}{4}<\theta\le \frac{\pi}{2},\ e\in \R^2
\]
near \(x_0\). Then there exist a smaller cone aperture $\theta_* \in \left(\frac{\pi}{4},\theta\right)$, a radius \(r_*>0\), and a time \(T_*>0\), depending only on the local Lipschitz geometry and on \(\theta\), such that $p(\cdot,t)$ is monotone in the cone $W(\theta_*, e)\cap B(x_0, r_*)$, for every \(0\le t\le T_*\).
\end{proposition}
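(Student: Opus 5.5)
The plan is to prove cone monotonicity by a local comparison argument between $p$ and its translates, in the spirit of Caffarelli's proof of Lipschitz regularity for two-phase free boundaries. The point source at the origin lies at positive distance from $x_0$, so inside a small cylinder $\mathcal C:=B(x_0,r_*)\times[0,T_*]$ the pressure $p$ is a viscosity solution of \eqref{Hele-Shaw_Kim} with no singular term. The comparison principle for Definition~\ref{def:visc_sol_domain} (Kim \cite{Kim2003}) can therefore be applied on $\mathcal C$, provided the strict ordering is checked on its parabolic boundary.

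\textbf{Step 1: reformulate monotonicity as a family of comparisons.} Monotonicity of $p(\cdot,t)$ in $W(\theta_*,e)$ means $p(x-\varepsilon\nu,t)\le p(x,t)$ for every unit $\nu$ with $\angle(\nu,e)\le\theta_*$ and every small $\varepsilon>0$. Because the initial cone is strictly larger, $\theta>\theta_*$, the hypothesis on $p_0$ gives a stronger statement at $t=0$:
\[
\sup_{|y|\le \varepsilon\sin(\theta-\theta_*)} p_0(x-\varepsilon\nu+y)\le p_0(x).
\]
The condition $M<1$ together with $\theta>\pi/4$ guarantees that this cone is compatible with the free boundary, in the sense that the translated positive phase stays inside the original one near $x_0$. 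I would therefore compare $p$ with the sup-convolution
\[
v_\varepsilon(x,t):=\sup_{|y|\le c\,\varepsilon}p(x-\varepsilon\nu+y,t),
\]
with $c$ slightly decreased in time, which is a viscosity subsolution. Equivalently, I would compare with the perturbed family $\sup_{B_{\varphi(x,t)}}p(\cdot-\varepsilon\nu)$, where $\varphi$ is chosen so that $(1+\text{small})\,|\Delta\varphi|$-type terms keep it a subsolution in the Hele-Shaw sense. The comparison principle then yields $v_\varepsilon\le p$ in $\mathcal C$ once the ordering holds on the parabolic boundary.

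\textbf{Step 2: initial and lateral boundary data.} At $t=0$ the required ordering is exactly the strengthened initial monotonicity from Step 1. The lateral boundary $\partial B(x_0,r_*)\times[0,T_*]$ is the main obstacle.
\begin{enumerate}[(i)]
\item \emph{Points of $\partial B(x_0,r_*)$ a fixed distance inside $\Omega(t)$.} Here $p$ is positive and harmonic. I would use the Harnack inequality and the boundary Harnack principle in Lipschitz domains to turn the initial $\varepsilon$-monotonicity into strict monotonicity with a gain, of the form $p(x-\varepsilon\nu)\le (1-c\varepsilon)\,p(x)$. This gain absorbs the enlarged ball in the sup-convolution.
\item \emph{Points of $\partial B(x_0,r_*)$ near the free boundary.} Here I would take $T_*$ small, using the arrival-time and speed estimate $\delta/T(x_+)\sim|\nabla p(x_-,0)|$ quoted above. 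This keeps the free boundary within a distance $o(\varepsilon)$ of $\Gamma_0$ on the relevant scale. On that scale the Lipschitz geometry of $\Gamma_0$, with constant $M<1$, supplies the ordering of the positive phases directly.
\end{enumerate}
The loss of aperture from $\theta$ to $\theta_*$ is the room spent in these two estimates. This is why $\theta_*$, $r_*$ and $T_*$ depend only on the local Lipschitz data and on $\theta$.

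\textbf{Step 3: conclusion.} With the parabolic-boundary ordering in hand, comparison gives $v_\varepsilon\le p$ in $\mathcal C$ for every admissible $\nu$ and every small $\varepsilon$. Letting $\varepsilon\to0$ yields monotonicity of $p(\cdot,t)$ in $W(\theta_*,e)\cap B(x_0,r_*)$ for all $0\le t\le T_*$.

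The hardest part is keeping the lateral-boundary estimate uniform in $\varepsilon$. The first thing I would try is the boundary Harnack comparison of $p$ with the directional derivative $\partial_\nu p$ at time zero, propagated over a short time by the speed bounds.
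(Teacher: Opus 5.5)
The paper gives no proof of this statement; it only quotes it from Choi--Jerison--Kim. So your outline has to stand on its own, and it does not.

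\textbf{Where it breaks: the lateral boundary.} The set-up in Step 1 is fine: a sup-convolution whose radius is nonincreasing in time is a subsolution, and the room $\theta-\theta_*$ gives strict ordering at $t=0$. The difficulty is that the lateral boundary is handled circularly.
\begin{itemize}
\item Since $x_0\in\Gamma_0$, the free boundary $\Gamma(t)$ crosses $\partial B(x_0,r_*)$ for every $t\le T_*$.
\item At those crossing points, Kim's comparison principle needs $\overline{\{v_\varepsilon>0\}}\subset\{p>0\}$ and $v_\varepsilon<p$. That amounts to $\Omega(t)+\varepsilon\nu+B_{c\varepsilon}\Subset\Omega(t)$ and $\sup_{|y|\le c\varepsilon}p(x-\varepsilon\nu+y,t)<p(x,t)$ near $\partial B(x_0,r_*)\cap\Gamma(t)$, for all small $\varepsilon$.
\item This is cone monotonicity of $p(\cdot,t)$ at scale $\varepsilon$ near another free-boundary point, which is the conclusion you are trying to prove.
\end{itemize}

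\textbf{Why remedy (ii) does not break the circle.} Fix $T_*>0$. The arrival-time estimate $\delta/T(x_+)\sim|\nabla p(x_-,0)|$ says the boundary moves with nondegenerate speed, so near $\partial B(x_0,r_*)$ it has moved by some fixed $d(T_*)>0$.
\begin{itemize}
\item Once $\varepsilon\ll d(T_*)$, $\Gamma(t)$ is not within $o(\varepsilon)$ of $\Gamma_0$.
\item The Lipschitz geometry of $\Gamma_0$ then says nothing about the scale-$\varepsilon$ geometry of $\Gamma(t)$.
\item Forcing the displacement to be $o(\varepsilon)$ requires $T_*=T_*(\varepsilon)\to0$. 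Step 3 then collapses, since it lets $\varepsilon\to0$ on a fixed interval $[0,T_*]$.
\end{itemize}

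\textbf{Part (i) has the same defect in milder form.} The gain $\partial_\nu p_0\gtrsim p_0/d$ from boundary Harnack is a time-zero fact in $\Omega_0$. It transfers to $p(\cdot,t)$ by continuity only at distance $\gg d(T_*)$ from the free boundary, because $p(\cdot,t)-p_0>0$ on $\Gamma_0$ once the boundary has moved. Applying boundary Harnack directly in $\Omega(t)$ would presuppose the cone structure of $\Omega(t)$ that you are proving. So a band of width $\sim d(T_*)$ around $\Gamma(t)$ on the lateral boundary is controlled, uniformly in $\varepsilon$, by neither (i) nor (ii).

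\textbf{What is missing.} You need lateral information that does not degenerate as $\varepsilon\to0$. Two possible routes:
\begin{itemize}
\item Avoid a lateral boundary altogether when global structure allows it, comparing with translates on the whole domain. This is what the paper does for $\eta$ in Proposition~\ref{visc_reg_mod_cty}: it compares $\eta(\cdot+\beta,t)-\omega(\beta)$ with $\eta$ on all of $\T$, so only the initial time matters.
\item For the genuinely local statement, a natural route is two-scale. First obtain ordering only for translations of a fixed size $\varepsilon_0\gtrsim d(T_*)$, using short-time control of $\Gamma(t)$ and the room $\theta-\theta_*$. Then upgrade this $\varepsilon_0$-monotonicity to genuine monotonicity in a smaller cone and ball by a Caffarelli-type variable-radius sup-convolution. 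Its lateral data would come from the $\varepsilon_0$-monotonicity plus an interior Harnack gain.
\end{itemize}

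\textbf{A warning sign.} The hypothesis $\theta>\pi/4$ (equivalently $M<1$) does no work in your outline. The sentence in Step 1 invoking it only restates what monotonicity of $p_0$ in $W(\theta,e)$ already gives at $t=0$. This hypothesis rules out in particular acute corners of $\Omega_0$, which by the waiting-time result quoted in the paper do not move initially. It is exactly where one would expect the short-time control of $\Gamma(t)$ near the lateral boundary to come from.
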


In star-shaped variables, this cone monotonicity gives a local version of
our bound Corollary \ref{cor_Lipschitz_bound}. At a regular boundary point,
we know that $\nabla p$ only has normal component, so
\[
\frac{\nabla p}{|\nabla p|}=-\nu,\quad \mathrm{where}\ \nu=\frac{e_r-\eta_\alpha e_\theta}{\sqrt{1+\eta_\alpha^2}}.
\]
Monotonicity of \(p\) in the cone \(W(\theta_*,-e_r(\alpha_0))\) means that
\(\nabla p\cdot q\ge 0\) for every direction \(q\) in this cone. Direct calculation then gives
\begin{equation}
|\eta_\alpha|\le \cot\theta_* .
\end{equation}

The third result concerns regularization of free boundary when the local Lipschitz constant is small. 
\begin{theorem}[Local-in-time regularization, \cite{ChoiJerisonKim2007}]
\label{thm:CJK_classical_regularization}
Let \(p\) be a viscosity solution (in the sense of Definition \ref{def:visc_sol_domain}) of the Hele-Shaw equations, and suppose that
\(\Gamma_0\) has sufficiently small local Lipschitz constant. Then there exists \(T>0\), depending only on the Lipschitz geometry of \(\Omega_0\), such that for every \(0<t\le T\), the free boundary \(\Gamma(t)\) is smooth in space and time, \(p\) is smooth up to the free
boundary from the positive phase, and the Hele-Shaw free-boundary condition
\[
\partial_tp=|\nabla p|^2
\quad\text{on }\Gamma(t)
\]
holds in the classical sense.
\end{theorem}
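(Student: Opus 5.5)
The plan is to reduce this statement to the regularization theory of Kim \cite{Kim2006Reg} and Choi--Jerison--Kim \cite{ChoiJerisonKim2007}, feeding in the geometric a priori information available for our solution. The point-injection setting only modifies the argument near the origin. All statements involved are local near $\Gamma_0$, which stays at positive distance from $0$ by Corollary~\ref{cor_Linfty_bound}, so I will work in a fixed annular neighborhood of $\Gamma_0$ throughout.

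\emph{Step 1: spatial Lipschitz control on $[0,T]$.} For a solution from Theorem~\ref{thm:gwp}, Proposition~\ref{prop:visc_sol_equiv} shows that $p$ is a viscosity solution in the sense of Definition~\ref{def:visc_sol_domain}. By Corollary~\ref{cor_Lipschitz_bound}, passed to the limit, we have $\|\partial_\alpha\eta(\cdot,t)\|_{L^\infty}\le\|\partial_\alpha\eta_0\|_{L^\infty}$. Hence $\Gamma(t)$ keeps the same small Lipschitz constant for all $t$, and the cone condition of Figure~\ref{fig:cone_condition} holds uniformly. For a general viscosity solution with small Lipschitz $\Gamma_0$, I would instead invoke the cone monotonicity in Proposition~\ref{thm:CJK_cone_monotonicity}. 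With $M<1$ this yields monotonicity of $p(\cdot,t)$ in a cone of aperture $\theta_*>\pi/4$ on $B(x_0,r_*)\times[0,T_*]$. A compactness argument over $x_0\in\Gamma_0$ then gives a uniform $T$ and a uniform spatial Lipschitz bound on $\Gamma(t)$.

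\emph{Step 2: nondegeneracy and space-time Lipschitz regularity.} This is the step I expect to be the main obstacle. Kim's regularization theorem needs the free boundary to be Lipschitz in \emph{space-time}, with the normal speed bounded above and below at each scale. Spatial control alone is not enough. To obtain this I will use the arrival-time estimate $\delta/T(x_+)\sim|\nabla p(x_-,0)|$ from \cite{ChoiJerisonKim2007}, together with Harnack-type comparability of $p$ in the positive phase. Monotonicity in a cone containing both $-e_r$ and directions close to it gives $|\nabla p|\sim p(x-\delta\nu)/\delta$ near the boundary.

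The danger is degeneracy near points of $\Gamma_0$ where the initial pressure vanishes to high order. Smallness of the Lipschitz constant rules this out, because obtuse-angle configurations give linear growth by the boundary Harnack principle. The plan is therefore to show that for $t\in(0,T]$ one has $V\sim|\nabla p|\sim$ a scale-invariant constant on each parabolic cylinder. This yields a space-time Lipschitz graph in the radial direction. In our setting, the bound $\partial_t\eta\in L^\infty_tL^2_\alpha$ from Theorem~\ref{thm:gwp}, combined with the interpolation estimate used in Case~3 of Proposition~\ref{prop:visc_sol_equiv}, should help here: it gives Hölder-in-time control of $h$ as a starting point.

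\emph{Step 3: regularization and bootstrap.} With space-time Lipschitz regularity and nondegeneracy in hand, I will apply Kim \cite{Kim2006Reg}: such free boundaries are $C^{1,\gamma}$, then smooth, and $p$ is smooth up to $\Gamma(t)$ from the positive phase. Smoothness lets the viscosity conditions of Definition~\ref{def:visc_sol_domain} be tested by $p$ itself. This yields $\partial_tp=|\nabla p|^2$ classically on $\Gamma(t)$. Equivalently, by Proposition~\ref{consistency}, $\eta$ satisfies \eqref{interface_eqn} pointwise. Higher regularity can alternatively be bootstrapped on the interface equation itself, using the parabolic structure of $G(h)$ and the Dirichlet-to-Neumann estimates of Appendix~\ref{appendix:DN}.
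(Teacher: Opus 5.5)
\textbf{Gap in Step~2, which is the whole content of the theorem.} The paper gives no proof of this statement. It imports it from \cite{ChoiJerisonKim2007}, remarking only that the arguments there are local near the free boundary and so transfer from injection through a fixed boundary to injection at the origin. Your outline tries to rebuild the result from its ingredients: spatial Lipschitz control, then nondegeneracy and space-time Lipschitz regularity, then the regularity theorem of \cite{Kim2006Reg}. As an argument, Step~2 is not proved. You state the target ($V\sim|\nabla p|\sim$ a scale-invariant constant on every parabolic cylinder for $0<t\le T$), but the reason you give for it is false. A small Lipschitz constant does not give linear growth of $p_0$. A Lipschitz curve with arbitrarily small constant can have a corner with interior opening $\theta\in(\pi/2,\pi)$. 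There $p_0$ behaves like $r^{\pi/\theta}$ with $\pi/\theta>1$, so $|\nabla p_0|\to0$ at the vertex. The boundary Harnack principle only makes positive harmonic functions vanishing on $\Gamma_0$ comparable to one another; it gives no linear lower bound. Nondegeneracy therefore genuinely fails at $t=0$ and must be generated by the flow, at every scale, for $t>0$. That is the substantive part of \cite{ChoiJerisonKim2007}, and it is where the smallness hypothesis must be used; your outline only names it. The time regularity you offer as a starting point does not help either. The bound $\partial_t\eta\in L^\infty_tL^2_\alpha$ gives only $\|h(\cdot,t)-h(\cdot,s)\|_{L^\infty}\lesssim|t-s|^{2/3}$. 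That is neither a Lipschitz bound in time nor any lower bound on the speed. It is also available only for the solution of Theorem~\ref{thm:gwp}, not for an arbitrary viscosity solution as in the statement.

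\textbf{The smallness hypothesis is never really used.} This is a symptom of the gap, since the hypothesis cannot be dropped: acute corners, which occur once the Lipschitz constant exceeds $1$, persist for a waiting time \cite{KingLaceyVazquez1995}. In Step~1, the invariant $\|\partial_\alpha\eta(\cdot,t)\|_{L^\infty}\le\|\partial_\alpha\eta_0\|_{L^\infty}$ keeps the normal in a fixed cone around $e_r$. That amounts to a small local Lipschitz constant only if $\|\partial_\alpha\eta_0\|_{L^\infty}$ is itself small, which is a stronger, global assumption. A smooth star-shaped $\Gamma_0$ has arbitrarily small local Lipschitz constant at small scales, while $\partial_\alpha\eta_0$ may be large. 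Proposition~\ref{thm:CJK_cone_monotonicity}, as stated, only gives an aperture $\theta_*>\pi/4$, i.e.\ slope below $1$, not a small slope.

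\textbf{Smaller points.} The ``equivalently'' in Step~3 via Proposition~\ref{consistency} needs $p$ to come from a viscosity solution $\eta$ of the interface equation. The paper knows this only for the constructed solution; Proposition~\ref{prop:visc_sol_equiv} goes in one direction only. Keeping $\Gamma(t)$ away from the origin for a general viscosity solution cannot rest on Corollary~\ref{cor_Linfty_bound}, which concerns the regularized problem; it should come from, e.g., the monotone expansion of the positive phase.

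\textbf{What to do.} Either cite the theorem of \cite{ChoiJerisonKim2007} outright, as the paper does, and check only that it localizes away from the origin. Or actually carry out the nondegeneracy argument of Step~2.
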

In particular, our solution $\eta$ as in Theorem \ref{thm:gwp} is instantaneously smooth for small $t > 0$ given that $\|\partial_\alpha\eta\|_{L^\infty}$ is sufficiently small. 

\newpage
\appendix

\section{Dirichlet-to-Neumann Operators on Different Domains}\label{appendix:DN}
We recall the following definitions of Dirichlet-to-Neumann operators in graph domain and star-shaped domain, respectively. They should be understood in a weak sense if necessary. We use slightly different notations here to highlight their structural similarities while emphasizing the type of domains on which they are defined.

\begin{definition}[Dirichlet-to-Neumann operator, graph domain]\label{graph_DN}
Suppose $\Omega_\eta^g:=\{(x, y): y < \eta(x)\}$, where $\eta\in \lip(\T)$. We define the graph domain Dirichlet-to-Neumann operator $G_g(\eta)f(x)$ as 
\begin{equation}
(G_g(\eta)f)(x)=N_g\cdot\nabla \phi(x,\eta(x)),
\end{equation}
where $N_g(x) := (-\partial_x\eta, 1)$, and $\phi$ solves the boundary value problem
\begin{equation}
\left\{
\begin{array}{ll}
 \Delta\phi = 0 & \mathrm{in}\ \Omega_\eta^g  \\
 \phi(x, \eta(x)) = f(x) \\
 \nabla\phi \in L^2(\Omega_f^g)
\end{array}
\right.
\end{equation}
\end{definition}

\begin{definition}[Dirichlet-to-Neumann operator, star-shaped domain]\label{star_DN}
Suppose $\Omega_\eta^s:=\{0\}\cup\{e^{y + i\alpha}: y < \eta(\alpha)\}$, where $\eta\in \lip(\T)$. We define the star-shaped domain Dirichlet-to-Neumann operator $G_s(\eta)f$ as 
\begin{equation}
(G_s(\eta)f)(\alpha)=N_s\cdot\nabla\phi(e^{\eta(\alpha)+i\alpha}),
\end{equation}
where $N_s(\alpha) := -i(\eta'(\alpha) + i)e^{\eta(\alpha) + i\alpha}$, and $\phi$ solves the boundary value problem
\begin{equation}
\left\{
\begin{array}{ll}
 \Delta\phi = 0 & \mathrm{in}\ \Omega_\eta^s  \\
 \phi(e^{\eta(\alpha)+i\alpha}) = f(\alpha) \\
 \nabla\phi \in L^2(\Omega_\eta^s)
\end{array}
\right.
\end{equation}
\end{definition}

We show that the two definitions are actually equivalent under appropriate regularity assumptions. Before we dive into technicalities, we perform the change of variables formally. Since $\phi_g(\alpha,y) = \phi_s(e^y\cos \alpha, e^y\sin \alpha)$, we obtain
\begin{equation}
\nabla \phi_g = (-e^y\sin \alpha\, \partial_\alpha\phi_s + e^y\cos \alpha\, \partial_y\phi_s,\ e^y\cos \alpha\, \partial_\alpha\phi_s + e^y\sin \alpha\, \partial_y \phi_s  )
\end{equation}
\begin{equation}
\begin{aligned}
N_g \cdot \nabla\phi_g = (e^\eta \partial_\alpha\eta\, \sin\alpha\, + e^\eta \cos\alpha )\partial_\alpha\phi_s + (-e^\eta\partial_\alpha\eta\, \cos\alpha + e^\eta\sin\alpha)\partial_y\phi_s.
\end{aligned}
\end{equation}
On the other hand, we have
\begin{equation}
N_s = (e^\eta \cos\alpha + e^\eta\partial_\alpha\eta\, \sin\alpha, \ e^\eta \sin\alpha\, - e^\eta \partial_\alpha \eta\, \cos\alpha)
\end{equation}
\begin{equation}
N_s \cdot \nabla\phi_s = (e^\eta \partial_\alpha\eta\, \sin\alpha\, + e^\eta \cos\alpha )\partial_\alpha\phi_s + (-e^\eta\partial_\alpha\eta\, \cos\alpha + e^\eta\sin\alpha)\partial_y\phi_s.
\end{equation}
It follows that
\begin{equation}
N_g(\alpha)\cdot \nabla\phi_g(\alpha, \eta(\alpha)) = N_s(\alpha)\cdot\nabla\phi_s(e^{\eta(\alpha)}e^{i\alpha}).
\end{equation}

\begin{proposition}\label{prop:DNs_equivalence}
Suppose $\eta\in \lip(\T)$ and $f \in H^{\frac{1}{2}}(\T)$, then
\begin{equation}
G_g(\eta)f = G_s(\eta)f
\end{equation}
are both well-defined in $\dot{H}^{-\frac{1}{2}}(\T)$. 
\end{proposition}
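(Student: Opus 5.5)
The plan is to transport everything through the conformal map $E(\alpha,y):=e^{y+i\alpha}$. It maps the strip-periodic graph domain $\Omega^g_\eta=\{(\alpha,y):y<\eta(\alpha)\}$, viewed on the cylinder $\T\times\R$, biholomorphically onto the punctured star-shaped domain $\Omega^s_\eta\setminus\{0\}$. Composition with a conformal map preserves both harmonicity and the Dirichlet integral. Thus if $\phi_s$ is the weak harmonic extension of $f$ in $\Omega^s_\eta$ (Proposition \ref{prop:DN_well_defined}), then $\phi_g:=\phi_s\circ E$ satisfies $\int_{\Omega^g_\eta}|\nabla\phi_g|^2=\int_{\Omega^s_\eta}|\nabla\phi_s|^2<\infty$. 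It also has trace $f$ on $\{y=\eta(\alpha)\}$, since $E$ is bi-Lipschitz on a neighborhood of the boundary, where $\eta$ is bounded. Finally, it satisfies the weak equation against every test function $\varphi\circ E$ with $\varphi\in\dot H^1_0(\Omega^s_\eta)$. Conversely, any $\varphi_g\in\dot H^1_0(\Omega^g_\eta)$ pushes forward to a function in $\dot H^1_0$ of the punctured disk region.

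The first step is therefore to check that the puncture is removable at the $H^1$ level: a point has zero capacity in $\R^2$, so $\dot H^1_0(\Omega^s_\eta\setminus\{0\})=\dot H^1_0(\Omega^s_\eta)$. On the graph side, the condition $\nabla\phi_g\in L^2$ near $y=-\infty$ corresponds exactly to finite energy near the origin. With this, the weak harmonic extensions correspond under $E$, and the uniqueness in Proposition \ref{prop:DN_well_defined} (together with its graph analogue, Proposition 2.6 of \cite{Dong-Gancedo-Nguyen-23}) gives $\phi_g=\phi_s\circ E$.

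Next I compare the variational definitions. For $g\in H^{1/2}(\T)$ I take any extension $\psi_s$ with trace $g$ and set $\psi_g=\psi_s\circ E$. Conformal invariance of the pairing gives
\[
\langle G_g(\eta)f,g\rangle=\int_{\Omega^g_\eta}\nabla\phi_g\cdot\nabla\psi_g=\int_{\Omega^s_\eta}\nabla\phi_s\cdot\nabla\psi_s=\langle G_s(\eta)f,g\rangle .
\]
The boundary pairings on both sides are taken against $d\alpha$. This is consistent with the unnormalized normals: for smooth data, $N_g\,d\alpha$ and $N_s\,d\alpha$ both equal the normal times arclength, and the formal computation preceding the statement shows that $N_g\cdot\nabla\phi_g=N_s\cdot\nabla\phi_s$ pointwise. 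So the two distributions agree as elements of $\dot H^{-1/2}(\T)$.

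The remaining step is to show that each side is a bounded functional on $H^{1/2}(\T)$. This follows from the trace inequality $\|\phi\|_{\dot H^1}\le\|f\|_{\dot H^{1/2}}$, together with the fact that the trace space of $\dot H^1(\Omega^s_\eta)$ coincides with $H^{1/2}(\T)$ under the Lipschitz parametrization $\alpha\mapsto e^{\eta(\alpha)+i\alpha}$, with constants depending on $\|\eta\|_{W^{1,\infty}}$. The same holds on the graph side, so both operators are well-defined with equivalent norms.

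I expect the main obstacle to be the functional-analytic bookkeeping rather than the identity itself. One must check that $\dot H^1_0$ and the trace spaces match up to the puncture and to the unbounded end $y\to-\infty$, and that the quotient by constants is handled consistently: constants in $\Omega^s_\eta$ correspond to constants on the cylinder, and $G$ annihilates them on both sides. If the removability argument is delicate, I would first prove the identity for smooth $\eta$ and $f$ using the classical pointwise computation, and then pass to the limit using the $\dot H^{1/2}\to\dot H^{-1/2}$ bounds and the stability of weak harmonic extensions under $W^{1,\infty}$-weak-* approximation of $\eta$.
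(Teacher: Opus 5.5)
Your proposal is correct and follows essentially the same route as the paper. You pull back the star-shaped weak harmonic extension by $E(\alpha,y)=e^{y+i\alpha}$ and use conformal invariance of the Dirichlet integral to identify $\phi_s\circ E$ as the graph-side weak harmonic extension, then compare the two variational pairings using $\Psi_g=\Psi_s\circ E$. Your capacity and removability remarks are slightly more careful than the paper, which just tests against $\chi\in C_c^\infty(\Omega^g_\eta)$ (whose push-forwards avoid the origin) and invokes density of $C_c^\infty(\Omega^g_\eta)$ in $\dot H^1_0(\Omega^g_\eta)$.
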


\begin{proof}
Let $\phi_s\in \dot{H}^1(\Omega_\eta^s)$ be the weak harmonic extension of $f$ in $\Omega_\eta^s$, namely $\operatorname{Tr}\phi_s=f$ and
\begin{equation}
\int_{\Omega_\eta^s}\nabla \phi_s\cdot \nabla \varphi\ dA=0
\end{equation}
for all $\varphi\in H^1_0(\Omega_\eta^s)$. We define $E(x,y):=e^{y+ix}$ and ${\phi}_g(x,y):=\phi_s\circ E(x, y)$. We note that, viewing $x\in\T$, $E$ is a bijection $\Omega_\eta^g\to \Omega_\eta^s\setminus\{0\}$. Moreover, suppose $u_s,v_s\in \dot H^1(\Omega_\eta^s)$ and ${u_g}:=u_s\circ E$, ${v_g}:=v_s\circ E$, then writing $r=e^y$ and $\alpha=x$ and regarding $u_s = u_s(r, \alpha)$, we have 
\begin{equation}
\partial_y{u_g}=r\,\partial_r u_s,\quad \partial_x{u_g}=\partial_\alpha u_s,
\end{equation}
and similar equalities hold for $v_g$ and $v_s$. Since
\begin{equation}
dy=\frac{dr}{r},
\qquad
dx=d\alpha,
\qquad
dA=r\,dr\,d\alpha,
\end{equation}
a direct computation gives
\begin{equation}\label{eq:Dirichlet-form-conformal}
\int_{\Omega_\eta^g}\nabla {u_g}\cdot \nabla {v_g}\,dx\,dy
=
\int_{\Omega_\eta^s}\nabla u_s\cdot \nabla v_s\,dA.
\end{equation}
We show that $\phi_g$ is a weak harmonic extension of $f$ in $\Omega_\eta^g$. Let $\chi\in C_c^\infty(\Omega_\eta^g)$ and we define
\begin{equation}
\varphi(z):=
\begin{cases}
\chi(E^{-1}(z)), & z\in \Omega_\eta^s\setminus\{0\},\\
0, & z=0.
\end{cases}
\end{equation}
Since $\chi$ is compactly supported in $\Omega_\eta^g$, $\varphi$ is compactly supported in $\Omega_\eta^s\setminus\{0\}$. Then it follows from \eqref{eq:Dirichlet-form-conformal} that 
\begin{equation}
\int_{\Omega_\eta^g} \nabla\chi\cdot\nabla\phi_g\ dxdy = \int_{\Omega_\eta^s} \nabla\varphi\cdot\nabla\phi_s\ dA =0,
\end{equation}
so $\phi_g$ is weakly harmonic in $\Omega_{\eta}^g$. It also follows from \eqref{eq:Dirichlet-form-conformal} that $\phi_g\in \dot{H}^1(\Omega_\eta^g)$. Since we also have $\tr\ \phi_g = f$, by density of $C_c^\infty(\Omega_\eta^g)$ in $\dot H_0^1(\Omega_\eta^g)$ we get that $\phi_g$ is the weak harmonic extension of $f$ in $\Omega_\eta^g$.

Now let $\psi\in \dot H^{1/2}(\mathbb{T})$, and choose any $\Psi_s\in \dot H^1(\Omega_\eta^s)$ with $\tr\, \Psi_s = \psi$. Set $\Psi_g := \Psi_s \circ E$, we have from \eqref{eq:Dirichlet-form-conformal} that $\Psi_g\in \dot H^1(\Omega_\eta^g)$ and that $\tr\, \Psi_g = \psi$. Now, using the variational definitions of Dirichlet-to-Neumann operators and \eqref{eq:Dirichlet-form-conformal}, we obtain
\begin{equation}
\bigl\langle G_{\mathrm{g}}(\eta)f,\psi\bigr\rangle
=
\int_{\Omega_\eta^g}\nabla \phi_g\cdot \nabla \Psi_g\,dx\,dy
=
\int_{\Omega_\eta^s}\nabla \phi_s\cdot \nabla \Psi_s\,dA
=
\bigl\langle G_{\mathrm{s}}(\eta)f,\psi\bigr\rangle.
\end{equation}
Since $\psi\in \dot H^{1/2}(\mathbb{T})$ is arbitrary, we get
\begin{equation}
G_{\mathrm{g}}(\eta)f=G_{\mathrm{s}}(\eta)f
\qquad
\text{in }\dot H^{-1/2}(\mathbb{T}).
\end{equation}
as desired.
\end{proof}

The relationship uncovered in Proposition \ref{prop:DNs_equivalence} allow us to immediately apply existing results on $G_{\mathrm{g}}$ to $G_{\mathrm{s}}$, such as the following. We go back to our notation in Definition \ref{def:DN_star_shaped}. 

\begin{proposition}[$H^s$ boundedness {\cite[Theorem~3.8]{AlazardBurqZuily2014}, \cite[Proposition~3.7]{nguyen-pausader}}]
\label{DN_boundedness}
Let $s_0 > \frac{3}{2}$ and $\sigma > \frac{1}{2}$. Then there exists a nondecreasing function $\mathcal{F} : \mathbb{R}^+ \to \mathbb{R}^+$ such that
\begin{equation}
\|G(e^\eta)f\|_{H^{\sigma-1}(\mathbb T)}
\le
\mathcal F\bigl(\|\eta\|_{H^{s_0}(\mathbb T)}\bigr)
\left(
\|f\|_{H^\sigma(\mathbb T)}
+
\|\eta\|_{H^\sigma(\mathbb T)}\|f\|_{H^{s_0}(\mathbb T)}
\right).
\end{equation}
for all $\eta, f\in H^{\max\{s_0, \sigma\}}(\T)$
\end{proposition}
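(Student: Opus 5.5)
The statement is Proposition \ref{DN_boundedness}, the $H^s$ bound for $G(e^\eta)f$. The plan is to obtain it from the corresponding graph-domain estimate through the equivalence of Proposition \ref{prop:DNs_equivalence}, rather than redoing paradifferential analysis on star-shaped domains.

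\emph{Step 1: reduce to the graph operator.} For $\eta\in H^{s_0}(\T)\subset W^{1,\infty}(\T)$ (since $s_0>3/2$) and $f\in H^{\max\{s_0,\sigma\}}(\T)$, we have $f\in H^{1/2}(\T)$. Proposition \ref{prop:DNs_equivalence} then gives $G_s(\eta)f=G_g(\eta)f$ in $\dot H^{-1/2}(\T)$, where $G_g$ is the periodic graph operator on $\{y<\eta(x)\}$ with normal $N_g=(-\eta',1)$.

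\emph{Step 2: match normalizations.} In the notation of Definition \ref{def:DN_star_shaped}, $G(e^\eta)f$ uses the normal $N_h=he_r-h_\alpha e_\theta=e^\eta(e_r-\eta_\alpha e_\theta)$. I will check directly that this coincides with $N_s=-i(\eta'+i)e^{\eta+i\alpha}$ from Definition \ref{star_DN}. Hence $G(e^\eta)f=G_s(\eta)f=G_g(\eta)f$, with no extra multiplicative factor.

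\emph{Step 3: quote the graph estimate.} The graph operator satisfies
\[
\|G_g(\eta)f\|_{H^{\sigma-1}}\le\mathcal F(\|\eta\|_{H^{s_0}})\bigl(\|f\|_{H^\sigma}+\|\eta\|_{H^\sigma}\|f\|_{H^{s_0}}\bigr).
\]
This is \cite[Theorem~3.8]{AlazardBurqZuily2014} in the infinite-depth periodic setting; \cite[Proposition~3.7]{nguyen-pausader} gives the analogous statement. Because the two operators agree as distributions, the $H^{\sigma-1}$ norms agree, and the bound transfers.

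\emph{Main obstacle: matching the setting of the cited results.} The cited results are stated for $\T^d$ or $\R^d$, with the domain infinitely deep below the graph. Our $\Omega^g_\eta$ is exactly such a periodic, infinitely deep graph domain, and it is the image of the punctured disk domain under $E$. I need to confirm two points:
\begin{enumerate}
    \item The $\dot H^1$ variational solution used in Proposition \ref{prop:DNs_equivalence} is the solution for which the cited theorems are proved. In both settings it is the unique $\dot H^1$ weak harmonic extension, so this should hold.
    \item For $\sigma-1<-1/2$ the identity should still make sense. I expect to handle this by density. Approximate $f$ by smooth functions; both operators are continuous in $f$ from $H^{\max(s_0,\sigma)}$, by the graph bound on one side and by the identity for smooth $f$ on the other, so the equality extends.
\end{enumerate}
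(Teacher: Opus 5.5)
Your proposal is correct and follows the paper's own route. The paper likewise cites the graph-domain bound and transfers it through Proposition~\ref{prop:DNs_equivalence}, and your check that $N_h=e^\eta(e_r-\eta_\alpha e_\theta)=N_s$ is what makes $G(e^\eta)f=G_s(\eta)f=G_g(\eta)f$ hold with no extra factor. Your item (2) is unnecessary: $\sigma>\frac12$ forces $\sigma-1>-\frac12$, and equality in $\dot H^{-1/2}(\T)$ already identifies the two operators as distributions, so their $H^{\sigma-1}$ norms coincide without any density argument.
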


\begin{proposition}[Contraction estimates {\cite[Proposition~2.13]{dePoyferreNguyen2017}}]
\label{DN_contraction}
Let $s_0 > \frac{3}{2}$ and $\sigma \in \left[\frac{1}{2}, s_0\right]$.
Then there exists a nondecreasing function
$\mathcal{F} : \mathbb{R}^+ \to \mathbb{R}^+$ such that
\begin{equation}\label{eq:DN_contraction_estimate}
\big\|G(e^{\eta_1})f - G(e^{\eta_2})f\big\|_{H^{\sigma-1}(\mathbb{T})}
\leq
\mathcal{F}\!\left(\big\|(\eta_1,\eta_2)\big\|_{H^{s_0}(\mathbb{T})}\right)
\big\|\eta_1 - \eta_2\big\|_{H^\sigma(\mathbb{T})}
\big\|f\big\|_{H^{s_0}(\mathbb{T})}
\end{equation}
for all $\eta_j, f \in H^{s_0}(\mathbb{T})$.
\end{proposition}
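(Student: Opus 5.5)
The statement to prove is Proposition~\ref{DN_contraction}: a contraction estimate in $\eta$ for the star-shaped Dirichlet-to-Neumann operator. The plan is to transfer the graph-domain estimate of de~Poyferr\'e--Nguyen through the exponential change of variables in Proposition~\ref{prop:DNs_equivalence}.

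First I check how the two normalizations match. In Definition~\ref{def:DN_star_shaped}, $G(e^\eta)f$ is the star-shaped operator $G_s(\eta)f$ of Definition~\ref{star_DN}. One point to verify is that the normals agree: $N_h = h e_r - h_\alpha e_\theta$ with $h=e^\eta$, while $N_s=-i(\eta'+i)e^{\eta+i\alpha}=e^\eta(e_r-\eta' e_\theta)$. These coincide because $h_\alpha=e^\eta\eta'$.

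Next, Proposition~\ref{prop:DNs_equivalence} gives $G(e^{\eta_j})f=G_g(\eta_j)f$ in $\dot H^{-1/2}(\T)$ for $j=1,2$. Both sides lie in $H^{\sigma-1}$ once the graph estimate is known, so the identity holds in $H^{\sigma-1}$. Therefore
\[
G(e^{\eta_1})f-G(e^{\eta_2})f=G_g(\eta_1)f-G_g(\eta_2)f .
\]

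The graph operator $G_g(\eta)$ here is posed on the periodic graph domain $\{y<\eta(x)\}$ with $x\in\T$. This is exactly the infinite-depth setting of \cite[Proposition~2.13]{dePoyferreNguyen2017}, with $d=1$ and periodic boundary conditions. That result gives, for $s_0>3/2$ and $\sigma\in[\tfrac12,s_0]$, the bound
\[
\|G_g(\eta_1)f-G_g(\eta_2)f\|_{H^{\sigma-1}}\le \mathcal F(\|(\eta_1,\eta_2)\|_{H^{s_0}})\,\|\eta_1-\eta_2\|_{H^\sigma}\,\|f\|_{H^{s_0}} .
\]
Combining this with the identity above yields \eqref{eq:DN_contraction_estimate}.

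The main obstacle is justifying that the cited graph estimate applies verbatim in the periodic, infinite-depth, one-dimensional setting with the normalization $N_g=(-\eta',1)$. That paper works on $\R^d$ or $\T^d$ with possibly finite depth. My approach is to confirm that its constants depend only on $\|\eta_j\|_{H^{s_0}}$, which control the Lipschitz norms since $H^{s_0}\hookrightarrow W^{1,\infty}$. If periodicity were not covered directly, I would fall back on the shape-derivative formula
\[
G_g(\eta_1)f-G_g(\eta_2)f=\int_0^1 d_\eta G_g(\eta_2+\tau(\eta_1-\eta_2))f\,[\eta_1-\eta_2]\,d\tau,
\]
together with Lannes' formula $d_\eta G(\eta)f[\zeta]=-G(\eta)(\zeta B)-\partial_x(\zeta V)$. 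The $H^{\sigma-1}$ bound would then follow from Proposition~\ref{DN_boundedness} and product estimates, using that $B,V\in H^{s_0-1}\subset L^\infty$.
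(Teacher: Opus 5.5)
Your main argument is the same as the paper's: the paper gives no separate proof. It simply transfers the graph-domain estimate of de~Poyferr\'e--Nguyen to $G(e^\eta)$ via the identity $G_g(\eta)f=G_s(\eta)f$ of Proposition~\ref{prop:DNs_equivalence}, and your checks that $N_h=N_s$ and that the $\dot H^{-1/2}$ identity upgrades to $H^{\sigma-1}$ fill in exactly what that transfer needs. The fallback is not needed, and as sketched it only closes for $\sigma\le s_0-1$: for $\sigma$ closer to $s_0$, in general $\zeta B,\zeta V\notin H^\sigma$, so Proposition~\ref{DN_boundedness} plus product estimates fail, and you would need the cancellation $G(\eta)B=-\partial_x V$ (or a paradifferential argument) to handle the terms where $B$ and $V$ carry the high frequencies.
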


Finally, we have the following proposition that upgrades the equality in Proposition \ref{prop:DNs_equivalence} to a pointwise agreement under sufficient regularity. 
\begin{proposition}[Pointwise equivalence at a \(C^{1,1}\) point]
\label{cor:DNs_pointwise_equivalence}
Suppose $\eta\in \lip(\T)$ and $\eta$ is $C^{1,1}$ at $\alpha_0\in\T$.
Let $h=e^\eta$. Then
\begin{equation}\label{eq:DNs_pointwise_equivalence}
G_g(\eta)\eta(\alpha_0)=G_s(\eta)\eta(\alpha_0),
\end{equation}
where both Dirichlet-to-Neumann operators are classically well-defined.
\end{proposition}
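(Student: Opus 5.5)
The plan is to transfer the pointwise $C^{1,1}$ theory from the graph setting to the star-shaped one through the conformal change of variables $E(x,y)=e^{y+ix}$ of Proposition~\ref{prop:DNs_equivalence}. Let $\phi_s$ be the harmonic extension of $\eta$ in $\Omega_\eta^s$ and set $\phi_g:=\phi_s\circ E$. By the proof of Proposition~\ref{prop:DNs_equivalence}, $\phi_g$ is the weak harmonic extension of $\eta$ in the periodic graph domain $\Omega_\eta^g$, with $\nabla\phi_g\in L^2$. The argument therefore reduces to three things: both sides are classically defined at $\alpha_0$, the classical objects are obtained by limits that commute with $E$, and the formal chain-rule identity $N_g\cdot\nabla\phi_g=N_s\cdot\nabla\phi_s$ then holds at the boundary point.

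\emph{Step 1: classical well-posedness.} The boundary $\{y=\eta(x)\}$ is Lipschitz and is $C^{1,1}$ at $(\alpha_0,\eta(\alpha_0))$, and the Dirichlet data $\eta$ is $C^{1,1}$ at $\alpha_0$. The graph result cited in Proposition~\ref{pointwise_cp} (Theorem~2.12 of \cite{Dong-Gancedo-Nguyen-23}) therefore gives a classical normal derivative there. Concretely, $\nabla\phi_g(X)$ has a limit $\nabla\phi_g(X_0)$ as $X\to X_0:=(\alpha_0,\eta(\alpha_0))$ nontangentially from $\Omega_\eta^g$. To make this clean, I would reduce to zero boundary data by subtracting $y$. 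Since $\eta(x)-y$ restricted to the boundary is $0$, we get $\phi_g=y+w_g$ with $w_g$ harmonic, vanishing on the graph. The graph result is then applied to $w_g$. The linear unbounded piece $y$ requires a localized version of the cited theorem, which is local in nature.

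\emph{Step 2: transfer to the star-shaped side.} The map $E$ is a conformal diffeomorphism from a neighborhood of $X_0$ onto a neighborhood of $x_0=e^{\eta(\alpha_0)+i\alpha_0}$. It sends $\Omega_\eta^g$ onto $\Omega_\eta^s\setminus\{0\}$, and since it is locally bi-Lipschitz it sends nontangential approach regions to nontangential approach regions. Moreover $\phi_s=\phi_g\circ E^{-1}$ and $\nabla\phi_s(E(X))=(DE(X))^{-T}\nabla\phi_g(X)$, with $DE$ smooth. Hence $\nabla\phi_s$ has a nontangential limit at $x_0$, given by $(DE(X_0))^{-T}\nabla\phi_g(X_0)$. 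This is consistent with the classical definition used in Proposition~\ref{pointwise_cp} and Proposition~\ref{taylor_sign}: the star-shaped boundary is also $C^{1,1}$ at $x_0$, being the image under a smooth map of a curve $C^{1,1}$ at $X_0$, so the star-shaped classical derivative exists independently and must coincide with this limit.

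\emph{Step 3: the algebraic identity.} With the limits in hand, the formal computation preceding Proposition~\ref{prop:DNs_equivalence} applies verbatim at the point. One checks that $N_s=DE(X_0)\,N_g$ up to the conformal factor, so that $N_s\cdot(DE)^{-T}\nabla\phi_g=N_g\cdot\nabla\phi_g$. This yields \eqref{eq:DNs_pointwise_equivalence}.

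I expect Step~1 to be the main obstacle. The difficulty is ensuring that the cited pointwise $C^{1,1}$ boundary regularity applies when the data are only $C^{1,1}$ at a single point, and to the unbounded periodic graph domain, given that $\phi_g$ carries the term $y$. My first approach would be the localization just described: write $\phi_g=y+w_g$, use interior-to-boundary estimates (e.g.\ a barrier/Hopf-type argument with tangent $C^{1,1}$ balls, as in \cite[Lemma~11.17]{CaffarelliSalsa2005}) for $w_g$ near $X_0$, and rely on a nearby smooth comparison to handle the far field.
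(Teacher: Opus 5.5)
Your proposal is correct and follows essentially the paper's route: subtract the harmonic function matching the boundary data ($y$ on the graph side, $\log|z|$ on the star-shaped side), use the local pointwise $C^{1,1}$ boundary expansion of Caffarelli--Salsa Lemma~11.17 to get classical normal derivatives, and match them through $\phi_s\circ E=\phi_g$. The paper applies the lemma on both sides separately instead of transporting the limit through $E$, which makes no real difference. One small precision: $DE(X_0)N_g=N_s$ holds exactly, with no conformal factor, and this exact equality is what your Step~3 identity needs.
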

\begin{proof}
Let $N_s, N_g, \phi_s, \phi_g$, and $E$ be as in the proof of Proposition \ref{prop:DNs_equivalence}. Let $z_0 = e^{\eta(\alpha_0)}e^{i\alpha_0}$, $\mathbf x_0:= (\alpha_0, \eta(\alpha_0))$, and set 
\[p_s(z)=\phi_s(z)-\log|z|, \qquad p_g(x,y)=\phi_g(x, y) - y.\] 
Since $\eta$ is $C^{1,1}$ at $\alpha_0$, Lemma~11.17 of
\cite{CaffarelliSalsa2005} implies that the normal derivatives $\partial_{N_s}p_s(z_0)$ and
$\partial_{N_g}p_g(\mathbf{x}_0)$ are both classically well-defined. It follows that $\partial_{N_s}\phi_s(z_0)$ and $\partial_{N_g}\phi_g(\mathbf{x}_0)$ are classically well-defined. Direct calculation using the identity $\phi_s\circ E = \phi_g$ gives 
\begin{equation}
\partial_{N_g}\phi_g(\mathbf{x}_0) = \partial_{N_s}\phi_s(z_0)
\end{equation}
and thus $G_g(\eta)\eta(\alpha_0)=G_s(\eta)\eta(\alpha_0)$ as desired.
\end{proof}

\section{Analysis Preparations}
\subsection{Useful Inequalities}\label{subsec:useful_ineq}
We collect useful inequalities in this subsection. First of all, we have the classical Gronwall's inequality.
\begin{lemma}[Gronwall's inequality]\label{Gronwall}
Suppose $u:[0,T]\to \R$ is differentiable and satisfies
\begin{equation}
u'(t) \leq a(t)u(t) + b(t)\quad \mathrm{for\ all}\ 0\leq t\leq T
\end{equation}
where $a$ and $b$ are integrable on $[0,T]$. Define the integrating factor
\begin{equation}
\mu(t) := \exp\bigg(-\int_0^t a(s)\ ds\bigg)
\end{equation}
Then we have the inequality
\begin{equation}
u(t)\leq \exp\bigg(\int_0^t a(s)\ ds\bigg)\bigg[u(0) + \int_0^t b(s)\mu(s)\ ds\bigg]
\end{equation}
\end{lemma}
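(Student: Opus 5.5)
The plan is to avoid differentiating the product $\mu u$ pointwise. Since $a$ is only integrable, $\mu$ is merely absolutely continuous and need not be differentiable everywhere. Likewise, $u$ differentiable everywhere does not by itself make $u$ absolutely continuous, because $u'$ has an integrable upper bound but no lower bound. So I first convert the pointwise differential inequality into an integral inequality on every subinterval, and then run the integrating-factor argument at the level of measures.

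\emph{Step 1 (integrated inequality).} Set $g:=au+b$. Since $u$ is differentiable, it is continuous and hence bounded on $[0,T]$, so $g\in L^1([0,T])$ and $u'\le g$ everywhere. I will prove that
\[
u(t)-u(s)\le\int_s^t g\,d\tau\qquad\text{for all } 0\le s<t\le T.
\]
This is where I expect the main difficulty. I would use the Vitali--Carath\'eodory theorem: for each $\varepsilon>0$ there is a lower semicontinuous $G_\varepsilon>-\infty$ with $G_\varepsilon\ge g$ and $\int_0^T G_\varepsilon\le\int_0^T g+\varepsilon$. Put
\[
\Phi(x):=u(x)-u(s)-\int_s^x G_\varepsilon\,d\tau-\varepsilon(x-s).
\]
Let $x^*$ be the supremum of those $x\in[s,t]$ with $\Phi\le0$ on $[s,x]$; continuity of $\Phi$ gives $\Phi(x^*)\le0$. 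Suppose $x^*<t$. Differentiability of $u$ at $x^*$ gives $u(y)-u(x^*)\le(g(x^*)+\varepsilon/2)(y-x^*)$ for $y$ slightly larger than $x^*$. Lower semicontinuity of $G_\varepsilon$ gives $\int_{x^*}^y G_\varepsilon\ge (G_\varepsilon(x^*)-\varepsilon/2)(y-x^*)$ for such $y$. Combining these with $G_\varepsilon(x^*)\ge g(x^*)$ yields $\Phi(y)\le\Phi(x^*)\le0$ slightly beyond $x^*$, contradicting the choice of $x^*$. Hence $\Phi(t)\le0$. Letting $\varepsilon\to0$ gives the claim.

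\emph{Step 2 (structure of $u$).} Step 1 says that $z(t):=u(t)-\int_0^t g\,d\tau$ is nonincreasing, and $z$ is continuous because $u$ is. So $u=z+\int_0^\cdot g$ is a continuous BV function whose Lebesgue--Stieltjes measure splits as $du=g\,dt+dz$, with $dz\le0$.

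\emph{Step 3 (integrating factor).} The function $\mu$ is absolutely continuous and positive, with $d\mu=-a\mu\,dt$. Both $\mu$ and $u$ are continuous and BV, so the Stieltjes product rule applies: $d(\mu u)=\mu\,du+u\,d\mu$. Substituting,
\[
d(\mu u)=\mu(au+b)\,dt+\mu\,dz-a\mu u\,dt=\mu b\,dt+\mu\,dz\le \mu b\,dt,
\]
where the last step uses $\mu>0$ and $dz\le0$. Integrating over $[0,t]$ gives $\mu(t)u(t)-u(0)\le\int_0^t b\mu\,ds$. Dividing by $\mu(t)=\exp(-\int_0^t a)$ yields exactly the stated bound.
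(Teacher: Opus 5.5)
Your proof is correct. It uses the same integrating factor as the paper, but the justification is genuinely different. The paper simply writes $\frac{d}{dt}(\mu u)=\mu u'+\mu' u\le \mu b$ and integrates from $0$ to $t$. This tacitly needs two things: $\mu'=-a\mu$ at every point (e.g.\ $a$ continuous), and the fundamental theorem of calculus for $\mu u$ (e.g.\ $u\in C^1$). That is the situation in the paper's energy estimates, but it is not what the lemma literally assumes. For $a\in L^1$ the identity $\mu'=-a\mu$ holds only a.e., and an a.e.\ differential inequality does not integrate without absolute continuity (think of the Cantor function). Your Step 1 supplies exactly the missing step: it is a one-sided version of the theorem that an everywhere differentiable function with integrable derivative satisfies the fundamental theorem of calculus (Rudin, \emph{Real and Complex Analysis}, Thm.~7.21), proved by the same Vitali--Carath\'eodory argument. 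The Stieltjes product rule for continuous BV functions then replaces the pointwise product rule. So your route proves the lemma under its stated minimal hypotheses, at the cost of heavier measure-theoretic machinery.

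Two small refinements.

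First, lower semicontinuous majorants can be forced to equal $+\infty$ somewhere. For example, take $g=\sum_n n\chi_{\{q_n\}}$ with $(q_n)$ an enumeration of the rationals in $[0,T]$. Then every set $\{G_\varepsilon>M\}$ is open and dense, so by Baire $G_\varepsilon=+\infty$ on a dense $G_\delta$. In that case your expression $G_\varepsilon(x^*)-\varepsilon/2$ is meaningless. Instead, use lower semicontinuity together with $G_\varepsilon(x^*)\ge g(x^*)$ to get $G_\varepsilon>g(x^*)-\varepsilon/2$ near $x^*$. This gives $\Phi(y)\le\Phi(x^*)$ in all cases.

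Second, Step 1 yields more than you use. It makes $u$ of bounded variation, so its everywhere-defined derivative is integrable, and the cited theorem then shows $u$ is absolutely continuous. Hence $\mu u$ is absolutely continuous, and Step 3 could be finished with ordinary a.e.\ calculus instead of Lebesgue--Stieltjes measures.
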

\begin{proof}
By definition of \(\mu\), we have $\mu'(t)=-a(t)\mu(t)$. Then
\begin{equation}\label{eq:gronwall_product_derivative}
\mu(t)u'(t)
\le
a(t)\mu(t)u(t)+\mu(t)b(t),
\end{equation}
and hence
\begin{equation}\label{eq:mu_u_derivative}
\frac{d}{dt}\bigl(\mu(t)u(t)\bigr)
=
\mu(t)u'(t)+\mu'(t)u(t)
\le
\mu(t)b(t).
\end{equation}
Integrating \eqref{eq:mu_u_derivative} from \(0\) to \(t\), and using
\(\mu(0)=1\), we have
\begin{equation}\label{eq:integrated_gronwall}
\mu(t)u(t)
\le
u(0)+\int_0^t \mu(s)b(s)\,ds.
\end{equation}
Finally, multiplying \eqref{eq:integrated_gronwall} by \(\mu(t)^{-1}\) yields
\begin{equation}
u(t)
\le
\exp\left(\int_0^t a(s)\,ds\right)
\left[
u(0)+\int_0^t b(s)\mu(s)\,ds
\right].
\end{equation}
\end{proof}

We then collect some results about inequalities in Sobolev spaces. 
\begin{lemma}[Sobolev interpolation]\label{lem:1d_sobolev_interpolation}
Let $f\in W^{1,\infty}(\mathbb T)$ be real-valued. Then
\begin{equation}\label{eq:sobolev_interp_L2_H1_nonzero}
\|f\|_{L^\infty(\mathbb T)}
\le C\|f\|_{L^2(\mathbb T)}^{1/2}
\|\partial_\alpha f\|_{L^2(\mathbb T)}^{1/2}
+C\|f\|_{L^2(\mathbb T)},
\end{equation}
\begin{equation}\label{eq:sobolev_interp_L2_W1inf_nonzero}
\|f\|_{L^\infty(\mathbb T)}
\le C\|f\|_{L^2(\mathbb T)}^{2/3}
\|\partial_\alpha f\|_{L^\infty(\mathbb T)}^{1/3}
+C\|f\|_{L^2(\mathbb T)} .
\end{equation}
\end{lemma}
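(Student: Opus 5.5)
The two estimates in Lemma~\ref{lem:1d_sobolev_interpolation} are one-dimensional Gagliardo--Nirenberg inequalities on the compact torus. I will prove both by the same device: split $f$ into its mean and its oscillation, and control the oscillation either through the fundamental theorem of calculus or through a localization argument. Write $\bar f := \frac{1}{2\pi}\int_\T f\,d\alpha$. By Cauchy--Schwarz, $|\bar f|\le (2\pi)^{-1/2}\|f\|_{L^2}$, and this mean contributes the lower-order term $C\|f\|_{L^2}$ in both inequalities. Since $f$ is Lipschitz it is continuous, so $\|f\|_{L^\infty}$ is attained at some point $\alpha^*$.

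\textbf{First inequality.} I use the standard squaring trick. Since $f$ is continuous, there is a point $\beta$ with $f(\beta)^2\le \frac{1}{2\pi}\|f\|_{L^2}^2$; indeed some point must lie at or below the average of $f^2$. The fundamental theorem of calculus, applied to the absolutely continuous function $f^2$ along the shorter arc from $\beta$ to $\alpha$, gives
\[
f(\alpha)^2 \le f(\beta)^2 + 2\int_\T |f\,\partial_\alpha f|\,d\alpha \le \tfrac{1}{2\pi}\|f\|_{L^2}^2 + 2\|f\|_{L^2}\|\partial_\alpha f\|_{L^2}.
\]
Taking square roots and using $\sqrt{a+b}\le\sqrt a+\sqrt b$ yields the first estimate. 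This step involves no real difficulty.

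\textbf{Second inequality.} I localize near the maximizer. Set $M:=\|f\|_{L^\infty}=|f(\alpha^*)|$ and $L:=\|\partial_\alpha f\|_{L^\infty}$. For $|\alpha-\alpha^*|_\T\le r$ we have $|f(\alpha)|\ge M-Lr$. I distinguish two cases according to the size of $M/(2L)$.

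\emph{Case 1: $M/(2L)\le\pi$.} Choose $r=M/(2L)$. Then $|f|\ge M/2$ on an arc of length $2r = M/L$, so
\[
\|f\|_{L^2}^2 \ge \frac{M^2}{4}\cdot\frac{M}{L} = \frac{M^3}{4L}.
\]
Solving for $M$ gives $M\le 4^{1/3}\|f\|_{L^2}^{2/3}L^{1/3}$.

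\emph{Case 2: $M/(2L)>\pi$, or $L=0$.} Then $|f|\ge M/2$ on all of $\T$, so $\|f\|_{L^2}\ge \sqrt{2\pi}\,M/2$. Hence $M\le C\|f\|_{L^2}$, which is the additive term.

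The only point requiring care is this case split, which makes the choice of radius $r$ admissible on a torus of fixed length; it is also the sole reason the extra $C\|f\|_{L^2}$ term appears. Once the split is made, both estimates follow with absolute constants $C$.
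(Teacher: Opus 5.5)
Your proof is correct, and it differs from the paper's mainly in the second estimate. For the first inequality you use the same squaring trick as the paper, but with a different base point. The paper subtracts the mean, takes a zero $\alpha_*$ of $g=f-\bar f$, and integrates $(g^2)'$. You instead start from a point where $f^2$ is at most its average $\frac{1}{2\pi}\|f\|_{L^2}^2$, which produces the additive $L^2$ term directly and makes the mean subtraction unnecessary.

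For the second inequality, the paper writes $|g(\alpha)|\le |A_rg(\alpha)|+|g(\alpha)-A_rg(\alpha)|\le r^{-1/2}\|g\|_{L^2}+r\|\partial_\alpha g\|_{L^\infty}$ at every point. It then optimizes a free scale $r=\min\{\pi,(\|g\|_{L^2}/\|\partial_\alpha g\|_{L^\infty})^{2/3}\}$. You instead bound from below the $L^2$ mass under the peak, using the self-referential scale $r=M/(2L)$; the paper's cap $r\le\pi$ becomes your Case 2. The two are dual forms of the same fact: the Lipschitz bound keeps $f$ comparable to a fixed value on a window, and Cauchy--Schwarz controls that window. Your version gives explicit constants and shows that the additive term is needed only when $f$ is nearly constant at the scale of $\T$. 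The paper's pointwise averaging bound with a free parameter is the more standard template and adapts more readily to other norms.

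One cosmetic point: your opening paragraph says the mean $\bar f$ contributes the lower-order term in both inequalities. Neither of your arguments actually uses $\bar f$; the term comes from your choice of base point and from Case 2. That framing sentence should be corrected or dropped.
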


\begin{proof}
Both inequalities are special cases of Gagliardo-Nirenberg inequalities, but we provide a self-contained elementary proof here. Let $\overline f:=|\mathbb T|^{-1}\int_{\mathbb T}f\,d\alpha$ and
$g:=f-\overline f$. Then $\int_{\mathbb T}g\,d\alpha=0$,
$\partial_\alpha g=\partial_\alpha f$, and
\begin{equation}\label{eq:mean_subtraction_bounds}
\|g\|_{L^2}\le C\|f\|_{L^2},
\qquad
|\overline f|\le C\|f\|_{L^2}.
\end{equation}
It is therefore enough to prove the homogeneous estimates for $g$. Since $g$ has mean zero, there exists $\alpha_*\in\mathbb T$ with
$g(\alpha_*)=0$. Hence, for any $\alpha\in\mathbb T$,
\begin{equation}\label{eq:mean_zero_fundamental_identity}
g(\alpha)^2
=
2\int_{\alpha_*}^{\alpha}g(\beta)\partial_\beta g(\beta)\,d\beta .
\end{equation}
By Cauchy--Schwarz we have $|g(\alpha)|^2 \le 2\|g\|_{L^2} \|\partial_\alpha g\|_{L^2}$,  and taking the supremum gives
\begin{equation}\label{eq:mean_zero_first_interp}
\|g\|_{L^\infty}
\le
C\|g\|_{L^2}^{1/2}
\|\partial_\alpha g\|_{L^2}^{1/2}.
\end{equation}
For the second inequality, we may assume $\|\partial_\alpha g\|_{L^\infty} > 0$. We write $A_rg(\alpha) := \frac{1}{r}\int_{\alpha-r/2}^{\alpha + r/2} g(\beta)\ d\beta$. For small $r > 0$, we have the pointwise bound
\begin{equation}
|g(\alpha)| \leq |A_rg(\alpha)| + |g(\alpha) - A_rg(\alpha)|.
\end{equation}
By Cauchy-Schwarz, the first term has the bound $|A_rg(\alpha)| \leq r^{-1/2}\|g\|_{L^2}$; using mean-value theorem, the second term can be bounded as $|g(\alpha) - A_rg(\alpha)|\leq r\|\partial_\alpha g\|_{L^\infty}$. It follows that
\begin{equation}
\|g\|_{L^\infty}\leq r^{-1/2}\|g\|_{L^2} + r\|\partial_\alpha g\|_{L^\infty}
\end{equation}
Choosing 
\[r = \min\bigg\{\pi, \bigg(\frac{\|g\|_{L^2}}{\|\partial_\alpha g\|_{L^\infty}}\bigg)^\frac{2}{3}\bigg\} \]
we obtain
\begin{equation}
\|g\|_{L^\infty}
\le C\|g\|_{L^2}^{2/3}
\|\partial_\alpha g\|_{L^\infty}^{1/3}
+C\|g\|_{L^2(\mathbb T)}
\end{equation}
as desired. Now the proof is complete.
\end{proof}

\begin{lemma}[One-dimensional Morrey inequality]\label{lem:morrey_general_T}
Let \(k\in\mathbb N_0\) and \(1<p\le \infty\). If
\(f\in W^{k+1,p}(\mathbb T)\), then \(\partial_\alpha^k f\) admits a
Hölder-continuous representative and
\begin{equation}\label{eq:morrey_general_seminorm}
[\partial_\alpha^k f]_{C^{0,1-\frac1p}(\mathbb T)}
\le
\|\partial_\alpha^{k+1}f\|_{L^p(\mathbb T)}.
\end{equation}
\end{lemma}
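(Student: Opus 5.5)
The statement is the one-dimensional Morrey inequality on $\mathbb T$: for $f\in W^{k+1,p}(\mathbb T)$ with $1<p\le\infty$, the function $g:=\partial_\alpha^k f$ has a Hölder-continuous representative and $[g]_{C^{0,1-\frac1p}}\le\|\partial_\alpha g\|_{L^p}$. Replacing $f$ by $g$, it suffices to treat $k=0$. The plan is to write $g$ as the integral of its weak derivative and then apply Hölder's inequality on the integration interval.

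First, I would obtain the absolutely continuous representative. Lift $g$ to a $2\pi$-periodic function on $\mathbb R$. Its weak derivative $g'$ lies in $L^p_{\mathrm{loc}}$, hence in $L^1_{\mathrm{loc}}$ since $p>1$ and the interval is bounded. The primitive $G(\alpha):=\int_0^\alpha g'(\beta)\,d\beta$ is absolutely continuous and satisfies $G'=g'$ weakly. Therefore $g-G$ has zero weak derivative on $\mathbb R$, and so it equals a constant almost everywhere; this is the standard mollification argument. Replacing $g$ by $G$ plus that constant gives a continuous representative with
\[
g(\alpha)-g(\beta)=\int_\beta^\alpha g'(s)\,ds
\]
for all real $\alpha,\beta$.

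Second, I would estimate the increment. Fix $\alpha,\beta\in\mathbb T$ and choose real lifts with $|\alpha-\beta|=|\alpha-\beta|_{\mathbb T}\le\pi$. The segment between these lifts has length at most $\pi<2\pi$, so it projects injectively onto $\mathbb T$ up to endpoints. Hölder's inequality with exponents $p$ and $p'=p/(p-1)$ then gives
\[
|g(\alpha)-g(\beta)|\le|\alpha-\beta|^{1-\frac1p}\Big(\int_\beta^\alpha|g'|^p\Big)^{1/p}\le|\alpha-\beta|_{\mathbb T}^{1-\frac1p}\|g'\|_{L^p(\mathbb T)}.
\]
For $p=\infty$ the same bound reads $|g(\alpha)-g(\beta)|\le\|g'\|_{L^\infty}|\alpha-\beta|_{\mathbb T}$, with exponent $1-\frac1p=1$. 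Dividing by $|\alpha-\beta|_{\mathbb T}^{1-1/p}$ and taking the supremum gives the seminorm bound with constant $1$.

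The only step that needs care is the first: identifying the Sobolev function with the integral of its weak derivative, and checking that the periodic lift is consistent. The lift is consistent because $\int_{\mathbb T}g'=0$, which holds since $g'$ is the weak derivative of a periodic function (test against the constant function $1$). That zero-mean property is what makes $G$ plus a constant $2\pi$-periodic, so the continuous representative is a genuine function on $\mathbb T$.
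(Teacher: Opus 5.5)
Your proof is correct and follows the same route as the paper: you write the increment of $\partial_\alpha^k f$ as the integral of $\partial_\alpha^{k+1} f$ over the shorter arc, of length $|\alpha-\beta|_{\mathbb T}$, and then apply H\"older's inequality. The only difference is that you explicitly justify the absolutely continuous, $2\pi$-periodic representative (using the zero mean of the weak derivative), a step the paper takes for granted.
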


\begin{proof}
Let \(\alpha,\beta\in\mathbb T\), and let \(I_{\alpha,\beta}\) be the shorter
arc joining \(\beta\) to \(\alpha\). Then
\begin{equation}\label{eq:morrey_general_ftc}
\partial_\alpha^k f(\alpha)-\partial_\alpha^k f(\beta)
=
\int_{I_{\alpha,\beta}}\partial_\alpha^{k+1}f(\gamma)\,d\gamma .
\end{equation}
By Hölder's inequality,
\begin{equation}\label{eq:morrey_general_holder}
|\partial_\alpha^k f(\alpha)-\partial_\alpha^k f(\beta)|
\le
|I_{\alpha,\beta}|^{1-\frac1p}
\|\partial_\alpha^{k+1}f\|_{L^p(\mathbb T)}
=
|\alpha-\beta|_{\mathbb T}^{1-\frac1p}
\|\partial_\alpha^{k+1}f\|_{L^p(\mathbb T)}.
\end{equation}
which proves \eqref{eq:morrey_general_seminorm}. 
\end{proof}

\begin{lemma}[Sobolev product inequalities \cite{BahouriCheminDanchin2011}]\label{Sobolev_prod}
Let \(s\ge 0\). Then there exists \(C_s>0\) such that
\begin{equation}\label{eq:Hs_Linfty_product_T}
\|uv\|_{H^s(\mathbb T)}
\le
C_s\Big(
\|u\|_{L^\infty(\mathbb T)}\|v\|_{H^s(\mathbb T)}
+
\|v\|_{L^\infty(\mathbb T)}\|u\|_{H^s(\mathbb T)}
\Big).
\end{equation}
In particular, if \(s>\frac12\), then \(H^s(\mathbb T)\) is an algebra:
\begin{equation}\label{eq:Hs_algebra_T}
\|uv\|_{H^s(\mathbb T)}
\le
C_s\|u\|_{H^s(\mathbb T)}\|v\|_{H^s(\mathbb T)}.
\end{equation}
\end{lemma}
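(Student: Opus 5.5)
The plan is to prove \eqref{eq:Hs_Linfty_product_T} with a Littlewood--Paley (Bony paraproduct) decomposition on $\mathbb T$, and then deduce \eqref{eq:Hs_algebra_T} from the embedding $H^s(\mathbb T)\hookrightarrow L^\infty(\mathbb T)$ for $s>\frac12$.

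\textbf{Setup.} Fix a smooth inhomogeneous dyadic partition of unity in frequency. Let $\Delta_{-1}$ be the projection onto $|k|\lesssim 1$, which contains the zero mode, and let $\Delta_j$ for $j\ge0$ localize to $|k|\sim 2^j$. Set $S_j=\sum_{j'<j}\Delta_{j'}$. Then $\|f\|_{H^s}^2\simeq\sum_{j\ge-1}2^{2js}\|\Delta_j f\|_{L^2}^2$, and the kernels of $\Delta_j$ and $S_j$ have uniformly bounded $L^1(\mathbb T)$ norms, so $\|S_j u\|_{L^\infty}\lesssim\|u\|_{L^\infty}$ and $\|\Delta_j u\|_{L^\infty}\lesssim\|u\|_{L^\infty}$. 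The case $s=0$ is immediate from $\|uv\|_{L^2}\le\|u\|_{L^\infty}\|v\|_{L^2}$, so we assume $s>0$.

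\textbf{Decomposition.} Write
\[
uv=T_uv+T_vu+R(u,v),\qquad T_uv:=\sum_j S_{j-1}u\,\Delta_j v,\qquad R(u,v):=\sum_{|j-j'|\le1}\Delta_j u\,\Delta_{j'}v .
\]

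\textbf{Paraproducts.} Each term $S_{j-1}u\,\Delta_jv$ has Fourier support in an annulus of size $\sim2^j$. Hence, by almost-orthogonality,
\[
\|T_uv\|_{H^s}^2\lesssim\sum_j2^{2js}\|S_{j-1}u\|_{L^\infty}^2\|\Delta_jv\|_{L^2}^2\lesssim\|u\|_{L^\infty}^2\|v\|_{H^s}^2 .
\]
The term $T_vu$ is handled symmetrically and is bounded by $\|v\|_{L^\infty}\|u\|_{H^s}$.

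\textbf{Remainder (main obstacle).} The terms $\Delta_ju\,\Delta_{j'}v$ with $|j-j'|\le1$ are supported only in balls $|k|\lesssim2^j$, not in annuli, so almost-orthogonality is unavailable. We use $s>0$ instead. Since $\Delta_q$ annihilates the terms with $j<q-N$ for a fixed $N$,
\[
2^{qs}\|\Delta_qR\|_{L^2}\lesssim\sum_{j\ge q-N}2^{(q-j)s}\,\|u\|_{L^\infty}\,2^{js}\|\Delta_jv\|_{L^2}.
\]
The right-hand side is a discrete convolution of $(2^{js}\|\Delta_jv\|_{L^2})_j\in\ell^2$ with the kernel $(2^{-ms}\mathbf 1_{m\ge -N})_m$, which lies in $\ell^1$ exactly because $s>0$. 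Young's inequality for sequences then gives $\|R\|_{H^s}\lesssim_s\|u\|_{L^\infty}\|v\|_{H^s}$. Summing the three bounds yields \eqref{eq:Hs_Linfty_product_T}.

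\textbf{Algebra property.} For $s>\frac12$, Cauchy--Schwarz on the Fourier series gives
\[
\|f\|_{L^\infty}\le\sum_k|\widehat f(k)|\le\Big(\sum_k\langle k\rangle^{-2s}\Big)^{1/2}\|f\|_{H^s}=C_s\|f\|_{H^s},
\]
and the sum over $k$ converges precisely because $2s>1$. Substituting this into \eqref{eq:Hs_L\infty_product_T} gives \eqref{eq:Hs_algebra_T}.

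\textbf{Alternative for integer $s=m$.} One can instead expand $\partial^m(uv)$ by the Leibniz rule and use the Gagliardo--Nirenberg inequality $\|\partial^jf\|_{L^{2m/j}}\lesssim\|f\|_{L^\infty}^{1-j/m}\|\partial^mf\|_{L^2}^{j/m}$. Hölder's inequality followed by Young's inequality for products ($ab\le a^p/p+b^q/q$) then gives the same bound.
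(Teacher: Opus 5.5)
Your argument is correct, and it is the standard Bony paraproduct proof from Bahouri--Chemin--Danchin: the annulus-supported paraproducts are bounded for every $s$, the remainder is controlled by an $\ell^1*\ell^2$ convolution that requires $s>0$, and the algebra property follows from $H^s(\mathbb T)\hookrightarrow L^\infty(\mathbb T)$ for $s>\frac12$. The paper gives no argument of its own and simply cites that reference, so your proof follows essentially the same route.
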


\begin{lemma}[Fractional chain rule]\label{fractional_chain_rule}
Suppose $s > 0$ and $f$ takes values in $[a, b]$. Here $-\infty \leq a\leq b\leq \infty$. Let $\phi$ be $C^1$. Then 
\begin{equation}
\|\phi(f)\|_{\dot{H}^s} \leq \sup_{y\in [a, b]}|\phi'(y)|\cdot \|f\|_{\dot{H}^s(\T)}
\end{equation}
\end{lemma}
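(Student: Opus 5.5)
The approach is to write $\|\cdot\|_{\dot H^s(\T)}^2$ as a \emph{positive superposition of squared first differences}, for which composition with a Lipschitz map is a contraction with constant exactly $\sup_{[a,b]}|\phi'|=:L$. This gives the stated constant, with no loss, in the range $0<s\le 1$. For $s>1$ I expect the statement as worded to fail, and I explain why at the end.

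\emph{Case $s=1$.} This case is immediate. If $f\in H^1(\T)$ takes values in $[a,b]$, then $\partial_\alpha\phi(f)=\phi'(f)\partial_\alpha f$ a.e. by the chain rule for Lipschitz compositions. Since $\|g\|_{\dot H^1}=\|\partial_\alpha g\|_{L^2}$ by Parseval, we get $\|\phi(f)\|_{\dot H^1}\le L\|f\|_{\dot H^1}$.

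\emph{Case $0<s<1$.} The finite-difference characterization in Section \ref{sec:notations} only holds up to constants, so I will not use it. Instead I use the subordination identity
\[
|k|^{2s}=c_s\int_0^\infty \bigl(1-e^{-\tau k^2}\bigr)\tau^{-1-s}\,d\tau,\qquad c_s^{-1}=\int_0^\infty(1-e^{-u})u^{-1-s}\,du,
\]
which is valid for every $k\in\mathbb Z$ and vanishes at $k=0$.

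Let $p_\tau\ge 0$ be the periodic heat kernel on $\T$, normalized consistently with the Fourier convention of Section \ref{sec:notations}. Parseval gives, with a normalization constant $\kappa>0$ that is the same on both sides,
\[
\sum_k\bigl(1-e^{-\tau k^2}\bigr)|\widehat g(k)|^2=\kappa\int_\T\int_\T p_\tau(\alpha-\beta)\,|g(\alpha)-g(\beta)|^2\,d\alpha\,d\beta .
\]
This follows from expanding the square and using $\int_\T p_\tau=1$. Combining the two identities,
\[
\|g\|_{\dot H^s}^2=c_s\kappa\int_0^\infty\tau^{-1-s}\int_\T\int_\T p_\tau(\alpha-\beta)|g(\alpha)-g(\beta)|^2\,d\alpha\,d\beta\,d\tau ,
\]
an exact identity with a nonnegative weight.

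Now apply this to $g=\phi(f)$. Since $f$ takes values in the interval $[a,b]$, the mean value theorem gives $|\phi(f(\alpha))-\phi(f(\beta))|\le L|f(\alpha)-f(\beta)|$ pointwise. Integrating against the nonnegative weight yields $\|\phi(f)\|_{\dot H^s}^2\le L^2\|f\|_{\dot H^s}^2$. The interchanges of sums and integrals are justified by Tonelli, since every integrand is nonnegative.

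\emph{Case $s>1$.} This is the main obstacle, and I do not expect a proof as worded. For $s=2$ one has $\partial_\alpha^2\phi(f)=\phi'(f)f''+\phi''(f)(f')^2$. Take $\phi(y)=\sin(Ny)$, so $L=N$, and take $f=\epsilon\cos\alpha$ with $\epsilon N$ of order one. Then the term $\phi''(f)(f')^2$ has size about $N^2\epsilon^2$, while $L\|f''\|_{L^2}$ is only about $N\epsilon$. The ratio is of order $N\epsilon\cdot$const, and choosing the constant appropriately makes the inequality fail.

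So my first attempt at $s>1$ would be to \emph{check this counterexample numerically/explicitly}. If it holds up, I would record that the lemma is proved as stated for $0<s\le1$, and that for $s>1$ it must be replaced by a Moser-type bound depending on $\sup_{[a,b]}|\phi^{(j)}|$, $j\le\lceil s\rceil$, and on $\|f\|_{L^\infty}$.

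This matters for the paper's use: the lemma is invoked there with $\phi(y)=e^{-2y}$ at $s-1\ge1$, where such a Moser bound suffices because $\|\eta\|_{W^{1,\infty}}$ is controlled.
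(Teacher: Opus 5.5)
Your argument is correct on the range where the statement holds, and it refines the paper's proof rather than repeating it. The paper writes $\|\phi(f)\|_{\dot H^s}^2$ as the Gagliardo double integral with kernel $|2\sin\frac{\alpha-\beta}{2}|^{-1-2s}$, treating it as an equality, and then applies $|\phi(f(\alpha))-\phi(f(\beta))|\le L|f(\alpha)-f(\beta)|$ under the integral. You use the same mechanism, a nonnegative superposition of squared differences that contracts under Lipschitz maps, but with the subordinated heat-kernel weight $\tau^{-1-s}p_\tau(\alpha-\beta)$.

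What your route buys is exactness. With the Fourier definition of $\dot H^s$ from Section~\ref{sec:notations}, the Gagliardo form is in general only comparable to $\sum_k|k|^{2s}|\widehat g(k)|^2$; that is what the $\simeq$ there records. So the paper's argument gives the inequality only with an $s$-dependent constant, unless the seminorm is redefined, whereas your identity gives exactly $L$. For the paper's application only $\lesssim$ is needed.

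Both routes need $s<1$. For $s\ge1$ the Gagliardo integral is $+\infty$ for every nonconstant Lipschitz $f$, so the paper's computation says nothing at $s\ge1$. Your separate chain-rule argument is what covers $s=1$.

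Your suspicion about $s>1$ is correct, and the statement fails for every $s>1$, not only $s=2$. By scaling, your example is $\phi=\sin$, $f=\lambda\cos\alpha$ with $\lambda=N\epsilon$; you need $\lambda$ large, not of order one. With $\widehat g(k)=\frac{1}{2\pi}\int_\T g(\alpha)e^{-ik\alpha}\,d\alpha$ one has $L=1$ and $\|f\|_{\dot H^s}=\lambda/\sqrt2$. Meanwhile $g=\sin(\lambda\cos\alpha)$ satisfies $\sum_k|\widehat g(k)|^2\le1$ and
\[
\|g\|_{\dot H^1}^2=\frac{\lambda^2}{2\pi}\int_\T\sin^2\alpha\,\cos^2(\lambda\cos\alpha)\,d\alpha=\frac{\lambda^2}{4}\bigl(1+o(1)\bigr).
\]
H\"older on the Fourier side gives $\|g\|_{\dot H^1}^2\le\bigl(\sum_k|\widehat g(k)|^2\bigr)^{1-1/s}\|g\|_{\dot H^s}^{2/s}$. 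Hence $\|g\|_{\dot H^s}\ge(\lambda/2)^s(1+o(1))$, which exceeds $\lambda/\sqrt2$ for large $\lambda$. This also avoids having to rule out cancellation between $\phi'(f)f''$ and $\phi''(f)(f')^2$ in your $s=2$ heuristic.

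So the lemma holds only for $0<s\le1$. Its use for $\|e^{-2\eta}\|_{H^{s-1}}$ with $s-1\ge1$ in Section~\ref{sec:visc_reg} must be replaced by a Moser estimate, as you say. That estimate also needs an additive term, $\|e^{-2\eta}\|_{H^{s-1}}\le C(\|\eta\|_{L^\infty})(1+\|\eta\|_{H^{s-1}})$, because $e^{-2\eta}$ does not vanish when $\eta$ is constant. The extra term is harmless in the Gronwall argument.
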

\begin{proof}
Using the finite difference expression of Sobolev norms, we have
\begin{equation}
\begin{split}
\|\phi(f)\|_{\dot{H}^s}^2 &= \iint \frac{|\phi(f(\alpha')) - \phi(f(\beta'))|^2}{|\sin^2(\frac{\alpha'-\beta'}{2})|^{1+2s}}\ d\alpha'd\beta' \\
&\leq \sup_{y\in [a, b]}|\phi'(y)|^2\iint \frac{|f(\alpha') - f(\beta')|^2}{|\sin^2(\frac{\alpha'-\beta'}{2})|^{1+2s}}\ d\alpha'd\beta' \\
&= \sup_{y\in [a, b]}|\phi'(y)|^2\cdot \|f\|_{\dot{H}^s(\T)}^2
\end{split}
\end{equation}
as desired. 
\end{proof}

\subsection{Boundedness of Cauchy integral operators}\label{subsec:bdd_cauchy} We introduce some classical harmonic analysis results on Cauchy integral operators. 
\begin{lemma}\label{lem:log_radial_chord_arc}
Let $\eta\in W^{1,\infty}(\mathbb T)$ and set $z(\alpha):=e^{\eta(\alpha)}e^{i\alpha}$. Denote $M:=\|\eta\|_{L^\infty(\mathbb T)}$ and $L:=\|\eta_\alpha\|_{L^\infty(\mathbb T)}$. Then for all $\alpha,\beta\in\mathbb T$, we have
\begin{equation}\label{eq:z-bi-lip}
\frac{2}{\pi}e^{-M}|\alpha-\beta|_{\mathbb T}\leq |z(\alpha)-z(\beta)|
\le
e^M\sqrt{1+L^2}\,|\alpha-\beta|_{\mathbb T}.
\end{equation}
In particular, the chord-arc constant of $z(\T)$ is bounded by a
constant depending only on $\|\eta\|_{W^{1,\infty}}$.
\end{lemma}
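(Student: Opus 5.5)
We need to prove Lemma \ref{lem:log_radial_chord_arc}: the two-sided bound for $|z(\alpha)-z(\beta)|$ with $z(\alpha)=e^{\eta(\alpha)}e^{i\alpha}$.

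The upper bound comes from the fundamental theorem of calculus along the shorter arc $I$ joining $\beta$ to $\alpha$. We have $z'(\gamma)=(\eta'(\gamma)+i)e^{\eta(\gamma)+i\gamma}$, so $|z'(\gamma)|=e^{\eta(\gamma)}\sqrt{1+\eta'(\gamma)^2}\le e^M\sqrt{1+L^2}$ almost everywhere. Since $z$ is Lipschitz, $z(\alpha)-z(\beta)=\int_I z'(\gamma)\,d\gamma$. Therefore $|z(\alpha)-z(\beta)|\le |I|\,e^M\sqrt{1+L^2}=e^M\sqrt{1+L^2}\,|\alpha-\beta|_\T$.

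For the lower bound we do not integrate; we use the law of cosines directly, as in \eqref{denom_lower_bound}. Write $a=e^{\eta(\alpha)}$, $b=e^{\eta(\beta)}$ and $\theta=\alpha-\beta$. Then
\[
|z(\alpha)-z(\beta)|^2=a^2+b^2-2ab\cos\theta=(a-b)^2+4ab\sin^2\tfrac{\theta}{2}\ge 4ab\sin^2\tfrac{\theta}{2}.
\]
Since $ab\ge e^{-2M}$, this gives $|z(\alpha)-z(\beta)|\ge 2e^{-M}|\sin(\theta/2)|$. Because $|\sin(\theta/2)|=\sin(|\theta|_\T/2)$ and $|\theta|_\T/2\in[0,\pi/2]$, Jordan's inequality $\sin x\ge \tfrac{2}{\pi}x$ on $[0,\pi/2]$ yields $2|\sin(\theta/2)|\ge \tfrac{2}{\pi}|\theta|_\T$. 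This is exactly the claimed constant $\tfrac{2}{\pi}e^{-M}$. Notably, the lower bound does not use $L$ at all.

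For the chord-arc consequence, the arc length between $z(\alpha)$ and $z(\beta)$ along the shorter parameter arc is at most $e^M\sqrt{1+L^2}\,|\alpha-\beta|_\T$. The chord is at least $\tfrac{2}{\pi}e^{-M}|\alpha-\beta|_\T$. Hence the ratio of arc to chord is at most $\tfrac{\pi}{2}e^{2M}\sqrt{1+L^2}$, which depends only on $\|\eta\|_{W^{1,\infty}}$. The only point requiring care is making sure the periodic distance is used consistently, since $\sin^2(\theta/2)$ is $2\pi$-periodic. No step here is a genuine obstacle.
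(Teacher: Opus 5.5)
Your proof is correct and follows essentially the same route as the paper. The upper bound comes from $|z_\alpha|\le e^M\sqrt{1+L^2}$ integrated along the shorter arc, and the lower bound comes from the law-of-cosines identity (the paper writes it as $e^{\eta(\alpha)+\eta(\beta)}\bigl(2\cosh(\eta(\alpha)-\eta(\beta))-2\cos(\alpha-\beta)\bigr)$, which equals your $(a-b)^2+4ab\sin^2(\theta/2)$), followed by $ab\ge e^{-2M}$ and Jordan's inequality.
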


\begin{proof}
Let $\delta:=|\alpha-\beta|_{\mathbb T}\in[0,\pi]$. Direct calculations give
\begin{equation}
|z(\alpha)-z(\beta)|^2
=
e^{\eta(\alpha)+\eta(\beta)}
\bigl(2\cosh(\eta(\alpha)-\eta(\beta))-2\cos(\alpha-\beta)\bigr).
\end{equation}
and thus
\begin{equation}
|z(\alpha)-z(\beta)|^2
\ge
e^{-2M}4\sin^2\frac{\delta}{2}
\ge
e^{-2M}\frac{4}{\pi^2}\delta^2,  
\end{equation}
which gives the lower bound. The upper bound immediately follows from
\begin{equation}
|z_\alpha|\le e^M\sqrt{1+L^2}.    
\end{equation}
\end{proof}

The following theorem is classical. See, for example, \cites{Calderon1977Cauchy,CMM1982Cauchy,David1984Cauchy}.
\begin{theorem}[$L^2$ boundedness of Cauchy integral operator]\label{lem:cauchy_chord_arc_l2}
Let $\Gamma\subset\mathbb C$ be a chord-arc curve. Define the Cauchy singular
integral
\[
\mathcal C_\Gamma F(z)
:=
\pv \int_\Gamma \frac{F(\zeta)}{\zeta-z}\,d\zeta,
\quad z\in\Gamma.
\]
Then $\mathcal C_\Gamma$ extends to a bounded operator on $L^2(\Gamma)$ with 
\begin{equation}
\|\mathcal C_\Gamma F\|_{L^2(\Gamma)}
\le L_\Gamma \|F\|_{L^2(\Gamma)}.
\end{equation}
Moreover, if $\Gamma$ is parametrized by $z:\T\to \C$, then the constant \(L_\Gamma\) depends only on the chord-arc constant of $z$.
\end{theorem}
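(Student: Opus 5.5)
We would follow the classical route through Lipschitz graphs and the ``big pieces'' principle of David, keeping track of constants so that everything depends only on the chord-arc constant $\kappa$ of $\Gamma$. Throughout, $\Gamma$ is parametrized by arclength $s\mapsto z(s)$, with $|\Gamma|<\infty$ and $|z'(s)|=1$ a.e. The chord-arc condition says $|s-t|_{\Gamma}\le \kappa|z(s)-z(t)|$.

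\textbf{Step 1: standard Calder\'on--Zygmund kernel.} In these coordinates $\mathcal C_\Gamma F(z(t))=\pv\int K(t,s)F(z(s))\,ds$ with kernel
\[
K(t,s)=\frac{z'(s)}{z(s)-z(t)} .
\]
The chord-arc lower bound gives $|K(t,s)|\le \kappa|s-t|^{-1}$ at once. The smoothness estimates are where a difficulty appears. In the variable $t$, differentiating $(z(s)-z(t))^{-1}$ is harmless. In the variable $s$, however, the factor $z'(s)$ is only $L^\infty$, so $K$ is not a standard kernel there.

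To get around this we treat $\mathcal C_\Gamma$ as acting on $F\,dz$ rather than on $F\,ds$. In that formulation the kernel is the antisymmetric function $(z(s)-z(t))^{-1}$, which is a Calder\'on--Zygmund kernel on the space of homogeneous type $(\Gamma,ds)$ with constants $\lesssim\kappa^2$. The general CZ theory on spaces of homogeneous type then reduces $L^2$ boundedness to boundedness of suitable truncations. It also gives, for free, $L^p$ bounds and the good-$\lambda$ inequality that are used in Step 3.

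\textbf{Step 2: Lipschitz graphs.} For Lipschitz graphs $\{x+iA(x)\}$ we invoke the Coifman--McIntosh--Meyer theorem~\cite{CMM1982Cauchy}. Its bound depends only on $\|A'\|_{L^\infty}$. The cleanest proof is the $T(b)$ theorem with the accretive function $b=1+iA'$: in that case $T_b1=\pv\int\frac{dz}{z-w}$ is a constant, so the $T(b)$ hypotheses are trivially satisfied.

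For a closed curve the same argument applies, with $b=z'$ and the observation that $\pv\int_\Gamma\frac{d\zeta}{\zeta-z}=i\pi$. The catch is that $z'$ need not be (para-)accretive on a general chord-arc curve. This is exactly why we do not stop at Step 2.

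\textbf{Step 3: big pieces of Lipschitz graphs (main obstacle).} We would show that chord-arc curves contain big pieces of Lipschitz graphs, with constants controlled by $\kappa$. Precisely, for every arc $I\subset\Gamma$ there is a Lipschitz graph $\Gamma_I$, with Lipschitz constant $\le C(\kappa)$ after a rotation, such that $|I\cap\Gamma_I|\ge c(\kappa)|I|$.

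The first thing we would try is a direct rising-sun/stopping-time argument on the argument of the tangent direction $z'$ along $I$. Pick the direction $e$ of the chord between the endpoints of $I$, which has length $\ge|I|/\kappa$. The set of $s\in I$ where the projection of $z$ onto $e$ fails to be ``monotone with slope $\ge 1/(2\kappa)$'' can be covered by a disjoint family of bad arcs. Each bad arc has projected increment small compared with its length, so by summing projections their total length is at most $(1-c(\kappa))|I|$. On the complement, the projection onto $e$ is bi-Lipschitz, so that part is a Lipschitz graph over the line $\mathbb R e$.

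If this elementary argument does not produce a uniform $c(\kappa)$, we would fall back on Jones' $\beta$-number construction, which gives the same conclusion.

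\textbf{Step 4: transference to $\Gamma$.} With the big pieces in hand, we apply David's good-$\lambda$ argument~\cite{David1984Cauchy}. We compare the maximal truncated Cauchy integral $\mathcal C_\Gamma^*F$ with the operators $\mathcal C_{\Gamma_I}^*$ on the big pieces: on the common set these differ only by terms controlled by Hardy--Littlewood maximal functions, via Step 1. The good-$\lambda$ inequality
\[
|\{\mathcal C^*_\Gamma F>2\lambda,\ MF\le\gamma\lambda\}|\le (1-c(\kappa)/2)\,|\{\mathcal C^*_\Gamma F>\lambda\}|
\]
then yields $\|\mathcal C^*_\Gamma F\|_{L^2}\le L(\kappa)\|F\|_{L^2}$. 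Here $\gamma$ is chosen small depending on $\kappa$, and the inequality is first established a priori for truncated kernels, so that all quantities are finite.

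Finally, principal values exist a.e.\ by density of smooth $F$, for which the limit is clear. Since the last sentence of the theorem concerns a parametrization $z:\mathbb T\to\mathbb C$, the bi-Lipschitz change of variables to arclength shows that the constant depends only on the chord-arc constant of $z$.
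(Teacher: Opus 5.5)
The paper does not prove this theorem. It cites it as classical (Calder\'on, Coifman--McIntosh--Meyer, David), so your outline has to stand on its own; it is essentially David's big-pieces route.

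\textbf{Step 4 does not close as written.} The good-$\lambda$ inequality you state uses levels $\lambda$ and $2\lambda$ with constant $1-c(\kappa)/2$. Integrated in the layer-cake formula, it gives only $\|\mathcal C^*_\Gamma F\|_{L^2}^2\le 4(1-c/2)\|\mathcal C^*_\Gamma F\|_{L^2}^2+C\gamma^{-2}\|MF\|_{L^2}^2$. Since $4(1-c/2)\ge 2$, nothing can be absorbed. Big pieces can never make that constant small, because off $\Gamma_I$ you have no information. So the level ratio must be close to $1$: you need $|\{\mathcal C^*_\Gamma F>(1+\delta)\lambda,\ MF\le\gamma\lambda\}|\le(1-c/2)|\{\mathcal C^*_\Gamma F>\lambda\}|$ with $(1+\delta)^2(1-c/2)<1$.

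\textbf{The local comparison is not by maximal functions.} On a Whitney arc $I$, only the scales $\gtrsim|I|$ are controlled by $MF$, via a nearby point where $\mathcal C^*_\Gamma F\le\lambda$. The near part contains $\int_{3I\setminus\Gamma_I}F(\zeta)\,d\zeta/(\zeta-x)$ evaluated at points $x\in\Gamma_I$, which may accumulate on $\Gamma\setminus\Gamma_I$. No maximal function bounds this term, and it is not an $L^2(\Gamma_I)$ input either.

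What is needed is the weak-$(1,1)$ bound, from finite complex measures to $L^{1,\infty}(\Gamma_I)$, for the maximal Cauchy operator of the Lipschitz graph $\Gamma_I$. This follows from Coifman--McIntosh--Meyer plus Calder\'on--Zygmund theory. Apply it to $\nu=F\,d\zeta|_{\Gamma\cap 3I}$. It shows that the set of $x\in I\cap\Gamma_I$ where the near part exceeds $\delta\lambda/2$ has measure $\lesssim\gamma|I|/\delta$. So you fix $\delta$ first and then take $\gamma\ll\delta$. With these two repairs Steps 3--4 go through. Your rising-sun argument in Step 3 is already fine and gives $c(\kappa)=1/(2\kappa)$.

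\textbf{The Step 2 obstruction is false.} On a chord-arc curve $z'$ need not be accretive pointwise, but it is para-accretive, in fact accretive in mean on every arc. For $I=[s_0,s_1]$ with $|I|\le|\Gamma|/2$, $|\int_I z'(s)\,ds|=|z(s_1)-z(s_0)|\ge|I|/\kappa$, which is just the chord-arc condition.

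Hence the David--Journ\'e--Semmes $T(b)$ theorem, in its homogeneous-space form for a closed curve, applies to your Step 1 kernel $(z(s)-z(t))^{-1}$ with $b=z'$. Here $T(b)$ and $T^t(b)$ are the constants $\pm i\pi$. The weak boundedness property of $M_bTM_b$ follows from the antisymmetry of the kernel together with $|z(s)-z(t)|^{-1}\le\kappa|s-t|^{-1}$. This proves the theorem in one step, with constants depending only on $\kappa$, and makes the big-pieces machinery unnecessary.
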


Now we have the following lemma on Cauchy integral operator on star-shaped domains.  

\begin{lemma}\label{lem:cauchy_star_shaped_l2}
Let $\eta\in W^{1,\infty}(\mathbb T)$ and $z(\alpha)=e^{\eta(\alpha)}e^{i\alpha}$. Define $\Gamma:= z(\T)$ and
\[T[\eta]\theta(\alpha)
:=
\pv\int_{\mathbb T}
\frac{\theta(\beta)}{z(\alpha)-z(\beta)}\,d\beta .
\]
Then $T[\eta]$ is bounded on $L^2(\mathbb T)$, and
\begin{equation}\label{eq:cauchy-star-shaped-bound}
\|T[\eta]\theta\|_{L^2(\mathbb T)}
\le
\cF\bigl(\|\eta\|_{W^{1,\infty}}\bigr)
\|\theta\|_{L^2(\mathbb T)}.
\end{equation}
\end{lemma}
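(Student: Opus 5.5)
The plan is to reduce $T[\eta]$ to the Cauchy singular integral $\mathcal C_\Gamma$ of Theorem~\ref{lem:cauchy_chord_arc_l2} on the curve $\Gamma=z(\T)$. Lemma~\ref{lem:log_radial_chord_arc} already tells us that $z$ is bi-Lipschitz with constants depending only on $M=\|\eta\|_{L^\infty}$ and $L=\|\eta_\alpha\|_{L^\infty}$. Hence $\Gamma$ is a chord-arc curve whose constant is controlled by $\|\eta\|_{W^{1,\infty}}$. What is left is to handle the mismatch between $d\beta$ and the complex line element $d\zeta=z'(\beta)\,d\beta$, and to change variables between $L^2(\T)$ and $L^2(\Gamma)$.

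First I rewrite the operator. For $\alpha\in\T$, set $\zeta=z(\beta)$. Then
\[
T[\eta]\theta(\alpha)=-\pv\int_\T\frac{\theta(\beta)}{z(\beta)-z(\alpha)}\,d\beta
=-\pv\int_\Gamma \frac{F(\zeta)}{\zeta-z(\alpha)}\,d\zeta,
\qquad F(z(\beta)):=\frac{\theta(\beta)}{z'(\beta)}.
\]
This is legitimate because $z'(\beta)=(\eta'(\beta)+i)e^{\eta(\beta)+i\beta}$ satisfies $|z'|\ge e^{-M}$ a.e., so $F$ is well defined. The principal values in parameter and in arclength also agree up to a bounded error: symmetric $\beta$-intervals map to arcs that are symmetric up to a bi-Lipschitz distortion. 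I would treat this by the standard observation that the maximal truncations are comparable, with the difference controlled by the Hardy--Littlewood maximal function, which is $L^2$-bounded. Alternatively, one can define both principal values as $L^2$ limits and note they coincide for smooth $\theta$.

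Next I control the norms. Arclength satisfies $d\sigma=|z'(\beta)|\,d\beta$, and $e^{-M}\le|z'|\le e^M\sqrt{1+L^2}$. Therefore
\[
\|F\|_{L^2(\Gamma)}^2=\int_\T\frac{|\theta|^2}{|z'|^2}|z'|\,d\beta\le e^{M}\|\theta\|_{L^2(\T)}^2,
\qquad
\|T[\eta]\theta\|_{L^2(\T)}^2\le e^{M}\|\mathcal C_\Gamma F\|_{L^2(\Gamma)}^2 .
\]
Combining these with Theorem~\ref{lem:cauchy_chord_arc_l2} gives $\|T[\eta]\theta\|_{L^2}\le e^{M}L_\Gamma\|\theta\|_{L^2}$. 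Since $L_\Gamma$ depends only on the chord-arc constant, this quantity is a nondecreasing function $\cF(\|\eta\|_{W^{1,\infty}})$.

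The main obstacle is the principal-value identification in the first step, i.e.\ justifying that the parameter truncation and the arclength truncation define the same operator. I would first prove the bound for smooth $\eta$ and $\theta$, where the truncation difference is an integral over an asymmetric annulus of size comparable to $\epsilon$ and vanishes as $\epsilon\to0$. I would then extend to Lipschitz $\eta$ through the maximal-function comparison, which is uniform in the chord-arc constant.
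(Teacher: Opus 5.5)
Your proposal is correct and is essentially the paper's proof: the same substitution $F(z(\beta))=\theta(\beta)/z'(\beta)$, chord-arc control from Lemma~\ref{lem:log_radial_chord_arc}, Theorem~\ref{lem:cauchy_chord_arc_l2}, and the change of variables between $L^2(\T)$ and $L^2(\Gamma)$ via $e^{-M}\le|z'|\le e^{M}\sqrt{1+L^2}$. Your one addition is the parameter-versus-arclength principal-value issue, which the paper does not address; comparing maximal truncations through the Hardy--Littlewood maximal function settles it, though it relies on the classical $L^2$ bound for the maximal truncated Cauchy integral rather than just the principal-value bound stated in Theorem~\ref{lem:cauchy_chord_arc_l2}.
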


\begin{proof}
We have from Lemma \ref{lem:cauchy_chord_arc_l2} and Lemma \ref{lem:log_radial_chord_arc} that
\begin{equation}\label{eq:cauchy-standard-bound}
\begin{aligned}
\|\mathcal C_\Gamma F\|_{L^2(\Gamma)}
& \le
L_\Gamma \|F\|_{L^2(\Gamma)}\\
& \leq \cF\bigl(\|\eta\|_{W^{1,\infty}}\bigr)\|F\|_{L^2(\Gamma)}
\end{aligned}
\end{equation}
Now, we write
\begin{equation}
T[\eta]\theta(\alpha)
=
-\pv\int_\Gamma
\frac{F(\zeta)}{\zeta-z(\alpha)}\,d\zeta
\quad \text{where} \ F(z(\beta)):=\frac{\theta(\beta)}{z_\beta(\beta)},
\end{equation}
and note from \eqref{eq:z-bi-lip} that
\begin{equation}
\|F\|_{L^2(\Gamma)}
\le
C\bigl(\|\eta\|_{W^{1,\infty}}\bigr)\|\theta\|_{L^2(\mathbb T)},
\end{equation}
Then we have
\begin{equation}
\begin{aligned}
\|T[\eta]\theta\|_{L^2(\mathbb T)}
&\le
\cF\bigl(\|\eta\|_{W^{1,\infty}}\bigr)
\|\mathcal C_\Gamma F\|_{L^2(\Gamma)}\\
&\leq \cF\bigl(\|\eta\|_{W^{1,\infty}}\bigr)\|F\|_{L^2(\Gamma)}\\
&\leq \cF\bigl(\|\eta\|_{W^{1,\infty}}\bigr)\|\theta\|_{L^2(\mathbb T)}
\end{aligned}
\end{equation}
as desired.
\end{proof}

\subsection{Functional analysis} We have the following Aubin-Lions compactness lemma.
\begin{lemma}[Aubin-Lions \cites{Aubin1963Compactness, Lions1969QuelquesMethodes, BedrossianVicol2022}]\label{aubin-lions}
Suppose $B_0\subset B_1\subset B_2$ are Banach spaces. We assume that the embeddings are all continuous and that $B_0\subset B_1$ compactly. Let $p, r\in (1,\infty]\times (1,\infty)$. For $T>0$, we define
\[E_{p,r} := \{f\in L^p([0,T]; B_0): \partial_t f\in L^r([0,T]; B_2)\} \]
Then
\begin{enumerate}
    \item if $p<\infty$, the embedding $E_{p,r}\subset L^p([0,T]; B_1)$ is compact;
    \item if $p = \infty$, the embedding $E_{p,r}\subset C([0,T]; B_1)$ is compact.
\end{enumerate}
\end{lemma}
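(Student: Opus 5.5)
The statement is the Aubin--Lions lemma (Lemma~\ref{aubin-lions}). I would prove it in three steps: an Ehrling-type interpolation inequality, a time-equicontinuity estimate in $B_2$, and an Arzel\`a--Ascoli argument, upgrading $B_2$-convergence to $B_1$-convergence via the interpolation inequality.

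\textbf{Step 1: Ehrling's inequality.} For every $\varepsilon>0$ there is $C_\varepsilon$ with
\[
\|u\|_{B_1}\le \varepsilon\|u\|_{B_0}+C_\varepsilon\|u\|_{B_2}\qquad\text{for all } u\in B_0 .
\]
I would argue by contradiction. Otherwise there are $u_k$ with $\|u_k\|_{B_0}=1$ and $\|u_k\|_{B_1}>\varepsilon+k\|u_k\|_{B_2}$. By compactness of $B_0\subset B_1$, a subsequence converges in $B_1$ to some $u$. Since $\|u_k\|_{B_1}$ is bounded, $\|u_k\|_{B_2}\to0$, so continuity of $B_1\subset B_2$ forces $u=0$. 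This contradicts $\|u_k\|_{B_1}>\varepsilon$.

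\textbf{Step 2: time equicontinuity in $B_2$.} Let $\mathcal B$ be a bounded set in $E_{p,r}$. Since $\partial_t f\in L^r([0,T];B_2)$ with $r>1$, each $f$ has a representative in $C([0,T];B_2)$, and H\"older's inequality gives
\[
\|f(t)-f(s)\|_{B_2}\le |t-s|^{1-1/r}\|\partial_t f\|_{L^r B_2}.
\]

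\textbf{Step 3, case $p=\infty$.} I would use Arzel\`a--Ascoli in $C([0,T];B_1)$. One subtlety is that $f(t)\in B_0$ with $\|f(t)\|_{B_0}\le C$ for \emph{every} $t$, not just a.e. To see this, take $t_n\to t$ with $t_n$ in the full-measure set where the bound holds. Then $f(t_n)\to f(t)$ in $B_2$, and the $f(t_n)$ are precompact in $B_1$, so $f(t)$ lies in the $B_1$-closure of the bounded set $\{\|u\|_{B_0}\le C\}$. This gives pointwise relative compactness in $B_1$, which is all that is needed.

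Equicontinuity in $B_1$ follows from Step 1 applied to differences. The approximation argument above shows $f(t)-f(s)$ is a $B_1$-limit of elements $v$ with $\|v\|_{B_0}\le 2C$. Passing to the limit in Ehrling's inequality for such $v$ gives
\[
\|f(t)-f(s)\|_{B_1}\le 2C\varepsilon + C_\varepsilon |t-s|^{1-1/r}\|\partial_t f\|_{L^rB_2}.
\]
Arzel\`a--Ascoli then yields a subsequence converging in $C([0,T];B_1)$.

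\textbf{Step 3, case $p<\infty$.} Here I would use the Kolmogorov--Riesz--Fr\'echet (Simon) characterization of compact sets in $L^p([0,T];B_1)$: relative compactness of the time averages $\int_{t_1}^{t_2}f\,dt$ in $B_1$, together with $\|\tau_hf-f\|_{L^p(0,T-h;B_1)}\to0$ uniformly over $\mathcal B$. The averages lie in a bounded set of $B_0$, hence are precompact in $B_1$. For the translations, Step 1 gives
\[
\|\tau_hf-f\|_{L^pB_1}\le 2\varepsilon\sup_{\mathcal B}\|f\|_{L^pB_0}+C_\varepsilon\|\tau_hf-f\|_{L^pB_2},
\]
and the last term is at most $C_\varepsilon T^{1/p}h^{1-1/r}\sup_{\mathcal B}\|\partial_tf\|_{L^rB_2}$. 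Choosing $\varepsilon$ small and then $h$ small makes the right-hand side uniformly small.

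\textbf{Main obstacle.} I expect the $p<\infty$ case to be the hardest part, since it needs the vector-valued Kolmogorov--Riesz (Simon) compactness criterion, which I would cite rather than reprove. In the $p=\infty$ case the only delicate point is the pointwise-in-$t$ precompactness handled above. For the paper's application, where $B_0=W^{1,\infty}$, $B_1=C$ and $B_2=H^{-1}$, only the $p=\infty$ case is used.
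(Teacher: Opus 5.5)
Your proposal is correct. The paper gives no proof of this lemma and only cites Aubin, Lions and Bedrossian--Vicol, and your argument is the standard proof from that literature: Ehrling's inequality, H\"older continuity in time with values in $B_2$, Arzel\`a--Ascoli for $p=\infty$, and Simon's translation criterion for $p<\infty$.

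The one slip is wording in the $p=\infty$ case. Your argument shows that, for every $t$, $f(t)$ lies in the $B_1$-closure of $\{\|u\|_{B_0}\le C\}$. It does not show that $f(t)\in B_0$ with $\|f(t)\|_{B_0}\le C$, which need not hold unless, e.g., $B_0$ is reflexive. The weaker statement is exactly what your pointwise-compactness and equicontinuity steps use, so the proof is unaffected.
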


\newpage
\bibliography{refs_v5}

@book{Darcy1856,
  author    = {Darcy, Henry},
  title     = {Les fontaines publiques de la ville de {Dijon}},
  publisher = {Dalmont},
  address   = {Paris},
  year      = {1856}
}

@article{HeleShaw1898,
  author  = {Hele-Shaw, Henry S.},
  title   = {The flow of water},
  journal = {Nature},
  volume  = {58},
  pages   = {34--36},
  year    = {1898},
  doi     = {10.1038/058034a0}
}

@book{Muskat1937,
  author    = {Muskat, Morris},
  title     = {The Flow of Homogeneous Fluids Through Porous Media},
  publisher = {McGraw-Hill},
  address   = {New York},
  year      = {1937}
}

@article{SaffmanTaylor1958,
  author  = {Saffman, Philip G. and Taylor, Geoffrey I.},
  title   = {The penetration of a fluid into a porous medium or {Hele-Shaw} cell containing a more viscous liquid},
  journal = {Proceedings of the Royal Society of London. Series A. Mathematical and Physical Sciences},
  volume  = {245},
  number  = {1242},
  pages   = {312--329},
  year    = {1958},
  doi     = {10.1098/rspa.1958.0085}
}

@article{Aubin1963Compactness,
  author  = {Aubin, Jean-Pierre},
  title   = {Un th{\'e}or{\`e}me de compacit{\'e}},
  journal = {Comptes Rendus de l'Acad{\'e}mie des Sciences de Paris},
  volume  = {256},
  pages   = {5042--5044},
  year    = {1963}
}

@book{Lions1969QuelquesMethodes,
  author    = {Lions, Jacques-Louis},
  title     = {Quelques m{\'e}thodes de r{\'e}solution des probl{\`e}mes aux limites non lin{\'e}aires},
  publisher = {Dunod and Gauthier-Villars},
  address   = {Paris},
  year      = {1969}
}

@article{Baiocchi1972,
  author  = {Baiocchi, Claudio},
  title   = {Su un problema di frontiera libera connesso a questioni di idraulica},
  journal = {Annali di Matematica Pura ed Applicata},
  series  = {4},
  volume  = {92},
  pages   = {107--127},
  year    = {1972},
  doi     = {10.1007/BF02417940}
}

@article{Richardson1972,
  author  = {Richardson, S.},
  title   = {{Hele-Shaw} flows with a free boundary produced by the injection of fluid into a narrow channel},
  journal = {Journal of Fluid Mechanics},
  volume  = {56},
  number  = {4},
  pages   = {609--618},
  year    = {1972},
  doi     = {10.1017/S0022112072002624}
}

@article{Caffarelli1976,
  author  = {Caffarelli, Luis A.},
  title   = {The smoothness of the free surface in a filtration problem},
  journal = {Archive for Rational Mechanics and Analysis},
  volume  = {63},
  number  = {1},
  pages   = {77--86},
  year    = {1976},
  doi     = {10.1007/BF00251578}
}

@article{Caffarelli1977,
  author  = {Caffarelli, Luis A.},
  title   = {The regularity of free boundaries in higher dimensions},
  journal = {Acta Mathematica},
  volume  = {139},
  pages   = {155--184},
  year    = {1977},
  doi     = {10.1007/BF02392236}
}

@article{Calderon1977Cauchy,
  author  = {Calder{\'o}n, Alberto P.},
  title   = {{Cauchy} integrals on {Lipschitz} curves and related operators},
  journal = {Proceedings of the National Academy of Sciences of the United States of America},
  volume  = {74},
  number  = {4},
  pages   = {1324--1327},
  year    = {1977},
  doi     = {10.1073/pnas.74.4.1324}
}

@article{FabesJodeitRiviere1978,
  author  = {Fabes, Eugene B. and Jodeit, Manfred, Jr. and Rivi{\`e}re, Nicolas M.},
  title   = {Potential techniques for boundary value problems on {$C^1$}-domains},
  journal = {Acta Mathematica},
  volume  = {141},
  pages   = {165--186},
  year    = {1978},
  doi     = {10.1007/BF02545747}
}

@article{ElliottJanovsky1981,
  author  = {Elliott, Charles M. and Janovsk{\'y}, Vladim{\'i}r},
  title   = {A variational inequality approach to {Hele-Shaw} flow with a moving boundary},
  journal = {Proceedings of the Royal Society of Edinburgh. Section A. Mathematics},
  volume  = {88},
  number  = {1--2},
  pages   = {93--107},
  year    = {1981},
  doi     = {10.1017/S0308210500017300}
}

@book{Friedman1982,
  author    = {Friedman, Avner},
  title     = {Variational Principles and Free-Boundary Problems},
  publisher = {Wiley},
  address   = {New York},
  year      = {1982}
}

@article{CMM1982Cauchy,
  author  = {Coifman, Ronald R. and McIntosh, Alan and Meyer, Yves},
  title   = {L'int{\'e}grale de {Cauchy} d{\'e}finit un op{\'e}rateur born{\'e} sur \(L^2\) pour les courbes lipschitziennes},
  journal = {Annals of Mathematics. Second Series},
  volume  = {116},
  number  = {2},
  pages   = {361--387},
  year    = {1982},
  doi     = {10.2307/2007065}
}

@article{David1984Cauchy,
  author  = {David, Guy},
  title   = {Op{\'e}rateurs int{\'e}graux singuliers sur certaines courbes du plan complexe},
  journal = {Annales Scientifiques de l'{\'E}cole Normale Sup{\'e}rieure},
  series  = {4},
  volume  = {17},
  number  = {1},
  pages   = {157--189},
  year    = {1984},
  doi     = {10.24033/asens.1469}
}

@article{DuchonRobert1984,
  author  = {Duchon, Jean and Robert, Raoul},
  title   = {{\'E}volution d'une interface par capillarit{\'e} et diffusion de volume. {I}. {E}xistence locale en temps},
  journal = {Annales de l'Institut Henri Poincar{\'e}. C, Analyse non lin{\'e}aire},
  volume  = {1},
  number  = {5},
  pages   = {361--378},
  year    = {1984},
  doi     = {10.1016/S0294-1449(16)30418-8}
}

@article{Verchota1984,
  author  = {Verchota, Gregory C.},
  title   = {Layer potentials and regularity for the {Dirichlet} problem for {Laplace}'s equation in {Lipschitz} domains},
  journal = {Journal of Functional Analysis},
  volume  = {59},
  number  = {3},
  pages   = {572--611},
  year    = {1984},
  doi     = {10.1016/0022-1236(84)90066-1}
}

@book{Grisvard1985,
  author    = {Grisvard, Pierre},
  title     = {Elliptic Problems in Nonsmooth Domains},
  publisher = {Pitman},
  address   = {Boston},
  year      = {1985}
}

@incollection{Kenig1986,
  author    = {Kenig, Carlos E.},
  title     = {Elliptic Boundary Value Problems on {Lipschitz} Domains},
  booktitle = {Beijing Lectures in Harmonic Analysis},
  editor    = {Stein, Elias M.},
  series    = {Annals of Mathematics Studies},
  volume    = {112},
  pages     = {131--184},
  publisher = {Princeton University Press},
  address   = {Princeton, NJ},
  year      = {1986}
}

@article{Howison1986,
  author  = {Howison, Sam D.},
  title   = {Cusp development in {Hele-Shaw} flow with a free surface},
  journal = {SIAM Journal on Applied Mathematics},
  volume  = {46},
  number  = {1},
  pages   = {20--26},
  year    = {1986},
  doi     = {10.1137/0146003}
}

@article{Costabel1988,
  author  = {Costabel, Martin},
  title   = {Boundary integral operators on {Lipschitz} domains: elementary results},
  journal = {SIAM Journal on Mathematical Analysis},
  volume  = {19},
  number  = {3},
  pages   = {613--626},
  year    = {1988},
  doi     = {10.1137/0519043}
}

@article{CrandallIshiiLions1992,
  author  = {Crandall, Michael G. and Ishii, Hitoshi and Lions, Pierre-Louis},
  title   = {User's guide to viscosity solutions of second order partial differential equations},
  journal = {Bulletin of the American Mathematical Society},
  volume  = {27},
  number  = {1},
  pages   = {1--67},
  year    = {1992},
  doi     = {10.1090/S0273-0979-1992-00266-5}
}

@article{Chen1993,
  author  = {Chen, Xinfu},
  title   = {The {Hele-Shaw} problem and area-preserving curve-shortening motions},
  journal = {Archive for Rational Mechanics and Analysis},
  volume  = {123},
  number  = {2},
  pages   = {117--151},
  year    = {1993},
  doi     = {10.1007/BF00375152}
}

@article{ConstantinPugh1993,
  author  = {Constantin, Peter and Pugh, Mary},
  title   = {Global solutions for small data to the {Hele-Shaw} problem},
  journal = {Nonlinearity},
  volume  = {6},
  number  = {3},
  pages   = {393--415},
  year    = {1993},
  doi     = {10.1088/0951-7715/6/3/004}
}

@article{Wu1997WaterWaves2D,
  author  = {Wu, Sijue},
  title   = {Well-posedness in {Sobolev} spaces of the full water wave problem in 2-{D}},
  journal = {Inventiones Mathematicae},
  volume  = {130},
  number  = {1},
  pages   = {39--72},
  year    = {1997},
  doi     = {10.1007/s002220050177}
}

@article{EscherSimonett1997SurfaceTension,
  author  = {Escher, Joachim and Simonett, Gieri},
  title   = {Classical solutions for {Hele-Shaw} models with surface tension},
  journal = {Advances in Differential Equations},
  volume  = {2},
  number  = {4},
  pages   = {619--642},
  year    = {1997}
}

@article{EscherSimonett1997Multidimensional,
  author  = {Escher, Joachim and Simonett, Gieri},
  title   = {Classical solutions of multidimensional {Hele-Shaw} models},
  journal = {SIAM Journal on Mathematical Analysis},
  volume  = {28},
  number  = {5},
  pages   = {1028--1047},
  year    = {1997},
  doi     = {10.1137/S0036141095291919}
}

@article{Wu1999WaterWaves3D,
  author  = {Wu, Sijue},
  title   = {Well-posedness in {Sobolev} spaces of the full water wave problem in 3-{D}},
  journal = {Journal of the American Mathematical Society},
  volume  = {12},
  number  = {2},
  pages   = {445--495},
  year    = {1999},
  doi     = {10.1090/S0894-0347-99-00290-8}
}

@book{McLean2000,
  author    = {McLean, William},
  title     = {Strongly Elliptic Systems and Boundary Integral Equations},
  publisher = {Cambridge University Press},
  address   = {Cambridge},
  year      = {2000}
}

@article{Kim2003,
  author  = {Kim, Inwon C.},
  title   = {Uniqueness and existence results on the {Hele--Shaw} and the {Stefan} problems},
  journal = {Archive for Rational Mechanics and Analysis},
  volume  = {168},
  number  = {4},
  pages   = {299--328},
  year    = {2003},
  doi     = {10.1007/s00205-003-0251-z}
}

@article{SiegelCaflischHowison2004,
  author  = {Siegel, Michael and Caflisch, Russel E. and Howison, Sam},
  title   = {Global existence, singular solutions, and ill-posedness for the {Muskat} problem},
  journal = {Communications on Pure and Applied Mathematics},
  volume  = {57},
  number  = {10},
  pages   = {1374--1411},
  year    = {2004},
  doi     = {10.1002/cpa.20040}
}

@book{CaffarelliSalsa2005,
  author    = {Caffarelli, Luis A. and Salsa, Sandro},
  title     = {A Geometric Approach to Free Boundary Problems},
  series    = {Graduate Studies in Mathematics},
  volume    = {68},
  publisher = {American Mathematical Society},
  address   = {Providence, RI},
  year      = {2005},
  doi       = {10.1090/gsm/068}
}

@article{JerisonKim2005,
  author  = {Jerison, David and Kim, Inwon C.},
  title   = {The one-phase {Hele--Shaw} problem with singularities},
  journal = {Journal of Geometric Analysis},
  volume  = {15},
  number  = {4},
  pages   = {641--667},
  year    = {2005},
  doi     = {10.1007/BF02922248}
}

@article{ChoiKim2006,
  author  = {Choi, Sunhi and Kim, Inwon C.},
  title   = {Waiting time phenomena of the {Hele--Shaw} and the {Stefan} problem},
  journal = {Indiana University Mathematics Journal},
  volume  = {55},
  number  = {2},
  pages   = {525--551},
  year    = {2006},
  url     = {https://www.jstor.org/stable/24902363}
}

@article{Kim2006Reg,
  author  = {Kim, Inwon C.},
  title   = {Regularity of the free boundary for the one phase {Hele--Shaw} problem},
  journal = {Journal of Differential Equations},
  volume  = {223},
  number  = {1},
  pages   = {161--184},
  year    = {2006},
  doi     = {10.1016/j.jde.2005.07.003}
}

@article{Kim2006Long,
  author  = {Kim, Inwon C.},
  title   = {Long time regularity of solutions of the {Hele--Shaw} problem},
  journal = {Nonlinear Analysis: Theory, Methods \& Applications},
  volume  = {64},
  number  = {12},
  pages   = {2817--2831},
  year    = {2006},
  doi     = {10.1016/j.na.2005.09.021}
}

@book{GustafssonVasiliev2006,
  author    = {Gustafsson, Bj{\"o}rn and Vasil'ev, Alexander},
  title     = {Conformal and Potential Analysis in {Hele-Shaw} Cells},
  series    = {Advances in Mathematical Fluid Mechanics},
  publisher = {Birkh{\"a}user},
  address   = {Basel},
  year      = {2006},
  doi       = {10.1007/3-7643-7704-8}
}

@article{ChoiJerisonKim2007,
  author  = {Choi, Sunhi and Jerison, David and Kim, Inwon C.},
  title   = {Regularity for the one-phase {Hele--Shaw} problem from a {Lipschitz} initial surface},
  journal = {American Journal of Mathematics},
  volume  = {129},
  number  = {2},
  pages   = {527--582},
  year    = {2007},
  doi     = {10.1353/ajm.2007.0008}
}

@article{ChoiJerisonKim2009,
  author  = {Choi, Sunhi and Jerison, David and Kim, Inwon C.},
  title   = {Local regularization of the one-phase {Hele--Shaw} flow},
  journal = {Indiana University Mathematics Journal},
  volume  = {58},
  number  = {6},
  pages   = {2765--2804},
  year    = {2009},
  doi     = {10.1512/iumj.2009.58.3802}
}

@book{BahouriCheminDanchin2011,
  author    = {Bahouri, Hajer and Chemin, Jean-Yves and Danchin, Rapha{\"e}l},
  title     = {Fourier Analysis and Nonlinear Partial Differential Equations},
  series    = {Grundlehren der mathematischen Wissenschaften},
  volume    = {343},
  publisher = {Springer},
  address   = {Berlin},
  year      = {2011},
  doi       = {10.1007/978-3-642-16830-7}
}

@book{TaylorPDE3,
  author    = {Taylor, Michael E.},
  title     = {Partial Differential Equations {III}: Nonlinear Equations},
  series    = {Applied Mathematical Sciences},
  volume    = {117},
  edition   = {2},
  publisher = {Springer},
  address   = {New York},
  year      = {2011},
  doi       = {10.1007/978-1-4419-7049-7},
  isbn      = {978-1-4419-7048-0}
}

@article{CordobaCordobaGancedo2011,
  author  = {C{\'o}rdoba, Antonio and C{\'o}rdoba, Diego and Gancedo, Francisco},
  title   = {Interface evolution: the {Hele-Shaw} and {Muskat} problems},
  journal = {Annals of Mathematics. Second Series},
  volume  = {173},
  number  = {1},
  pages   = {477--542},
  year    = {2011},
  doi     = {10.4007/annals.2011.173.1.10}
}

@article{CastroCordobaFeffermanGancedo2012,
  author  = {Castro, {\'A}ngel and C{\'o}rdoba, Diego and Fefferman, Charles and Gancedo, Francisco and L{\'o}pez-Fern{\'a}ndez, Mar{\'i}a},
  title   = {{Rayleigh--Taylor} breakdown for the {Muskat} problem with applications to water waves},
  journal = {Annals of Mathematics. Second Series},
  volume  = {175},
  number  = {2},
  pages   = {909--948},
  year    = {2012},
  doi     = {10.4007/annals.2012.175.2.9}
}

@article{CastroCordobaFeffermanGancedo2013,
  author  = {Castro, {\'A}ngel and C{\'o}rdoba, Diego and Fefferman, Charles and Gancedo, Francisco},
  title   = {Breakdown of smoothness for the {Muskat} problem},
  journal = {Archive for Rational Mechanics and Analysis},
  volume  = {208},
  number  = {3},
  pages   = {805--909},
  year    = {2013},
  doi     = {10.1007/s00205-013-0614-3}
}

@article{ConstantinCordobaGancedoStrain2013,
  author  = {Constantin, Peter and C{\'o}rdoba, Diego and Gancedo, Francisco and Strain, Robert M.},
  title   = {On the global existence for the {Muskat} problem},
  journal = {Journal of the European Mathematical Society},
  volume  = {15},
  number  = {1},
  pages   = {201--227},
  year    = {2013},
  doi     = {10.4171/JEMS/358}
}

@article{AlazardBurqZuily2014,
  author  = {Alazard, Thomas and Burq, Nicolas and Zuily, Claude},
  title   = {On the {Cauchy} problem for gravity water waves},
  journal = {Inventiones Mathematicae},
  volume  = {198},
  number  = {1},
  pages   = {71--163},
  year    = {2014},
  doi     = {10.1007/s00222-014-0498-z}
}

@article{ChengCoutandShkoller2014,
  author  = {Cheng, C. H. Arthur and Coutand, Daniel and Shkoller, Steve},
  title   = {Global existence and decay for solutions of the {Hele-Shaw} flow with injection},
  journal = {Interfaces and Free Boundaries},
  volume  = {16},
  number  = {3},
  pages   = {297--338},
  year    = {2014},
  doi     = {10.4171/IFB/321}
}

@article{ConstantinCordobaGancedoRodriguezPiazzaStrain2016,
  author  = {Constantin, Peter and C{\'o}rdoba, Diego and Gancedo, Francisco and Rodr{\'i}guez-Piazza, Luis and Strain, Robert M.},
  title   = {On the {Muskat} problem: global in time results in 2{D} and 3{D}},
  journal = {American Journal of Mathematics},
  volume  = {138},
  number  = {6},
  pages   = {1455--1494},
  year    = {2016},
  doi     = {10.1353/ajm.2016.0042}
}

@article{ConstantinGancedoShvydkoyVicol2017,
  author  = {Constantin, Peter and Gancedo, Francisco and Shvydkoy, Roman and Vicol, Vlad},
  title   = {Global regularity for 2{D} {Muskat} equations with finite slope},
  journal = {Annales de l'Institut Henri Poincar{\'e} C, Analyse non lin{\'e}aire},
  volume  = {34},
  number  = {4},
  pages   = {1041--1074},
  year    = {2017},
  doi     = {10.1016/j.anihpc.2016.09.001}
}

@article{dePoyferreNguyen2017,
  author  = {de Poyferr{\'e}, Thibault and Nguyen, Quang-Huy},
  title   = {A paradifferential reduction for the gravity-capillary waves system at low regularity and applications},
  journal = {Bulletin de la Soci{\'e}t{\'e} Math{\'e}matique de France},
  volume  = {145},
  number  = {4},
  pages   = {643--710},
  year    = {2017},
  doi     = {10.24033/bsmf.2750}
}

@article{Cameron2019,
  author  = {Cameron, Stephen},
  title   = {Global well-posedness for the two-dimensional {Muskat} problem with slope less than 1},
  journal = {Analysis \& PDE},
  volume  = {12},
  number  = {4},
  pages   = {997--1022},
  year    = {2019},
  doi     = {10.2140/apde.2019.12.997}
}

@article{ChangLaraGuillenSchwab2019,
  author  = {Chang-Lara, H{\'e}ctor A. and Guillen, Nestor and Schwab, Russell W.},
  title   = {Some free boundary problems recast as nonlocal parabolic equations},
  journal = {Nonlinear Analysis},
  volume  = {189},
  pages   = {111538},
  year    = {2019},
  doi     = {10.1016/j.na.2019.05.019}
}

@article{GancedoGarciaJuarezPatelStrain2019,
  author  = {Gancedo, Francisco and Garc{\'i}a-Ju{\'a}rez, Eduardo and Patel, Neel and Strain, Robert M.},
  title   = {On the {Muskat} problem with viscosity jump: global in time results},
  journal = {Advances in Mathematics},
  volume  = {345},
  pages   = {552--597},
  year    = {2019},
  doi     = {10.1016/j.aim.2019.01.017}
}

@article{Matioc2019,
  author  = {Matioc, Bogdan-Vasile},
  title   = {The {Muskat} problem in two dimensions: equivalence of formulations, well-posedness, and regularity results},
  journal = {Analysis \& PDE},
  volume  = {12},
  number  = {2},
  pages   = {281--332},
  year    = {2019},
  doi     = {10.2140/apde.2019.12.281}
}

@article{Wu2019WaterWaves,
  author  = {Wu, Sijue},
  title   = {Wellposedness of the 2{D} full water wave equation in a regime that allows for non-{$C^1$} interfaces},
  journal = {Inventiones Mathematicae},
  volume  = {217},
  number  = {1},
  pages   = {241--375},
  year    = {2019},
  doi     = {10.1007/s00222-019-00867-4}
}

@article{AlazardLazar2020,
  author  = {Alazard, Thomas and Lazar, Omar},
  title   = {Paralinearization of the {Muskat} equation and application to the {Cauchy} problem},
  journal = {Archive for Rational Mechanics and Analysis},
  volume  = {237},
  number  = {2},
  pages   = {545--583},
  year    = {2020},
  doi     = {10.1007/s00205-020-01514-6}
}

@article{nguyen-pausader,
  author  = {Nguyen, Huy Q. and Pausader, Beno{\^{\i}}t},
  title   = {A paradifferential approach for well-posedness of the {Muskat} problem},
  journal = {Archive for Rational Mechanics and Analysis},
  volume  = {237},
  number  = {1},
  pages   = {35--100},
  year    = {2020},
  doi     = {10.1007/s00205-020-01494-7}
}

@article{nguyen20,
  author  = {Nguyen, Huy Q.},
  title   = {On well-posedness of the {Muskat} problem with surface tension},
  journal = {Advances in Mathematics},
  volume  = {374},
  pages   = {107344},
  year    = {2020},
  doi     = {10.1016/j.aim.2020.107344}
}

@article{flynn-nguyen,
  author  = {Flynn, Patrick T. and Nguyen, Huy Q.},
  title   = {The vanishing surface tension limit of the {Muskat} problem},
  journal = {Communications in Mathematical Physics},
  volume  = {382},
  number  = {2},
  pages   = {1205--1241},
  year    = {2021},
  doi     = {10.1007/s00220-021-03980-9}
}

@article{AlazardNguyen2021Critical,
  author  = {Alazard, Thomas and Nguyen, Quoc-Hung},
  title   = {On the {Cauchy} problem for the {Muskat} equation. {II}: critical initial data},
  journal = {Annals of PDE},
  volume  = {7},
  number  = {1},
  pages   = {Paper No. 7, 25 pp.},
  year    = {2021},
  doi     = {10.1007/s40818-021-00099-x}
}

@article{CordobaLazar2021MuskatH32,
  author  = {C{\'o}rdoba, Diego and Lazar, Omar},
  title   = {Global well-posedness for the {2D} stable {Muskat} problem in {$H^{3/2}$}},
  journal = {Annales Scientifiques de l'{\'E}cole Normale Sup{\'e}rieure},
  series  = {4},
  volume  = {54},
  number  = {5},
  pages   = {1315--1351},
  year    = {2021},
  doi     = {10.24033/asens.2483}
}

@article{AlazardNguyen2022Muskat3D,
  author  = {Alazard, Thomas and Nguyen, Quoc-Hung},
  title   = {Quasilinearization of the {3D} {Muskat} equation, and applications to the critical {Cauchy} problem},
  journal = {Advances in Mathematics},
  volume  = {399},
  pages   = {Paper No. 108278, 52 pp.},
  year    = {2022},
  doi     = {10.1016/j.aim.2022.108278}
}

@book{BedrossianVicol2022,
  author    = {Bedrossian, Jacob and Vicol, Vlad},
  title     = {The Mathematical Analysis of the Incompressible {Euler} and {Navier--Stokes} Equations: An Introduction},
  series    = {Graduate Studies in Mathematics},
  volume    = {225},
  publisher = {American Mathematical Society},
  address   = {Providence, RI},
  year      = {2022},
  pages     = {218},
  isbn      = {978-1-4704-7178-1}
}

@article{ChenNguyenXu2022C1Muskat,
  author  = {Chen, Ke and Nguyen, Quoc-Hung and Xu, Yiran},
  title   = {The {Muskat} problem with {$C^1$} data},
  journal = {Transactions of the American Mathematical Society},
  volume  = {375},
  number  = {5},
  pages   = {3039--3060},
  year    = {2022},
  doi     = {10.1090/tran/8559}
}

@article{GancedoLazar2022Muskat3D,
  author  = {Gancedo, Francisco and Lazar, Omar},
  title   = {Global well-posedness for the three dimensional {Muskat} problem in the critical {Sobolev} space},
  journal = {Archive for Rational Mechanics and Analysis},
  volume  = {246},
  number  = {1},
  pages   = {141--207},
  year    = {2022},
  doi     = {10.1007/s00205-022-01808-x}
}

@article{Nguyen2022BesovMuskat,
  author  = {Nguyen, Huy Q.},
  title   = {Global solutions for the {Muskat} problem in the scaling invariant {Besov} space {$\dot B^1_{\infty,1}$}},
  journal = {Advances in Mathematics},
  volume  = {394},
  pages   = {Paper No. 108122, 28 pp.},
  year    = {2022},
  doi     = {10.1016/j.aim.2021.108122}
}

@article{agrawal-patel-wu,
  author  = {Agrawal, Siddhant and Patel, Neel and Wu, Sijue},
  title   = {Rigidity of acute angled corners for one phase {Muskat} interfaces},
  journal = {Advances in Mathematics},
  volume  = {412},
  pages   = {Paper No. 108801},
  year    = {2023},
  doi     = {10.1016/j.aim.2022.108801}
}

@article{AgrawalAlazard2023Rellich,
  author  = {Agrawal, Siddhant and Alazard, Thomas},
  title   = {Refined {Rellich} boundary inequalities for the derivatives of a harmonic function},
  journal = {Proceedings of the American Mathematical Society},
  volume  = {151},
  number  = {5},
  pages   = {2103--2113},
  year    = {2023},
  doi     = {10.1090/proc/16277}
}

@article{AlazardNguyen2023Endpoint,
  author  = {Alazard, Thomas and Nguyen, Quoc-Hung},
  title   = {Endpoint {Sobolev} theory for the {Muskat} equation},
  journal = {Communications in Mathematical Physics},
  volume  = {397},
  number  = {3},
  pages   = {1043--1102},
  year    = {2023},
  doi     = {10.1007/s00220-022-04514-7}
}

@article{Dong-Gancedo-Nguyen-23,
  author  = {Dong, Hongjie and Gancedo, Francisco and Nguyen, Huy Q.},
  title   = {Global well-posedness for the one-phase {Muskat} problem},
  journal = {Communications on Pure and Applied Mathematics},
  volume  = {76},
  number  = {12},
  pages   = {3912--3967},
  year    = {2023},
  doi     = {10.1002/cpa.22124}
}

@article{GarciaJuarezGomezSerranoHaziotPausader2024,
  author  = {Garc{\'i}a-Ju{\'a}rez, Eduardo and G{\'o}mez-Serrano, Javier and Haziot, Susanna V. and Pausader, Beno{\^{\i}}t},
  title   = {Desingularization of small moving corners for the {Muskat} equation},
  journal = {Annals of PDE},
  volume  = {10},
  number  = {2},
  pages   = {17},
  year    = {2024},
  doi     = {10.1007/s40818-024-00175-y}
}

@article{KimZhang2024SourceDrift,
  author  = {Kim, Inwon and Zhang, Yuming Paul},
  title   = {Regularity of {Hele-Shaw} flow with source and drift},
  journal = {Annals of PDE},
  volume  = {10},
  number  = {2},
  pages   = {20},
  year    = {2024},
  doi     = {10.1007/s40818-024-00184-x}
}

@incollection{Alazard2024Paralinearization,
  author    = {Alazard, Thomas},
  title     = {Paralinearization of free boundary problems in fluid dynamics},
  booktitle = {Partial Differential Equations: Waves, Nonlinearities and Nonlocalities},
  editor    = {Ehrnstr{\"o}m, Mats and Holden, Helge and Jakobsen, Espen R.},
  series    = {Abel Symposia},
  volume    = {18},
  pages     = {1--31},
  publisher = {Springer Nature Switzerland},
  address   = {Cham},
  year      = {2025},
  doi       = {10.1007/978-3-031-91282-5_1}
}

@article{Dong-Gancedo-Nguyen-23-2,
  author  = {Dong, Hongjie and Gancedo, Francisco and Nguyen, Huy Q.},
  title   = {Global well-posedness for the one-phase {Muskat} problem in 3{D}},
  journal = {Preprint, arXiv:2308.14230 [math.AP]},
  year    = {2023},
  url     = {https://arxiv.org/abs/2308.14230}
}

@article{AlazardKoch2025HeleShaw,
  author  = {Alazard, Thomas and Koch, Herbert},
  title   = {The {Hele-Shaw} semi-flow},
  journal = {Preprint, arXiv:2312.13678 [math.AP]},
  year    = {2023},
  url     = {https://arxiv.org/abs/2312.13678}
}

@article{Lazar2024SurfaceTensionMuskat,
  author  = {Lazar, Omar},
  title   = {Global well-posedness of arbitrarily large {Lipschitz} solutions for the {Muskat} problem with surface tension},
  journal = {Preprint, arXiv:2407.09444 [math.AP]},
  year    = {2024},
  url     = {https://arxiv.org/abs/2407.09444}
}

@article{SchwabTuTuranova2024,
  author  = {Schwab, Russell W. and Tu, Son and Turanova, Olga},
  title   = {Well-posedness for viscosity solutions of the one-phase {Muskat} problem in all dimensions},
  journal = {Preprint, arXiv:2404.10972 [math.AP]},
  year    = {2024},
  url     = {https://arxiv.org/abs/2404.10972}
}

@article{MatiocWalker2025Capillary,
  author  = {Matioc, Bogdan-Vasile and Walker, Christoph},
  title   = {A potential theory approach to the capillarity-driven {Hele-Shaw} problem},
  journal = {Preprint, arXiv:2508.15491 [math.AP]},
  year    = {2025},
  url     = {https://arxiv.org/abs/2508.15491}
}

@article{DongKwon2026SurfaceTensionMuskat,
  author  = {Dong, Hongjie and Kwon, Hyunwoo},
  title   = {Global well-posedness of the one-phase {Muskat} problem with surface tension},
  journal = {Preprint, arXiv:2604.06545 [math.AP]},
  year    = {2026},
  url     = {https://arxiv.org/abs/2604.06545}
}

@article{Zhang2026C1,
  author  = {Zhang, Yuming Paul},
  title   = {{$C^1$}-regularity of the free boundary for {Hele-Shaw} flow with source and drift},
  journal = {Preprint, arXiv:2604.26906 [math.AP]},
  year    = {2026},
  url     = {https://arxiv.org/abs/2604.26906}
}

@article{AlazardMeunierSmets2020,
  author  = {Alazard, Thomas and Meunier, Nicolas and Smets, Didier},
  title   = {Lyapunov functions, identities and the Cauchy problem for the Hele-Shaw equation},
  journal = {Communications in Mathematical Physics},
  volume  = {377},
  number  = {2},
  pages   = {1421--1459},
  year    = {2020},
}

@article{HowisonKing1989,
  author  = {Howison, S. D. and King, J. R.},
  title   = {Explicit solution to six free-boundary problems in fluid flow and diffusion},
  journal = {IMA Journal of Applied Mathematics},
  volume  = {42},
  pages   = {155--175},
  year    = {1989},
}

@article{KingLaceyVazquez1995,
  author  = {King, J. R. and Lacey, A. A. and V{\'a}zquez, J. L.},
  title   = {Persistence of corners in free boundaries in {Hele-Shaw} flow},
  journal = {European Journal of Applied Mathematics},
  volume  = {6},
  number  = {5},
  pages   = {455--490},
  year    = {1995},
  doi     = {10.1017/S0956792500001984},
}

\end{document}